\documentclass[11pt]{article}
\usepackage{amsmath}
\usepackage{amssymb}
\usepackage{graphicx}
\usepackage{amsthm}
\usepackage{verbatim}
\usepackage{color}
\usepackage{enumerate}
\usepackage{amsfonts}
\usepackage[all]{xy}
\usepackage{fancyhdr}
\usepackage{enumerate}
\theoremstyle{plain}

\newtheorem{theorem}{Theorem}[subsection]
\newtheorem{lemma}[theorem]{Lemma}
\newtheorem{proposition}[theorem]{Proposition}
\newtheorem{corollary}[theorem]{Corollary}

\theoremstyle{remark}
\newtheorem{remark}[theorem]{Remark}

\theoremstyle{definition}
\newtheorem{definition}[theorem]{Definition}
\newtheorem{conjecture}[theorem]{Conjecture}
\newtheorem{example}[theorem]{Example}

\newtheorem{convention}[theorem]{Convention}
\newtheorem{question}[theorem]{Question}

\begin{document}
\title{Geometric chains of orbifolds with corners and singular homology}

\author{Hao Yu}



\maketitle

\begin{abstract}
We study a geometric homology theory (GHT for short). Its chains are generated by smooth maps from compact oriented orbifolds with corners to a smooth space, and two kinds of chains are set to zero: those admitting an orientation reversing automorphism, and degenerate ones, which, together with their boundaries, factor through smooth maps to orbifolds of lower dimension. Bisecting the source of a geometric chain along a hypersurface transverse to its corners changes the chain only by the boundary of an explicit prism and by prisms over its faces; cutting a geometric cycle along a regular level set of a smooth function on the target does not change its homology class. Geometric chains also behave well under products and pullbacks along fibre bundles, which overcomes some of the combinatorial rigidity of singular chains. We prove that geometric homology is naturally isomorphic to singular homology with rational coefficients for every smooth space admitting a good cover, such as smooth manifolds, orbifolds and locally finite polyhedra, and also for orbifolds with corners. The proof uses cutting operators along level sets of functions on the target, realized by explicit prisms, instead of triangulations. For continuous maps into arbitrary spaces we conjecture the same statement, and prove it for Euclidean neighbourhood retracts in degrees at most one, with surjectivity in degree two. Without the relation killing degenerate chains the comparison fails for orbifold chains: the weighted projective plane $\mathbb{P}(1,1,p)$, $p\ge5$ prime, mapped to a point is a cycle that does not bound. As applications we give a chain map from the Morse complex of a Morse--Smale pair to geometric chains, depending only on the pair and on orientations of the descending manifolds, which is a quasi-isomorphism on closed oriented manifolds, and discuss the BV structure on geometric chains of moduli spaces of bordered Riemann surfaces, following Costello and Harrelson--Voronov--Z\'u\~niga.
\end{abstract}
\medskip
\tableofcontents

\section{Introduction}
In this paper, we introduce a new homology theory, which we call \textit{geometric homology theory}, or GHT for brevity. This theory is based on the notion of geometric chains, first introduced by Harrelson, Voronov and Z\'u\~niga (\cite{vhz}). GHT has several nice properties.
\begin{itemize}
\item For every smooth space admitting a good cover, the homology of smooth geometric chains is isomorphic to rational singular homology (Theorem \ref{main_thm}). This includes smooth manifolds, orbifolds and polyhedra, and also orbifolds with corners. We conjecture that the same holds for continuous geometric chains in arbitrary spaces (Conjecture \ref{conj_continuous}).
\item Chains in GHT behave well under natural operations such as Cartesian products and pullbacks along fibre bundles. This is in sharp contrast with singular chains, for which the product of two simplices is not a simplex and has to be subdivided, and pullbacks along fibre bundles have no canonical definition.
\item Every compact oriented manifold or orbifold with corners has a canonical fundamental chain in GHT, whereas no such chains exist in singular theory.
\end{itemize}
Because of these properties, we believe that in some situations it is useful to work with geometric chains in place of singular chains.

We first define geometric chains (see Sections \ref{sec_notation} and \ref{sec_orbifolds} for the relevant notions and Section \ref{sec_chains} for details). A \textit{smooth space} is a topological space $X$ together with a sheaf of continuous functions, called smooth. The sheaf contains the constants, is closed under composition with smooth functions of several variables, and admits smooth Urysohn functions (Definition \ref{def_smooth_space}). A continuous map $f$ from (the underlying space of) an orbifold with corners to $X$ is \textit{smooth} if $\varphi\circ f$ is smooth for every local smooth function $\varphi$ on $X$.
\begin{definition}\label{main_def}
(geometric chains).
Let $X$ be a smooth space.
\begin{itemize}
\item A \textit{geometric simplex} of dimension $n$ in $X$ is a pair $(Q,f)$, where $Q$ is a compact oriented smooth orbifold with corners of dimension $n$ (possibly disconnected) and $f:|Q|\to X$ is a smooth map.
\item Two geometric simplices $(Q,f)$ and $(Q',f')$ are \textit{isomorphic} if there is an orientation preserving diffeomorphism $h:Q'\to Q$ of orbifolds with corners with $f'=f\circ h$.
\item A geometric simplex $(Q,f)$ is \textit{thin} if $f=g\circ\pi$, where $\pi:Q\to N$ is a smooth map to an orbifold $N$ (possibly with corners, not necessarily compact) with $\dim N<n$, and $g:|N|\to X$ is smooth.
\item A linear combination of thin geometric simplices whose boundary is again a linear combination of thin geometric simplices is called \textit{degenerate}.
\end{itemize}
The space $GC^\infty_n(X;\mathbb{Q})$ of \textit{geometric chains} is the $\mathbb{Q}$-vector space spanned by the isomorphism classes of $n$-dimensional geometric simplices, modulo the relations
\begin{itemize}
\item $(-Q, f) = -(Q, f)$, where $-Q$ is $Q$ with the reversed orientation;
\item $(Q_1 \sqcup Q_2, f_1 \sqcup f_2) = (Q_1, f_1) + (Q_2, f_2)$;
\item degenerate chains are zero.
\end{itemize}
The boundary of $(Q,f)$ is $(\partial Q, f\circ i_Q)$, where $i_Q:\partial Q\to Q$ is the boundary of $Q$ (Convention \ref{conv_faces}) with the induced orientation. The homology of $GC^\infty_*(X;\mathbb{Q})$ is denoted by $GH^\infty_*(X;\mathbb{Q})$. See Definitions \ref{def_degenerate} and \ref{def_smooth_GC} for the precise construction.
\end{definition}
All orbifolds in this paper are effective, except for the orbifolds in the wide sense of Definition \ref{def_orb_chains}(b); non-effective sources are reduced to effective ones by a convention stated after Definition \ref{orbifold}.
\begin{remark}
We use coefficients $\mathbb{Q}$. In the proof of Theorem \ref{main_thm} they enter, as far as we can see, only through Lemma \ref{lem_Cg}; see Remark \ref{rem_stratified} for integral coefficients. They are needed for the weights $1/|K|$ of non-effective sources (after Definition \ref{orbifold}), and are used in Proposition \ref{prop_MVa_cont} and Remark \ref{rem_twisted}; see also the relevant paragraph in \cite{vhz}.
\end{remark}

Let us comment on the relations. By the first relation, a geometric simplex admitting an orientation reversing automorphism is zero (Lemma \ref{lem_Cg}).
\begin{itemize}
\item Without the third relation the comparison with singular homology fails, already for a point. Closed manifolds of positive dimension mapped to a point do bound without it, but closed orbifolds such as the weighted projective plane $\mathbb{P}(1,1,p)$, $p\ge5$ prime, define cycles that do not (Remark \ref{rem_role_of_D}). Such cycles are thin with empty boundary, hence degenerate, hence zero. Thin simplices play the role of degenerate simplices in simplicial theory.
\item In a previous version of this paper we imposed, instead, a \textit{gluing relation}: a geometric simplex was declared equal to the sum of the pieces of any decomposition of it. We no longer impose this relation. Cutting a geometric simplex along a hypersurface transverse to its corners changes the chain, but the change is realized by an explicit prism (Proposition \ref{prop_bisection}), just as barycentric subdivision is chain homotopic to the identity on singular chains.
\item The gluing relation, being an equality in a quotient, made the comparison with singular homology delicate: cancellations could take place through relations involving chains outside a given open set. With the present definition the relations are local (Lemma \ref{lem_D_local}).
\end{itemize}

A smooth singular simplex $s:\Delta^n\to X$ is a geometric simplex $(\Delta^n,s)$, and
\begin{equation*}
p: C^\infty_*(X;\mathbb{Q}) \rightarrow GC^\infty_*(X;\mathbb{Q}),\qquad s\mapsto(\Delta^n,s),
\end{equation*}
is a chain map (Section \ref{sec_comparison}). A \textit{good cover} of $X$ is an open cover such that every non-empty finite intersection of its members is a disjoint union of open, smoothly contractible subsets. If $X$ admits a good cover, smooth singular chains compute singular homology (Lemma \ref{lem_smooth_singular}), so $p$ induces a map $p_*:H_*(X;\mathbb{Q})\to GH^\infty_*(X;\mathbb{Q})$. We will prove the following theorem.
\begin{theorem}\label{main_thm}
(geometric homology is isomorphic to singular homology).
Let $X$ be a smooth space admitting a good cover. Then
\begin{equation}
p_*: H_*(X;\mathbb{Q}) \xrightarrow{\ \cong\ } GH^\infty_*(X;\mathbb{Q})
\end{equation}
is an isomorphism, natural with respect to smooth maps. This applies in particular to smooth manifolds, to locally finite polyhedra in $\mathbb{R}^N$, and to orbifolds (Section \ref{sec_examples}). The same conclusion holds for orbifolds with corners (Corollary \ref{cor_corners_general}), and also for chains given by liftable maps or by orbifold morphisms (Proposition \ref{prop_orb_comparison}).
\end{theorem}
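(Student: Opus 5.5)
We prove the theorem by a Mayer--Vietoris argument over a good cover. Both $U\mapsto H_*(U;\mathbb{Q})$ (computed by smooth singular chains, Lemma \ref{lem_smooth_singular}) and $U\mapsto GH^\infty_*(U;\mathbb{Q})$ will be treated as functors on open subsets of $X$, with $p$ a natural transformation between them. Naturality of $p_*$ with respect to smooth maps is immediate: both push-forwards are post-composition, $f_*(\Delta^n,s)=(\Delta^n,f\circ s)$. The proof then has three parts: a point computation, a Mayer--Vietoris sequence for geometric chains, and the standard induction over the nerve of the cover.

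\textbf{Step 1 (contractible pieces).} We show $GH^\infty_*(U;\mathbb{Q})\cong\mathbb{Q}$ in degree $0$ and vanishes otherwise when $U$ is smoothly contractible.
\begin{itemize}
\item Homotopy invariance comes from the prism operator $(Q,f)\mapsto([0,1]\times Q,\,H\circ(\mathrm{id}\times f))$. This operator preserves the relations: products with $[0,1]$ of thin simplices are thin, and products of degenerate chains are degenerate. It therefore reduces the claim to the case $U=\mathrm{pt}$.
\item Over a point, every simplex $(Q,\mathrm{const})$ with $n=\dim Q>0$ is thin: it factors through the $0$-dimensional orbifold $\mathrm{pt}$. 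Its boundary is also thin, so every such simplex is degenerate and hence zero.
\item In degree $0$, a compact oriented $0$-orbifold is a finite set of signed points, possibly with isotropy weights $1/|K|$. Its class is its signed weighted count, which is where $\mathbb{Q}$ is used.
\end{itemize}
Hence $p_*$ is an isomorphism for contractible $U$.

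\textbf{Step 2 (Mayer--Vietoris for geometric chains).} For an open cover $X=U\cup V$, let $GC^{\{U,V\}}_*$ be the subcomplex generated by simplices with image in $U$ or in $V$. The sequence
\begin{equation*}
0\to GC_*(U\cap V)\to GC_*(U)\oplus GC_*(V)\to GC^{\{U,V\}}_*\to 0
\end{equation*}
should be exact. Exactness in the middle and injectivity on the left need the relations to be local, which is Lemma \ref{lem_D_local}: a degenerate chain in $X$ supported in $U$ is degenerate in $U$.

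The key claim is that $GC^{\{U,V\}}_*\hookrightarrow GC_*(X)$ is a quasi-isomorphism. We replace barycentric subdivision by the cutting operators of the abstract:
\begin{itemize}
\item Choose a smooth Urysohn function $\varphi$ on $X$ with $\varphi=0$ off $V$ and $\varphi=1$ off $U$.
\item For a geometric chain $(Q,f)$, choose a regular value $c$ of $\varphi\circ f$ that is regular on all corner strata; such $c$ exist by Sard applied stratumwise. Cut $Q$ along $(\varphi\circ f)^{-1}(c)$.
\item By Proposition \ref{prop_bisection}, the cut chain differs from $(Q,f)$ by the boundary of an explicit prism plus prisms over faces. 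On cycles this shows that every cycle is homologous to one in $GC^{\{U,V\}}$, and that a cycle in $GC^{\{U,V\}}$ bounding in $X$ bounds in $GC^{\{U,V\}}$.
\end{itemize}

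\textbf{Step 3 (induction).} With Mayer--Vietoris in hand on both sides, and $p$ compatible with both sequences, the five lemma plus induction on the number of open sets gives the isomorphism for finite unions of members of a good cover. Intersections of such unions are disjoint unions of contractible sets, so Step 1 applies to them, using additivity over disjoint unions. For a general good cover we pass to the limit: both theories have compact supports and commute with directed unions, since a geometric chain has compact image. Alternatively, one runs the spectral sequence of the Čech double complex. The examples (manifolds, polyhedra, orbifolds, orbifolds with corners) then reduce to exhibiting good covers, as referenced in the statement.

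\textbf{Main obstacle.} The hardest step is Step 2. The prism correction from Proposition \ref{prop_bisection} is a natural candidate, but it is not a chain map on the nose, because the regular value $c$ depends on the simplex. To get a well-defined subdivision operator we will choose $c$ coherently on faces, using a countable dense set of values generic for all faces simultaneously, and prove that different choices differ by boundaries. We must also check that cutting a degenerate chain yields a degenerate chain. Thinness is preserved because the cut pieces still factor through $\pi$. For boundaries, the new cut faces must be thin: if $f=g\circ\pi$, the cut face maps to $\pi^{-1}$ of a level set of $\varphi\circ g$, which is not obviously lower-dimensional. Here the plan is to perturb the level choice to be regular for $\varphi\circ g$ on $N$ as well, so that the cut face factors through the level set in $N$, whose dimension is $\dim N-1$.
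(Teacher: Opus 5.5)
Your overall architecture is the paper's: dimension axiom and homotopy invariance, cutting along a regular level set of a smooth Urysohn function with the bisection prism, induction over finite unions of members of the good cover, then a colimit. Your argument for the key claim is essentially Proposition~\ref{prop_MVa}. The gap is the asserted exactness in the middle of
\[
0\to GC_*(U\cap V)\to GC_*(U)\oplus GC_*(V)\to GC^{\{U,V\}}_*\to 0 ,
\]
where $GC^{\{U,V\}}_*=GC_*(U)+GC_*(V)\subseteq GC_*(X)$ (everything smooth, as in your notation).

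An element of the kernel is a pair $([a],[b])$ with $a\in C^g_*(U)$, $b\in C^g_*(V)$ and $e:=a-b\in D_*(X)$. It comes from $GC_*(U\cap V)$ if and only if $e\in D_*(U)+D_*(V)$. Lemma~\ref{lem_D_local} does not give this, because it only concerns degenerate chains lying entirely in $C^g_*(U)$. Splitting $e=e_U+e_V$ by supports gives two thin chains. However, $\partial e_U$ may contain non-thin terms in $U\cap V$ that cancel only against those of $\partial e_V$, so neither half need be degenerate, and nothing supplies another splitting. Exactness in the middle is therefore equivalent to the vanishing of the smooth analogue of the complex $\mathcal{K}_*$ of Section~\ref{sec_continuous}; compare Remark~\ref{rem_locality_problem}. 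The paper never proves this and does not need it. What it proves instead is that $K:=GC_*(U)\cap GC_*(V)$ contains $GC_*(U\cap V)$ with acyclic quotient (Proposition~\ref{prop_MVb}), which suffices for the long exact sequence. If you redefine $GC^{\{U,V\}}_*$ as $(C^g_*(U)+C^g_*(V))/(D_*(U)+D_*(V))$, the short sequence becomes exact. But then the map to $GC_*(X)$ has kernel $\mathcal{K}_*$, and your key claim needs the same missing input.

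That missing input is that the one-sided cut $\mathrm{Cut}^-$ preserves degenerate chains. In Proposition~\ref{prop_MVb} one writes $[a]=[\mathrm{Cut}^-a]+[\mathrm{Cut}^+a]+\partial[Pa]+[P\partial a]$ in $GC_*(U)$ and needs $\mathrm{Cut}^-e\in D_*(U)$. This is Lemma~\ref{lem_thin_interface}. You state its idea in your last paragraph: take $t$ regular also for $\phi\circ g$ on the strata of $N$, so the interface factors through a level set of $N$. But you apply it where nothing is needed. For the two-sided cut $\mathrm{Cut}=\mathrm{Cut}^-+\mathrm{Cut}^+$ with a common value $t$, the interfaces cancel, $\partial\,\mathrm{Cut}=\mathrm{Cut}\,\partial$, and $\mathrm{Cut}(D)\subseteq D$ is automatic (Lemma~\ref{lem_cut}).

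Likewise, you need no global subdivision operator and no ``countable dense set'' of values. A chain involves finitely many classes, and a single $t$ regular for all of them and their faces makes $\mathrm{Cut}$ and $P$ well defined on their span, with cancelling faces cut identically.

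Two smaller points:
\begin{enumerate}
\item In Step~1, the boundary of a $1$-dimensional simplex over a point is not thin. You need that a connected compact oriented effective $1$-orbifold with corners is an interval or a circle, so its boundary is $[pt]-[pt]$ or $0$.
\item In Step~3, intersections of finite unions are not disjoint unions of contractible sets. Induct instead on the number of members of $\mathcal{G}^\cap$, so that $U\cap V=\bigcup_{i<k}(O_i\cap O_k)$ falls under the induction hypothesis.
\end{enumerate}
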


\textit{Continuous chains.} Definition \ref{main_def} makes sense for an arbitrary topological space $X$ if $f$, and the map $g$ in the definition of thinness, are only required to be continuous; $\pi$ is still required to be smooth. This gives the complex $GC_*(X;\mathbb{Q})$ of continuous geometric chains, with homology $GH_*(X;\mathbb{Q})$ (Section \ref{sec_chains}). Singular simplices define a chain map $p:C_*(X;\mathbb{Q})\to GC_*(X;\mathbb{Q})$. For a smooth space $X$ there is a natural chain map $GC^\infty_*(X;\mathbb{Q})\to GC_*(X;\mathbb{Q})$ compatible with $p$.
\begin{conjecture}\label{conj_continuous}
For every topological space $X$, or at least for every Euclidean neighbourhood retract, $p_*:H_*(X;\mathbb{Q})\to GH_*(X;\mathbb{Q})$ is an isomorphism.
\end{conjecture}
The proof of Theorem \ref{main_thm} uses smoothness in an essential way: chains are cut along level sets of functions pulled back from the target, so that two faces which cancel in the boundary of a cycle are cut in the same way. For continuous maps no such compatible smooth cutting need exist (Section \ref{sec_continuous}). We obtain the following partial results.
\begin{itemize}
\item The first half of the Mayer--Vietoris principle holds for continuous chains in metrizable spaces (Proposition \ref{prop_MVa_cont}).
\item For Euclidean neighbourhood retracts, the conjecture reduces to the acyclicity of a complex $\mathcal{K}_*$ measuring the locality of degenerate chains (Proposition \ref{prop_reduction}).
\item For Euclidean neighbourhood retracts, $p_*$ is an isomorphism in degrees $\le1$ and surjective in degree $2$ (Theorem \ref{thm_low_degrees}).
\item $p_*$ is an isomorphism in degree $2$ if a question in dimension theory has a positive answer for $m=1$ (Question \ref{q_dimension} and Corollary \ref{cor_degree_two}). We answer this question in several cases, for instance for piecewise monotone generators, which include the real analytic ones (Proposition \ref{prop_dim_laps}); the general case remains open.
\end{itemize}

If the sources $Q$ in Definition \ref{main_def} are required to be manifolds with corners, with the same notion of thinness ($N$ may still be an orbifold), we obtain a subcomplex $GC^{\infty,m}_*(X;\mathbb{Q})$ of $GC^\infty_*(X;\mathbb{Q})$.
\begin{corollary}\label{main_cor}
Under the hypotheses of Theorem \ref{main_thm}, $p$ also induces an isomorphism $H_*(X;\mathbb{Q})\cong GH^{\infty,m}_*(X;\mathbb{Q})$.
\end{corollary}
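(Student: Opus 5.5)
The plan is to deduce Corollary \ref{main_cor} from Theorem \ref{main_thm} by factoring $p$ through the subcomplex of manifold chains. Since $\Delta^n$ is a manifold with corners, $p$ lands in $GC^{\infty,m}_*(X;\mathbb{Q})$, giving
\begin{equation*}
C^\infty_*(X;\mathbb{Q})\xrightarrow{\ p^m\ } GC^{\infty,m}_*(X;\mathbb{Q})\xrightarrow{\ \iota\ } GC^\infty_*(X;\mathbb{Q}),\qquad \iota\circ p^m=p .
\end{equation*}
By Theorem \ref{main_thm}, $\iota_*\circ p^m_*$ is an isomorphism, so $p^m_*$ is injective and $\iota_*$ is surjective. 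It therefore suffices to prove \emph{either} that $p^m_*$ is surjective \emph{or} that $\iota_*$ is injective.

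The cleanest route is to rerun the proof of Theorem \ref{main_thm} verbatim inside $GC^{\infty,m}_*$ and check that each step preserves the class of manifold sources. The proof consists of a Mayer--Vietoris/locality argument on a good cover, using Lemma \ref{lem_D_local}, together with cutting operators along level sets of smooth functions on $X$, realized by explicit prisms as in Proposition \ref{prop_bisection}. Cutting a manifold with corners along a regular level set transverse to its corners gives manifolds with corners again, and the prisms are products $Q\times[0,1]$ glued along faces, which are again manifolds with corners. The degenerate relation in $GC^{\infty,m}_*$ is defined with the same thinness condition, where $N$ may be an orbifold, so the locality lemma and the identification of degenerate chains go through unchanged. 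The contractible-set step, i.e.\ acyclicity in positive degrees over a smoothly contractible open set, uses a cone construction $Q\times[0,1]\to X$ via a contraction, with the top end collapsed. This cone is thin, or a prism on a manifold source, so it again stays inside manifold sources. With these checks, the Mayer--Vietoris induction over the good cover yields that $p^m_*$ is an isomorphism.

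The main obstacle is to verify that no step of the proof of Theorem \ref{main_thm} genuinely produces orbifold sources from manifold ones. There are two points to check. The first is Lemma \ref{lem_Cg}, where rational coefficients and orientation-reversing automorphisms enter; this should involve only quotients used to show a chain vanishes, and such vanishing statements hold equally in the subcomplex. The second is degenerate chains in $GC^{\infty,m}_*$: a linear combination of manifold-source simplices that is degenerate in $GC^\infty$ must also be degenerate in $GC^{\infty,m}$, and since the thinness definition is identical this holds. If some local step of the main proof (for example the treatment of orbifold targets in Section \ref{sec_examples}) does use orbifold sources intrinsically, I would fall back on proving injectivity of $\iota_*$ directly. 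Such an argument would apply the cutting operators to a bounding orbifold chain $\partial B=\iota(z)$ over small good-cover pieces, with each piece contractible in $X$, and use the cone construction to replace $B$ by a manifold chain modulo prisms over faces of $z$, which are manifold sources.
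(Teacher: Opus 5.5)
Your main route is the paper's proof. The paper reruns the argument for Theorem \ref{main_thm} inside $GC^{\infty,m}_*$, using that bisection pieces, the bisection prisms $W(Q,\rho)\subseteq Q\times\mathbb{R}$ of Lemma \ref{lem_prism} (which are not cylinders glued along faces), products with $[0,1]$ and standard simplices are manifolds with corners whenever $Q$ is. The degenerate subcomplex $D^\infty_*\cap C^{g,\infty,m}_*$ still satisfies Lemmas \ref{lem_D_local} and \ref{lem_thin_interface}, because $N$ may remain an orbifold.

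One correction concerns the contractible step, which does not use a cone with the top end collapsed. Such a cone is in general not a manifold (or orbifold) with corners, as Remark \ref{rem_stratified}(3) notes, so your claim that it stays inside manifold sources would be false. The step uses instead the uncollapsed cylinder $Q\times[0,1]$ of Proposition \ref{prop_homotopy}. This identifies the homology of a smoothly contractible set with that of a point, which Proposition \ref{prop_point} computes verbatim for manifold sources (intervals and circles in degree one).

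With that substitution your argument is complete, and the fallback via injectivity of $\iota_*$ is not needed.
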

A related statement for manifolds with corners appears in \cite{cd}, where, however, no details are given. In \cite{vhz}, Harrelson, Voronov and Z\'u\~niga remark on this property: ``Our notion of chains leads
to a version of oriented bordism theory via passing to homology. If we impose extra
equivalence relations, such as some kind of a suspension isomorphism, as in \cite{ja},
or work with piecewise smooth geometric chains and treat them as currents, in the
spirit of \cite{fooo}, we may obtain a complex whose homology is isomorphic to the
ordinary real homology of X''. Although they did not state it as a formal conjecture, we take the liberty of regarding it as an implicit question. Remark \ref{rem_role_of_D} confirms that an extra relation is needed, although not because closed manifolds survive: closed manifolds of positive dimension mapped to a point bound even without it. From this point of view, Theorem \ref{main_thm} gives an affirmative answer, where the extra relation is the vanishing of degenerate chains.

\textit{Related work.} Several other geometric descriptions of singular homology are available.
\begin{itemize}
\item Jakob (\cite{ja98, ja}) describes the homology theory associated with a multiplicative cohomology theory $h^*$, in particular singular homology, by triples $(M,a,f)$. Here $M$ is a closed smooth manifold with an $h^*$-orientation, $a\in h^*(M)$, and $f\colon M\to X$ is a continuous map; the relations are bordism and vector bundle modification.
\item Kreck (\cite{kr}) defines stratifold homology, using regular stratifolds as cycles and bordism of $c$-stratifolds as the relation, and proves that it is isomorphic to integral singular homology for CW complexes.
\item Joyce defines M-homology (\cite{jo15}), isomorphic to ordinary homology with coefficients in any commutative ring. Its chains are built from smooth maps from manifolds with corners into manifolds or effective orbifolds, with additional data, and, together with M-cohomology, it has pushforwards, pullbacks, products and fundamental classes defined at the (co)chain level. His Kuranishi homology (\cite{jo07}) uses Kuranishi spaces with gauge-fixing data as sources, over $\mathbb{Q}$-algebras, and has no relation corresponding to degenerate chains.
\item Closest to the present paper is Lipyanskiy's chain-level theory (\cite{lip}). There, chains are smooth maps from compact oriented manifolds with corners into a manifold without boundary, with integer coefficients. Two kinds of chains are set to zero: those admitting an orientation reversing automorphism, and degenerate ones. A chain is degenerate if it has \emph{small image}, meaning that its image is contained in the image of a smooth map from a manifold of smaller dimension, and if its boundary has small image up to chains of the first kind.
\end{itemize}
Our notion of a degenerate chain, a thin chain with thin boundary, is modelled on Lipyanskiy's definition, and, as in \cite{lip}, cutting along regular level sets of functions on the target is a basic tool. Our setting differs in the following respects.
\begin{itemize}
\item We allow orbifold sources and use rational coefficients.
\item Thinness requires a factorization $f=g\circ\pi$ of the map itself through a smooth map $\pi$ to a lower-dimensional orbifold. This is stronger than having small image.
\item Lipyanskiy verifies the Eilenberg--Steenrod axioms for manifolds and indicates the comparison with singular homology by a Mayer--Vietoris argument. Here we give a complete proof of Theorem \ref{main_thm} for smooth spaces with a good cover. It is based on an explicit prism realizing the bisection of a geometric simplex (Section \ref{sec_bisection}), which plays the role of Lipyanskiy's crease construction, and on cutting operators pulled back from the target (Section \ref{sec_cutting}).
\end{itemize}

Similarly, one can define geometric cohomology groups, cup and cap products, and so on. Essentially, properties that hold in singular homology (cohomology) should still hold in GHT. This can be deduced from the method of proof of Theorem \ref{main_thm}, although, as a general principle, each basic property still has to be proved separately. Since this is the first of a potential series of papers developing GHT, we do not introduce geometric cohomology here and leave it to subsequent work.

As for applications, which were also a motivation for developing GHT, we mention that in \cite{cos04,cos05} Costello constructed an algebraic counterpart of mirror symmetry, in particular a purely algebraic construction of the Gromov--Witten potential. More specifically, he constructs a Batalin--Vilkovisky algebra structure on the singular chain complexes of the Deligne--Mumford compactifications of moduli spaces of Riemann surfaces with marked points, and uses fundamental chains as solutions of the quantum master equation; these encode the fundamental classes of the Deligne--Mumford spaces and hence the Gromov--Witten potentials. The BV operator involves pullbacks of chains along circle bundles, and Costello works with homotopy coinvariants of the circle actions on chains. This model is needed for other reasons as well, for instance to include $\psi$-class insertions and to act on Hochschild chains, which makes the theory explicit in terms of ribbon graphs. Geometric chains give a different chain-level model, in which pullbacks along circle bundles are defined directly and the moduli spaces have canonical fundamental chains. Using it, we extend part of these results to open-closed topological conformal field theories (see \cite{hy} and Section \ref{sec_applications}). This is also related to the motivation for geometric chains given in \cite{vhz}: ``we need the unit circle $S^1$ to have a canonical fundamental cycle and the moduli spaces which we consider to have canonical fundamental chains, irrespective of the choice of a triangulation. On the other hand, we believe it is generally better to work at the more basic level of chains rather than that of homology''. We do not claim that this model replaces the homotopy coinvariants of \cite{cos05}. Further applications are discussed in Section \ref{sec_further}.

The triangulability of compact orbifolds with corners, compatibly with their faces, which was stated in earlier versions of this paper, is treated in the separate note \cite{yu_tri}; it is not used here. We briefly sketch the ideas of the proofs.

The comparison theorem does not use triangulations. Its proof follows the Eilenberg--Steenrod pattern. Homotopy invariance and the dimension axiom are elementary. The key point is a Mayer--Vietoris principle, which requires cutting a geometric chain into pieces that are small with respect to an open cover. For a smooth chain $(Q,f)$ we cut $Q$ along a regular level set of $\phi\circ f$, where $\phi$ is a function on the target. Since the cut is determined by $f$, it is well defined on isomorphism classes, and two faces which cancel in the boundary of a cycle are cut in the same way. The cut is realized by an explicit prism, whose top face is bent along the cut (Section \ref{sec_bisection}). This yields cutting operators that are chain homotopic to the identity, exactly as barycentric subdivision is in singular theory.

The rest of the paper is organized as follows.
\begin{itemize}
\item In Section \ref{sec_orbifolds}, we review orbifolds and orbifolds with corners, and linear charts at corners (Lemma \ref{lem_doubling}).
\item Section \ref{sec_main} contains the definition of the geometric chain complex, the bisection prism, the cutting operators, and the proof of the comparison theorem. It then treats examples of smooth spaces with good covers (Section \ref{sec_examples}), chains given by liftable maps and orbifold morphisms (Section \ref{sec_orb_chains}) and continuous maps (Section \ref{sec_continuous}), and ends with products and pullbacks of geometric chains (Section \ref{sec_operations}).
\item An application in categorical Gromov--Witten theory is briefly discussed in Section \ref{sec_applications}, and further applications (Morse theory, string vertices) in Section \ref{sec_further}.
\item Appendix \ref{app_intervals} contains the proofs of the dimension-theoretic results of Section \ref{sec_continuous}, and Appendix \ref{app_fundamental} constructs the fundamental chains of orbifolds with corners used in Remark \ref{rem_thin_cycles}.
\end{itemize}

\section{Notation}\label{sec_notation}
Let $R^n$ denote the $n$-dimensional Euclidean space, and let $R^n_+:=\{(x_1,x_2,\dots,x_n) | x_i \geq 0, i=1,\dots,n\}$ be the subset of points all of whose coordinates are nonnegative. Let $R^n_k:=\{(x_1,x_2,\dots,x_n) | x_i\geq 0, i=1,\dots,k\}$ denote the subset of $R^n$ with the first $k$ coordinates nonnegative, and let $R^n_{+k}$ denote the subset of $R^n_+$ consisting of the points exactly $k$ of whose coordinates are $0$, i.e. the points of depth $k$.

Some care is needed with the definition of smooth maps between manifolds with corners. Several different definitions exist in the literature (\cite{ce, me, mon, jo}). It seems essential that a smooth map be at least weakly smooth; the definitions differ in the additional constraints they impose, in particular on the behaviour of smooth maps at the boundary and corners. We adopt the category of smooth manifolds with corners defined by Joyce \cite{jo}, and refer the reader to that paper for full details on the relevant notions, such as weakly smooth maps and submersions. We stress, however, that we \textit{never} need a general definition of smooth maps between manifolds with corners in this paper. Such maps occur only in the definitions of orbifolds with corners and of the groupoids in Definition \ref{def_orb_chains}(b), and only in very special cases, in which the definitions we use agree for \textit{any} of the notions of smooth maps mentioned above.


For a smooth manifold with corners $M$ of dimension $n$, we denote by $Bord_k({M})$ the subset of ${M}$ consisting of the points that have a neighbourhood diffeomorphic to an open set in $R^n_{k}$ by a diffeomorphism sending the point to $0$. It is called the \textit{$k$-corner} of $M$. When there is no ambiguity, we also call a connected component of it a $k$-corner. Each connected component of $Bord_k(M)$ is a smooth manifold of dimension $n-k$, called a \textit{corner stratum} of $M$. We call its closure in $M$ a \textit{$k$-face} of $M$; it is a subset of $M$ and need not be a manifold with corners (it is not the $k$-corner $C_k(M)$ of \cite[Definition~2.10]{jo}). The boundary $\partial M$ is defined in Convention \ref{conv_faces}. 
The manifold $M$ together with the subsets $Bord_k(M)$ is called the \textit{corner structure} of $M$.

\begin{convention}[boundary]\label{conv_faces}
We use the boundary of a manifold with corners in the sense of Joyce \cite[Definitions~2.6 and 2.10]{jo}. As a set, $\partial M$ consists of pairs $(x,\beta)$, where $x\in M$ and $\beta$ is a local boundary component of $M$ at $x$. It is a manifold with corners of dimension $\dim M-1$, and the map $i_M:\partial M\to M$, $(x,\beta)\mapsto x$, is smooth \cite[Theorem~3.4(iv)]{jo}. A point of depth $l$ (i.e. having a chart modelled on $R^n_l$ which sends the point to the origin) has exactly $l$ preimages. If $M$ is oriented, $\partial M$ carries the induced orientation, outward normal first \cite[Convention~7.2]{jo}. The second boundary $\partial^2M=\partial(\partial M)$ carries a free involution $\sigma$, which exchanges the two local boundary components at a point of depth two. It satisfies $i_M\circ i_{\partial M}\circ\sigma=i_M\circ i_{\partial M}$, and reverses the induced orientation if $M$ is oriented (\cite[\S7]{jo}: $S_k$ acts on the orientation of $\partial^kM$ by the sign).

For an orbifold with corners, locally $\tilde U/G$, we put $\partial(\tilde U/G):=(\partial\tilde U)/G$, where $g\in G$ acts by $(x,\beta)\mapsto(gx,g(\beta))$. Since $G$ may permute the connected components of $\partial\tilde U$, the local models are $(\tilde C,G_{\tilde C})$, where $\tilde C$ runs over these components and $G_{\tilde C}$ is the stabiliser of $\tilde C$. These local models glue to an orbifold with corners $\partial Q$ with a map $i_Q:\partial Q\to Q$. The above properties persist, except that a point of depth $l$ has at most $l$ preimages, and $\sigma$ need not be free (the local group may permute the local boundary components). The involution $\sigma$ is still orientation reversing, which is all that Lemma \ref{lem_Cg} uses. Moreover, $\partial Q$ is again effective, since a finite order linear map fixing a boundary hyperplane pointwise and preserving the half-space is the identity. The \textit{codimension one faces} of $Q$ are the connected components of $\partial Q$. We say that $Q$ has \textit{embedded faces} if $i_Q$ is injective on each of them. Simplices and their products have embedded faces. The teardrop manifold with corners \cite[Remark~2.11]{jo} (a closed disc with one corner point at which both local boundary components belong to the same face) does not, and neither do many moduli spaces of curves. We do not assume embedded faces.
\end{convention}
 When a Lie group $G$ acts continuously on a topological space $M$, its orbit space is denoted by $|M/G|$. A smooth action of a Lie group $G$ on a manifold with corners $M$ is a smooth map
\begin{equation}
f: G \times M \rightarrow M
\end{equation}
such that $f|_{G \times {Bord_k(M)}} \subseteq Bord_k{M}$.
If, moreover, $M=R^n_k$ and $G$ acts on each $Bord_l(M)$ linearly, we say that $R^n_k$ is a \textit{linear corner representation} of $G$.

Reflecting $R^n_k$ in the first $k$ coordinate hyperplanes turns it into $R^n$ and thus reduces the \textit{corner} case to the case without boundary or corners. We call this \textit{doubling}; it is used in Lemma \ref{lem_doubling}.


\section{Orbifolds and orbifolds with corners}\label{sec_orbifolds}
We review some preliminaries on orbifolds and orbifolds with corners. Standard references for orbifolds and their description by Lie groupoids are \cite{alr,mo,mp}; see also \cite{cm, mm} and the pioneering work \cite{s}.
We use the original definition by charts.
\begin{definition}\label{orbifold}
(orbifolds).
Let $X$ be a topological space, and fix $n \geq 0$.
\begin{itemize}
\item An $n$-dimensional orbifold chart on $X$ is given by a connected open subset
$\tilde{U}\subseteq R^n$, a finite group $G$ acting smoothly on $\tilde{U}$, 
and a map $\phi:\tilde{U}\rightarrow X$ such that $\phi$ is $G$-invariant and induces a homeomorphism of $\tilde{U}/G$  onto
an open subset $U \subseteq  X$.
\item An embedding $\lambda : (\tilde{U} , G, \varphi  ) \hookrightarrow (\tilde{V} ,H, \psi)$ between two such charts is a
smooth embedding $\lambda : \tilde{U} \hookrightarrow \tilde{V}$
 with $\psi \lambda = \varphi$.  An orbifold atlas on $X$ is a family $\mathcal{U} = {(\tilde{U} , G, \varphi)}$ of such charts, which
cover $X$ and are locally compatible: given any two charts $(\tilde{U} , G, \varphi )$ for
$U = \varphi(\tilde{U}) \subset X$ and $(\tilde{V},H,\psi  )$ for $V \subseteq X$, and a point $x \in U \cap V$, there
exists an open neighbourhood $W \subseteq U \cap V$ of $x$ and a chart $(\tilde{W},K,\mu  )$ for $W$
such that there are embeddings $(\tilde{W},K,\mu  ) \hookrightarrow (\tilde{U}, G, \varphi  )$ and $(\tilde{W},K,\mu  )\hookrightarrow (\tilde{V},H,\psi)$.
\item An atlas $\mathcal{U}$ is said to refine another atlas $\mathcal{V}$ if for every chart in $\mathcal{U}$ there
exists an embedding into some chart of $\mathcal{V}$. Two orbifold atlases are said to be
equivalent if they have a common refinement.
\end{itemize}
An orbifold of dimension $n$, or $n$-orbifold, is a paracompact Hausdorff space $X$ equipped with an equivalence class $\mathcal{U}$ of $n$-dimensional orbifold atlases.
\end{definition}
All orbifolds, with or without corners, are assumed to be \textit{effective}: each local group acts effectively on its chart. If $Q$ is a compact oriented orbifold with corners that is not effective, the kernel $K$ of the local actions is locally constant. For connected $Q$ we put $[Q,f]:=\frac{1}{|K|}[Q_{\mathrm{red}},f]$, where $Q_{\mathrm{red}}$ is the effective orbifold obtained by dividing the local groups by $K$. This is compatible with $\partial$, since $K$ also acts trivially on $\partial Q$ and $(\partial Q)_{\mathrm{red}}=\partial(Q_{\mathrm{red}})$. This convention applies to the moduli spaces of Section \ref{sec_applications}. Since geometric chains are built from orbifolds with corners, we extend Definition \ref{orbifold} to orbifolds with corners, in the same way as manifolds are generalized to manifolds with corners.
\begin{definition}\label{orbi_corners}
(orbifolds with corners).
Let $X$ be a topological space, and fix $n \geq 0$.
\begin{itemize}
\item A generalized $n$-dimensional orbifold chart on $X$ is given by a connected open subset
$\tilde{U} \subseteq R^n_+$, a finite group $G$ acting smoothly on $\tilde{U}$, 
and a map $\varphi :\tilde{U} \rightarrow X$ such that $\varphi$ is $G$-invariant, $G$ maps each $\tilde{U}_k := \tilde{U} \cap R^n_{+k}$ to itself (it may permute the connected components of $\tilde U_k$), and $\varphi$ induces a homeomorphism of $\tilde{U}/G$  onto
an open subset $U \subseteq X$.
\item An embedding $\lambda : (\tilde{U} , G, \varphi  ) \hookrightarrow (\tilde{V} ,H,\psi  )$ between two such charts is a
diffeomorphism $\lambda:\tilde{U}\to\lambda(\tilde U)$ onto an open subset of $\tilde{V}$, where $\lambda$ and $\lambda^{-1}$ are smooth up to the corners (locally restrictions of smooth maps between open subsets of $R^n$), with $\lambda(\tilde{U}_k)\subseteq\tilde{V}_k$ for all $k$, and $\psi \lambda = \varphi$.  An orbifold atlas on $X$ is a family $\mathcal{U} = {(\tilde{U} , G, \varphi  )}$ of such charts, which
cover $X$ and are locally compatible: given any two charts $(\tilde{U} , G, \varphi  )$ for
$U = \varphi(\tilde{U}) \subseteq X$ and $(\tilde{V} ,H, \psi )$ for $V \subseteq X$, and a point $x \in U \cap V$, there
exists an open neighbourhood $W \subseteq U \cap V$ of $x$ and a chart $(\tilde{W},K,\mu  )$ for $W$
such that there are embeddings $(\tilde{W},K,\mu  ) \hookrightarrow (\tilde{U} , G, \varphi  )$ and $(\tilde{W},K,\mu  ) \hookrightarrow (\tilde{V} ,H,\psi  )$.
\item An atlas $\mathcal{U}$ is said to refine another atlas $\mathcal{V}$ if for every chart in $\mathcal{U}$ there
exists an embedding into some chart of $\mathcal{V}$. Two generalized orbifold atlases are said to be
equivalent if they have a common refinement.
\end{itemize}
An orbifold with corners of dimension $n$, or $n$-orbifold with corners, is a paracompact Hausdorff space $X$ equipped with an equivalence class $\mathcal{U}$ of $n$-dimensional generalized orbifold atlases.
\end{definition}
\begin{remark}
In Definition \ref{orbi_corners} the local groups may permute the connected components of $\tilde U_k$, as in Convention \ref{conv_faces} and Lemma \ref{lem_doubling}; this is needed, for instance, when an automorphism of a Riemann surface permutes boundary components (Section \ref{sec_applications}). Requiring each component to be $G$-invariant gives a strictly smaller class: $[0,\infty)^3/\mathbb{Z}_3$, with $\mathbb{Z}_3$ permuting the coordinates cyclically, has a single ray of points of depth two emanating from the image of $0$, whereas every model in which the local group preserves the components has three. 
\end{remark}
We shall use the following linearization of corner charts. The doubled group in (2) is in general a semidirect product, since the local group may permute the boundary hyperplanes.
\begin{lemma}[Doubling lemma]\label{lem_doubling}
Let $G$ be a finite group acting smoothly on a $G$-invariant open neighbourhood $U$ of $0$ in $R^n_k$, preserving the corner structure and fixing $0$. Then:
\begin{enumerate}
\item After shrinking $U$ and applying a $G$-equivariant, corner preserving diffeomorphism onto a neighbourhood of $0$, there is a linear isomorphism $R^n\cong R^k\oplus R^{n-k}$, $x=(y,l)$, carrying $R^n_k$ to $\{y_1,\dots,y_k\ge 0\}$, such that $G$ acts linearly by $g(y,l)=(P_g\,y,\ A_g\,l)$ with $P_g$ a permutation matrix and $A_g$ orthogonal for a $G$-invariant inner product on $R^{n-k}$.
\item Let $\Gamma\cong Z_2^k$ be the group generated by the reflections $r_i: y_i\mapsto -y_i$. Then $G$ normalizes $\Gamma$, and $\hat G:=\Gamma\rtimes G$ acts linearly on $R^n$. The inclusion $R^n_k\subset R^n$ induces a homeomorphism $R^n_k/G\cong R^n/\hat G$.
\item For $x\in R^n_k$ of depth $d$ (exactly $d$ of the coordinates $y_i$ vanish) the subgroup $\hat G_x\cap\Gamma$ has order $2^d$. Consequently, if $x,x'\in R^n_k$ have isotropy groups $\hat G_x$, $\hat G_{x'}$ which are conjugate in $\hat G$, then $x$ and $x'$ have the same depth.
\end{enumerate}
\end{lemma}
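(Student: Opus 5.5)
The plan is to linearize in two stages: first make the action linear near $0$ (a Bochner linearization adapted to corners), then exploit that a linear map preserving the corner structure of $R^n_k$ must permute the boundary hyperplanes.

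\textbf{Part (1).} Since $G$ preserves the corner structure and fixes $0$, its differential $dg_0$ preserves the tangent cone $T_0R^n_k=\{y_1,\dots,y_k\ge0\}$ and its faces. The extremal rays of this cone are the positive $y_i$-axes intersected with the depth-$(k-1)$ locus, but more invariantly the faces of the cone are determined by the boundary hyperplanes $\{y_i=0\}$. Hence $dg_0$ permutes the hyperplanes $H_i=\{y_i=0\}$ and preserves the cone. I would set $\Phi=\frac1{|G|}\sum_g (dg_0)^{-1}\circ g$, the Bochner average. Then $d\Phi_0=\mathrm{id}$ and $\Phi\circ g=dg_0\circ\Phi$. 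To keep $\Phi$ corner preserving, I would not average in arbitrary coordinates. Instead I would first choose coordinates so that the functions $y_i$ are adapted: the averaged defining functions $\tilde y_i=\frac1{|G|}\sum_g y_{\pi_g(i)}\circ g\cdot(\text{positive factor})$ should be permuted by $G$. Concretely, $y_i\circ g=c\,y_{\pi_g(i)}$ locally, with $c>0$ smooth, because $g$ maps $H_{\pi_g^{-1}(i)}\cap U$ to $H_i\cap U$ and preserves the sides. One then averages along the normal directions. The remaining coordinates $l$ are averaged by Bochner on $R^{n-k}$ after splitting off the $y$-part, choosing a $G$-invariant complement. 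This yields a linear action of the stated form: $P_g$ acts on $y$ by a permutation matrix, since a linear map preserving the orthant and fixing the hyperplanes as a set is a positive monomial matrix, and the positive scalars can be normalized to $1$ by conjugating with a diagonal matrix averaged over $G$. Finally $A_g$ on the invariant complement is made orthogonal by an averaged inner product.

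\textbf{Part (2).} The conjugation identity $g r_i g^{-1}=r_{\pi_g(i)}$ follows directly since $P_g$ is a permutation matrix acting only on the $y$-factor. So $G$ normalizes $\Gamma$, and $\hat G=\Gamma\rtimes G$ acts linearly. The map $R^n_k/G\to R^n/\hat G$ is surjective because $\Gamma$ moves any point into the orthant by flipping signs. For injectivity, if $\gamma g x=x'$ with both points in $R^n_k$, then $gx\in R^n_k$, and $\gamma$ fixes it, since a sign flip keeping it in the orthant must act on zero coordinates. Continuity and openness follow from properness of finite quotients.

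\textbf{Part (3).} An element $\gamma g$ with $\gamma\in\Gamma$ fixing $x$ and lying in $\Gamma$ means $g=1$. Then $\gamma$ fixes $x$ exactly when $\gamma$ flips only coordinates with $y_i=0$. This gives $2^d$ elements. Conjugation in $\hat G$ preserves $\Gamma$, since it is normal, and so preserves $|\hat G_x\cap\Gamma|$, hence the depth.

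The main obstacle is part (1): linearizing while keeping the corner structure. I would handle it by doing the $y$-normalization first and then Bochner on the full map, checking that $\Phi$ preserves each $\{y_i\ge0\}$ because each summand does.
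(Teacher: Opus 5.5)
Your overall route is the paper's: the Bochner average $\Phi=\frac1{|G|}\sum_g(dg_0)^{-1}\circ g$ followed by linear algebra for (1), and essentially the same arguments for (2) and (3). The paper gets the homeomorphism in (2) from the map $(y,l)\mapsto(|y_1|,\dots,|y_k|,l)$, which identifies $R^n/\Gamma$ with $R^n_k$ equivariantly; your closedness of finite quotient maps works as well.

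The step that fails as you state it is the corner preservation of $\Phi$, which is the heart of (1). That each summand preserves $\{y_i\ge0\}$ only says $\Phi(U)\subseteq R^n_k$, and this is not enough. For instance, $(y_1,y_2)\mapsto(y_1+y_2^2,y_2)$ preserves $\{y_1\ge0\}$ and has derivative the identity at $0$, but moves the boundary line off itself. What is needed is that each summand maps $U\cap\{y_i=0\}$ into $\{y_i=0\}$ and $U\cap\{y_i>0\}$ into $\{y_i>0\}$. Both sets are convex, so the average inherits this, and the inverse function theorem for a smooth extension of $\Phi$ then yields the corner preserving linearizing chart.

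This holds already in the original coordinates, and it is exactly the paper's argument. Near $0$, $g$ maps $\{y_i=0\}$ into $\{y_{\sigma_g(i)}=0\}$, so $dg_0$ induces the same permutation of the hyperplanes, and $(dg_0)^{-1}\circ g$ maps each $\{y_i=0\}$ into itself. Applied to $g^{-1}\in G$, this shows that $g$ maps $\{y_i>0\}$ into $\{y_{\sigma_g(i)}>0\}$, which $(dg_0)^{-1}$, preserving the cone, sends back into $\{y_i>0\}$. So averaging in the given coordinates does not destroy the corners, and your preliminary change of coordinates is unnecessary. You had the relevant fact about $g$ but did not combine it with $dg_0$.

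If you do want adapted coordinates, they should be permuted exactly, and no positive factor is needed. The functions $\tilde y_i:=\frac1{|G|}\sum_g y_{\sigma_g(i)}\circ g$ vanish on $\{y_i=0\}$, are positive on $\{y_i>0\}$, have differential at $0$ a positive multiple of $dy_i$, and satisfy $\tilde y_{\sigma_h(i)}\circ h=\tilde y_i$. In the coordinates $(\tilde y,l)$ every summand of $\Phi$ fixes each $\tilde y_i$. Corner preservation is then automatic, and the linearized $y$-part is already a permutation matrix, which makes your later diagonal rescaling superfluous. As written, the proposal mixes the two routes and completes neither.

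On the paper's route the rescaling is needed. Note, however, that for $n>k$ the cone $R^n_k$ has lineality space $L=\{y=0\}$ and no extremal rays. A linear $g$ preserving it has the form $(y,l)\mapsto(M_gy,B_gy+A_gl)$ with $M_g$ positive monomial, so the invariant complement has to be chosen first. Your averaged diagonal conjugation then works. If $M_ge_i=c_{g,i}e_{\sigma_g(i)}$, the identity $c_{gh,i}=c_{h,i}c_{g,\sigma_h(i)}$ shows that $d_i:=\sum_gc_{g,i}$ satisfies $c_{h,i}d_{\sigma_h(i)}=d_i$, so $\mathrm{diag}(d_i)\,M_h\,\mathrm{diag}(d_i)^{-1}$ is a permutation matrix.

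The paper obtains the complement, the permutation matrices and the orthogonality of $A_g$ in one stroke from a $G$-invariant inner product. The complement is $L^\perp$, and the unit vectors $w_i$ on the extremal rays of the simplicial cone $R^n_k\cap L^\perp$ satisfy $gw_i=w_{\sigma_g(i)}$ because $G$ acts isometrically.
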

\begin{proof}
(1) Since $g$ preserves the corner structure, $dg_0$ preserves the tangent cone $R^n_k$ and maps each boundary hyperplane $\{x_i=0\}$, $i\le k$, to another one. Put $\varphi(x)=\frac{1}{|G|}\sum_{g\in G}(dg_0)^{-1}g(x)$. Each summand maps $U\cap\{x_i=0\}$ into $\{x_i=0\}$ and $U\cap\{x_i>0\}$ into $\{x_i>0\}$, and these sets are convex, so $\varphi$ preserves the corner structure. Moreover $d\varphi_0=\mathrm{id}$ and $\varphi\circ g=dg_0\circ\varphi$. By the inverse function theorem (applied to a smooth extension of $\varphi$ to a neighbourhood of $0$ in $R^n$), $\varphi$ is a corner preserving diffeomorphism near $0$ which linearizes the action. Now choose a $G$-invariant inner product. The lineality space $L=\{x_1=\dots=x_k=0\}$ of the cone $R^n_k$ is $G$-invariant, hence so is $L^\perp$, and $R^n_k=C\times L$ with $C\subset L^\perp$ a simplicial cone. $G$ permutes the extremal rays of $C$; letting $w_i$ be the unit vector on the $i$-th ray, $g w_i=w_{\sigma_g(i)}$ exactly. The coordinates $y_i$ with respect to the basis $(w_i)$ of $L^\perp$, together with linear coordinates $l$ on $L$, have the required properties.

(2) $g r_i g^{-1}=r_{\sigma_g(i)}$, so $G$ normalizes $\Gamma$. The map $(y,l)\mapsto(|y_1|,\dots,|y_k|,l)$ identifies $R^n/\Gamma$ with $R^n_k$, $G$-equivariantly, hence $R^n/\hat G=(R^n/\Gamma)/G\cong R^n_k/G$.

(3) For $x=(y,l)$ with $y\ge 0$, an element $\gamma\in\Gamma$ fixes $x$ if and only if it only changes signs of vanishing coordinates, so $|\hat G_x\cap\Gamma|=2^d$. If $\hat G_x$ and $\hat G_{x'}$ are conjugate in $\hat G$, then, $\Gamma$ being normal, $\hat G_x\cap\Gamma$ and $\hat G_{x'}\cap\Gamma$ are conjugate, so $x$ and $x'$ have the same depth. This proves the last claim.
\end{proof}

\section{Geometric chains and the comparison theorem}\label{sec_main}
In this section we give the precise definition of the geometric chain complex and prove Theorem \ref{main_thm}.

\subsection{The geometric chain complex}\label{sec_chains}
Coefficients are $\mathbb{Q}$ throughout and are omitted from the notation. We first treat continuous chains; the smooth variants are obtained by adding smoothness assumptions (Definition \ref{def_smooth_GC}). Let $X$ be a topological space. A \textit{(continuous) geometric simplex} in $X$ is a pair $(Q,f)$ with $Q$ as in Definition \ref{main_def} and $f:|Q|\to X$ continuous; isomorphisms are as in Definition \ref{main_def}. Let $\mathcal{B}_n(X)$ be the set of isomorphism classes of \textit{connected} $n$-dimensional geometric simplices, and write $[Q,f]$ for the class of $(Q,f)$. A disconnected $Q$ is identified with the sum of its components. Put
\begin{equation*}
C^g_n(X):=\mathbb{Q}[\mathcal{B}_n(X)]\big/\big\langle[-Q,f]+[Q,f]\big\rangle,\qquad \partial[Q,f]:=\sum_F[F,f|_F],
\end{equation*}
where $F$ runs over the codimension one faces of $Q$, i.e. the connected components of $\partial Q$ (Convention \ref{conv_faces}), with the induced orientation (outward normal first). Here and below, for a face $F$ we write $f|_F:=f\circ i_Q|_F$, and similarly for other functions on $Q$.

\begin{lemma}\label{lem_Cg}
$(C^g_*(X),\partial)$ is a chain complex, functorial in $X$. If $(Q,f)$ admits an orientation reversing automorphism, i.e. an orientation reversing diffeomorphism $a$ of $Q$ with $f\circ a=f$, then $[Q,f]=0$. The classes not admitting such an automorphism, each with a chosen orientation, form a basis of $C^g_n(X)$.
\end{lemma}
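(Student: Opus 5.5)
We prove the three assertions in reverse order, since the structural description of $C^g_n(X)$ is what makes the other two routine.

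\emph{Basis.} Orientation reversal $[Q,f]\mapsto[-Q,f]$ is an involution $\iota$ on the set $\mathcal{B}_n(X)$, and $C^g_n(X)$ is the quotient of the free vector space on $\mathcal{B}_n(X)$ by the span of $b+\iota b$. The orbits of $\iota$ are of two kinds.
\begin{itemize}
\item A fixed point of $\iota$ is a class with $[-Q,f]=[Q,f]$. By the definition of isomorphism, this means there is an orientation preserving diffeomorphism $h:Q\to -Q$ with $f\circ h=f$, i.e.\ an orientation reversing automorphism of $(Q,f)$.
\item A free orbit is a pair $\{b,\iota b\}$ with $b\neq\iota b$.
\end{itemize}
For a fixed point $b$ the relation reads $2b=0$, so over $\mathbb{Q}$ we get $b=0$. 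For a free orbit the relation identifies $\iota b$ with $-b$. The quotient therefore decomposes orbitwise and is freely spanned by one chosen representative of each free orbit. This gives the basis statement and the vanishing statement for connected $Q$. For disconnected $Q$ with an orientation reversing automorphism $a$, the map $a$ permutes the components. Components in an orbit of length one are handled as above. For a cycle $Q_1\to Q_2\to\cdots\to Q_r\to Q_1$ of components, $a$ identifies each $[Q_{i+1},f]$ with $-[Q_i,f]$, so the contribution of the cycle is $[Q_1,f](1-1+\cdots)$, which vanishes if $r$ is even. If $r$ is odd, then $a^r$ restricts to an orientation reversing automorphism of $(Q_1,f)$, so $[Q_1,f]=0$ and the contribution again vanishes.

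\emph{Well-definedness, functoriality and $\partial^2=0$.} An isomorphism $h$ of simplices induces an orientation preserving diffeomorphism $\partial h$ of boundaries commuting with $i_Q$, so $\partial$ is well defined on $\mathcal{B}_n$. We have $\partial(-Q)=-\partial Q$ because the induced orientation is defined outward normal first, so $\partial$ respects the relation $[-Q,f]+[Q,f]=0$. Functoriality is immediate, since $g_*[Q,f]=[Q,g\circ f]$ commutes with both the relation and $\partial$.

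For $\partial^2=0$ we have $\partial^2[Q,f]=[\partial^2Q,\,f\circ i_Q\circ i_{\partial Q}]$. By Convention~\ref{conv_faces}, the involution $\sigma$ of $\partial^2Q$ is an orientation reversing diffeomorphism with $i_Q\circ i_{\partial Q}\circ\sigma=i_Q\circ i_{\partial Q}$. It is therefore an orientation reversing automorphism of the (possibly disconnected) simplex $(\partial^2Q,\,f\circ i_Q\circ i_{\partial Q})$, and the vanishing statement just proved gives $\partial^2[Q,f]=0$.

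\emph{Main obstacle.} The one delicate point is that for orbifolds $\sigma$ need not be free and may fix whole components of $\partial^2Q$ while permuting others. This is exactly why we proved the vanishing statement for arbitrary, possibly disconnected, sources with automorphisms permuting components, using $\mathbb{Q}$-coefficients for the $2b=0$ step. One also has to check that $\sigma$ is a genuine diffeomorphism of the orbifold $\partial^2Q$. Locally this is the swap of the two boundary components at a depth two point, which commutes with the action of the local group $G$ on $\partial^2\tilde U$, so it descends to the quotient.
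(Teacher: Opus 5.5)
Your proof is correct and follows essentially the paper's argument: $\partial^2=0$ comes from the orientation reversing involution $\sigma$ of $\partial^2Q$, which is compatible with the maps to $X$, and the vanishing statement comes from an orientation reversing automorphism forcing $[Q,f]=-[Q,f]$ over $\mathbb{Q}$; you also spell out the basis statement, which the paper calls clear, via the orbits of orientation reversal. Your odd/even cycle analysis for disconnected sources is more than is needed over $\mathbb{Q}$, since the automorphism already identifies the component sums of $(-Q,f)$ and $(Q,f)$, but it is valid and is essentially the integral-coefficient argument of Remark \ref{rem_stratified}(4).
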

\begin{proof}
An isomorphism induces an isomorphism of boundaries, preserving induced orientations, so $\partial$ is well defined. The components of $\partial^2Q$ are either exchanged in pairs by the involution $\sigma$ of Convention \ref{conv_faces}, which reverses orientations and is compatible with the maps to $X$, or preserved by $\sigma$. In the first case the two classes cancel; in the second case the class vanishes by the next statement. Hence $\partial^2=0$. If $a$ is as stated, then $[Q,f]=[-Q,f]=-[Q,f]$, so $[Q,f]=0$. The last statement is clear.
\end{proof}

\begin{definition}\label{def_degenerate}
An $n$-dimensional geometric simplex $(Q,f)$ is \textit{thin} if there exist a smooth orbifold $N$ (possibly with corners, not necessarily compact) with $\dim N<n$, a smooth map $\pi:Q\to N$, and a continuous map $g:|N|\to X$, such that $f=g\circ\pi$. Let $S_n(X)\subseteq C^g_n(X)$ be the span of the thin classes, and
\begin{equation*}
D_n(X):=\{\,c\in S_n(X)\ :\ \partial c\in S_{n-1}(X)\,\}.
\end{equation*}
Elements of $D_*(X)$ are called \textit{degenerate}. The geometric chain complex is $GC_*(X):=C^g_*(X)/D_*(X)$, and its homology is denoted by $GH_*(X)$.
\end{definition}
Here $\pi$ is smooth in the orbifold sense: every point of $Q$ has charts of $Q$ and of $N$ between which $\pi$ lifts to a smooth map, smooth up to the corners. Replacing $N$ by the union of its components meeting $\pi(Q)$, which are finitely many, we may assume that $N$ is second countable. For example, if $s:\Delta^{n-1}\to X$ and $\sigma_j:\Delta^n\to\Delta^{n-1}$ is a codegeneracy map, then $(\Delta^n,s\circ\sigma_j)$ is thin. Its $j$-th and $(j+1)$-st faces both equal $[\Delta^{n-1},s]$ and cancel, and the remaining faces factor through codegeneracies, hence are thin. So it is degenerate, as in simplicial theory.

\begin{lemma}\label{lem_D_local}
$D_*(X)$ is a subcomplex, and $\phi_*D_*(X)\subseteq D_*(Y)$ for every continuous map $\phi:X\to Y$. Moreover, for every open subset $U\subseteq X$,
\begin{equation*}
S_*(U)=S_*(X)\cap C^g_*(U),\qquad D_*(U)=D_*(X)\cap C^g_*(U).
\end{equation*}
In particular $GC_*(U)\to GC_*(X)$ is injective.
\end{lemma}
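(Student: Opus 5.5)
The plan is to prove the four assertions in order, using the basis of $C^g_n(X)$ from Lemma \ref{lem_Cg} for the locality statements.

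\emph{Subcomplex and functoriality.} If $c\in D_n(X)$, then $\partial c\in S_{n-1}(X)$ by definition. Moreover $\partial(\partial c)=0\in S_{n-2}(X)$ by Lemma \ref{lem_Cg}, so $\partial c\in D_{n-1}(X)$. For a continuous $\phi:X\to Y$, a thin simplex $f=g\circ\pi$ is sent to $\phi\circ f=(\phi\circ g)\circ\pi$, which is again thin. Hence $\phi_*S_*(X)\subseteq S_*(Y)$. Since $\phi_*$ commutes with $\partial$, it follows that $\phi_*D_*(X)\subseteq D_*(Y)$.

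\emph{The inclusion $C^g_*(U)\subseteq C^g_*(X)$.} Two simplices with values in $U$ are isomorphic over $U$ if and only if they are isomorphic over $X$, because the condition $f'=f\circ h$ does not involve the target. The same holds for orientation reversing automorphisms. So the basis of $C^g_n(U)$ given by Lemma \ref{lem_Cg} is exactly the set of basis elements of $C^g_n(X)$ whose maps land in $U$. Hence $C^g_*(U)\to C^g_*(X)$ is injective and identifies $C^g_*(U)$ with the span of a subset of the basis.

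\emph{Thinness is local.} Thinness is invariant under isomorphism and under orientation reversal. Consequently $S_n(X)$ is the span of the set $T_X$ of thin basis elements: each thin class is either $\pm$ a basis element or $0$. Now let $(Q,f)$ have image in $U$ and be thin in $X$, say $f=g\circ\pi$ with $g:|N|\to X$. Put $N':=g^{-1}(U)$. This is an open suborbifold of $N$, of the same dimension, and it contains $\pi(Q)$. Therefore $\pi:Q\to N'$ and $g|_{N'}:|N'|\to U$ exhibit $(Q,f)$ as thin in $U$.

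Now take $c\in S_n(X)\cap C^g_n(U)$. Its expansion in the basis uses only elements of $T_X$, and also only basis elements lying in $U$. Each of these is thin in $U$ by the previous paragraph, so $c\in S_n(U)$. The reverse inclusion $S_n(U)\subseteq S_n(X)\cap C^g_n(U)$ is immediate from functoriality.

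\emph{Degenerate chains and injectivity.} For $D$, take $c\in D_n(X)\cap C^g_n(U)$. Then $c\in S_n(U)$ by the above. Also $\partial c\in C^g_{n-1}(U)\cap S_{n-1}(X)=S_{n-1}(U)$, so $c\in D_n(U)$; the reverse inclusion again follows from functoriality. Finally, the kernel of $GC_*(U)\to GC_*(X)$ is $(D_*(X)\cap C^g_*(U))/D_*(U)$, which is $0$ by the equality just proved.

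The only delicate point is the bookkeeping with the basis. A priori, a chain supported in $U$ could be a combination of thin chains of $X$ that leave $U$ and cancel. This is excluded because $S_n(X)$ is spanned by basis vectors, and $C^g_n(U)$ is the span of a subset of the same basis.
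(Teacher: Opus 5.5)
Your proof is correct and follows the paper's argument: $\partial^2=0$ for the subcomplex property, composing the factorization $f=g\circ\pi$ with $\phi$ for functoriality, and, for locality, the fact that $S_*(X)$ and $C^g_*(U)$ are both spanned by subsets of the basis of Lemma \ref{lem_Cg}, together with restricting $N$ to the open suborbifold $g^{-1}(U)$. You additionally make explicit two points the paper leaves implicit: that $C^g_*(U)$ is the span of the basis elements mapping into $U$, and that the kernel of $GC_*(U)\to GC_*(X)$ is $(D_*(X)\cap C^g_*(U))/D_*(U)$.
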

\begin{proof}
If $c\in D_n$, then $\partial c\in S_{n-1}$ and $\partial(\partial c)=0\in S_{n-2}$, so $\partial c\in D_{n-1}$. If $f=g\circ\pi$ then $\phi\circ f=(\phi\circ g)\circ\pi$, so $\phi_*$ preserves $S$, hence $D$. Since $S_*(X)$ is spanned by a subset of the basis, an element of $C^g_*(U)$ lies in $S_*(X)$ if and only if each of its components is thin in $X$. If $f=g\circ\pi$ and $f(|Q|)\subseteq U$, then $\pi(Q)$ lies in the open suborbifold $g^{-1}(U)$ of $N$, and $f=g|\circ\pi$ exhibits $(Q,f)$ as thin in $U$. The statement for $D$ follows.
\end{proof}

\begin{definition}\label{def_smooth_space}
A \textit{smooth space} is a topological space $X$ together with, for every open $U\subseteq X$, a subalgebra $C^\infty(U)$ of the algebra of continuous real functions on $U$, such that:
\begin{enumerate}[(a)]
\item $U\mapsto C^\infty(U)$ is a sheaf;
\item $C^\infty(U)$ contains the constants, and $h(\varphi_1,\dots,\varphi_k)\in C^\infty(U)$ whenever $\varphi_1,\dots,\varphi_k\in C^\infty(U)$ and $h\in C^\infty(\mathbb{R}^k)$;
\item (smooth Urysohn functions) for every open $U$ and disjoint relatively closed subsets $A,B\subseteq U$, there is $\varphi\in C^\infty(U)$ with values in $[0,1]$, $\varphi|_A=0$ and $\varphi|_B=1$.
\end{enumerate}
Open subsets inherit smooth structures. A continuous map $f$ from (the underlying space of) an orbifold with corners, or from a smooth space, to $X$ is \textit{smooth} if $\varphi\circ f$ is smooth on $f^{-1}(U)$ for every $\varphi\in C^\infty(U)$. A \textit{smooth homotopy} is a homotopy $H:X\times[0,1]\to Y$ such that $H\circ(f\times\mathrm{id})$ is smooth for every smooth map $f$ from an orbifold with corners to $X$. The space $X$ is \textit{smoothly contractible} if $\mathrm{id}_X$ is smoothly homotopic to a constant map. A \textit{good cover} of $X$ is an open cover $\mathcal{G}$ such that every non-empty finite intersection of members of $\mathcal{G}$ is a disjoint union of open, smoothly contractible subsets.
\end{definition}

\begin{definition}\label{def_smooth_GC}
Let $X$ be a smooth space. The \textit{smooth} variants $C^{g,\infty}_*(X)$, $S^\infty_*(X)$, $D^\infty_*(X)$, $GC^\infty_*(X)$ and $GH^\infty_*(X)$ are defined in the same way as above, requiring that $f$ be smooth and, in the definition of thinness, that $g$ be smooth. Smooth singular chains $C^\infty_*(X)$ are generated by smooth maps $\Delta^n\to X$. Lemmas \ref{lem_Cg} and \ref{lem_D_local} hold verbatim, functoriality being with respect to smooth maps: in the proof of Lemma \ref{lem_D_local} one restricts $g$ to the open suborbifold $g^{-1}(U)$. Restricting to $Q$ which are manifolds with corners gives subcomplexes $C^{g,m}_*\subseteq C^g_*$ and $C^{g,\infty,m}_*\subseteq C^{g,\infty}_*$. We put $D^{m}_*:=D_*\cap C^{g,m}_*$ (the factor $N$ in the definition of thinness may still be an orbifold), $GC^m_*:=C^{g,m}_*/D^m_*\subseteq GC_*$, and similarly $GC^{\infty,m}_*$.
\end{definition}

\subsection{Comparison map, dimension axiom and homotopy invariance}\label{sec_comparison}
The standard simplex $\Delta^n$ with its standard orientation is a manifold with embedded faces. Define $p(s):=[\Delta^n,s]$ on singular simplices $s:\Delta^n\to X$ (resp. smooth singular simplices into a smooth space $X$). The orientation induced on the $i$-th face of $\Delta^n$ is $(-1)^i$ times the one transported by the face map $d_i$. Hence $p$ is a natural chain map $C_*(X;\mathbb{Q})\to GC_*(X)$ (resp. $C^\infty_*(X;\mathbb{Q})\to GC^\infty_*(X)$). By the example after Definition \ref{def_degenerate}, it factors through the normalized chain complex.

\begin{proposition}[dimension axiom]\label{prop_point}
$GC_n(pt)=0$ for $n\ge1$, and $GC_0(pt)\cong\mathbb{Q}$.
\end{proposition}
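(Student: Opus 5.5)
For $n\ge1$ I will show that every basis element $[Q,f]$ of $C^g_n(pt)$ already lies in $D_n(pt)$; since $D_n(pt)$ is a subspace, this gives $C^g_n(pt)=D_n(pt)$ and hence $GC_n(pt)=0$. For $n=0$ I will show that $D_0=0$ and that $C^g_0(pt)$ is one-dimensional.

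\emph{Degrees $n\ge1$.} Let $(Q,f)$ be a connected $n$-dimensional geometric simplex in $pt$, so $f$ is constant. Take $N=pt$, the $0$-dimensional orbifold, with $\pi:Q\to N$ the constant map, which is smooth, and $g:|N|\to pt$ the identity. Then $f=g\circ\pi$ and $\dim N=0<n$, so $(Q,f)$ is thin. By the same argument every geometric simplex of positive dimension in $pt$ is thin, so $S_k(pt)=C^g_k(pt)$ for all $k\ge1$. For $n\ge2$ the boundary $\partial[Q,f]$ lies in $C^g_{n-1}(pt)=S_{n-1}(pt)$, so $[Q,f]\in D_n(pt)$.

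The only real point is the case $n=1$. Here $\partial[Q,f]$ lies in degree $0$, where nothing is thin, because thinness would require $\dim N<0$. So I must check that $\partial[Q,f]=0$ in $C^g_0(pt)$. A compact connected $1$-orbifold with corners is effective, and its local groups are trivial or $\mathbb{Z}_2$ acting by reflection. Hence $Q$ is a circle, a closed interval, or an interval with one or two reflector endpoints. The boundary consists of the corner points, which are the endpoints of an honest interval; reflector points are interior points of the orbifold and contribute nothing to the boundary. For an interval the two endpoints carry opposite induced orientations, so $\partial[Q,f]=[+pt]+[-pt]=0$. In the remaining cases the boundary is empty, or is a single point whose class is not obviously zero. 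That point occurs only for the half-open-type model $[0,1]$ with a reflector at $1$, i.e. $[0,2]/\mathbb{Z}_2$ viewed as an orbifold with corners. I expect this case to be the main obstacle. I would resolve it by observing that $[0,2]$ with the reflection $x\mapsto 2-x$ is not a valid corner chart with a boundary at $1$: the reflection swaps the two boundary points $0$ and $2$, so the quotient has a single boundary point $0$ and a reflector point $1$. Therefore $[Q,f]$ must be shown degenerate by another route. I would use the orientation-reversing automorphism criterion of Lemma \ref{lem_Cg}, checking whether the orientation issue on a reflector orbifold forces $[Q,f]=0$: near a $\mathbb{Z}_2$ reflector point the orbifold is non-orientable in the orbifold sense, since the local group does not act by orientation-preserving maps. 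Hence such $Q$ is not an allowed oriented source. Once this is settled, only circles and intervals remain, and every $1$-simplex in $pt$ has boundary $0$ and so lies in $D_1(pt)$.

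\emph{Degree $0$.} A compact connected oriented effective $0$-orbifold is a point with trivial group and orientation $\pm1$, so $\mathcal{B}_0(pt)$ consists of the classes of $+pt$ and $-pt$. Dividing by the relation $[-Q,f]=-[Q,f]$ gives $C^g_0(pt)\cong\mathbb{Q}$, spanned by $[pt]$. The point has no orientation-reversing automorphism, so by Lemma \ref{lem_Cg} $[pt]\ne0$. Finally $S_0(pt)=0$, because no $N$ has negative dimension, so $D_0=0$ and $GC_0(pt)\cong\mathbb{Q}$.
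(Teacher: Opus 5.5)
Your proof is correct and follows the paper's argument. For $n\ge1$ you make every simplex thin by factoring through a point, for $n\ge2$ the faces are thin too, for $n=1$ orientability forces $Q$ to be a circle or an interval so that $\partial[Q,f]=0$, and in degree $0$ nothing is thin. The detour through the $[0,2]/\mathbb{Z}_2$ chart and Lemma~\ref{lem_Cg} is unnecessary: as the paper notes, a finite group acting effectively by orientation preserving maps on an open subset of $\mathbb{R}$ or $\mathbb{R}_{\ge0}$ and fixing a point is trivial, and this one fact rules out every non-manifold local model, including at boundary points.
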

\begin{proof}
For $n\ge1$ every $[Q,c]$ is thin, since the constant map $c$ factors through a point. For $n\ge2$ the faces are thin as well, so $C^g_n(pt)=D_n(pt)$. For $n=1$, a connected compact oriented effective $1$-orbifold with corners is an interval or a circle, since a finite group acting effectively by orientation preserving maps on an open subset of $\mathbb{R}$ or $\mathbb{R}_{\ge0}$ and fixing a point is trivial. Hence $\partial[Q,c]=[pt]-[pt]=0$ or $0$, so $[Q,c]\in D_1$. In degree $0$ no class is thin, and the signed count of points gives $GC_0(pt)\cong\mathbb{Q}$.
\end{proof}

\begin{proposition}[homotopy invariance]\label{prop_homotopy}
Homotopic maps $\psi_0\simeq\psi_1:X\to Y$ induce chain homotopic maps $GC_*(X)\to GC_*(Y)$. The same holds for $GC^\infty_*$ and smooth homotopies between smooth maps of smooth spaces.
\end{proposition}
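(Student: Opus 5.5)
The approach is the standard prism operator, which geometric chains make canonical. Let $H:X\times[0,1]\to Y$ be a homotopy with $H(\cdot,0)=\psi_0$ and $H(\cdot,1)=\psi_1$. For a connected geometric simplex $(Q,f)$ in $X$ put
\[
P[Q,f]:=(-1)^{\dim Q+1}\,[\,Q\times[0,1],\ H\circ(f\times\mathrm{id})\,],
\]
where $Q\times[0,1]$ carries the product orientation. This is a compact oriented orbifold with corners: locally it is $\tilde U\times[0,1]$ with $G$ acting trivially on the interval factor. In the smooth case, the map $H\circ(f\times\mathrm{id})$ is smooth by the definition of smooth homotopy.

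First I check that $P$ is well defined on $C^g_*(X)$. An isomorphism $h$ induces the isomorphism $h\times\mathrm{id}$. Reversing the orientation of $Q$ reverses that of $Q\times[0,1]$. Disjoint unions go to disjoint unions.

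Next I compute the boundary. The boundary of the product is
\[
\partial(Q\times[0,1])=\partial Q\times[0,1]\ \sqcup\ (-1)^{\dim Q}\bigl(Q\times\{1\}-Q\times\{0\}\bigr),
\]
as oriented orbifolds, since faces of the interval factor do not interact with the group actions. This gives the prism identity
\[
\partial P+P\partial=\psi_{1*}-\psi_{0*}
\]
on $C^g_*$. The signs are fixed by the chosen normalization $(-1)^{\dim Q+1}$; I will verify them once, in a chart, using Joyce's outward-normal-first convention.

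The main point is that $P$ descends to $GC_*$, i.e.\ $P(D_*(X))\subseteq D_*(Y)$. Suppose $(Q,f)$ is thin, with $f=g\circ\pi$ and $\dim N<\dim Q$. Then
\[
H\circ(f\times\mathrm{id})=\bigl(H\circ(g\times\mathrm{id})\bigr)\circ(\pi\times\mathrm{id}),
\]
with $\pi\times\mathrm{id}:Q\times[0,1]\to N\times[0,1]$ and $\dim N\times[0,1]<\dim Q+1$. Here $N\times[0,1]$ is an orbifold with corners, and in the smooth case $H\circ(g\times\mathrm{id})$ is smooth on $N\times[0,1]$ by the definition of smooth homotopy applied to $g$. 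Hence $P(S_*)\subseteq S_{*+1}$.

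Now let $c\in D_n(X)$. Then $c\in S_n$, so $Pc\in S_{n+1}$. By the prism identity,
\[
\partial Pc=\psi_{1*}c-\psi_{0*}c-P\partial c .
\]
Here $\psi_{i*}c\in S_n$ because $\psi_{i*}$ preserves $S$ (Lemma~\ref{lem_D_local}), and $P\partial c\in S_n$ because $\partial c\in S_{n-1}$. Hence $Pc\in D_{n+1}(Y)$.

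Therefore $P$ induces a chain homotopy on $GC_*$ between $\psi_{0*}$ and $\psi_{1*}$. The identical argument applies to $GC^\infty_*$ and to smooth homotopies between smooth maps of smooth spaces.

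The only delicate step is the sign and orientation bookkeeping in the boundary formula for $Q\times[0,1]$ in Joyce's convention, together with confirming that the orbifold product with an interval behaves as expected. Both are local checks in linear charts, using Lemma~\ref{lem_doubling} if needed.
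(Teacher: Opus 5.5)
Your proof is the paper's argument: the prism $[Q\times[0,1],H\circ(f\times\mathrm{id})]$, the boundary formula for the product, and the factorization through $\pi\times\mathrm{id}:Q\times[0,1]\to N\times[0,1]$, which shows that the prism preserves $S$ and then, via the prism identity, preserves $D$. The only slip is a sign: with the boundary formula you state, the normalization $(-1)^{\dim Q+1}$ gives $\partial P+P\partial=\psi_{0*}-\psi_{1*}$ (the paper uses $(-1)^{\dim Q}$ to get $\psi_{1*}-\psi_{0*}$), but this is harmless, since $-P$ is then the chain homotopy and your degeneracy argument is unchanged.
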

\begin{proof}
Let $H:X\times[0,1]\to Y$ be a homotopy. With the product orientation and the outward-normal-first convention,
\begin{equation*}
\partial[Q\times[0,1]]=[\partial Q\times[0,1]]+(-1)^n\big([Q\times\{1\}]-[Q\times\{0\}]\big)
\end{equation*}
for $\dim Q=n$. Put $\mathcal{H}[Q,f]:=(-1)^n[Q\times[0,1],\,H\circ(f\times\mathrm{id})]$; then $\partial\mathcal{H}+\mathcal{H}\partial=\psi_{1*}-\psi_{0*}$ on $C^g_*(X)$. If $f=g\circ\pi$, then $H\circ(f\times\mathrm{id})=H\circ(g\times\mathrm{id})\circ(\pi\times\mathrm{id})$, and $\dim(N\times[0,1])<\dim(Q\times[0,1])$, so $\mathcal{H}(S)\subseteq S$. For $c\in D$, the formula $\partial\mathcal{H}c=\psi_{1*}c-\psi_{0*}c-\mathcal{H}\partial c$ shows $\partial\mathcal{H}c\in S$, so $\mathcal{H}(D)\subseteq D$.
\end{proof}

\begin{corollary}\label{cor_contractible}
If $X$ is contractible (resp. $X$ is a smoothly contractible smooth space), then $GH_*(X)$ (resp. $GH^\infty_*(X)$) is $\mathbb{Q}$, concentrated in degree $0$, and $p_*$ is an isomorphism.
\end{corollary}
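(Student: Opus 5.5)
The plan is to combine Proposition \ref{prop_homotopy} with Proposition \ref{prop_point}, and the analogous facts for singular homology. Suppose first that $X$ is contractible. Then there is a point $x_0\in X$ for which the constant map $c:X\to X$, $c\equiv x_0$, is homotopic to $\mathrm{id}_X$. Factor $c$ as $X\xrightarrow{r}pt\xrightarrow{j}X$. By Proposition \ref{prop_homotopy}, $\mathrm{id}_{GH_*(X)}=j_*r_*$. Also $r\circ j=\mathrm{id}_{pt}$, so $r_*j_*=\mathrm{id}$. Thus $r_*:GH_*(X)\to GH_*(pt)$ is an isomorphism.

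In the smooth case I take $X$ to be a smoothly contractible smooth space. The maps $r$ and $j$ are smooth, and the homotopy is a smooth homotopy in the sense of Definition \ref{def_smooth_space}, so the smooth clause of Proposition \ref{prop_homotopy} applies in the same way. For $GC^\infty_*(pt)$ I would check that the proof of Proposition \ref{prop_point} goes through verbatim for smooth chains. Constant maps are smooth, and the factorization through a point is smooth, so $GC^\infty_*(pt)$ is again $\mathbb{Q}$ concentrated in degree $0$, spanned by $[pt]$. Since the differential of this complex is zero, $GH_*(pt)=GC_*(pt)$.

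Next I would show that $p_*$ is an isomorphism, using naturality of $p$. The square formed by $p_*$ for $X$ and for $pt$, together with $r_*$ on both sides, commutes. The map $r_*$ is an isomorphism on singular homology by ordinary homotopy invariance. In the smooth case I would use instead smooth singular homology $H_*(C^\infty_*(X))$. The same prism construction with a smooth homotopy, or Lemma \ref{lem_smooth_singular} if that lemma is meant to be available, shows that it is also homotopy invariant and that the smooth singular chains of a point compute $\mathbb{Q}$ in degree $0$. It therefore suffices to check $p_*$ for $X=pt$. In degree $0$, $p$ sends the singular $0$-simplex to $[pt]$, which generates $GC_0(pt)\cong\mathbb{Q}$. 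Both sides vanish in positive degrees, so $p_*$ is an isomorphism.

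The only delicate step is the smooth variant of the homotopy argument. One must check that the homotopy $\mathcal{H}$ of Proposition \ref{prop_homotopy} sends smooth chains to smooth chains, and thin smooth chains to thin smooth chains. This holds because $H\circ(f\times\mathrm{id})$ is smooth by the definition of a smooth homotopy, and because $H\circ(g\times\mathrm{id})$ is smooth as well, $g$ being a smooth map from the orbifold $N$ (with corners). The remaining steps are formal.
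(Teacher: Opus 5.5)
Your proof is correct and is exactly the argument the paper leaves implicit: the corollary is stated without proof immediately after Proposition \ref{prop_homotopy}, to be deduced from homotopy invariance, the dimension axiom (Proposition \ref{prop_point}, which holds verbatim for smooth chains) and naturality of $p$. Your direct use of the affine prism operator for smooth singular chains, instead of the later Lemma \ref{lem_smooth_singular}, keeps the smooth case self-contained.
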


\subsection{Bisection and the bisection prism}\label{sec_bisection}
\begin{definition}\label{def_bisection}
Let $Q$ be a compact orbifold with corners. A \textit{bisecting function} is a smooth function $\rho:Q\to\mathbb{R}$ (given in orbifold charts by invariant functions) such that $0$ is a regular value of the restriction of $\rho$ to every stratum of the corner stratification of $Q$, i.e. to every connected component of the set of points of a given depth. We put
\begin{equation*}
Q_-=\{\rho\le0\},\qquad Q_+=\{\rho\ge0\},\qquad I=\{\rho=0\}.
\end{equation*}
\end{definition}

\begin{lemma}\label{lem_bisection_pieces}
$Q_\pm$ are orbifolds with corners. Their boundaries consist of $I$ (if non-empty) and of the parts $(\partial Q)_\pm$ of $\partial Q$ where $\pm\rho\circ i_Q\ge0$. The two orientations induced on $I$ from $Q_-$ and $Q_+$ are opposite.
\end{lemma}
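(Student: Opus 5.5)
The statement is local, so I will work in an orbifold chart of $Q$ at a point $x$ and then check that the local models glue and are compatible with chart embeddings.

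At points with $\rho(x)\neq0$ there is nothing to prove. There $Q_\pm$ is either an open suborbifold of $Q$ near $x$ or misses a neighbourhood of $x$.

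Now let $\rho(x)=0$, and take a chart $(\tilde U,G,\varphi)$ centred at $x$, linearized by Lemma \ref{lem_doubling}(1). Then $\tilde U\subseteq R^n_k$, the action is $g(y,l)=(P_gy,A_gl)$, $x$ corresponds to $0$, and $k$ is the depth of $x$. Let $\tilde\rho=\rho\circ\varphi$, which is a $G$-invariant function. The stratum through $0$ is $\{y=0\}$. The hypothesis says that $0$ is a regular value of $\tilde\rho$ restricted to this stratum, so $d_l\tilde\rho(0)\neq0$. I will average a linear function to get a $G$-invariant coordinate. Put $t:=\tilde\rho$. Choose a linear coordinate $l_1$ on $L=R^{n-k}$ with $\partial\tilde\rho/\partial l_1(0)\neq0$, and complete it to coordinates $l'$ on a complement. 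Then $(y,t,l')$ is a chart near $0$ by the inverse function theorem, and it preserves the corner structure because it does not touch $y$.

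In these coordinates $Q_+$ is modelled on $\{y\ge0,\ t\ge0\}\cong R^n_{k+1}$, which is a corner chart. I still need $G$ to act smoothly and preserve the corner structure. It does: $G$ preserves $t$ because $\tilde\rho$ is invariant, and it permutes the $y_i$. The new coordinates need not be linear in $l'$, but the definition of an orbifold with corners chart only asks for a smooth action preserving the strata. Effectiveness is inherited, since $\{t>0\}$ is open and dense in the new chart and $G$ acts effectively there.

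Compatibility of charts is automatic: embeddings of $Q$-charts preserve $\rho$, so they restrict to embeddings of the $Q_\pm$-charts. The same argument applies to $Q_-$ with $t\le0$.

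Next I read off the boundary from the local model. At a point of $\{y\ge0,\ t\ge0\}$, the local boundary components are the hyperplanes $y_i=0$ and $t=0$. The hyperplanes $y_i=0$ are exactly the boundary components of $Q$ restricted to the region $\rho\ge0$, which gives $(\partial Q)_+$. The hyperplane $t=0$ gives $I$. In Joyce's description $\partial Q_+$ is the set of pairs (point, local boundary component), and this splits globally as $(\partial Q)_+\sqcup I$. Here $(\partial Q)_+$ is the bisection of $\partial Q$ by $\rho\circ i_Q$, for which $0$ is again a regular value on each stratum. Quotienting by $G$ as in Convention \ref{conv_faces}, the model for $I$ is $(\tilde U\cap\{t=0\})/G$, which is an orbifold with corners of dimension $n-1$.

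For the orientations I use the outward normal first convention. At a point of $I$, the outward normal of $Q_-$ is $+\partial_t$ and that of $Q_+$ is $-\partial_t$. If $(\nu,e_2,\dots,e_n)$ is oriented for $Q$, the orientation induced on $I$ from $Q_-$ is $(e_2,\dots,e_n)$ with $\nu=\partial_t$, and from $Q_+$ it is the same frame with $\nu=-\partial_t$. These differ by a sign. The orientations are $G$-invariant, so this descends to the orbifold.

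The only delicate step is checking that the averaged chart still has $G$ acting compatibly with the corner structure when the local group permutes boundary hyperplanes. This is handled by keeping the coordinates $y$ untouched, as in Lemma \ref{lem_doubling}.
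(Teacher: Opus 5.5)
Your proof is correct and takes the same approach as the paper. In a chart at a point of $I$ of depth $k$, you complete $\tilde\rho$ together with the boundary coordinates to a coordinate system by the inverse/implicit function theorem, use the $G$-invariance of $\tilde\rho$ for compatibility with the local group, and read off the faces $(\partial Q)_\pm\sqcup I$ and the opposite induced orientations on $I$ from the local model $\{y\ge0,\ \pm t\ge0\}$. The linearization via Lemma~\ref{lem_doubling} is not needed, since the paper works in an arbitrary corner chart; your remarks on effectiveness and on compatibility with chart embeddings fill in details the paper leaves implicit.
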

\begin{proof}
Near a point of depth $k$ choose a chart $\tilde U\subseteq R^n_k$ with boundary coordinates $x_1,\dots,x_k$. Regularity of $\rho$ on the stratum means $d\tilde\rho\notin\mathrm{span}(dx_1,\dots,dx_k)$ at points where $\tilde\rho=0$. By the implicit function theorem $(\tilde\rho,x_1,\dots,x_k)$ extend to a coordinate system in which $\{\tilde\rho\le0\}\cap R^n_k$ becomes $R^n_{k+1}$. Since $\tilde\rho$ is invariant, this is compatible with the local group action. The remaining statements follow from this local description.
\end{proof}
\begin{remark}
Not every decomposition of $Q$ along a common face is a bisection. The interface must be the zero set of a function that is regular on every corner stratum, so it is transverse to all faces and contains no point of depth $\dim Q$. Neither cutting $[0,1]^2$ along a diagonal nor the barycentric subdivision of $\Delta^n$ is a bisection. Proposition \ref{prop_bisection} gives invariance under bisections only, and only this is used below.
\end{remark}

\begin{definition}[bisection prism]\label{def_prism}
Let $\rho$ be a bisecting function on $Q$, and put $\tau:=\frac14\tanh\circ\rho$. Define
\begin{align*}
W(Q,\rho)&:=\{(x,t)\in Q\times\mathbb{R}\ :\ 0\le t\le 1-|\tau(x)|\}\\
&=\{t\ge0\}\cap\{t\le1-\tau\}\cap\{t\le1+\tau\},
\end{align*}
with the product orientation. The function $\tau$ has the same zeros and signs as $\rho$, $d\tau=\frac14\,d\rho$ along $\{\rho=0\}$, and $|\tau|<\frac14$. No choice is involved, so $W(F,\rho|_F)=(i_Q|_F\times\mathrm{id})^{-1}W(Q,\rho)$ for every face $F$.
\end{definition}

\begin{lemma}\label{lem_prism}
$W=W(Q,\rho)$ is a compact orbifold with corners, and $\partial W$ is the disjoint union of:
\begin{itemize}
\item the bottom $B=Q\times\{0\}$;
\item the two top faces $T_-=\{t=1+\tau,\ \rho\le0\}$ and $T_+=\{t=1-\tau,\ \rho\ge0\}$, which project diffeomorphically onto $Q_-$ and $Q_+$, and meet along $\{(x,1):x\in I\}$;
\item the side faces $W(F,\rho|_F)$, one for each face $F$ of $Q$.
\end{itemize}
With the induced orientations, $[B]=(-1)^{n+1}[Q]$, $[T_\pm]=(-1)^n[Q_\pm]$, and the side faces carry the product orientations of $W(F,\rho|_F)$.
\end{lemma}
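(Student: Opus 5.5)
The plan is to reduce everything to local models, where $W$ is cut out of the orbifold with corners $Q\times\mathbb{R}$ by the three inequalities $t\ge0$, $1-\tau-t\ge0$, $1+\tau-t\ge0$. Two of these are never active at the same point in a harmful way. Since $|\tau|<\frac14$, on $W$ the constraint $t\ge0$ is only active where $t=0<\frac34<1\pm\tau$. So near the bottom, $W$ is locally $Q\times[0,\epsilon)$, and the bottom face is separated from the top faces. The two top constraints are simultaneously active exactly where $\tau=0$, i.e. on $I\times\{1\}$. Away from $I$ only one of them is active, and its zero set $\{t=1\mp\tau\}$ is a graph over $Q$. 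Hence near such points $W$ is locally (open piece of $Q$)$\times(1-\epsilon',1]$ after the shear $t\mapsto t-(1\mp\tau(x))$, which is an invariant diffeomorphism in orbifold charts because $\tau$ is $G$-invariant.

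The key local step is at points of $I\times\{1\}$ of depth $k$ in $Q$. I would use the coordinates from the proof of Lemma~\ref{lem_bisection_pieces}: near such a point $(\tilde\rho,x_1,\dots,x_k,\dots)$ is a $G$-compatible coordinate system. Then $\tau=\frac14\tanh\tilde\rho$ can replace $\tilde\rho$ as a coordinate, since $d\tau=\frac14 d\rho$ on $I$. With $s:=1-t$, $W$ is locally $\{x_i\ge0\}\cap\{s\ge|\tau|\}$. The linear change $u=s-\tau$, $v=s+\tau$ is invertible, so the region becomes $\{x_i\ge0,\,u\ge0,\,v\ge0\}$, a corner of depth $k+2$. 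All new coordinates are invariant functions and the group acts only on the old chart factor, so this yields generalized orbifold charts. Compactness follows because $W$ is closed in $Q\times[0,1]$.

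With these models the boundary components can be read off. The local boundary components are $\{t=0\}$, $\{u=0\}$ (equivalently $t=1-\tau$, which forces $\tau\ge0$, i.e. $\rho\ge0$, giving $T_+$), $\{v=0\}$ (giving $T_-$), and those pulled back from local boundary components of $Q$. The last kind glue to $W(F,\rho|_F)$ by the compatibility noted in Definition~\ref{def_prism}. Since the boundary in the sense of Convention~\ref{conv_faces} is built from local boundary components, these pieces are disjoint as components of $\partial W$, even though their images meet in $W$. The top faces $T_\pm$ project via $(x,t)\mapsto x$ onto $Q_\pm$ as graphs, hence diffeomorphically, and their images meet along $I\times\{1\}$.

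The orientations are the step I expect to need the most care, and I would check them using the outward-normal-first convention with product orientation $Q\times\mathbb{R}$. At the bottom the outward normal is $-\partial_t$. From $-\partial_t\wedge o_Q=(-1)^{n+1}o_Q\wedge\partial_t$ we get $[B]=(-1)^{n+1}[Q]$. At $T_\pm$ the outward normal has positive $\partial_t$-component and projects with tangent directions onto $Q$, so the same computation gives $(-1)^n[Q_\pm]$. For a side face, the outward normal $\nu$ of $F$ in $Q$ gives $\nu\wedge o_F\wedge\partial_t=o_Q\wedge\partial_t$. That is the product orientation of $F\times\mathbb{R}$ induced on $W(F,\rho|_F)$, with no sign.
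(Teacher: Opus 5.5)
Your proposal is correct and follows essentially the paper's argument: your coordinates $u=s-\tau$, $v=s+\tau$ at $I\times\{1\}$ are a concrete form of the paper's observation that the differentials of the active constraints are independent (the difference $2\,d\tau$ does not lie in $\mathrm{span}(dx_i,dt)$). Your direct outward-normal computation for $T_\pm$, using $\partial_t$ and the graph projection, replaces the paper's deformation of $\tau$ to $s\tau$ with $s\to0$.

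The paper also checks one point that you leave implicit: $\rho|_F$ is again bisecting on each face $F$. This is needed for $W(F,\rho|_F)$ to be a prism in the sense of Definition~\ref{def_prism}, and it follows because in charts $i_Q$ maps the depth-$j$ points of $F$ onto an open subset of the depth-$(j+1)$ points of $Q$.
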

\begin{proof}
At a point of $W$, the active constraints are some boundary coordinates $x_i$ of $Q$, possibly $t\ge0$, and possibly one or both of $1\mp\tau-t\ge0$. If $t=0$ the top constraints are inactive, since $1-|\tau|>\frac34$. Both top constraints are active exactly when $\rho=0$ and $t=1$. The differentials of the active constraints are linearly independent. Indeed, the $dx_i$ are independent; $d(1\mp\tau-t)$ has a $dt$-component; and if both top constraints are active, their difference $2\,d\tau=\frac12\,d\rho$ does not lie in $\mathrm{span}(dx_i,dt)$, by regularity of $\rho$ on the stratum. Hence $W$ is locally modelled on $R^{n+1}_d$, $d$ being the number of active constraints. The local groups act trivially on $t$ and preserve $\rho$, so this holds for orbifolds as well. The list of faces follows. The function $\rho|_F$ is bisecting on $F$, because in charts $i_Q$ lifts to the inclusions $\tilde U\cap\{y_i=0\}\hookrightarrow\tilde U$, which map the points of depth $j$ of $\tilde U\cap\{y_i=0\}$ onto an open subset of the points of depth $j+1$ of $\tilde U$. (On underlying spaces the induced map of strata need not be a local homeomorphism, since the local group may permute the boundary hyperplanes: for $(y_1,y_2,v)\mapsto(y_2,y_1,-v)$ on $R^3_2$ it is $v\mapsto|v|$.) For the orientations, the bottom is treated as for $Q\times[0,1]$. For the top faces we deform $\tau$ to $s\tau$, $s\in[0,1]$, and let $s\to0$, which turns $T_\pm$ into the flat top $Q\times\{1\}$ restricted to $Q_\pm$. The side faces are as for $\partial Q\times[0,1]$.
\end{proof}

\begin{proposition}[bisection formula]\label{prop_bisection}
Let $(Q,f)$ be an $n$-dimensional geometric simplex and $\rho$ a bisecting function. In $C^g_*(X)$,
\begin{multline*}
[Q,f]-[Q_-,f]-[Q_+,f]\\=(-1)^{n+1}\,\partial[W(Q,\rho),f\circ\mathrm{pr}]+(-1)^n\sum_F[W(F,\rho|_F),f\circ\mathrm{pr}],
\end{multline*}
and $[W(F,\rho|_F),f\circ\mathrm{pr}]=0$ whenever $\rho|_F$ has no zero. In particular, if $Q$ has no boundary, then $[Q,f]$ and $[Q_-,f]+[Q_+,f]$ are homologous.
\end{proposition}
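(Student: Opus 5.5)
The plan is to read the formula directly off the face decomposition of the prism in Lemma \ref{lem_prism}. Write $W=W(Q,\rho)$ and $\tilde f=f\circ\mathrm{pr}$. By definition $\partial[W,\tilde f]$ is the sum of the classes of the codimension one faces of $W$ with their induced orientations. Lemma \ref{lem_prism} lists these faces and their orientations:
\begin{equation*}
\partial[W,\tilde f]=(-1)^{n+1}[Q,f]+(-1)^n[Q_-,f]+(-1)^n[Q_+,f]+\sum_F[W(F,\rho|_F),f\circ\mathrm{pr}].
\end{equation*}
Here the maps restrict correctly: on $B$ and on $T_\pm$ the map $\tilde f$ is $f$ composed with the projection to $Q$, $Q_-$, $Q_+$, which is a diffeomorphism. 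So, as isomorphism classes of geometric simplices, $[B,\tilde f]=(-1)^{n+1}[Q,f]$ and $[T_\pm,\tilde f]=(-1)^n[Q_\pm,f]$. On the side face corresponding to $F$, the map $\tilde f$ is $f|_F\circ\mathrm{pr}$, using the identity $W(F,\rho|_F)=(i_Q|_F\times\mathrm{id})^{-1}W(Q,\rho)$ from Definition \ref{def_prism}.

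Multiplying this identity by $(-1)^{n+1}$ and rearranging gives exactly the stated formula. One point needs care in the bookkeeping. If $T_-$ and $T_+$ happen to lie in the same connected component of $\partial W$, the class is still additive, because we work componentwise via the disjoint union relation. Since $T_\pm$ only meet along a codimension-two corner, they are distinct local boundary components, so $\partial W$ really is a disjoint union as asserted. The remaining check is that the side faces of $W$ are indexed by the faces $F$ of $Q$, i.e.\ by the components of $\partial Q$. Each component $F$ gives a possibly disconnected $W(F,\rho|_F)$, which we split into components using the disjoint union convention.

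For the vanishing claim, suppose $\rho|_F$ has no zero, say $\rho|_F<0$ after possibly changing sign. Then $W(F,\rho|_F)=\{0\le t\le 1+\tau\}$, and the map $(x,t)\mapsto(x,t/(1+\tau(x)))$ is a diffeomorphism onto $F\times[0,1]$ commuting with the projection to $F$. Hence $[W(F,\rho|_F),f\circ\mathrm{pr}]=[F\times[0,1],f|_F\circ\mathrm{pr}]$. This simplex admits the orientation reversing automorphism $(x,s)\mapsto(x,1-s)$ compatible with the map to $X$, so Lemma \ref{lem_Cg} shows it vanishes. If $\partial Q=\emptyset$ there are no side terms, and $[Q,f]-[Q_-,f]-[Q_+,f]$ is a boundary. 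Since $Q_\pm$ then have boundary only $I$ with opposite orientations by Lemma \ref{lem_bisection_pieces}, both sides are cycles and they are homologous.

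The main obstacle is the orientation signs in Lemma \ref{lem_prism}. Since these are assumed, the argument here is essentially bookkeeping. The one point I would double-check is that the side-face orientation is the product orientation with no extra sign. This matches the $\partial Q\times[0,1]$ term in the proof of Proposition \ref{prop_homotopy}, which enters with sign $+1$.
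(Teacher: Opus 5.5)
Your proposal is correct and is essentially the paper's argument. The formula is read off from the face list and orientation signs of Lemma \ref{lem_prism}, and the side prism over a face with $\rho|_F\neq0$ vanishes by the rescaling $(x,t)\mapsto(x,t/(1-|\tau(x)|))$ onto $F\times[0,1]$, then $t\mapsto1-t$ and Lemma \ref{lem_Cg}. Your aside about $T_-$ and $T_+$ possibly sharing a component of $\partial W$ is vacuous, since Lemma \ref{lem_prism} already gives $\partial W$ as a disjoint union of these pieces.
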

\begin{proof}
The formula follows from Lemma \ref{lem_prism}. If $\rho|_F$ has no zero, then $|\rho|_F|$ is smooth on $F$, and $(x,t)\mapsto(x,t/(1-|\tau(x)|))$ is a diffeomorphism $W(F,\rho|_F)\cong F\times[0,1]$. Composing with $t\mapsto1-t$ gives an orientation reversing automorphism preserving $f\circ\mathrm{pr}$, so the class vanishes by Lemma \ref{lem_Cg}.
\end{proof}
Proposition \ref{prop_bisection} replaces the gluing relation of the previous version of this paper: bisection is realized by a chain homotopy rather than imposed as a relation.

\subsection{Cutting operators}\label{sec_cutting}
In this subsection and the next one we work in the smooth setting of Definition \ref{def_smooth_GC}. Let $X$ be a smooth space, $\phi\in C^\infty(X)$ and $t\in\mathbb{R}$. For a smooth geometric simplex $(Q,f)$ the function $\phi\circ f$ is smooth on $Q$. We say that $t$ is \textit{regular} for a finite set of classes $[Q,f]$, closed under taking faces, if $t$ is a regular value of $\phi\circ f$ on every stratum of every such $Q$. By Sard's theorem (applied in orbifold charts) regular values form a set of full measure. For such classes put $\rho:=\phi\circ f-t$, which is a bisecting function on $Q$, and define
\begin{gather*}
\mathrm{Cut}^\pm[Q,f]:=[Q_\pm,f],\qquad \mathrm{Cut}:=\mathrm{Cut}^-+\mathrm{Cut}^+,\\ P[Q,f]:=(-1)^{n+1}[W(Q,\rho),f\circ\mathrm{pr}].
\end{gather*}
Since $\rho$ is determined by $f$, these operators are well defined on isomorphism classes and are linear.

\begin{lemma}\label{lem_cut}
On the span of classes for which $t$ is regular:
\begin{enumerate}
\item $\partial\,\mathrm{Cut}=\mathrm{Cut}\,\partial$;
\item $\partial P+P\partial=\mathrm{id}-\mathrm{Cut}$;
\item $\mathrm{Cut}$ and $P$ preserve $S^\infty_*$ and $D^\infty_*$.
\end{enumerate}
\end{lemma}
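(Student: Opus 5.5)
Parts (1) and (2) come straight from Lemma \ref{lem_bisection_pieces} and Proposition \ref{prop_bisection}; part (3) is a direct construction on thin factorizations, where the only care needed is keeping the target dimension below that of the source.

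\emph{Part (1).} By Lemma \ref{lem_bisection_pieces}, $\partial Q_\pm$ consists of $(\partial Q)_\pm$ together with $I$, and the two copies of $I$ carry opposite orientations. In $\partial\mathrm{Cut}[Q,f]=\partial[Q_-,f]+\partial[Q_+,f]$ the two classes $[I,f|_I]$ therefore cancel, and what remains is $[(\partial Q)_-,f]+[(\partial Q)_+,f]$.

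Each face $F$ of $Q$ carries the bisecting function $\rho|_F=\phi\circ f|_F-t$; this is shown in the proof of Lemma \ref{lem_prism}, and it is also the reason regularity is required on the whole face-closed set of classes. The cut depends only on $f$, so $(\partial Q)_\pm=\sum_F F_\pm$, with the orientations compatible. Hence $\partial\,\mathrm{Cut}[Q,f]=\mathrm{Cut}\,\partial[Q,f]$. A cut piece $F_\pm$ may be disconnected, but disconnected sources were identified with sums of their components, so this causes no trouble.

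\emph{Part (2).} This is Proposition \ref{prop_bisection} rewritten. That proposition gives
\[
[Q]-\mathrm{Cut}[Q]=(-1)^{n+1}\partial[W(Q,\rho)]+(-1)^n\sum_F[W(F,\rho|_F)].
\]
By Definition \ref{def_prism}, $W(F,\rho|_F)$ is the prism over the face $F$ built from the restricted function, so $P[F,f|_F]=(-1)^{n}[W(F,\rho|_F),f\circ\mathrm{pr}]$, because $\dim F=n-1$. Thus $P\partial[Q,f]=(-1)^n\sum_F[W(F,\rho|_F)]$, and the right-hand side above equals $\partial P[Q,f]+P\partial[Q,f]$.

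The point to check is that the side faces of $W(Q,\rho)$ in Lemma \ref{lem_prism} carry exactly the product orientations of $W(F,\rho|_F)$, which that lemma states. Everything is linear on isomorphism classes, and both sides are compatible with the relation $[-Q]=-[Q]$, since reversing the orientation of $Q$ reverses that of $W$.

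\emph{Part (3).} This is the main obstacle: a thin factorization $f=g\circ\pi$ need not be compatible with the cut. Fortunately $\rho=\phi\circ f-t=(\phi\circ g-t)\circ\pi$ is pulled back from $N$. We do \emph{not} cut $N$, since $t$ need not be regular for $\phi\circ g$. Instead we restrict: $\pi|_{Q_\pm}:Q_\pm\to N$ still factors $f|_{Q_\pm}$, and $\dim N<n=\dim Q_\pm$. So $\mathrm{Cut}$ preserves $S^\infty$.

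For $P$, we use the map $\pi\times\mathrm{id}:W(Q,\rho)\subseteq Q\times\mathbb{R}\to N\times\mathbb{R}$, and write $f\circ\mathrm{pr}=(g\circ\mathrm{pr}_N)\circ(\pi\times\mathrm{id})$. Here $\dim(N\times\mathbb{R})<n+1$, and $W$ is an open-free subset of $Q\times\mathbb{R}$ with the restriction smooth. So $P$ preserves $S^\infty$.

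For $D^\infty$, take $c\in D^\infty$. Then $\mathrm{Cut}\,c\in S^\infty$, and by (1) $\partial\,\mathrm{Cut}\,c=\mathrm{Cut}\,\partial c\in S^\infty$. Likewise $Pc\in S^\infty$, and by (2) $\partial Pc=c-\mathrm{Cut}\,c-P\partial c\in S^\infty$, using $\partial c\in S^\infty$.

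One subtlety remains: $S^\infty$ is spanned by basis classes, but a class can vanish through orientation-reversing automorphisms. The operators are defined on generators and preserve the relation $[-Q]=-[Q]$, so they act on $C^{g,\infty}_*$, and preserving thinness of generators suffices.
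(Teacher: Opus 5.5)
Your proof is correct and follows the paper's argument essentially verbatim. Part (1) comes from the cancellation of the two oppositely oriented copies of $I$ (Lemma \ref{lem_bisection_pieces}). Part (2) comes from rewriting the side terms of Proposition \ref{prop_bisection} as $P\partial$. Part (3) restricts the thin factorization and then applies (1) and (2) to handle $D^\infty$.

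The only cosmetic difference is in showing that $P$ preserves $S^\infty$. You factor through $N\times\mathbb{R}$ via $\pi\times\mathrm{id}$, whereas the paper simply notes that $f\circ\mathrm{pr}=g\circ(\pi\circ\mathrm{pr})$ already factors through $N$ itself.
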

\begin{proof}
(1) The face $I$ appears in $\partial[Q_-]$ and in $\partial[Q_+]$ with opposite orientations (Lemma \ref{lem_bisection_pieces}), and the remaining faces are those of $\mathrm{Cut}(\partial[Q,f])$. (2) This is Proposition \ref{prop_bisection}: since $\dim F=n-1$, we have $P[F,f|_F]=(-1)^n[W(F,\rho|_F)]$, so the side terms equal $P(\partial[Q,f])$. (3) If $f=g\circ\pi$, then the maps on $Q_\pm$ and on $W$ factor through $\pi$, so $\mathrm{Cut}$ and $P$ preserve $S^\infty$. For $c\in D^\infty$ we get $\partial\,\mathrm{Cut}(c)=\mathrm{Cut}(\partial c)\in S^\infty$ and $\partial P(c)=c-\mathrm{Cut}(c)-P(\partial c)\in S^\infty$.
\end{proof}

\begin{lemma}[thin interfaces]\label{lem_thin_interface}
Let $(Q,f)$ be thin, with a chosen factorization $f=g\circ\pi$ through $\pi:Q\to N$. Assume that either $\dim N\le n-2$, or $t$ is a regular value of $\phi\circ g$ on every corner stratum of $N$ (in orbifold charts). Then the interface $(I,f|_I)$ is thin. Consequently, if $t$ satisfies this condition for the chosen factorizations of all thin classes involved, then $\mathrm{Cut}^-$ alone maps $D^\infty$ into $D^\infty$.
\end{lemma}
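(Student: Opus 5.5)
The plan is to produce a factorization of $f|_I$ through an orbifold of dimension less than $n-1=\dim I$, treating the two hypotheses separately, and then to deduce the statement about $\mathrm{Cut}^-$ from the boundary formula for $Q_-$ in Lemma \ref{lem_bisection_pieces}.

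\emph{Case $\dim N\le n-2$.} Here I restrict the given factorization to the interface: $f|_I=g\circ(\pi\circ i)$, where $i:I\to Q$ is the inclusion. This composite is smooth in the orbifold sense, since in local charts $I$ is a coordinate hyperplane (Lemma \ref{lem_bisection_pieces}). Because $\dim N<\dim I=n-1$, the pair $(I,f|_I)$ is thin, and nothing more is needed.

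\emph{Case $t$ regular for $\phi\circ g$ on every corner stratum of $N$.} Then $t$ is a bisecting value for $\phi\circ g$ on $N$. The only difference from Definition \ref{def_bisection} is that $N$ need not be compact, and the local argument of Lemma \ref{lem_bisection_pieces} does not use compactness. Hence $N_I:=\{\phi\circ g=t\}$ is an orbifold with corners, of dimension $\dim N-1<n-1$; locally it is a stratum-transverse hypersurface. Since $\phi\circ f=\phi\circ g\circ\pi$, the map $\pi$ sends $I=\{\phi\circ f=t\}$ into $N_I$, and $f|_I=g|_{N_I}\circ\pi|_I$. The step I expect to be the main obstacle is showing that $\pi|_I:I\to N_I$ is smooth into the suborbifold. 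In charts, $N_I$ is cut out by a function $\tilde\psi$ that is regular on strata. By the coordinate change in the proof of Lemma \ref{lem_bisection_pieces}, $\tilde\psi$ can be taken as a coordinate, so a smooth map into $\tilde N$ with $\tilde\psi\circ\tilde\pi=0$ lands smoothly in the slice $\{\tilde\psi=0\}$ within that chart. One must also check that lifts of $\pi$ into $\tilde N$ remain compatible with the restricted local groups of $N_I$. This holds because $\tilde\psi$ is invariant, so the chart of $N_I$ is just the slice with the same group, possibly after passing to the effective quotient by the convention after Definition \ref{orbifold}. The map $g|_{N_I}$ is smooth because it is a restriction.

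\emph{Consequence for $\mathrm{Cut}^-$.} Take $c\in D^\infty$ and write $c=\sum a_j[Q_j,f_j]$ with each term thin. Then $\mathrm{Cut}^-c$ is a combination of thin classes, because each $f_j|_{Q_{j,-}}$ factors through $\pi_j$. By Lemma \ref{lem_bisection_pieces},
\[
\partial\,\mathrm{Cut}^-c=\mathrm{Cut}^-(\partial c)+\sum a_j[I_j,f_j|_{I_j}].
\]
The interface terms are thin by the first part. The element $\partial c$ is a combination of thin classes of dimension $n-1$, and the hypothesis covers their chosen factorizations as well, since all thin classes involved are included. So $\mathrm{Cut}^-(\partial c)$ is again thin, by the same restriction argument. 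Therefore $\partial\,\mathrm{Cut}^-c\in S^\infty$, which gives $\mathrm{Cut}^-c\in D^\infty$.

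Finally, I would verify that this is independent of how $c$ is written. The operator $\mathrm{Cut}^-$ is defined on basis classes, and $S^\infty$ is spanned by a subset of the basis (Lemma \ref{lem_Cg}), so the thin decomposition is canonical.
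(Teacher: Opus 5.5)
Your proposal is correct and follows the paper's proof. When $\dim N\le n-2$ you restrict the factorization; otherwise you factor $f|_I$ through $\pi|_I:I\to L=(\phi\circ g)^{-1}(t)$, which is a suborbifold of dimension $\dim N-1$ by the local argument of Lemma \ref{lem_bisection_pieces}; and you conclude from $\partial\,\mathrm{Cut}^-(c)=\mathrm{Cut}^-(\partial c)+(\text{interface terms})$. Your check that $\pi|_I$ is smooth into the slice fills in a step the paper leaves implicit, and, as you effectively note, $\mathrm{Cut}^-(\partial c)$ is thin by restriction alone, so the hypothesis on $t$ is needed only for the interface terms.
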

\begin{proof}
If $\dim N\le n-2$, then $f|_I=g\circ\pi|_I$ already exhibits $(I,f|_I)$ as thin. Otherwise $I=\pi^{-1}(L)$ with $L=(\phi\circ g)^{-1}(t)$, which by the argument of Lemma \ref{lem_bisection_pieces} is a suborbifold with corners of $N$ of dimension $\dim N-1<n-1=\dim I$, and $f|_I$ factors through $\pi|_I:I\to L$. By Sard's theorem the admissible $t$ form a set of full measure. For $c\in D^\infty$, $\partial\,\mathrm{Cut}^-(c)$ is $\mathrm{Cut}^-(\partial c)$ plus interface terms; both lie in $S^\infty$.
\end{proof}

\subsection{Mayer--Vietoris}\label{sec_MV}
Let $O$ be a smooth space, $O=U\cup V$ with $U,V$ open. By Definition \ref{def_smooth_space}(c) choose $\phi\in C^\infty(O)$ with values in $[0,1]$, $\phi=0$ on $O\setminus V$ and $\phi=1$ on $O\setminus U$, and use it in the cutting operators with $t\in(0,1)$. Then:
\begin{itemize}
\item the pieces produced by $\mathrm{Cut}^-$ map into $\{\phi\le t\}\subseteq U$;
\item the pieces produced by $\mathrm{Cut}^+$ map into $\{\phi\ge t\}\subseteq V$;
\item interfaces map into $\{\phi=t\}\subseteq U\cap V$;
\item the prism $P[Q,f]$ maps into $f(|Q|)$.
\end{itemize}
By Lemma \ref{lem_D_local} we regard $GC^\infty_*(U)$, $GC^\infty_*(V)$ and $GC^\infty_*(U\cap V)$ as subcomplexes of $GC^\infty_*(O)$.

\begin{proposition}\label{prop_MVa}
The inclusion $GC^\infty_*(U)+GC^\infty_*(V)\hookrightarrow GC^\infty_*(O)$ is a quasi-isomorphism.
\end{proposition}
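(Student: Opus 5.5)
We show that the quotient complex $GC^\infty_*(O)/(GC^\infty_*(U)+GC^\infty_*(V))$ is acyclic, using the cutting operators of Section \ref{sec_cutting} with the function $\phi$ fixed above. Let $z\in C^{g,\infty}_n(O)$ with $\partial z\in C^{g,\infty}_{n-1}(U)+C^{g,\infty}_{n-1}(V)+D^\infty_{n-1}(O)$. It suffices to find $w$ with $z-\partial w\in C^{g,\infty}(U)+C^{g,\infty}(V)+D^\infty(O)$. A representative of a relative cycle involves only finitely many classes. Their faces, and the classes occurring in a chosen decomposition of $\partial z$ (including the degenerate part $d$ and its boundary), also form a finite set. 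So by Sard's theorem we can pick $t\in(0,1)$ regular for all of them.

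For the construction, put $w:=P z$. Lemma \ref{lem_cut}(2) gives
\[
z-\partial Pz=\mathrm{Cut}(z)+P(\partial z).
\]
The term $\mathrm{Cut}(z)=\mathrm{Cut}^-(z)+\mathrm{Cut}^+(z)$ lies in $C^{g,\infty}(U)+C^{g,\infty}(V)$, by the first two bullet points preceding the proposition. Write $\partial z=a+b+d$ with $a$ in $U$, $b$ in $V$ and $d\in D^\infty(O)$. Since the prism of a simplex maps into the image of that simplex, $P(a)$ lies in $U$ and $P(b)$ lies in $V$. The term $P(d)$ lies in $D^\infty(O)$ by Lemma \ref{lem_cut}(3), which applies because $t$ is regular for $d$ and $\partial d$. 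Hence $z-\partial w$ vanishes in the quotient, and the relative cycle $z$ is a relative boundary.

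This argument works directly at the level of $C^{g,\infty}$ modulo $D^\infty$, so we never need to lift classes. The point requiring care is that $\mathrm{Cut}$ and $P$ are defined only on spans of classes for which $t$ is regular. The constant $t$ therefore has to be chosen once for a finite face-closed family containing every class of $z$, $a$, $b$ and $d$, together with all their faces. This is possible because finite intersections of full-measure sets have full measure.

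A second subtlety is that $U\cup V=O$ must be used to ensure that $\{\phi\le t\}\subseteq U$ and $\{\phi\ge t\}\subseteq V$. This holds because $\phi=1$ off $U$, so $\phi\le t<1$ forces a point into $U$; symmetrically $\phi=0$ off $V$ forces $\phi\ge t>0$ into $V$.

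Finally, we need $GC^\infty(U)+GC^\infty(V)$, viewed inside $GC^\infty(O)$, to be the image of $C^{g,\infty}(U)+C^{g,\infty}(V)$. This holds by Lemma \ref{lem_D_local}. Together with the long exact sequence, acyclicity of the quotient yields the quasi-isomorphism. The main obstacle I would expect is only bookkeeping of regularity for the degenerate part $d$: Lemma \ref{lem_cut}(3) needs $t$ regular for $\partial d$ as well, which is why those classes are included in the finite family.
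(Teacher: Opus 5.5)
Your proof is correct and is essentially the paper's argument. It uses the same identity $\partial P+P\partial=\mathrm{id}-\mathrm{Cut}$ of Lemma \ref{lem_cut}, the same support properties of $\mathrm{Cut}^\pm$ and $P$, and $P(D^\infty_*)\subseteq D^\infty_*$, with $t$ chosen regular for a finite face-closed family. The only difference is packaging: you show in one step that $C^{g,\infty}_*(O)/\big(C^{g,\infty}_*(U)+C^{g,\infty}_*(V)+D^\infty_*(O)\big)$ is acyclic, while the paper checks surjectivity and injectivity on homology separately, applying the same identity to an absolute cycle $z$ and to $c=\partial w+e$.
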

\begin{proof}
\textit{Surjectivity on homology.} Let $z$ be a cycle, i.e. $\partial z\in D^\infty$, and choose $t$ regular for the classes of $z$ and their faces. By Lemma \ref{lem_cut},
\begin{equation*}
z=\mathrm{Cut}(z)+\partial P(z)+P(\partial z),
\end{equation*}
where $P(\partial z)\in D^\infty$ and $\mathrm{Cut}(z)=\mathrm{Cut}^-(z)+\mathrm{Cut}^+(z)\in GC^\infty_*(U)+GC^\infty_*(V)$.

\textit{Injectivity on homology.} Let $c\in C^{g,\infty}_*(U)+C^{g,\infty}_*(V)$ with $\partial c\in D^\infty_*(O)$, and suppose that $c=\partial w+e$ with $w\in C^{g,\infty}_*(O)$ and $e\in D^\infty_*(O)$. Choose $t$ regular for all classes occurring in $c,w,e$ and their faces. Applying $\mathrm{Cut}$ to $c=\partial w+e$ and using Lemma \ref{lem_cut},
\begin{equation*}
c-\partial\big(\mathrm{Cut}(w)+P(c)\big)=\mathrm{Cut}(e)+P(\partial c)\in D^\infty_*(O).
\end{equation*}
Here $\mathrm{Cut}(w)\in C^{g,\infty}_*(U)+C^{g,\infty}_*(V)$, and, since prisms map into the images of their bases, $P(c)\in C^{g,\infty}_*(U)+C^{g,\infty}_*(V)$. Hence $c$ is a boundary in $GC^\infty_*(U)+GC^\infty_*(V)$.
\end{proof}

\begin{proposition}\label{prop_MVb}
Let $K:=GC^\infty_*(U)\cap GC^\infty_*(V)$, the intersection taken inside $GC^\infty_*(O)$. Then $K/GC^\infty_*(U\cap V)$ is acyclic. Hence $GC^\infty_*(U\cap V)\to K$ is a quasi-isomorphism.
\end{proposition}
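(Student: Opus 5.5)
The plan is to show directly that every cycle of $K/GC^\infty_*(U\cap V)$ is a boundary, using the cutting operators of Section \ref{sec_cutting} with the Urysohn function $\phi$ of Section \ref{sec_MV}. The key input is Lemma \ref{lem_thin_interface}: $\mathrm{Cut}^-$ alone preserves degenerate chains.

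\emph{Step 1: normal form for elements of $K$.} Let $k\in K$. By Lemma \ref{lem_D_local} we may write $k=[x]=[y]$ with $x\in C^{g,\infty}_*(U)$, $y\in C^{g,\infty}_*(V)$ and $d:=x-y\in D^\infty_*(O)$. The classes $[Q,f]$ form a basis, and $S^\infty_*(O)$ is spanned by a subset of this basis. Every basis class occurring in $d$ lies in $U$ or in $V$, so we can split
\[
d=d_U-d_V+d_{UV}.
\]
Here $d_U$ consists of the classes of $x$ mapping into $U$ but not into $V$, $d_V$ of the classes of $y$ mapping into $V$ but not into $U$, and $d_{UV}$ of classes mapping into $U\cap V$. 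Since $d\in S^\infty$, all classes in $d_U$ and $d_V$ are thin. Consequently $x=x_0+d_U$ with $x_0\in C^{g,\infty}_*(U\cap V)$.

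\emph{Step 2: cutting lands in $U\cap V$.} Choose $t\in(0,1)$ with the following properties. It must be regular for all classes of $x$, of $y$, of a representative $z+e$ of $\partial x$ (with $z\in C^{g,\infty}_*(U\cap V)$ and $e\in D^\infty$), and of all their faces. It must also satisfy the hypothesis of Lemma \ref{lem_thin_interface} for chosen factorizations of the thin classes of $d$ and of its faces; this is possible because all these conditions have full measure. Then we check the pieces of $\mathrm{Cut}(x)=\mathrm{Cut}(x_0)+\mathrm{Cut}^+(d_U)+\mathrm{Cut}^-(d_U)$ one at a time.
\begin{itemize}
\item $\mathrm{Cut}(x_0)$ maps into $U\cap V$.
\item $\mathrm{Cut}^+(d_U)$ maps into $U\cap\{\phi\ge t\}\subseteq U\cap V$.
\item For $\mathrm{Cut}^-(d_U)$, Lemma \ref{lem_thin_interface} gives $\mathrm{Cut}^-(d)\in D^\infty(O)$. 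Moreover $\mathrm{Cut}^-(d_V)$ and $\mathrm{Cut}^-(d_{UV})$ map into $V\cap\{\phi\le t\}\subseteq U\cap V$. Hence $\mathrm{Cut}^-(d_U)$ is congruent modulo $D^\infty(O)$ to a chain in $C^{g,\infty}_*(U\cap V)$.
\end{itemize}
Altogether, $[\mathrm{Cut}(x)]\in GC^\infty_*(U\cap V)$.

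\emph{Step 3: the homotopy preserves $K$.} By Lemma \ref{lem_cut}(2),
\[
x=\mathrm{Cut}(x)+\partial P(x)+P(\partial x).
\]
Prisms map into the images of their bases, so $P(x)\in C^{g,\infty}_*(U)$ and $P(y)\in C^{g,\infty}_*(V)$. Also $P(x)-P(y)=P(d)\in D^\infty$ by Lemma \ref{lem_cut}(3), hence $[P(x)]\in K$. Now suppose $k$ is a relative cycle, i.e. $\partial k\in GC^\infty_*(U\cap V)$, so that $\partial x=z+e$ as above. Then
\[
P(\partial x)=P(z)+P(e),
\]
with $P(z)\in C^{g,\infty}_*(U\cap V)$ and $P(e)\in D^\infty$. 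Therefore $k\equiv\partial[P(x)]$ modulo $GC^\infty_*(U\cap V)$, with $[P(x)]\in K$, which proves acyclicity. The final claim follows from the long exact sequence of the pair $GC^\infty_*(U\cap V)\subseteq K$.

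The main obstacle is Step 2. The thin pieces $d_U$ lying in $U\setminus V$ need not form a degenerate chain on their own, and $\mathrm{Cut}$ creates interfaces that are not obviously thin. This is exactly where the chosen factorizations and the genericity condition of Lemma \ref{lem_thin_interface} are needed. One also has to make sure that a single $t$ works simultaneously for all the finitely many classes involved.
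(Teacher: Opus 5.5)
Your proof is correct and follows the paper's argument: it uses the same identity $x=\mathrm{Cut}(x)+\partial P(x)+P(\partial x)$, shows $[P(x)]\in K$ because $P(x)-P(y)=P(d)$ is degenerate, splits $P(\partial x)=P(z)+P(e)$, and invokes Lemma \ref{lem_thin_interface} to make $\mathrm{Cut}^-$ of the degenerate difference degenerate. Your splitting $d=d_U-d_V+d_{UV}$ is an unnecessary detour. The paper simply notes that $\mathrm{Cut}^+x$ already maps into $U\cap V$ because $x$ maps into $U$, and writes $\mathrm{Cut}^-x=\mathrm{Cut}^-y+\mathrm{Cut}^-d$, where $\mathrm{Cut}^-y$ maps into $U\cap V$.
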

\begin{proof}
An element of $K$ is represented by $a\in C^{g,\infty}_*(U)$ such that $a-b=e\in D^\infty_*(O)$ for some $b\in C^{g,\infty}_*(V)$. Suppose its image in $K/GC^\infty_*(U\cap V)$ is a cycle, i.e. $\partial a=y+d$ with $y\in C^{g,\infty}_*(U\cap V)$ and $d\in D^\infty_*(O)$. Since $d=\partial a-y\in C^{g,\infty}_*(U)$, Lemma \ref{lem_D_local} gives $d\in D^\infty_*(U)$. Choose $t$ regular for all classes occurring in $a,b,e,y,d$ and their faces, and satisfying the condition of Lemma \ref{lem_thin_interface} for chosen factorizations of the thin classes in $e$. By Lemma \ref{lem_cut}, in $GC^\infty_*(U)$,
\begin{equation*}
[a]=[\mathrm{Cut}^-a]+[\mathrm{Cut}^+a]+\partial[Pa]+[P(\partial a)].
\end{equation*}
We examine the terms one by one.
\begin{itemize}
\item $\mathrm{Cut}^+a\in C^{g,\infty}_*(U\cap V)$, because $a$ maps into $U$ and $\mathrm{Cut}^+$ produces pieces mapping into $V$.
\item $P(\partial a)=P(y)+P(d)$, with $P(y)\in C^{g,\infty}_*(U\cap V)$ and $P(d)\in D^\infty_*(U)$.
\item $[Pa]\in K$, because $Pa-Pb=Pe\in D^\infty_*(O)$ and $Pb\in C^{g,\infty}_*(V)$.
\item $\mathrm{Cut}^-a=\mathrm{Cut}^-b+\mathrm{Cut}^-e$. Here $\mathrm{Cut}^-b\in C^{g,\infty}_*(U\cap V)$, because $b$ maps into $V$ and $\mathrm{Cut}^-$ produces pieces mapping into $U$. Moreover $\mathrm{Cut}^-e\in D^\infty_*(O)$ by Lemma \ref{lem_thin_interface}, and it maps into $U$, so $\mathrm{Cut}^-e\in D^\infty_*(U)$ by Lemma \ref{lem_D_local}.
\end{itemize}
Hence $[a]\equiv\partial[Pa]$ modulo $GC^\infty_*(U\cap V)$, with $[Pa]\in K$.
\end{proof}

\subsection{Proof of the main theorem}
\begin{lemma}\label{lem_smooth_singular}
Let $X$ be a smooth space admitting a good cover. Then the inclusion $C^\infty_*(X)\hookrightarrow C_*(X)$ of smooth singular chains into singular chains is a quasi-isomorphism.
\end{lemma}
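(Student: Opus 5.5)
The proof will be a Mayer--Vietoris argument over the good cover, comparing the two functors $U\mapsto H_*(C^\infty_*(U))$ and $U\mapsto H_*(C_*(U))$ on open subsets $U\subseteq X$. The inclusion $\iota\colon C^\infty_*\hookrightarrow C_*$ is natural with respect to inclusions of open sets, so it suffices to check three things. First, $\iota$ is a quasi-isomorphism on each smoothly contractible open set. Second, both sides satisfy Mayer--Vietoris, so that the isomorphism propagates to finite unions of sets whose intersections are already handled. Third, one can pass to disjoint unions and to the whole space $X$.

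\textbf{Contractible pieces.} Let $U$ be smoothly contractible, with smooth homotopy $H\colon U\times[0,1]\to U$ from $\mathrm{id}$ to a constant map. For a smooth simplex $s$, the map $H\circ(s\times\mathrm{id})$ is smooth on the manifold with corners $\Delta^n\times[0,1]$, by the definition of smooth homotopy. Subdividing $\Delta^n\times[0,1]$ into simplices by the standard prism triangulation gives a chain homotopy on $C^\infty_*(U)$ from the identity to the constant map. Hence $C^\infty_*(U)$ has the homology of a point, and so does $C_*(U)$, because a smooth homotopy is in particular continuous. Disjoint unions are immediate, since both complexes split as direct sums over path components.

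\textbf{Mayer--Vietoris.} For $C_*$ this is the classical small-simplices theorem. For $C^\infty_*$ I need that barycentric subdivision and the associated chain homotopy preserve smoothness. They do, because both are built by precomposing with affine maps of simplices. Lebesgue-number arguments then apply verbatim, so $C^{\infty}_*(U)+C^\infty_*(V)\to C^\infty_*(U\cup V)$ is a quasi-isomorphism. The five lemma now shows that if $\iota$ is an isomorphism on $U$, $V$ and $U\cap V$, it is one on $U\cup V$. Induction on the number of members then gives the result for every finite union of members of the good cover, and indeed for every open set that is a finite union of finite intersections of members of $\mathcal G$. Here one uses that such intersections are disjoint unions of smoothly contractible pieces, and that intersections of finite unions distribute into finite unions of finite intersections, so the induction is on the length of the union.

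\textbf{Passing to $X$.} If $\mathcal G$ is finite, the previous step finishes the proof. In general I would use compactness: every singular chain, and every chain bounding it, has compact image, so it lies in a finite union of members of $\mathcal G$. Homology of both complexes therefore commutes with the directed colimit over finite subfamilies, because both complexes are filtered colimits of the complexes for finite unions. The isomorphisms on finite unions then pass to the colimit.

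The main obstacle is this last step, together with the bookkeeping in the induction. One must make sure that the class of open sets being inducted over is closed under the intersections needed, namely that $(U_1\cup\dots\cup U_k)\cap U_{k+1}$ is a finite union of finite intersections, each of which is a disjoint union of contractible pieces, possibly infinitely many. Disjoint unions of arbitrary cardinality are harmless, since both complexes are direct sums over the pieces. I would first try to set up the induction as a statement $P(k)$: $\iota$ is an isomorphism on every union of $k$ finite intersections of members of $\mathcal G$. This is proved by induction on $k$, with $P(1)$ given by the contractible case.
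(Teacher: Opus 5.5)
Your proposal is correct and follows essentially the same route as the paper: homotopy invariance of $C^\infty_*$ via the prism operator from the affine triangulation of $\Delta^n\times[0,1]$, and the small chains lemma because barycentric subdivision and its chain homotopy are affine. The paper then runs exactly your induction on unions of $k$ finite intersections of members of the good cover, using $(O_1\cup\dots\cup O_{k-1})\cap O_k=\bigcup_{i<k}(O_i\cap O_k)$, followed by the colimit over finite unions (Steps 1--3 of the proof of Theorem~\ref{main_thm}).
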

\begin{proof}
Both complexes are homotopy invariant under smooth homotopies; for smooth singular chains, use the standard prism operator built from the affine triangulation of $\Delta^n\times[0,1]$. Both are acyclic in positive degrees on smoothly contractible spaces, and hence on disjoint unions of open, smoothly contractible subsets. Both satisfy the small chains lemma, because iterated barycentric subdivision is affine and hence preserves smoothness. Therefore the argument of Steps 1--3 of the proof of Theorem \ref{main_thm}, which uses only the properties just listed, applies to the inclusion $C^\infty_*\to C_*$.
\end{proof}

\begin{proof}[Proof of Theorem \ref{main_thm}]
Let $\mathcal{G}$ be a good cover of $X$, and let $\mathcal{G}^\cap$ be the set of non-empty finite intersections of members of $\mathcal{G}$. By Lemma \ref{lem_smooth_singular} we may compare $GH^\infty_*$ with the homology of smooth singular chains.

\textit{Step 1: members of $\mathcal{G}^\cap$.} Such a member is a disjoint union $\bigsqcup_\alpha W_\alpha$ of open, smoothly contractible sets. A simplex with connected domain maps into a single $W_\alpha$, so $C^{g,\infty}_*(\bigsqcup_\alpha W_\alpha)=\bigoplus_\alpha C^{g,\infty}_*(W_\alpha)$, and $\partial$ preserves the summands. Since $S^\infty_*$ is spanned by a subset of the basis, Lemma \ref{lem_D_local} gives $D^\infty_*(\bigsqcup_\alpha W_\alpha)=\bigoplus_\alpha D^\infty_*(W_\alpha)$. Hence $GC^\infty_*(\bigsqcup_\alpha W_\alpha)=\bigoplus_\alpha GC^\infty_*(W_\alpha)$, and likewise for smooth singular chains, so the claim follows from Corollary \ref{cor_contractible}.

\textit{Step 2: finite unions.} Let $O=O_1\cup\dots\cup O_k$ with $O_i\in\mathcal{G}^\cap$, and argue by induction on $k$. Put $U=O_1\cup\dots\cup O_{k-1}$, $V=O_k$ and $\mathcal{U}=\{U,V\}$. Then $U\cap V=\bigcup_{i<k}(O_i\cap O_k)$ is a union of $k-1$ members of $\mathcal{G}^\cap$ (or empty). The sequence
\begin{equation*}
0\to K\to GC^\infty_*(U)\oplus GC^\infty_*(V)\to GC^\infty_*(U)+GC^\infty_*(V)\to0,
\end{equation*}
with maps $c\mapsto(c,c)$ and $(a,b)\mapsto a-b$, is exact. By Propositions \ref{prop_MVa} and \ref{prop_MVb} it yields a long exact Mayer--Vietoris sequence for $GH^\infty_*$. The map $p$ induces a compatible map from the corresponding sequence for $\mathcal{U}$-small smooth singular chains to this sequence. The five lemma and the induction hypothesis show that $p_*$ is an isomorphism for $O$.

\textit{Step 3: $X$ itself.} Consider the directed family of finite unions $O\subseteq X$ of members of $\mathcal{G}$. Every element of $C^{g,\infty}_*(X)$ and every smooth singular chain has compact support, hence lies in some $O$. By Lemma \ref{lem_D_local}, $GC^\infty_*(X)=\mathrm{colim}_{O}GC^\infty_*(O)$. Homology commutes with filtered colimits, so Step 2 implies the claim for $X$.
\end{proof}

\begin{remark}[the role of degenerate chains]\label{rem_role_of_D}
Write $C'_*:=C^g_*(pt)$ for geometric chains in a point without the third relation. If $D$ is replaced by $0$, the proofs of Propositions \ref{prop_homotopy}, \ref{prop_bisection}, \ref{prop_MVa} and \ref{prop_MVb} and of Steps 2 and 3 above go through; in Proposition \ref{prop_MVb}, $C^{g,\infty}_*(U)\cap C^{g,\infty}_*(V)=C^{g,\infty}_*(U\cap V)$, both sides being spanned by subsets of the basis of Lemma \ref{lem_Cg}. So the third relation is used essentially only in Proposition \ref{prop_point}, through Corollary \ref{cor_contractible}, and the comparison theorem without it would amount to $H_n(C'_*)=0$ for $n>0$. This fails.

Let $p\ge5$ be a prime, and call a point $x$ of a $4$-dimensional orbifold with corners $Q$ a \textit{$p$-point} if its local group acts on $T_xQ\cong\mathbb{R}^4$ as $\mu_p\subset\mathbb{C}^*$ acts on $\mathbb{C}^2$ by scalars. A real linear map normalizing this action is $\mathbb{C}$-linear or $\mathbb{C}$-antilinear, hence has positive determinant; so each $p$-point carries a well defined sign, comparing the orientation of $Q$ with the complex one. At a point of depth $\ge1$ the local group fixes the sum of the covectors of the local boundary hyperplanes, hence a non-zero vector, so $p$-points are interior; they are isolated. Their signed count $\phi_p(Q)$ is additive and changes sign with the orientation, so it is a linear form on $C'_4$. Moreover $\phi_p\circ\partial=0$ on $C'_5$. Indeed, in a $5$-dimensional $W$ the points whose local group is $\mathbb{Z}_p$, acting as $\mathbb{R}\oplus\mathbb{C}^2$, form a $1$-manifold. It has no points of depth $k\ge2$: the sums over orbits of the covectors of the local boundary hyperplanes are linearly independent fixed covectors, and the fixed space is one-dimensional, so the local group would permute the $k$ hyperplanes transitively; then $k=p$, which is impossible for $p\ge7$ and for $p=5$ gives the regular representation of $\mathbb{Z}_5$ on $\mathbb{R}^5$, which is not of the given type. So it meets $\partial W$ transversally at points of depth one, which correspond to the $p$-points of the faces. With the outward normal first, the two ends of an arc carry opposite signs. The remaining ends are points $y$ whose local group $G_y$ strictly contains a subgroup $H\cong\mathbb{Z}_p$ of this type. Applied to $H$, the argument above shows that $y$ has depth at most one. Near $y$ the $1$-manifold lies in the union of the fixed lines of such subgroups of $G_y$. If $G_y$ fixed a non-zero vector, as it does at depth one, this vector would span all these lines, and their points near $y$ would have local group $G_y$. Hence $G_y$ fixes no non-zero vector, so $y$ is interior, and it is isolated among these ends, since the local group of any other point near $y$ fixes a non-zero vector. There the contributions of the link $S^4/G_y$ cancel in pairs $\pm v$, since $gv=-v$ for some $g\in G_y$ would give an orientation reversing map normalizing the action. The weighted projective plane $\mathbb{P}(1,1,p)$ is a closed oriented effective orbifold with a single singular point, which is a $p$-point. Hence $[\mathbb{P}(1,1,p),c]$ is a cycle in $C'_4$ which is not a boundary, and the classes for different $p$ are linearly independent; $\mathbb{P}(1,\dots,1,p)$ gives non-zero classes in degrees $4j$ in the same way. With the third relation these cycles are thin with empty boundary, hence zero.

On the other hand, $C'_1=C'_2=0$, as every compact oriented effective orbifold with corners of dimension $1$ or $2$ has a corner preserving orientation reversing diffeomorphism, and closed manifolds of positive dimension bound in $C'_*$. A closed $3$-manifold is homologous, by Proposition \ref{prop_bisection} applied along a Heegaard surface, to the sum of two handlebodies, each of which admits an orientation reversing diffeomorphism. $\mathbb{CP}^n$ is, up to sign, the boundary of the prism $\{(x,t):0\le t\le1-\epsilon\max_i|z_i|^2/|z|^2\}$, whose other faces are polydiscs $(D^2)^n$, again with orientation reversing diffeomorphisms; since $\Omega_*\otimes\mathbb{Q}$ is generated by products of complex projective spaces, every closed oriented manifold of positive dimension bounds. We do not know whether the third relation is needed in Corollary \ref{main_cor}, i.e. whether $H_n(C^{g,m}_*(pt))=0$ for $n\ge3$. The forms $\phi_p$ give no information here, since manifolds have no $p$-points. For $p=3$ the argument above fails: the $1$-manifold can also end at points of depth $3$, whose local group permutes three boundary hyperplanes cyclically, and the analogous count $\phi_3$ does not vanish on $\partial C'_5$.
\end{remark}

\begin{remark}[injectivity via Stokes]\label{rem_stokes}
For a smooth manifold $X$, injectivity of $p_*:H_n(X;\mathbb{Q})\to GH^\infty_n(X)$ also follows directly from Stokes' theorem, without the Mayer--Vietoris argument for geometric chains. Represent a class by a smooth singular cycle $s$ (Lemma \ref{lem_smooth_singular}).
\begin{itemize}
\item For an $n$-form $\omega$ on $X$ and $[Q,f]\in C^{g,\infty}_n(X)$ put $\int_{[Q,f]}\omega:=\int_Qf^*\omega$, with the $1/|G|$ convention of orbifold integration. Here $f^*\omega$ is defined because $f$ lifts locally to smooth maps of charts, $X$ being a manifold.
\item This vanishes on thin simplices: $g$ lifts locally to smooth maps on charts of $N$, so $f^*\omega=\pi^*(g^*\omega)$ with $g^*\omega=0$, as $\dim N<n$.
\item By Stokes' theorem on orbifolds with corners, with the boundary of Convention \ref{conv_faces}, $\int_{\partial c}\omega=\int_cd\omega$ for $c\in C^{g,\infty}_{n+1}(X)$.
\end{itemize}
If $p(s)=\partial w+d$ with $d\in D^\infty_n(X)$ and $\omega$ is closed, then $\int_s\omega=\int_wd\omega+\int_d\omega=0$. By de Rham's theorem for smooth singular chains, $[s]=0$ in $H_n(X;\mathbb{R})$, hence in $H_n(X;\mathbb{Q})$.

For an orbifold $X$, possibly with corners, an $n$-form $\omega$ is a compatible family of invariant forms $\tilde\omega$ on charts. For maps which are only smooth in the sense of Definition \ref{def_smooth_space}, $f^*\omega$ is not defined a priori: such maps need not lift (Section \ref{sec_examples}), and an invariant form need not be a combination of wedge products of differentials of invariant functions; on $\mathbb{R}^2/\{\pm1\}$ all such combinations vanish at $0$, whereas $dx\wedge dy$ does not. For liftable $f$ (Definition \ref{def_orb_chains}), however, $f^*\omega$ is well defined. Let $\tilde f_1,\tilde f_2:\tilde U\to\tilde V$ be lifts of $f$ on a connected chart $(\tilde U,G)$ of $Q$, into a chart $(\tilde V,H)$ of $X$; for instance $\tilde f_2=\tilde f_1\circ\gamma$ with $\gamma\in G$. The closed sets $\{\tilde f_2=h\tilde f_1\}$, $h\in H$, cover $\tilde U$, so the union of their interiors is dense. There $\tilde f_2^*\tilde\omega=\tilde f_1^*h^*\tilde\omega=\tilde f_1^*\tilde\omega$, hence everywhere by continuity. So the forms $\tilde f^*\tilde\omega$ are $G$-invariant, independent of the lift, and define a form $f^*\omega$ on $Q$. Moreover $d f^*\omega=f^*d\omega$, and $(g\circ\pi)^*\omega=\pi^*g^*\omega$ for liftable $g$ and $\pi$, since $\tilde g\circ\tilde\pi$ lifts $g\circ\pi$. Hence the three items above hold for $C^{g,\mathrm{orb}}_*(X)$ and $S^{\mathrm{orb}}_*(X)$: integration is defined on $GC^{\mathrm{orb}}_n(X)$, and Stokes' theorem $\int_{\partial c}\omega=\int_cd\omega$ holds for $c\in C^{g,\mathrm{orb}}_{n+1}(X)$. The injectivity argument then applies to $p_*:H_n(X;\mathbb{Q})\to GH^{\mathrm{orb}}_n(X)$, with $C^{\mathrm{orb}}_*(X)$ in place of $C^\infty_*(X)$; it computes $H_*(X;\mathbb{Q})$ by part (A) of the proof of Proposition \ref{prop_orb_comparison}, which uses only singular chains. In place of de Rham's theorem one needs that integration over cycles of $C^{\mathrm{orb}}_*(X)\otimes\mathbb{R}$ identifies $H^n_{dR}(X)$ with $\mathrm{Hom}(H_n(X;\mathbb{R}),\mathbb{R})$. For orbifolds without boundary the de Rham isomorphism is due to Satake \cite{s}. We only sketch the statement in the present form: the Mayer--Vietoris argument applies, using the Poincar\'e lemma for invariant forms on the sets $W/H$ of Proposition \ref{prop_good_cover_orbifold} (the homotopy operator of the equivariant contraction preserves invariant forms) and the small chains lemma for $C^{\mathrm{orb}}_*$, and, if $X$ has corners, the inward flow of Corollary \ref{cor_corners_general}. Granting this, we obtain injectivity of $H_n(X;\mathbb{Q})\to GH^{\mathrm{orb}}_n(X)$ without the Mayer--Vietoris argument for geometric chains. Since $GH^{\mathrm{orb}}_*(X)\to GH^\infty_*(X)$ is an isomorphism (Proposition \ref{prop_orb_comparison}), integration also gives a pairing of $GH^\infty_n(X)$ with $H^n_{dR}(X)$, although $f^*\omega$ is not defined for every chain in $C^{g,\infty}_n(X)$.
\end{remark}

\begin{proof}[Proof of Corollary \ref{main_cor}]
All constructions above preserve the class of manifolds with corners: bisection pieces, bisection prisms, products with $[0,1]$, and standard simplices. Hence the same argument applies verbatim to $GC^{\infty,m}_*$.
\end{proof}

\begin{remark}[stratified sources]\label{rem_stratified}
One may ask whether orbifolds with corners can be replaced by a more general class of compact stratified spaces with corners, such as Kreck's stratifolds \cite{kr}. We record what we know; nothing here is used elsewhere in the paper.
\begin{enumerate}
\item Apart from the description of $1$-dimensional sources in Proposition \ref{prop_point}, the proof of Theorem \ref{main_thm} uses the following properties of the sources: a boundary operation with the properties stated in Convention \ref{conv_faces}, in particular the orientation reversing involution $\sigma$ of $\partial^2Q$; products with $[0,1]$, and the notion of smooth homotopy of Definition \ref{def_smooth_space}; Sard's theorem on all corner strata, and the bisection pieces and prisms of Lemmas \ref{lem_bisection_pieces} and \ref{lem_prism}; the standard simplices with the face signs of Section \ref{sec_comparison}; and, for thinness, the analogous properties of the targets $N$ (open subsets, products with $[0,1]$ and regular level sets, as in Lemmas \ref{lem_D_local} and \ref{lem_thin_interface}). As far as we can see, orbifold charts enter the proof only through these lemmas, so the argument should apply to any class of sources with these properties. We have not written out such a class.
\item For orbifolds, the smooth structure of the underlying space alone is not the right data. With its invariant functions, $\mathbb{R}^2/\{\pm1\}$ has a $3$-dimensional Zariski tangent space at $0$: the cotangent space $\mathfrak{m}_0/\mathfrak{m}_0^2$ is spanned by the classes of $x^2,xy,y^2$. So an orbifold is a stratifold in the sense of \cite{kr} only after its smooth structure, and with it the class of smooth maps, is changed. For $Q=\mathbb{R}^3_2/\mathbb{Z}_2$, with $\mathbb{Z}_2$ acting by $(y_1,y_2,v)\mapsto(y_2,y_1,-v)$, we have $\partial Q\cong\{y_1=0\}$, and invariant functions restrict to functions on $\partial Q$ that are even in $v$ along $y_2=0$. So $v$ is not locally the restriction of a smooth function on $Q$, and the smooth structure of $\partial Q$ is not induced from that of $Q$ by restriction along $i_Q$. It is therefore natural to make the corner data part of the structure, as in Joyce's s-manifolds with corners \cite{jo25}. Kreck's $c$-stratifolds have collared boundaries and no corners, and stratifold homology is defined by bordism of cycles rather than by a chain complex.
\item Suppose that, in addition to (1), the class contains the cone $C(Q)=Q\times[0,1]/Q\times\{1\}$ of each connected member $Q$ of positive dimension, with $\partial C(Q)=Q\times\{0\}\sqcup\bigsqcup_FC(F)$, $F$ running over the codimension one faces of $Q$, oriented as in $\partial(Q\times[0,1])$ (the apex lies in no codimension one face). Let $C'_*$ be the analogue for this class of the complex of Remark \ref{rem_role_of_D}. Then $\mathcal{C}[Q,c]:=(-1)^{n+1}[C(Q),c]$, $n=\dim Q$, satisfies $\partial\mathcal{C}+\mathcal{C}\partial=\mathrm{id}$ on $C'_n$ for $n\ge1$, so $H_n(C'_*)=0$ for $n\ge1$. If moreover $\partial C'_1=0$, as for orbifolds (Proposition \ref{prop_point}), then $H_*(C'_*)\cong\mathbb{Q}$, concentrated in degree $0$. The argument of the first paragraph of Remark \ref{rem_role_of_D} would then give the comparison theorem for this class without the third relation. For instance, $\mathbb{P}(1,1,p)$ bounds its cone, which is not an orbifold. The condition $\partial C'_1=0$ is not automatic: an oriented $1$-dimensional source with three edges meeting at a vertex, if allowed, would give $3[pt]=0$. Cones on manifolds are in general not manifolds, so this does not answer the question at the end of Remark \ref{rem_role_of_D}.
\item In the proof of Theorem \ref{main_thm}, the coefficients $\mathbb{Q}$ enter, as far as we can see, only through Lemma \ref{lem_Cg}, where $[Q,f]=-[Q,f]$ is used to conclude $[Q,f]=0$. Over $\mathbb{Z}$ one can impose the vanishing of classes with an orientation reversing automorphism as a separate relation, as in \cite{lip}. Then $\partial$ is well defined. If $a$ is such an automorphism of $(Q,f)$, then $a$ maps each face $F$ to $a(F)$ reversing the induced orientations; in an orbit of $a$ of odd length $k$ on the faces, $a^k$ restricts to an orientation reversing automorphism of each face, and the faces in an orbit of even length cancel in pairs. The proof of Lemma \ref{lem_Cg} then gives $\partial^2=0$. We expect the proof of Theorem \ref{main_thm} to go through with integral coefficients, but we have not checked this in detail. Independently of this, rational coefficients are used for the weights $1/|K|$ of non-effective sources (after Definition \ref{orbifold}), in Proposition \ref{prop_MVa_cont} and in Remark \ref{rem_twisted}.
\item The cones in (3) fill cycles mapped to a point. We do not see how they bear on Conjecture \ref{conj_continuous}: the proof of Proposition \ref{prop_MVa_cont} uses $D$, and Proposition \ref{prop_reduction} concerns the locality of $D$.
\end{enumerate}
We do not know a class of stratified spaces with corners that contains orbifolds with corners and has the properties in (1) and (3).
\end{remark}

\subsection{Examples of smooth spaces with good covers}\label{sec_examples}
For an orbifold with corners $X$, the smooth functions on an open subset are those which lift to smooth invariant functions in orbifold charts, smooth up to the corners. A map $f:Q\to X$ which is smooth in the sense of Definition \ref{def_smooth_space} need not lift to charts (for instance $[0,1]\to\mathbb{R}/\mathbb{Z}_2$, $y\mapsto[\sqrt y]$: smooth functions on $\mathbb{R}/\mathbb{Z}_2$ are smooth functions of $x^2$). The following lemma shows that this causes no trouble for homotopies.

\begin{lemma}\label{lem_orbifold_homotopy}
Let $X$ be an orbifold with corners, $Y$ a smooth space and $H:X\times[0,1]\to Y$ continuous. Assume that for every $\varphi\in C^\infty(V)$, $V\subseteq Y$ open, the function $\varphi\circ H$ lifts, on the preimage of $H^{-1}(V)$ in $(\text{chart})\times[0,1]$, to smooth functions, smooth up to the corners. Then $H$ is a smooth homotopy in the sense of Definition \ref{def_smooth_space}. This applies in particular if $Y$ is an orbifold with corners and $H$ lifts locally to smooth maps of charts.
\end{lemma}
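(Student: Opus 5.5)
The claim is that $H\circ(f\times\mathrm{id})$ is smooth for every smooth map $f:Q\to X$ from an orbifold with corners $Q$. By Definition \ref{def_smooth_space}, this means: for every open $V\subseteq Y$ and every $\varphi\in C^\infty(V)$, the function $\varphi\circ H\circ(f\times\mathrm{id})$ should be smooth on its domain in $Q\times[0,1]$. Smoothness is local, so I will fix a point $(q,s)$ and work in an orbifold chart $(\tilde U,G)$ of $X$ around $x=f(q)$, with the $G$-fixed point $\tilde x$ over $x$. By hypothesis, $\varphi\circ H$ lifts near $(\tilde x,s)$ to a smooth function $\tilde h$ on $\tilde U\times[0,1]$, smooth up to the corners. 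Averaging over $G$, which acts trivially on $[0,1]$, I may assume $\tilde h$ is $G$-invariant, since it already descends to $\varphi\circ H$.

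The difficulty is that $f$ need not lift to the chart, as the example $y\mapsto[\sqrt y]$ shows. So I cannot simply compose $\tilde h$ with a lift of $f$. Instead I will use only the information that $f$ is smooth, i.e.\ that $u\circ f$ is smooth for every smooth invariant $u$ on $\tilde U$. The key step is a parametrized invariant-theory statement. Choose a Hilbert basis $u_1,\dots,u_m$ of the $G$-invariant polynomials for the linearized action of Lemma \ref{lem_doubling}. In the corner case, first pass to the doubled group $\hat G$ acting linearly on $R^n$, so that $\tilde U/G\cong R^n/\hat G$ near $x$. By Schwarz's theorem, applied with the parameter $t$ (or directly to the $\hat G$-action on $R^n\times\mathbb{R}$ with trivial action on the second factor), one gets
\[
\tilde h(v,t)=k(u_1(v),\dots,u_m(v),t)
\]
for a smooth $k$ on an open subset of $\mathbb{R}^{m+1}$. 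In the corner case $\tilde h$ must first be extended to a $\hat G$-invariant smooth function: extend it smoothly across the corner by Seeley extension, then average over $\hat G$. This still restricts to $\tilde h$ on $R^n_k$, because $\tilde h$ is $G$-invariant there and each $\gamma\in\Gamma$ maps $R^n_k$ back onto itself only via the identity on the relevant orbit representatives.

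With this representation,
\[
\varphi\circ H\circ(f\times\mathrm{id})(q',t)=k\bigl(u_1(f(q')),\dots,u_m(f(q')),t\bigr),
\]
where the $u_i$ are now viewed as smooth functions on $U\subseteq X$. The maps $u_i\circ f$ are smooth on $Q$ because $f$ is smooth, and $k$ is smooth, so by closure under composition in charts of $Q\times[0,1]$ the composite is smooth near $(q,s)$. This proves the first statement.

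For the final sentence, suppose $Y$ is an orbifold with corners and $H$ lifts locally to smooth chart maps $\tilde H$. Then every $\varphi\in C^\infty(V)$ lifts to a smooth invariant $\tilde\varphi$, and $\tilde\varphi\circ\tilde H$ is a smooth lift of $\varphi\circ H$, so the hypothesis is satisfied.

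I expect the main obstacle to be the parametrized Schwarz step at corners, specifically the invariant extension through the doubling. If that extension is troublesome, I would fall back on an explicit $\Gamma$-even extension: reflect in each $y_i$ using Seeley extension of the even part, which works because functions of $y_i\ge0$ are smooth exactly when they are smooth functions of $y_i$ and then of $y_i^2$ after symmetrization.
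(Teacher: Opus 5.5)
Your strategy matches the paper's at interior points: lift $\varphi\circ H$, make the lift invariant, apply Schwarz's theorem with $t$ as a trivially acted-on parameter, and compose the invariant polynomials with $f$. The corner case, however, has a genuine gap: passing to the doubled group $\hat G$ does not work.

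\textbf{The gap.} A function that is smooth up to the corner of $R^n_k$ is in general not the restriction of a $\Gamma$-invariant smooth function on $R^n$. Take $n=k=1$, $G$ trivial, and $\tilde h(y,t)=y$ on $y\ge0$ (for instance $X=[0,\infty)$, $Y=\mathbb{R}$, $H(y,t)=y$). A $\Gamma$-invariant extension would have to be $|y|$, which is not smooth.

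\begin{itemize}
\item Your claim that the $\hat G$-average of a Seeley extension $E\tilde h$ restricts to $\tilde h$ on $R^n_k$ is false. For $y>0$ the average is $\frac12\big(y+E\tilde h(-y)\big)$. The elements of $\Gamma$ carry $R^n_k$ to other orthants, where $E\tilde h$ is uncontrolled.
\item Your fallback fails for the same reason: a function smooth on $y\ge0$ need not be a smooth function of $y^2$, as $y=\sqrt{y^2}$ shows.
\item Conceptually, Lemma \ref{lem_doubling}(2) gives only a homeomorphism $R^n_k/G\cong R^n/\hat G$, not an identification of smooth functions. The $\hat G$-invariant smooth functions, such as functions of $y_i^2$, form a strictly smaller algebra than the $G$-invariant functions smooth up to the corners. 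So a representation $\tilde h=k(u_1,\dots,u_m,t)$ with $\hat G$-invariant $u_i$ cannot exist in general.
\end{itemize}

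\textbf{The fix.} Do not double at all; this is exactly the paper's argument.

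\begin{enumerate}
\item By Lemma \ref{lem_doubling}(1), $G$ itself already acts linearly on all of $R^n$, by $(y,l)\mapsto(P_gy,A_gl)$, and this action preserves $C=R^n_k$.
\item Extend $\tilde h$ by Seeley's theorem, successively across the faces $y_1=0,\dots,y_k=0$. If the base point has $s\in\{0,1\}$, also extend across $t=0$ or $t=1$; your proposal omits this, but Schwarz needs a function on an open set of $R^n\times\mathbb{R}$.
\item Average over $G$ only, and multiply by an invariant cut-off. Since $G$ maps $C$ into itself and $\tilde h$ is $G$-invariant on $C$, the average still equals $\tilde h$ on $C\times[0,1]$.
\item Apply Schwarz's theorem to $G$ acting on $R^n\oplus\mathbb{R}$, trivially on the second summand. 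This gives $\tilde h=k(p_1,\dots,p_m,t)$ with $p_i$ $G$-invariant polynomials.
\item The restrictions of the $p_i$ to $C$ are $G$-invariant and smooth up to the corners, so they descend to smooth functions on $X$ near $x$. Your final composition step with $f$ then goes through unchanged.
\end{enumerate}
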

\begin{proof}
Let $f:Q\to X$ be smooth in the sense of Definition \ref{def_smooth_space}, and let $(q_0,s_0)\in Q\times[0,1]$. Choose a chart $\tilde U\cap C$ around $f(q_0)$, with $C=\{y_1,\dots,y_j\ge0\}$, on which the local group $G$ acts linearly, preserving $C$ and fixing the lift $0$ of $f(q_0)$. Let $\psi$ be the lift of $\varphi\circ H$.
\begin{itemize}
\item By Seeley's theorem \cite{se64}, applied successively to the coordinates $y_1,\dots,y_j$ and, if $s_0\in\{0,1\}$, to $s$, $\psi$ extends to a smooth function on a neighbourhood of $(0,s_0)$ in $\mathbb{R}^n\times\mathbb{R}$. Averaging over $G$ and multiplying by an invariant cut-off function, we may assume that $\psi$ is a $G$-invariant smooth function on $\mathbb{R}^n\times\mathbb{R}$.
\item By Schwarz's theorem \cite[Theorem~1]{sch75}, applied to the linear action on $\mathbb{R}^n\oplus\mathbb{R}$ which is trivial on the second summand,
\[
\psi(x,s)=h(p_1(x),\dots,p_m(x),s)
\]
with $h$ smooth and $p_i$ invariant polynomials.
\item The $p_i$ descend to smooth functions $\bar p_i$ near $f(q_0)$, so the $\bar p_i\circ f$ are smooth.
\end{itemize}
Hence, near $(q_0,s_0)$, the function $\varphi\circ H\circ(f\times\mathrm{id})=h(\bar p_1\circ f,\dots,\bar p_m\circ f,s)$ is smooth.
\end{proof}

\begin{enumerate}
\item \textit{Smooth manifolds} with their usual smooth functions. Smooth Urysohn functions exist, and geodesically convex balls for a Riemannian metric form a good cover. Here a smooth map from an orbifold with corners in the sense of Definition \ref{def_smooth_space} is the same as a smooth map in the usual sense.
\item \textit{Polyhedra.} Let $X=|K|\subseteq\mathbb{R}^N$ be the underlying space of a simplicial complex with linear simplices which is locally finite in $\mathbb{R}^N$, i.e. every point of $|K|$ has a neighbourhood in $\mathbb{R}^N$ meeting only finitely many simplices. Since $|K|$ is locally compact, it is closed in some open subset $W_0\subseteq\mathbb{R}^N$. Let $C^\infty(U)$ consist of the functions that are locally restrictions of smooth functions on open subsets of $\mathbb{R}^N$.
\begin{itemize}
\item Smooth Urysohn functions: write $U=X\cap W$ with $W\subseteq W_0$ open in $\mathbb{R}^N$. Relatively closed subsets of $U$ are closed in $W$, so we may restrict a smooth Urysohn function on $W$.
\item Good cover: the open stars of the vertices. A non-empty finite intersection of open stars is the open star $\mathrm{st}(\sigma)$ of a simplex $\sigma$. It is star-shaped with respect to the barycentre $b_\sigma$, and the linear contraction $(x,s)\mapsto(1-s)x+sb_\sigma$ preserves it. This contraction is the restriction of a smooth map of $\mathbb{R}^N$, hence a smooth homotopy.
\end{itemize}
\item \textit{Orbifolds} (without boundary). Smooth Urysohn functions are obtained from invariant partitions of unity. Good covers exist by Proposition \ref{prop_good_cover_orbifold}. (Moerdijk and Pronk \cite[Corollary~1.2.5]{mp99} construct an atlas whose charts $\tilde V$ and quotients $V$ are contractible and closed under finite intersections. Their charts are open stars of a triangulation, so the contractions are only continuous; smoothness in the sense of Definition \ref{def_smooth_space} is not addressed there.)
\item \textit{Orbifolds with corners}, which include the moduli spaces of Section \ref{sec_applications}. Axioms (a)--(c) of Definition \ref{def_smooth_space} are verified as in (3), using invariant partitions of unity which are smooth up to the corners. Instead of constructing good covers, we reduce to the interior (Corollary \ref{cor_corners_general}).
\end{enumerate}

\begin{proposition}\label{prop_good_cover_orbifold}
Every orbifold $X$ without boundary admits a good cover.
\end{proposition}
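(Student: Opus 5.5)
We would build the good cover from Riemannian geometry in orbifold charts. First fix a Riemannian metric on $X$, i.e. a compatible family of invariant metrics on charts, obtained from invariant local metrics glued with an invariant partition of unity. Around each point $x$ take a linear chart $(\tilde V_x,G_x)$ centred at a lift $\tilde x$, with $G_x$ the isotropy group. Let $\varepsilon_x>0$ be so small that the geodesic ball $B(\tilde x,\varepsilon_x)$ in the lifted metric is strongly geodesically convex and $G_x$-invariant. The cover then consists of the images $W/G_x$ of such balls, with radii chosen as in the manifold case, for instance smaller than a quarter of the local convexity radius. The aim is that every non-empty finite intersection of members is, locally in one chart, the image of an intersection of convex sets.

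The key steps are as follows. (i) Every geodesically convex $G$-invariant open set $W\subseteq\tilde V$ has a quotient $W/G$ that is smoothly contractible. Since $W$ is convex and $G$-invariant, its Riemannian centre of mass $c$ is $G$-fixed; we take the Karcher mean of the $G$-orbit of any point, or $\tilde x$ itself for the balls. The geodesic contraction $\tilde H(y,s)=\exp_c((1-s)\exp_c^{-1}y)$ is then $G$-equivariant and smooth, so it descends to $H:W/G\times[0,1]\to W/G$. Its local lifts are smooth maps of charts, so Lemma \ref{lem_orbifold_homotopy} shows that $H$ is a smooth homotopy in the sense of Definition \ref{def_smooth_space}. (ii) Finite intersections. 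For $U_1=W_1/G_1,\dots,U_k=W_k/G_k$, each component of $U_1\cap\dots\cap U_k$ should be of the form $(W'_1\cap g_2W'_2\cap\dots\cap g_kW'_k)/H$ inside a single chart. Here the $W'_i$ are lifts of the $U_i$ to a common chart, and $H$ is the stabiliser of this intersection. The intersection is convex, hence connected, and it is $H$-invariant. Distinct choices of the $g_i$ give disjoint or identical images. Applying (i) with $H$ in place of $G$ then gives the claim.

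We expect step (ii) to be the main obstacle: one must realise all the $U_i$ simultaneously in one chart, and show that the intersection downstairs is the disjoint union of the quotients of the translates of convex pieces. The plan is to take all radii smaller than a quarter of the injectivity and convexity radii at points of a fixed compact neighbourhood. Then, whenever $U_1\cap U_i\neq\emptyset$, the set $U_i$ lies in a ball of twice the radius around $x_1$. This ball is covered by the single linear chart $(\tilde V_{x_1},G_{x_1})$, and in that chart the preimage of $U_i$ is a disjoint union of $G_{x_1}$-translates of one convex set. This holds because distances in the quotient are minima over the orbit, and a convex ball around a lift of a point $x_i$ maps onto its image modulo the stabiliser of $\tilde x_i$. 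Intersecting these disjoint unions of convex sets gives a disjoint union of convex sets, which $G_{x_1}$ permutes. Their images in $X$ are the components, each the quotient of one convex set by its stabiliser. Disjointness of the components downstairs follows because the images of two pieces either coincide, when the pieces are $G_{x_1}$-translates of each other, or are disjoint. Paracompactness lets us take a locally finite refinement, although this is not needed for the definition of a good cover.
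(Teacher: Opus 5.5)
Your outline matches the paper's: an invariant metric, balls centred at lifts fixed by the isotropy groups, a finite intersection realised in the chart of the member of largest radius as a disjoint union of convex pieces permuted by the local group, a fixed point from Karcher's centre of mass, the equivariant geodesic contraction, and Lemma \ref{lem_orbifold_homotopy}. The gap is in step (ii), which carries the whole proof: it is asserted, and the reasons you give do not establish it. ``Distances in the quotient are minima over the orbit'' presupposes a length metric on $|X|$ that every chart computes as $\min_g\tilde d(a,gb)$ near its centre. This is true on small balls, but proving it means comparing different charts: you must show that lifts of $\mathrm{id}_X$ are local isometries and follow curves from one chart into another. That step is not formal, because a continuous lift of the projection of a geodesic need not be a geodesic (take $t\mapsto|t|$ for $\mathbb{R}\to\mathbb{R}/\{\pm1\}$). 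The paper never uses a metric on $|X|$. Instead it proves a continuation lemma: geodesics of two charts with corresponding initial vectors have the same image in $X$.

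Even granting the formula, you only get $\pi^{-1}(U_i)=\bigcup_g gB(\tilde y,r_i)$ in the chart at $x_1$, where $r_i$ is the radius of $U_i$. That these translates are equal or disjoint comes down to injectivity of $B(\tilde y,r_i)/G_{\tilde y}\to X$. You offer this injectivity as the reason, but it is the claim itself. It is not a property of convex balls: for $\{\pm1\}$ acting on $\mathbb{R}^2$ and $0<|\tilde y|<r$, the balls $B(\pm\tilde y,r)$ both contain $0$, while $G_{\tilde y}$ is trivial. It holds only because $r_i$ was chosen in the chart at $x_i$, and transporting that to the chart at $x_1$ requires comparing isotropy groups. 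The paper does this in three steps.
\begin{itemize}
\item It builds the isometry $\lambda=\exp_{\tilde x_i}\circ d\lambda_{\tilde y}\circ\exp_{\tilde y}^{-1}:B(\tilde y,r_i)\to B(\tilde x_i,r_i)$ over $\mathrm{id}_X$.
\item It shows that $\lambda$ intertwines $G_{\tilde y}$ with $G_{x_i}$.
\item It deduces that the stabiliser in $G_{x_1}$ of every point of $B(\tilde y,r_i)$ lies in $G_{\tilde y}$, which gives equal-or-disjoint.
\end{itemize}

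There are two smaller problems. First, a point of $U_i$ lies within $r_1+2r_i$ of $x_1$. So $U_1$ must be chosen of maximal radius, and the bound is $3r_1$, not $2r_1$; the paper takes $8\rho_x$ below the injectivity and convexity radii. Second, for $W$ an intersection of balls with different centres and radii, you still have to show that the Karcher mean of an $H$-orbit $S$ exists and lies in $W$. The paper imposes uniqueness of critical points of $P_S(z)=\sum_{a\in S}d(z,a)^2$ in every ball of radius $\le3\rho_x$ centred in $B(\tilde x,2\rho_x)$. Then the critical points in the balls defining $W$ all coincide with the one in $B(\tilde x_0,3\rho_{x_0})$, so this common point lies in $W$ and is $H$-fixed.
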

\begin{proof}
Choose a Riemannian metric on $X$. For $x\in X$ choose a chart $(\tilde U_x,G_x)$ with a lift $\tilde x$ fixed by $G_x$, and $\rho_x>0$ with the following properties:
\begin{itemize}
\item $B(\tilde x,8\rho_x)$ is relatively compact in $\tilde U_x$, and $8\rho_x$ is smaller than the convexity radius and the injectivity radius at every point of $B(\tilde x,8\rho_x)$;
\item for every ball $B$ of radius at most $3\rho_x$ centred in $B(\tilde x,2\rho_x)$ and every finite set $S\subseteq B$, the function $P_S(z)=\sum_{a\in S}d(z,a)^2$, which is smooth on $B$ by the first condition, has exactly one critical point in $B$; this holds for small $\rho_x$ by Karcher \cite[Theorem~1.2]{ka77}.
\end{itemize}
Put $U_x:=B(\tilde x,\rho_x)/G_x$. Lifts of $\mathrm{id}_X$ between charts, defined on open subsets, are local isometries. Near a point of a chart with trivial stabiliser the projection to $X$ is a local homeomorphism, and such points are dense since $X$ is effective.

\textit{Continuation.} Let $\tilde U,\tilde U'$ be charts, $\gamma:[0,T]\to\tilde U$ a geodesic and $\mu$ a lift of $\mathrm{id}_X$ near $\gamma(0)$ into $\tilde U'$. Assume that the geodesic $\gamma'$ of $\tilde U'$ with initial vector $d\mu(\dot\gamma(0))$ is defined on $[0,T]$. Then $\gamma(t)$ and $\gamma'(t)$ have the same image in $X$ for all $t$. Indeed, the set of $t$ such that this holds on $[0,t]$ is a closed interval $[0,t_1]$ with $t_1>0$. If $t_1<T$, choose a lift $\mu_1$ of $\mathrm{id}_X$ near $\gamma(t_1)$ with $\mu_1\gamma(t_1)=\gamma'(t_1)$. For $t\le t_1$ close to $t_1$ the points $\mu_1\gamma(t)$ and $\gamma'(t)$ lie in one orbit of the finite stabiliser of $\gamma'(t_1)$, so for some $g$ in it the closed set of $t$ with $g\mu_1\gamma(t)=\gamma'(t)$ contains an open interval. The geodesics $g\mu_1\gamma$ and $\gamma'$ then agree near $t_1$, contradicting the maximality of $t_1$.

Let $U_{x_0}\cap\dots\cap U_{x_m}\neq\emptyset$, with $\rho_{x_0}$ maximal, and let $\pi:\tilde U_{x_0}\to X$ be the projection. Its fibres are $G_{x_0}$-orbits. Pick $z$ in the intersection, with lifts $\tilde z\in B(\tilde x_0,\rho_{x_0})$ and $w\in B(\tilde x_j,\rho_{x_j})$. Continuing a lift $\mu$ of $\mathrm{id}_X$ with $\mu(w)=\tilde z$ along the geodesic from $w$ to $\tilde x_j$, whose length is less than $\rho_{x_j}\le\rho_{x_0}$, gives a lift of $x_j$ in $B(\tilde x_0,2\rho_{x_0})$; the continued geodesic is defined since $8\rho_{x_0}$ is smaller than the injectivity radius at $\tilde z$. The lifts of $x_j$ form one $G_{x_0}$-orbit and $G_{x_0}$ fixes $\tilde x_0$, so every lift $\tilde y$ of $x_j$ lies in $B(\tilde x_0,2\rho_{x_0})$, and $B(\tilde y,\rho_{x_j})\subseteq B(\tilde x_0,3\rho_{x_0})$.

For such $\tilde y$ let $\lambda$ be a lift of $\mathrm{id}_X$ near $\tilde y$ with $\lambda(\tilde y)=\tilde x_j$, and extend it by $\lambda:=\exp_{\tilde x_j}\circ d\lambda_{\tilde y}\circ\exp_{\tilde y}^{-1}$ to $B(\tilde y,\rho_{x_j})$. Both exponential maps are diffeomorphisms on the $\rho_{x_j}$-balls, so $\lambda$ is a diffeomorphism $B(\tilde y,\rho_{x_j})\to B(\tilde x_j,\rho_{x_j})$, and by continuation along radial geodesics it lies over $\mathrm{id}_X$. Hence $\lambda$ is a lift of $\mathrm{id}_X$ near every point with trivial stabiliser (its image then has trivial stabiliser too, the order of the local group being independent of the chart), and so it is an isometry. If $g\in G_{\tilde y}$, the stabiliser of $\tilde y$ in $G_{x_0}$, then $\lambda g\lambda^{-1}$ is an isometry of $B(\tilde x_j,\rho_{x_j})$ over $\mathrm{id}_X$. Near a point with trivial stabiliser it agrees with an element of $G_{x_j}$, hence it agrees with that element on the whole connected ball. Arguing symmetrically, $\lambda$ intertwines $G_{\tilde y}$ and $G_{x_j}$. In particular $\pi(B(\tilde y,\rho_{x_j}))=U_{x_j}$, so $\pi^{-1}(U_{x_j})$ is the union of the balls $B(\tilde y,\rho_{x_j})$, $\tilde y$ running over the lifts of $x_j$.

\textit{Two such balls are equal or disjoint.} Via $\lambda$, the map $B(\tilde y,\rho_{x_j})/G_{\tilde y}\to X$ is injective, and the stabiliser in $G_{x_0}$ of every point of $B(\tilde y,\rho_{x_j})$ is contained in $G_{\tilde y}$; the local groups have the same order. Suppose $z\in B(\tilde y,\rho_{x_j})\cap gB(\tilde y,\rho_{x_j})$, so $z=gz'$ with $z'\in B(\tilde y,\rho_{x_j})$. Injectivity gives $z'=hz$ with $h\in G_{\tilde y}$. Then $gh$ fixes $z$, so $g\in G_{\tilde y}$ and $gB(\tilde y,\rho_{x_j})=B(\tilde y,\rho_{x_j})$. In particular distinct lifts of $x_j$ give disjoint balls.

\textit{The intersection is a disjoint union of quotients.} The preimage of $U_{x_0}\cap\dots\cap U_{x_m}$ in $B(\tilde x_0,\rho_{x_0})$ is therefore the disjoint union of the non-empty sets
\begin{equation*}
W=B(\tilde x_0,\rho_{x_0})\cap B(\tilde y_1,\rho_{x_1})\cap\dots\cap B(\tilde y_m,\rho_{x_m}),
\end{equation*}
which $G_{x_0}$ permutes. Hence the intersection is the disjoint union of the open sets $W/H$, where $H\subseteq G_{x_0}$ is the stabiliser of $W$.

\textit{Each $W/H$ is smoothly contractible.} Each $W$ is strongly convex, as an intersection of balls of radius at most $\rho_{x_0}$ centred in $B(\tilde x_0,2\rho_{x_0})$. Let $S\subseteq W$ be an $H$-orbit. The balls $B_0:=B(\tilde x_0,3\rho_{x_0})$, $B(\tilde x_0,\rho_{x_0})$ and $B(\tilde y_j,\rho_{x_j})$ contain $S$ and satisfy the hypotheses of the second condition for $x=x_0$. Hence $P_S$ has a unique critical point $c$ in $B_0$. The critical point of $P_S$ in each of the other balls, which are contained in $B_0$, is a critical point in $B_0$, so it equals $c$, and $c\in W$. Since $H$ fixes $\tilde x_0$, it acts by isometries on $B_0$ preserving $S$, so $hc$ is a critical point of $P_S$ in $B_0$ and $hc=c$ for $h\in H$. As $W\subseteq B(c,2\rho_{x_0})$ and $2\rho_{x_0}$ is smaller than the injectivity radius at $c$, the geodesic contraction $(x,s)\mapsto\exp_c((1-s)\exp_c^{-1}x)$ is smooth on $W\times[0,1]$. It maps into $W$ by strong convexity and is $H$-equivariant. By Lemma \ref{lem_orbifold_homotopy}, applied to the chart $(W,H)$ of $W/H$, it descends to a smooth contraction of $W/H$.
\end{proof}

\begin{corollary}\label{cor_corners_general}
Let $X$ be an orbifold with corners. Then $p:C^\infty_*(X)\to GC^\infty_*(X)$ is a quasi-isomorphism, and $H_*(C^\infty_*(X))\cong H_*(X;\mathbb{Q})$. If a finite group $\varrho$ acts on $X$ by diffeomorphisms, these isomorphisms are $\varrho$-equivariant.
\end{corollary}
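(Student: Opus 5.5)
We reduce the case of an orbifold with corners $X$ to the interior $X^\circ=X\setminus i_X(\partial X)$, which is an orbifold without boundary. By Proposition \ref{prop_good_cover_orbifold}, $X^\circ$ has a good cover. Theorem \ref{main_thm} (Steps 1--3 together with Lemma \ref{lem_smooth_singular}) therefore shows two things: $p:C^\infty_*(X^\circ)\to GC^\infty_*(X^\circ)$ is a quasi-isomorphism, and $C^\infty_*(X^\circ)$ computes $H_*(X^\circ;\mathbb{Q})$.

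The plan is to show that the inclusion $j:X^\circ\hookrightarrow X$ induces isomorphisms on $GH^\infty_*$, on $H_*(C^\infty_*)$ and on $H_*(\,\cdot\,;\mathbb{Q})$. Naturality of $p$ then gives the commutative square that proves the statement.

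\textbf{Inward flow.} Choose a smooth inward pointing vector field $v$ on $X$, given in charts by invariant vector fields that point strictly into the interior at every boundary point. Such a field exists locally by Lemma \ref{lem_doubling}(1): in linear coordinates $(y,l)$, the field $\sum_i\partial_{y_i}$ is invariant because $G$ acts on the $y$-coordinates by permutations. These local fields are glued with an invariant partition of unity. Strict inward pointing is a convex condition, so it survives the gluing.

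For small $s\ge0$ the flow $\Phi_s$ is defined on all of $X$, is smooth up to the corners, and maps $X$ into $X^\circ$ for $s>0$. Completeness comes from local compactness; if needed, multiply $v$ by a positive function so that the flow exists for $s\in[0,1]$. We then define $r:=\Phi_1:X\to X^\circ$. The map $H(x,s)=\Phi_s(x)$ lifts locally to smooth maps of charts, so by Lemma \ref{lem_orbifold_homotopy} it is a smooth homotopy from $\mathrm{id}_X$ to $j\circ r$. The restriction of $H$ to $X^\circ\times[0,1]$ lands in $X^\circ$ and is a smooth homotopy from $\mathrm{id}_{X^\circ}$ to $r\circ j$.

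Proposition \ref{prop_homotopy} then shows that $j_*$ and $r_*$ are mutually inverse on $GH^\infty_*$. The same prism argument, with the affine triangulation of $\Delta^n\times[0,1]$ as in Lemma \ref{lem_smooth_singular}, gives the corresponding statement on $H_*(C^\infty_*)$, and the ordinary homotopy gives it on singular homology. Combined with the interior case, $p$ is a quasi-isomorphism on $X$, and $H_*(C^\infty_*(X))\cong H_*(C^\infty_*(X^\circ))\cong H_*(X^\circ;\mathbb{Q})\cong H_*(X;\mathbb{Q})$.

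\textbf{Equivariance.} This is the expected obstacle, since $r$ need not commute with a finite group $\varrho$. We average $v$ over $\varrho$; being inward pointing is $\varrho$-invariant and convex, so the averaged field is still strictly inward pointing, and its flow commutes with $\varrho$. Then $j$, $r$ and $H$ are all $\varrho$-equivariant. The isomorphisms on $X^\circ$ are natural with respect to the diffeomorphisms in $\varrho$, since $p$ and the comparison of Lemma \ref{lem_smooth_singular} are natural. Hence every isomorphism in the chain above is $\varrho$-equivariant.

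The remaining point to check is that the flow of the glued field really is smooth up to corners in charts where $G$ permutes the boundary hyperplanes. Near a depth-$k$ point, the $y_i$-component of the field is positive on $\{y_i=0\}$, so every trajectory starting in $R^n_k$ stays in $R^n_k$ and leaves each face immediately. The flow of the smooth extension to $R^n$ therefore restricts to $R^n_k$ and is the restriction of a smooth map. This is all that Lemma \ref{lem_orbifold_homotopy} needs.
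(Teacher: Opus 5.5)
Your proposal is correct and follows the paper's proof. The paper uses the same ingredients: a $\varrho$-invariant, strictly inward vector field; its forward flow as a smooth homotopy retracting $X$ into $X^\circ$ (Lemma \ref{lem_orbifold_homotopy}); homotopy invariance for $GH^\infty_*$, $H_*(C^\infty_*)$ and $H_*$; and Theorem \ref{main_thm} applied to $X^\circ$ via Proposition \ref{prop_good_cover_orbifold} together with naturality of $p$. The differences are minor:
\begin{itemize}
\item The paper flows for a smooth variable time $\varepsilon(x)$ instead of rescaling $v$. Your remark that completeness comes from local compactness is not right for non-compact $X$, since local compactness only gives a locally uniform existence time, but the rescaling you propose does fix this.
\item Equivariance of $r$ is not actually needed: the isomorphisms are induced by the $\varrho$-equivariant maps $j$ and $p$.
\end{itemize}
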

\begin{proof}
\textit{An inward vector field.} Inward pointing vectors form a convex cone in every chart, invariant under the local groups and under $\varrho$. Averaging and a $\varrho$-invariant partition of unity give a smooth $\varrho$-invariant vector field $v$ on $X$ which points strictly inwards along the boundary.

\textit{Retraction to the interior.} The forward flow $\Phi_t$ of $v$ is defined in charts by extending the lift of $v$ beyond the corners (Seeley \cite{se64}). Its existence time is locally bounded below by a positive constant, so there is a smooth $\varepsilon:X\to(0,1]$ such that $\Phi_t(x)$ is defined for $0\le t\le\varepsilon(x)$. In a chart with corner coordinates $y_1,\dots,y_k\ge0$ we have $\frac{d}{dt}y_i(\Phi_t)>0$ wherever $y_i=0$. Hence, for $t>0$, $\Phi_t(x)$ lies in the interior $X^\circ$, and forward trajectories starting in $X^\circ$ stay in $X^\circ$. Since forward trajectories starting in $X$ stay in $X$, where every extension agrees with the invariant lift of $v$, uniqueness of solutions shows that $\Phi_t$ does not depend on the extension and commutes with the local groups and with embeddings of charts, so $\Phi_t$ is well defined on $X$. Hence $r(x)=\Phi_{\varepsilon(x)}(x)$ maps $X$ to $X^\circ$. The map $H(x,s)=\Phi_{s\varepsilon(x)}(x)$ gives homotopies $\mathrm{id}_X\simeq i\circ r$ and $\mathrm{id}_{X^\circ}\simeq r\circ i$, where $i:X^\circ\to X$ is the inclusion. They lift to smooth maps of charts, hence are smooth homotopies by Lemma \ref{lem_orbifold_homotopy}.

\textit{Conclusion.} By Proposition \ref{prop_homotopy} and the corresponding statements for smooth singular and singular chains, $i$ induces isomorphisms on $GH^\infty_*$, on $H_*(C^\infty_*)$ and on $H_*$. Theorem \ref{main_thm} applies to $X^\circ$ by Proposition \ref{prop_good_cover_orbifold}, and the claim follows from naturality of $p$. If $\varrho$ acts, the isomorphisms are induced by the $\varrho$-equivariant maps $i$ and $p$, hence are $\varrho$-equivariant.
\end{proof}

\subsection{Liftable maps and orbifold morphisms}\label{sec_orb_chains}
For an orbifold target, integration of forms (Remark \ref{rem_stokes}) needs chains whose maps lift to charts, and pullback along orbifold bundles (Section \ref{sec_operations}) moreover needs an orbifold morphism as part of the data. We introduce the corresponding complexes and show that they compute the same homology.

\begin{definition}\label{def_orb_chains}
Let $X$ be an orbifold with corners.
\begin{enumerate}[(a)]
\item A continuous map $f:|Q|\to|X|$, $Q$ an orbifold with corners, is \textit{liftable} if every $q\in Q$ has charts $(\tilde U,G,\varphi)$ of $Q$ around $q$ and $(\tilde V,H,\psi)$ of $X$ around $f(q)$, and a smooth map $\tilde f:\tilde U\to\tilde V$, smooth up to the corners, with $\psi\circ\tilde f=f\circ\varphi$. No equivariance is required of $\tilde f$.
\item An \textit{orbifold with corners in the wide sense} is a proper \'etale Lie groupoid with corners, up to Morita equivalence, which need not be effective. Here the spaces of arrows and objects are manifolds with corners, the source and target maps are local diffeomorphisms of manifolds with corners (so they preserve depth), and Morita equivalences are given by principal bibundles which are manifolds with corners. As without corners \cite{mm,mp}, such a groupoid is isomorphic near each object $x$ to the action groupoid of the finite group $\mathcal{G}_x$ on a neighbourhood of $x$, which can be linearized by Lemma \ref{lem_doubling}(1); conversely, an orbifold with corners in the sense of Definition \ref{orbi_corners} is represented by the groupoid of germs of embeddings between the charts of an atlas \cite{mp}; for example $BH=(H\rightrightarrows pt)$ for a finite group $H$. Dividing the local groups by the kernels of their actions gives an effective orbifold with corners $N_{\mathrm{red}}$ with $|N_{\mathrm{red}}|=|N|$. An \textit{orbifold morphism} $\hat f:Q\to X$ between such objects is a generalized (Hilsum--Skandalis) morphism of representing groupoids, smooth up to the corners, taken up to $2$-isomorphism \cite{mp,mm,alr}. Its underlying map is $|\hat f|:|Q|\to|X|$.
\item $C^{g,\mathrm{orb}}_*(X)\subseteq C^{g,\infty}_*(X)$ is the span of the classes $[Q,f]$ with $f$ liftable. Such a class is \textit{thin} if $f=g\circ\pi$, where $\pi:Q\to N$ and $g:|N|\to|X|$ are liftable and $N$ is an orbifold with corners with $\dim N<\dim Q$. This defines $S^{\mathrm{orb}}_*(X)$, $D^{\mathrm{orb}}_*(X):=\{c\in S^{\mathrm{orb}}_*(X):\partial c\in S^{\mathrm{orb}}_*(X)\}$ and $GC^{\mathrm{orb}}_*(X):=C^{g,\mathrm{orb}}_*(X)/D^{\mathrm{orb}}_*(X)$. $C^{\mathrm{orb}}_*(X)$ is the complex of liftable singular simplices $\Delta^n\to|X|$.
\item $C^{g,\mathrm{mor}}_*(X)$ is defined as $C^g_*(X)$ in Section \ref{sec_chains}, from pairs $(Q,\hat f)$ of a compact oriented (effective) orbifold with corners $Q$ and an orbifold morphism $\hat f:Q\to X$. Here $(Q',\hat f')$ and $(Q,\hat f)$ are isomorphic if there is an orientation preserving diffeomorphism $h:Q'\to Q$ with $\hat f\circ\hat h\cong\hat f'$, where $\hat h$ is the morphism of Lemma \ref{lem_orb_maps}(d), and $\partial[Q,\hat f]:=\sum_F[F,\hat f\circ i_Q|_F]$. A class is \textit{thin} if $\hat f\cong\hat g\circ\hat\pi$ for orbifold morphisms $\hat\pi:Q\to N$ and $\hat g:N\to X$, where $N$ is an orbifold with corners \textit{in the wide sense} with $\dim N<\dim Q$. This defines $S^{\mathrm{mor}}_*(X)$, $D^{\mathrm{mor}}_*(X)$ and $GC^{\mathrm{mor}}_*(X)$. $C^{\mathrm{mor}}_*(X)$ is spanned by the ($2$-isomorphism classes of) orbifold morphisms $\hat\sigma:\Delta^n\to X$, with faces $\hat\sigma\circ d_i$.
\end{enumerate}
In both cases $p$ is defined as in Section \ref{sec_comparison}.
\end{definition}

\begin{lemma}\label{lem_orb_maps}
\begin{enumerate}[(a)]
\item Let $\hat f:Q\to X$ be an orbifold morphism, $q\in Q$, $(\tilde U,G)$ a chart of $Q$ centred at $q$, with $\tilde U$ a $G$-invariant convex neighbourhood of the fixed point $\tilde q$ in a linear corner representation, and $(\tilde V,H)$ a chart of $X$ centred at $|\hat f|(q)$. After shrinking $\tilde U$, $\hat f$ is represented by a pair $(\tilde f,\lambda)$ of a homomorphism $\lambda:G\to H$ and a $\lambda$-equivariant smooth map $\tilde f:\tilde U\to\tilde V$. Two pairs represent $2$-isomorphic morphisms if and only if they differ by conjugation, $(h\tilde f,h\lambda h^{-1})$ with $h\in H$. In particular $|\hat f|$ is liftable.
\item Liftable maps are smooth in the sense of Definition \ref{def_smooth_space}, and compositions and products of liftable maps are liftable (products of lifts are lifts).
\item The maps $i_Q:\partial Q\to Q$ and $\sigma$ of Convention \ref{conv_faces}, the inclusions of $Q_\pm$ and $I$ into $Q$ (Definition \ref{def_bisection}), the projection $\mathrm{pr}:W(Q,\rho)\to Q$ (Definition \ref{def_prism}), products of orbifold morphisms and their restrictions to open suborbifolds are orbifold morphisms, and $i_Q\circ i_{\partial Q}\circ\sigma=i_Q\circ i_{\partial Q}$ as morphisms.
\item Let $h:Q'\to Q$ be a local diffeomorphism of orbifolds with corners, i.e.\ a continuous map which lifts locally to local diffeomorphisms of charts, with $Q$ effective. Then there is an orbifold morphism $\hat h$ with $|\hat h|=h$, unique up to a unique $2$-isomorphism, and $\widehat{h\circ k}\cong\hat h\circ\hat k$. In particular a finite group acting on $Q$ by diffeomorphisms acts on $C^{g,\mathrm{mor}}_*(Q)$ and $C^{\mathrm{mor}}_*(Q)$.
\end{enumerate}
\end{lemma}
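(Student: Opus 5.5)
We treat the four parts of Lemma \ref{lem_orb_maps} separately, working throughout in the Moerdijk--Pronk groupoid picture of Definition \ref{def_orb_chains}(b). Near $q$ we replace the groupoid of $Q$ by the action groupoid $G\ltimes\tilde U$ of a linear corner representation, using Lemma \ref{lem_doubling}(1).

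For (a), represent $\hat f$ by a principal bibundle $P$ from $G\ltimes\tilde U$ to the groupoid of $X$. Restrict the target to the full subgroupoid $H\ltimes\tilde V$, which is legitimate after shrinking $\tilde U$ so that $|\hat f|(\tilde U/G)\subseteq\tilde V/H$. The anchor $P\to\tilde U$ is a surjective local diffeomorphism, so it admits a local section through a point $p_0$ over $\tilde q$. Since $\tilde U$ is convex and $G$-invariant, hence contractible, and $P\to\tilde U$ is a principal $H$-bundle, this section extends to a global section $s$. Put $\tilde f:=\beta\circ s$ and define $\lambda(g)$ by $g\cdot s(x)=s(gx)\cdot\lambda(g)^{-1}$; this is well defined because the right $H$-action is free and transitive on fibres. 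Continuity forces $\lambda(g)$ to be locally constant in $x$, hence constant on the connected $\tilde U$. A different section differs by a map $\tilde U\to H$, which is constant, and this gives the conjugation ambiguity. The converse direction is the standard construction of $P$ from $(\tilde f,\lambda)$. Liftability of $|\hat f|$ is then immediate from $\tilde f$.

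For (b), a smooth function on $X$ lifts to an invariant smooth function on $\tilde V$, and composing it with the lift $\tilde f$ shows that $\varphi\circ f$ lifts to a smooth function on $\tilde U$. That lift is automatically $G$-invariant, since it descends through $\varphi$. Compositions require choosing charts compatibly: pick the chart at $g(\pi(q))$ first, then shrink the chart at $\pi(q)$ so that its image under the lift lands in it. That the resulting local composite is a lift uses only $\psi\tilde g=g\varphi$. Products are clear.

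For (c), each listed map is induced by an equivariant map of charts with $\lambda$ the identity or an inclusion of stabilisers. For $i_Q$ the relevant charts are $(\tilde C,G_{\tilde C})\to(\tilde U,G)$ with $\lambda$ the inclusion. For $Q_\pm$, $I$ and $W$ the local groups are unchanged (Lemmas \ref{lem_bisection_pieces}, \ref{lem_prism}). These local data glue, because they are compatible with chart embeddings. For the identity $i_Q\circ i_{\partial Q}\circ\sigma=i_Q\circ i_{\partial Q}$, note that locally $\sigma$ exchanges two boundary hyperplanes and both composites are the inclusion of the codimension two stratum. Their $\lambda$'s agree up to the element of $G$ that swaps the hyperplanes, if such an element exists, and this gives a $2$-isomorphism. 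Checking that these local $2$-isomorphisms are coherent, so that they glue, is where the care lies.

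The main obstacle is (d). Locally, choose lifts $\tilde h:\tilde U'\to\tilde U$ that are local diffeomorphisms. Effectiveness of $Q$ means that an element of $G$ is determined by its germ on any open set. Hence for each $g'\in G'$ there is a unique $g\in G$ with $\tilde h g'=g\tilde h$ on a dense open set, and then everywhere. The argument is the same as in Remark \ref{rem_stokes}, using the closed sets $\{\tilde h g'=g\tilde h\}$; this defines $\lambda$. Uniqueness of the lift up to a unique element of $G$ follows in the same way, and this yields uniqueness of $\hat h$ up to a unique $2$-isomorphism, so the local pieces glue canonically. Functoriality $\widehat{h\circ k}\cong\hat h\circ\hat k$ follows from the uniqueness. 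The group action on $C^{g,\mathrm{mor}}_*$ and $C^{\mathrm{mor}}_*$ is then given by postcomposition with $\hat h$.
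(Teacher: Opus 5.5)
Your part (a) and the smoothness half of (b) follow the paper. There is, however, a genuine gap in (d).

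\textbf{Part (d).} From the closed sets $\{\tilde h g'=g\tilde h\}$, $g\in G$, you only get two things: their interiors $E_g$ have dense union, and, since $\tilde h$ is open and $Q$ is effective, these interiors are pairwise disjoint. Nothing yet prevents different parts of this dense set from carrying different elements $g$. In Remark \ref{rem_stokes} this did not matter, because only pulled-back forms were compared, not group elements.

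The missing step is at a point $w\in\bar E_{g_1}\cap\bar E_{g_2}$. By continuity of first derivatives, $g_1^{-1}g_2$ fixes $\tilde h(w)$, and at that point $d(g_1^{-1}g_2)\circ d\tilde h(w)=d\tilde h(w)$. Since $d\tilde h(w)$ is invertible, this element of finite order has trivial differential at a fixed point. Hence it acts trivially near that point (linearise as in Lemma \ref{lem_doubling}(1)), and so it is the identity by effectiveness. Thus the $\bar E_g$ are disjoint closed sets covering the connected chart, and only one of them is non-empty.

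This gap affects both your definition of $\lambda$ and your claim that two lifts differ by a single element of $G$. Example \ref{ex_lift_not_unique} shows the step cannot be skipped. For the lifts $u$ and $\mathrm{sgn}\cdot u$, the interiors $\{t>0\}$ (for $+1$) and $\{t<0\}$ (for $-1$) have dense union. The conclusion fails exactly at $t=0$, where $du=0$. So this is precisely where the local-diffeomorphism hypothesis enters, beyond openness.

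\textbf{Part (c).} You leave open the coherence of the local $2$-isomorphisms $i_Q\circ i_{\partial Q}\circ\sigma\cong i_Q\circ i_{\partial Q}$. In your stabiliser charts this is a real issue, because these $2$-isomorphisms need not be unique. Indeed $i_Q\circ i_{\partial Q}$ can have non-trivial $2$-automorphisms, e.g.\ for $\mathbb{Z}_2$ acting on $R^3_2$ by $(y_1,y_2,v)\mapsto(y_2,y_1,v)$.

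The paper avoids this entirely. Over $(\tilde U,G)$ it represents $\partial^2Q$ by the action groupoid of the whole group $G$ on $\partial^2\tilde U$, which is Morita equivalent to your stabiliser charts. There $\sigma$ is a $G$-equivariant involution, and $i\circ i\circ\sigma=i\circ i$ holds pointwise with $\lambda=\mathrm{id}_G$, compatibly with embeddings of charts. So the identity is a strict equality, and nothing has to be glued.

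You also omit the check that $G$ acts effectively on $\{\tilde\rho=0\}$, which your claim that the local groups of $I$ are unchanged requires. The argument: an element fixing this set pointwise near a fixed point preserves $d\tilde\rho\neq0$, so its differential is unipotent of finite order, hence trivial.

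\textbf{Composition in (b).} The difficulty is not shrinking. The lift of the inner map lands in some chart $\tilde V_1$ of the middle space, while the lift of the outer map is defined on another chart $\tilde V_2$ at the same point. You need:
a common chart $(\tilde V,K)$ with embeddings $\lambda_i:\tilde V\to\tilde V_i$;
a translation of the inner lift by an element of the local group of $\tilde V_1$, so that it maps the centre into $\lambda_1(\tilde V)$;
and then composition through $\lambda_2\circ\lambda_1^{-1}$.
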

\begin{proof}
(a) Over $\tilde U$ a generalized morphism to $H\ltimes\tilde V$ is a principal $H$-bundle over $\tilde U$ with a commuting $G$-action and an equivariant map to $\tilde V$ \cite{mp,alr}; after shrinking $\tilde U$ we may assume that this map lands in $\tilde V$. Since $\tilde U$ is contractible the bundle is trivial, and a trivialization, unique up to $H$ as $\tilde U$ is connected, turns the $G$-action into a homomorphism $\lambda$ and the map into $\tilde f$. A $2$-isomorphism is a smooth map $\tilde U\to H$, which is constant.

(b) Smooth functions on open subsets of $X$ lift to smooth invariant functions on charts, so $\varphi\circ f$ lifts locally to $\tilde\varphi\circ\tilde f$ and is smooth. Let $f:Q\to X$ and $g:X\to Y$ be liftable, $q\in Q$, $x=f(q)$, and let $\tilde f:\tilde U\to\tilde V_1$ and $\tilde g:\tilde V_2\to\tilde W$ be lifts near $q$ and $x$. Choose a chart $(\tilde V,K)$ centred at $x$, with lift $\tilde x$, and embeddings $\lambda_i:\tilde V\to\tilde V_i$. The lifts of $x$ in $\tilde V_1$ form one orbit of the local group, so after composing $\tilde f$ with an element of it we may assume $\tilde f(\tilde q)=\lambda_1(\tilde x)$. After shrinking $\tilde U$ to a chart centred at $q$, $\tilde f(\tilde U)\subseteq\lambda_1(\tilde V)$, and $\tilde g\circ\lambda_2\circ\lambda_1^{-1}\circ\tilde f$ lifts $g\circ f$.

(c) Over a chart $(\tilde U,G)$ of $Q$, the orbifolds $\partial Q$, $\partial^2Q$, $Q_\pm$, $I$ and $W(Q,\rho)$ are represented by the action groupoids of $G$ on $\partial\tilde U$, $\partial^2\tilde U$, $\tilde U\cap\{\pm\tilde\rho\ge0\}$, $\{\tilde\rho=0\}$ and $W(\tilde U,\tilde\rho)$ (Convention \ref{conv_faces}, Lemmas \ref{lem_bisection_pieces} and \ref{lem_prism}); these are Morita equivalent to the charts given by a component and its stabiliser. $G$ acts effectively on $\{\tilde\rho=0\}$: an element fixing it pointwise near a fixed point preserves the covector $d\tilde\rho\neq0$, so its differential is unipotent of finite order, hence the identity. The maps in question are induced by the evident $G$-equivariant maps, with $\lambda=\mathrm{id}_G$, which are compatible with embeddings of charts. On $\partial^2\tilde U$ the identity $i\circ i\circ\sigma=i\circ i$ holds pointwise. Products and restrictions are clear.

(d) Let $\tilde h_1,\tilde h_2:\tilde U\to\tilde V$ be two smooth lifts of $h$, with $\tilde h_1$ a local diffeomorphism, $\tilde U$ connected, and let $E_\eta$, $\eta\in H$, be the open set of points near which $\tilde h_2=\eta\tilde h_1$. The closed sets $\{\tilde h_2=\eta\tilde h_1\}$ cover $\tilde U$, so $\bigcup_\eta E_\eta$ is dense. If $w\in\bar E_\eta\cap\bar E_{\eta'}$, then $\eta^{-1}\eta'$ fixes $\tilde h_1(w)$ and, $d\tilde h_1(w)$ being invertible, has trivial differential there; an element of finite order with this property acts trivially near its fixed point, so $\eta=\eta'$ by effectiveness. Hence the sets $\bar E_\eta$ are disjoint, closed and cover $\tilde U$, and $\tilde h_2=\eta\tilde h_1$ for a single $\eta$. The same argument shows that $\tilde h\circ\gamma=\lambda(\gamma)\circ\tilde h$ for a unique $\lambda(\gamma)\in H$, that $\lambda$ is a homomorphism, and that the only $2$-automorphism of $(\tilde h,\lambda)$ is the identity. So the local pairs glue to $\hat h$, uniquely up to a unique $2$-isomorphism, and $\widehat{h\circ k}$ and $\hat h\circ\hat k$ are both such morphisms for $h\circ k$. By Example \ref{ex_lift_not_unique}, the assumption that $h$ is a local diffeomorphism cannot be dropped.
\end{proof}

Forgetting $\hat f$ gives chain maps $C^{\mathrm{mor}}_*(X)\to C^{\mathrm{orb}}_*(X)\subseteq C^\infty_*(X)$ and $C^{g,\mathrm{mor}}_*(X)\to C^{g,\mathrm{orb}}_*(X)\subseteq C^{g,\infty}_*(X)$, by Lemma \ref{lem_orb_maps}(a),(b). They map $D^{\mathrm{mor}}_*$ into $D^{\mathrm{orb}}_*$, and $D^{\mathrm{orb}}_*$ into $D^\infty_*$: if $\hat f\cong\hat g\circ\hat\pi$ through $N$, then $|\hat f|=|\hat g|\circ|\hat\pi|$ with $|\hat\pi|:Q\to N_{\mathrm{red}}$ and $|\hat g|:|N_{\mathrm{red}}|\to|X|$ liftable. So we obtain chain maps $GC^{\mathrm{mor}}_*(X)\to GC^{\mathrm{orb}}_*(X)\to GC^\infty_*(X)$ compatible with $p$. The first map is not injective: if $\hat u,\hat u'$ are the two morphisms $[-1,1]\to\mathbb{R}/\{\pm1\}$ with lifts $u$ and $\mathrm{sgn}\cdot u$ of Example \ref{ex_lift_not_unique}, then $[[-1,1],\hat u]$ and $[[-1,1],\hat u']$ are distinct non-zero basis classes, neither is thin since $|\hat u|$ is not constant, so their difference is non-zero in $GC^{\mathrm{mor}}_1$, and it maps to $0$. We do not know whether the second map is injective. For instance $D^{\mathrm{orb}}_*\subseteq D^\infty_*\cap C^{g,\mathrm{orb}}_*$, and we do not know whether equality holds.

\begin{example}\label{ex_lift_not_unique}
An orbifold morphism is not determined by its underlying map. Let $X=\mathbb{R}/\{\pm1\}$, $u(t)=(2+t)e^{-1/t^2}$ for $t\neq0$ and $u(0)=0$. (We take $u$ not even: for even $u$, the map $t\mapsto-t$ would make all chains below vanish by Lemma \ref{lem_Cg}.) Then $u$ and $\mathrm{sgn}\cdot u$ are smooth lifts of the same liftable map $[-1,1]\to|X|$, $t\mapsto[u(t)]$. Since $[-1,1]$ is simply connected, orbifold morphisms $[-1,1]\to X$ correspond to smooth maps $[-1,1]\to\mathbb{R}$ up to sign, so these lifts define two different morphisms. Let $Y=(\mathbb{R}\times S^1)/\{\pm1\}\to X$, with $-1$ acting on $S^1$ by the rotation by $\pi$. The fibre products of Section \ref{sec_operations} along the two morphisms are both $[-1,1]\times S^1$, with the maps $(t,\theta)\mapsto[u(t),\theta]$ and $(t,\theta)\mapsto[\mathrm{sgn}(t)u(t),\theta]$ to $|Y|$. These maps differ by the rotation by $\pi$ over $\{t<0\}$, and the two chains are not isomorphic: an isomorphism $\phi$ preserves $u\circ\mathrm{pr}_1$; since $u>0$ for $t\neq0$, $u$ is strictly increasing on $(0,1]$ with image $(0,3e^{-1}]$ and $u([-1,0))=(0,e^{-1}]$, so $\phi$ is the identity over $\{t>0,\ u(t)>e^{-1}\}$, hence by connectedness on $(0,1]\times S^1$, hence the rotation by $\pi$ on $[-1,0)\times S^1$, which is not continuous at $t=0$.
\end{example}

\begin{proposition}\label{prop_orb_comparison}
Let $X$ be an orbifold with corners. In the commutative diagram
\[
\xymatrix{
C^{\mathrm{mor}}_*(X)\ar[r]\ar[d]_p & C^{\mathrm{orb}}_*(X)\ar[r]\ar[d]_p & C^\infty_*(X)\ar[r]\ar[d]_p & C_*(X;\mathbb{Q})\\
GC^{\mathrm{mor}}_*(X)\ar[r] & GC^{\mathrm{orb}}_*(X)\ar[r] & GC^\infty_*(X) &
}
\]
all maps are quasi-isomorphisms. In particular $p$ induces isomorphisms $H_*(X;\mathbb{Q})\cong GH^{\mathrm{orb}}_*(X)\cong GH^{\mathrm{mor}}_*(X)$. They are natural with respect to liftable maps, resp. orbifold morphisms, and equivariant with respect to every finite group acting on $X$ by diffeomorphisms.
\end{proposition}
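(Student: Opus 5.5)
We would split the claim into two parts: (A) the top row consists of quasi-isomorphisms, and (B) the vertical maps $p$ for $\mathrm{orb}$ and $\mathrm{mor}$ are quasi-isomorphisms. The remaining vertical map $p$ is handled by Corollary \ref{cor_corners_general}. The bottom-row statements then follow from commutativity of the diagram by two-out-of-three.

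For (A), we first reduce to the interior, as in Corollary \ref{cor_corners_general}. The flow homotopy $H(x,s)=\Phi_{s\varepsilon(x)}(x)$ lifts to smooth maps of charts. It is therefore liftable, and in fact an orbifold morphism, being induced by equivariant lifts with $\lambda=\mathrm{id}$. Composing with it preserves all the singular complexes, so we may assume $X$ has no boundary. Take the good cover of Proposition \ref{prop_good_cover_orbifold}; each piece is $W/H$ with $W$ convex and $H$-invariant. Each of the complexes $C^{\mathrm{mor}}_*$, $C^{\mathrm{orb}}_*$, $C^\infty_*$ and $C_*$ then has the following three properties.
\begin{enumerate}
\item It is homotopy invariant under the equivariant geodesic contraction, using the affine prism operator. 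For $\mathrm{mor}$, we use that a morphism $\Delta^n\to W/H$ is, by Lemma \ref{lem_orb_maps}(a) and simple connectivity of $\Delta^n$, a lift $\Delta^n\to W$ up to $H$, and the contraction acts on lifts equivariantly.
\item It has the small chains property via affine barycentric subdivision.
\item It is acyclic in positive degrees on each $W/H$, by the contraction to the fixed point $c$.
\end{enumerate}
The Mayer--Vietoris and colimit argument of Steps 1--3 of the proof of Theorem \ref{main_thm} then gives (A).

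For (B), we rerun the proof of Theorem \ref{main_thm} for $GC^{\mathrm{orb}}_*$ and $GC^{\mathrm{mor}}_*$, again on the interior. Invariance under the inward flow follows from the homotopy formula of Proposition \ref{prop_homotopy}, with $H\circ(f\times\mathrm{id})$ liftable (respectively a morphism) by Lemma \ref{lem_orb_maps}(b),(c). The dimension axiom and the contractibility of the pieces $W/H$ follow from the same operator applied to the contraction. Here the contraction sends everything to the point $c/H$, which is the stacky point $BH_c$ in the $\mathrm{mor}$ case. We therefore need $GC^{\mathrm{mor}}_*(BH_c)$ to have homology $\mathbb{Q}$ in degree $0$. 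We would prove this as in Proposition \ref{prop_point}, with thinness factoring through $N=BH_c$ in the wide sense. This is why wide-sense $N$ are allowed. The degree-$0$ count uses classes of morphisms $pt\to BH_c$, which are all isomorphic after conjugation. Mayer--Vietoris goes through verbatim. The cutting function $\phi\circ f$ is defined from $|f|$, and the pieces $Q_\pm$, $I$ and $W(Q,\rho)$ with their inclusions and projection are orbifold morphisms by Lemma \ref{lem_orb_maps}(c), so $\mathrm{Cut}$ and $P$ preserve the subcomplexes. The locality Lemma \ref{lem_D_local} holds by restricting to open suborbifolds, as in (c). In Lemma \ref{lem_thin_interface}, the level set $L\subseteq N$ is a suborbifold and its inclusion is a morphism.

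Naturality is clear, since all operators commute with composition by liftable maps or morphisms. Equivariance follows from Lemma \ref{lem_orb_maps}(d) together with the $\varrho$-invariant inward field.

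I expect the main obstacle to be the $\mathrm{mor}$ case: the dimension axiom over $BH$, and checking that the wide-sense degenerate chains are local and are preserved by cutting. The first thing I would try is to reduce everything to action groupoids of $H$ on $W$ and to use the uniqueness of lifts up to conjugation from Lemma \ref{lem_orb_maps}(a).
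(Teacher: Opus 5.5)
Your proposal is correct and follows the paper's proof essentially step for step. It uses the same reduction to (A) and (B) by two-out-of-three, the inward flow as a strict morphism to pass to the interior, the pieces $W/H$ of the good cover with the contraction ending at $\hat c\circ\hat\beta$ through $BH$ (the one place where wide-sense $N$ is needed), and the verbatim transfer of cutting, locality and Mayer--Vietoris via Lemma \ref{lem_orb_maps}. The differences are minor: you route through $GC^{\mathrm{mor}}_*(BH)$, which you would first have to define for a non-effective target, whereas the paper shows directly that $(\hat K_1)_*z$ is degenerate in $GC^{\mathrm{mor}}_*(W/H)$; and you omit the paper's preliminary check (its item (1)) that $\partial$ is well defined on $2$-isomorphism classes and squares to zero, because $\sigma$ is a morphism with $i_Q\circ i_{\partial Q}\circ\sigma=i_Q\circ i_{\partial Q}$.
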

\begin{proof}
The diagram commutes, and $C^\infty_*(X)\to C_*(X;\mathbb{Q})$ and $p:C^\infty_*(X)\to GC^\infty_*(X)$ are quasi-isomorphisms by Corollary \ref{cor_corners_general}. Hence it suffices to prove the following, for $\star\in\{\mathrm{orb},\mathrm{mor}\}$:
\begin{enumerate}[(A)]
\item the composite $C^\star_*(X)\to C_*(X;\mathbb{Q})$ is a quasi-isomorphism;
\item $p:C^\star_*(X)\to GC^\star_*(X)$ is a quasi-isomorphism.
\end{enumerate}
We go through the proofs of Theorem \ref{main_thm} and Corollary \ref{cor_corners_general} and check that every construction preserves $C^\star_*$, $C^{g,\star}_*$ and $S^\star_*$. For $\star=\mathrm{mor}$ we write $f$ also for $|\hat f|$, and diffeomorphisms are regarded as morphisms by Lemma \ref{lem_orb_maps}(d).
\begin{enumerate}[(1)]
\item \textit{Boundary.} By Lemma \ref{lem_orb_maps}(c), $\partial$ is defined on $C^{g,\star}_*$. It is well defined on isomorphism classes, since a $2$-isomorphism $\hat f\circ\hat h\cong\hat f'$ restricts to the faces. Lemma \ref{lem_Cg} holds verbatim, since $\sigma$ is a morphism with $i_Q\circ i_{\partial Q}\circ\sigma=i_Q\circ i_{\partial Q}$.
\item \textit{Locality and functoriality.} If $\hat g\circ\hat\pi$ maps into an open subset $U\subseteq X$, then $\hat\pi$ maps into the open suborbifold $|\hat g|^{-1}(U)$ of $N$, and restrictions to open suborbifolds are again morphisms, resp. liftable. Hence Lemma \ref{lem_D_local} holds verbatim for $S^\star_*$ and $D^\star_*$, functoriality being with respect to liftable maps, resp. morphisms (Lemma \ref{lem_orb_maps}(b)).
\item \textit{Dimension axiom and homotopy invariance.} Proposition \ref{prop_point} holds verbatim; a morphism to a point is unique. Let $H:X\times[0,1]\to Y$ be liftable, resp. a morphism. By Lemma \ref{lem_orb_maps}, $H\circ(f\times\mathrm{id})$ is liftable, resp. a morphism. If $f=g\circ\pi$, then $H\circ(f\times\mathrm{id})=(H\circ(g\times\mathrm{id}))\circ(\pi\times\mathrm{id})$, and $\dim(N\times[0,1])<\dim(Q\times[0,1])$. So the proof of Proposition \ref{prop_homotopy} applies, without Lemma \ref{lem_orbifold_homotopy}. The prism operator of singular theory composes $\sigma\times\mathrm{id}$ with affine maps, so $C^\star_*$ is homotopy invariant as well.
\item \textit{Cutting.} For $\phi\in C^\infty(X)$, $\phi\circ f$ lifts locally to $\tilde\phi\circ\tilde f$, so bisecting functions, regular values and Sard's theorem are as in Section \ref{sec_cutting}. The pieces, interfaces and prisms are obtained by composing with the morphisms $j:Q_\pm\to Q$, $I\to Q$ and $\mathrm{pr}:W(Q,\rho)\to Q$ of Lemma \ref{lem_orb_maps}(c), and $f\cong g\circ\pi$ implies $f\circ j\cong g\circ(\pi\circ j)$. So $\mathrm{Cut}^\pm$ and $P$ preserve $C^{g,\star}_*$ and $S^\star_*$, and Lemma \ref{lem_cut} holds verbatim. In Lemma \ref{lem_thin_interface}, $L=(\phi\circ g)^{-1}(t)$ is given in a chart $\tilde N$ of $N$ by the invariant equation $\tilde\phi\circ\tilde g=t$, and $\tilde\pi$ maps $\tilde I=\{\tilde\phi\circ\tilde g\circ\tilde\pi=t\}$ to $\tilde L$. So $\pi|_I$ factors through $L$ by a liftable map, resp. a morphism. If $N$ is effective, so is $L$, by the covector argument in the proof of Lemma \ref{lem_orb_maps}(c) applied to $\tilde\phi\circ\tilde g$. Thus Lemma \ref{lem_thin_interface} holds for $S^\star_*$.
\item \textit{Mayer--Vietoris.} Open subsets of $X$ admit smooth Urysohn functions (Section \ref{sec_examples}). By (2) and (4), the proofs of Propositions \ref{prop_MVa} and \ref{prop_MVb} apply verbatim to $GC^\star_*$.
\item \textit{Contractible pieces.} Let $X$ have no boundary, and let $W/H$ be one of the open sets in the proof of Proposition \ref{prop_good_cover_orbifold}, an open suborbifold with chart $(W,H)$. The geodesic contraction of $W$ to $c$ is smooth and $H$-equivariant. So it defines a strict morphism $\hat K:[W/H]\times[0,1]\to[W/H]$ with $\hat K_0=\mathrm{id}$ and $\hat K_1=\hat c\circ\hat\beta$, where $\hat\beta:[W/H]\to BH$ is induced by $W\to pt$ and $\hat c:BH\to[W/H]$ by $c$.
\begin{itemize}
\item For $\star=\mathrm{orb}$, $|\hat K_1|$ is constant and factors through a point by liftable maps. So the proof of Corollary \ref{cor_contractible} applies, using (3).
\item For $\star=\mathrm{mor}$, let $z\in C^{g,\mathrm{mor}}_n(W/H)$, $n\ge1$, with $\partial z\in D^{\mathrm{mor}}_*$. By (3), $(\hat K_1)_*z-z=\partial\mathcal{H}z+\mathcal{H}\partial z$ with $\mathcal{H}\partial z\in D^{\mathrm{mor}}_*$. The terms $[Q,\hat c\circ(\hat\beta\circ\hat f)]$ of $(\hat K_1)_*z$ are thin, since they factor through $BH$, which has dimension $0$. For $n\ge2$ their faces are thin as well. For $n=1$, $Q$ is an interval or a circle (proof of Proposition \ref{prop_point}). A morphism from a point or an interval to $BH$ is unique up to $2$-isomorphism, so the two endpoints of an interval give the class $[pt,\hat c]$ with opposite signs. In both cases $(\hat K_1)_*z\in D^{\mathrm{mor}}_*$, so $z$ is a boundary in $GC^{\mathrm{mor}}_*(W/H)$. In degree $0$, $(\hat K_1)_*$ maps every chain to a multiple of $[pt,\hat c]$. This class is non-zero, because the signed count of points vanishes on boundaries and no $0$-chain is thin. Likewise, every morphism $\Delta^n\to BH$ is $2$-isomorphic to the trivial one, principal $H$-bundles over $\Delta^n$ being trivial. So $(\hat K_1)_*$ maps $C^{\mathrm{mor}}_*(W/H)$ onto the singular complex of a point.
\end{itemize}
Hence, for $W/H$, $GH^\star_*$ and $H_*(C^\star_*)$ are $\mathbb{Q}$, concentrated in degree $0$, and $p_*$ is an isomorphism; by the argument of Step 1 of the proof of Theorem \ref{main_thm}, $p_*$ is then an isomorphism for disjoint unions of such sets as well.
\item \textit{Small chains.} Iterated barycentric subdivision, and the chain homotopy between it and the identity, compose $\sigma$ with affine maps of simplices. So they preserve $C^\star_*$, and the small chains lemma holds for $C^\star_*$.
\item \textit{Orbifolds without boundary.} By Proposition \ref{prop_good_cover_orbifold}, $X$ has a good cover whose non-empty finite intersections are disjoint unions of sets as in (6). Steps 1--3 of the proof of Theorem \ref{main_thm} use only (2), (5), (6), (7) and the compact support of chains, so they prove (B). The same argument, as in Lemma \ref{lem_smooth_singular}, proves (A).
\item \textit{Corners.} The inward vector field $v$ in the proof of Corollary \ref{cor_corners_general} lifts to invariant vector fields on charts, compatible with embeddings of charts. Its local flows commute with the local groups and with the embeddings. So $r$ and the homotopies $H(x,s)=\Phi_{s\varepsilon(x)}(x)$ are induced by strict morphisms of groupoids, and in particular are liftable. By (3), the proof of Corollary \ref{cor_corners_general} reduces (A) and (B) to $X^\circ$, where they hold by (8).
\end{enumerate}
Equivariance follows from naturality and Lemma \ref{lem_orb_maps}(d).
\end{proof}

\begin{remark}[what changes]\label{rem_orb_changes}
We compare with the proofs of Theorem \ref{main_thm} and Corollary \ref{cor_corners_general}.
\begin{itemize}
\item Lemma \ref{lem_orbifold_homotopy} (Seeley, Schwarz) is not needed, since homotopies are liftable maps, resp. morphisms, on $X\times[0,1]$, and Lemma \ref{lem_orb_maps} suffices. Lemma \ref{lem_smooth_singular} is replaced by (A), which is proved in the same way.
\item In $GC^{\mathrm{mor}}_*$ the orbifold $N$ of a thin factorization may be non-effective. Of the proof above, only (6) uses this. Without it, $(\hat K_1)_*$ produces chains $[Q,\hat c\circ\hat\beta\circ\hat f]$ determined by a principal $H$-bundle over $Q$, which need not be thin: for $Q=S^1$, $X=\mathbb{R}^2/\mathbb{Z}_3$ and a bundle with monodromy $h\neq1$, the only effective orbifolds of dimension $0$ are points. This chain is nevertheless a boundary: it is $\pm\partial[D^2/\mathbb{Z}_3,\hat c\circ\hat\beta']$, where $\hat\beta'$ is given by $1\mapsto h$. We do not know whether all such chains are null-homologous when only effective $N$ are allowed. For $GC^{\mathrm{orb}}_*$ and $GC^\infty_*$, allowing non-effective $N$ changes nothing, since $N$ may be replaced by $N_{\mathrm{red}}$.
\item $GC^{\mathrm{orb}}_*$ is the quotient of the subcomplex $C^{g,\mathrm{orb}}_*\subseteq C^{g,\infty}_*$ by $D^{\mathrm{orb}}_*\subseteq D^\infty_*\cap C^{g,\mathrm{orb}}_*$. Using $D^\infty_*\cap C^{g,\mathrm{orb}}_*$ instead would give a genuine subcomplex of $GC^\infty_*$, and the proof above would apply verbatim, homotopies being smooth by Lemma \ref{lem_orbifold_homotopy}. We require liftable $g$, resp. morphisms $\hat g$, because thin factorizations must pull back forms (Remark \ref{rem_stokes}), resp. be pulled back along bundles (Section \ref{sec_operations}).
\item Everything else is unchanged: bisecting functions and Sard's theorem, prisms, Mayer--Vietoris and the colimit.
\end{itemize}
\end{remark}

\subsection{Continuous maps}\label{sec_continuous}
For continuous maps into arbitrary spaces, all results of Sections \ref{sec_chains}--\ref{sec_bisection} hold. In this subsection we show that the first half of the Mayer--Vietoris principle holds as well, and, for Euclidean neighbourhood retracts, we reduce Conjecture \ref{conj_continuous} to a locality property of degenerate chains. We first explain heuristically why the argument of Section \ref{sec_MV} does not apply directly.

\begin{remark}\label{rem_holonomy}
Let $C\subset S^1$ be a Cantor set, $\lambda\ge0$ a smooth function on $S^1$ whose zero set is exactly $C$, and $a$ the time-one map of the flow of $\lambda\,\partial_\theta$. Then $a$ fixes $C$ pointwise, and every orbit in a gap of $C$ accumulates at both endpoints of the gap. So an $a$-invariant continuous function is constant on each closed gap; if it is smooth, its derivative vanishes on the union of the gaps, which is dense, hence it is constant. On the other hand, a Cantor-type map $c:S^1\to S^1$ of degree one, collapsing each gap to a point, satisfies $c\circ a=c$.

Suppose that a geometric cycle has a codimension two corner $E\cong S^1$ on which the map is $c$. Suppose also that the isomorphisms through which the codimension one faces around $E$ cancel induce the automorphism $a$ of $(E,c)$ (the \emph{holonomy}). Then a cutting function which agrees on cancelling faces restricts to an $a$-invariant function on $E$. If it is smooth, it is constant on $E$. If moreover $\phi\circ c$ takes both values $0$ and $1$, then no constant is close to $\phi\circ c-t$. This is why Proposition \ref{prop_MVa_cont} below cuts each geometric simplex separately.
\end{remark}

Let $O$ be a metrizable space, $O=U\cup V$ with $U,V$ open, and let $\phi:O\to[0,1]$ be continuous with $\phi=0$ on $O\setminus V$ and $\phi=1$ on $O\setminus U$. Fix $t\in(0,1)$ and $0<\delta<\min(t,1-t)$. A bisecting function $\rho$ on $Q$ is \textit{adapted} to a geometric simplex $(Q,f)$ if $|\rho-(\phi\circ f-t)|<\delta$. Adapted functions exist: approximate $\phi\circ f-t$ by an invariant smooth function in charts, glue with a partition of unity, and shift by a small constant to make $0$ a regular value on all strata. For an adapted $\rho$, the pieces $Q_-$ and $Q_+$ map into $U$ and $V$ respectively.

For a geometric simplex $(Q,f)$ and a bisecting function $\rho$ on $Q$, we put $\mathrm{Cut}_\rho[Q,f]:=[Q_-,f]+[Q_+,f]$ and $P_\rho[Q,f]:=(-1)^{n+1}[W(Q,\rho),f\circ\mathrm{pr}]$. We let these operators act on the formal sum of the faces of $Q$, each face $F$ with the restriction $\rho|_F$; on prisms, $\rho$ acts through $\rho\circ\mathrm{pr}$. In this sense, Lemma \ref{lem_bisection_pieces} and Proposition \ref{prop_bisection} give
\begin{equation}\label{eq_cut_formal}
\partial\,\mathrm{Cut}_\rho=\mathrm{Cut}_\rho\,\partial,\qquad \partial P_\rho+P_\rho\partial=\mathrm{id}-\mathrm{Cut}_\rho.
\end{equation}
Only after applying them do we pass to classes in $C^g_*$.

\begin{lemma}\label{lem_two_cuts}
Let $\alpha,\beta$ be adapted bisecting functions on $(F,f)$ which are jointly transverse, i.e. on every stratum, at the points where $\alpha=\beta=0$, the differentials $d\alpha$ and $d\beta$ are linearly independent. Then, on $(F,f)$ and its faces:
\begin{enumerate}
\item $\mathrm{Cut}_\alpha\mathrm{Cut}_\beta=\mathrm{Cut}_\beta\mathrm{Cut}_\alpha$ and $P_\alpha\mathrm{Cut}_\beta=\mathrm{Cut}_\beta P_\alpha$;
\item $\Theta(\alpha,\beta):=P_\beta\mathrm{Cut}_\alpha-P_\alpha\mathrm{Cut}_\beta$ satisfies $\partial\Theta(\alpha,\beta)+\Theta(\alpha,\beta)\partial=\mathrm{Cut}_\alpha-\mathrm{Cut}_\beta$;
\item every geometric simplex occurring in $\mathrm{Cut}_\alpha(F)$ or in $\Theta(\alpha,\beta)(F)$ maps into $U$ or into $V$;
\item $P_\alpha(F)$ and $\Theta(\alpha,\beta)(F)$ are thin: their maps factor through the projection to $F$, which has smaller dimension.
\end{enumerate}
Moreover, given adapted $\alpha_1,\dots,\alpha_r$, a further adapted function $\beta$ jointly transverse to all of them exists.
\end{lemma}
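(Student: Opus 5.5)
The plan is to reduce everything to the local model of Lemma \ref{lem_bisection_pieces}, with two transverse hypersurfaces in place of one. Parts (2)--(4) then follow formally from (1), from \eqref{eq_cut_formal}, and from the fact that all constructions take place over $F$.

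\emph{Step 1: part (1).} First I check that $\alpha$ restricts to a bisecting function on $F_{\beta,\pm}=\{\pm\beta\ge0\}$, and symmetrically with $\alpha$ and $\beta$ exchanged.
\begin{itemize}
\item By the local description in Lemma \ref{lem_bisection_pieces}, the strata of $F_{\beta,\pm}$ are of two kinds: the open parts of the old strata where $\pm\beta>0$, and the strata of $\{\beta=0\}$.
\item On the old strata, $0$ is a regular value of $\alpha$ by assumption.
\item On a stratum of $\{\beta=0\}$, regularity of $\alpha$ at a zero means exactly that $d\alpha\notin\mathrm{span}(dx_i,d\beta)$. This is the joint transversality hypothesis.
\end{itemize}
Hence $\mathrm{Cut}_\alpha\mathrm{Cut}_\beta$ and $\mathrm{Cut}_\beta\mathrm{Cut}_\alpha$ both produce the four quadrants $\{\pm\alpha\ge0,\ \pm\beta\ge0\}$. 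They carry the same orientations (restricted from $F$) and the same maps $f$, so the two sums agree term by term. The same applies to the faces, whose restricted functions are again jointly transverse.

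For $P_\alpha\mathrm{Cut}_\beta=\mathrm{Cut}_\beta P_\alpha$, I use that Definition \ref{def_prism} involves no choices. This gives
\[
W(F_{\beta,\pm},\alpha)=(\mathrm{pr})^{-1}(F_{\beta,\pm})\cap W(F,\alpha)=W(F,\alpha)_{\beta\circ\mathrm{pr},\pm}.
\]
So it remains to show that $\beta\circ\mathrm{pr}$ is bisecting on $W(F,\alpha)$. I go through the active constraints listed in the proof of Lemma \ref{lem_prism}.
\begin{itemize}
\item On strata where no top constraint, or only one, is active, $d(\beta\circ\mathrm{pr})$ has no $dt$-component. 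Independence from $dx_i$, and from $dt$ or $dt\pm d\tau$, then reduces to regularity of $\beta$ on strata of $F$.
\item On the crease $\{\alpha=0,\ t=1\}$, the relevant span contains $d\alpha$. Here joint transversality is used again.
\end{itemize}
The sign $(-1)^{n+1}$ is the same on both sides, since $\dim F_{\beta,\pm}=\dim F$. This crease check, together with the bookkeeping of orientations of the quadrant pieces, is the step I expect to require the most care.

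\emph{Step 2: part (2).} This is purely formal. Using \eqref{eq_cut_formal} and $\partial\,\mathrm{Cut}=\mathrm{Cut}\,\partial$,
\[
\partial P_\beta\mathrm{Cut}_\alpha+P_\beta\mathrm{Cut}_\alpha\partial=(\mathrm{id}-\mathrm{Cut}_\beta)\mathrm{Cut}_\alpha,
\]
and similarly with $\alpha$ and $\beta$ exchanged. Subtracting the two identities and applying (1) cancels the terms $\mathrm{Cut}_\beta\mathrm{Cut}_\alpha=\mathrm{Cut}_\alpha\mathrm{Cut}_\beta$, which leaves $\mathrm{Cut}_\alpha-\mathrm{Cut}_\beta$.

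\emph{Step 3: parts (3) and (4).} For (3), on $F_{\alpha,-}$ we have $\phi\circ f\le t+\delta<1$, so the image avoids $O\setminus U$. Likewise $F_{\alpha,+}$ maps into $V$. The prisms in $\Theta$ are built over these pieces, so they map into the same images. For (4), every prism map has the form $f\circ\mathrm{pr}$, which factors through the smooth map $\mathrm{pr}$ to the lower-dimensional orbifold $F$, or to a piece of it. This is exactly the thinness condition of Definition \ref{def_degenerate}.

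\emph{Step 4: existence of $\beta$.} Start from an adapted $\beta_0$ constructed with strict margin, say $|\beta_0-(\phi\circ f-t)|<\delta/2$, and put $\beta=\beta_0-s$ for a small constant $s$.
\begin{itemize}
\item By regularity of each $\alpha_i$, the sets $\{\alpha_i=0\}$ are orbifolds with corners, and their strata are submanifolds of the strata of $F$, also in charts.
\item By Sard's theorem in orbifold charts, applied to $\beta_0$ on every stratum of $F$ and of its faces, and on every stratum of each $\{\alpha_i=0\}$, almost every $s$ is a regular value of all these restrictions simultaneously.
\end{itemize}
Regularity on the strata of $\{\alpha_i=0\}$ is equivalent to joint transversality with $\alpha_i$. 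Taking such an $s$ with $|s|<\delta/2$ keeps $\beta$ adapted.
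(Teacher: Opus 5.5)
Your proof is correct and follows essentially the same route as the paper. Part (1) uses the quadrant refinement and $W(F_{\beta,\pm},\alpha)=W(F,\alpha)\cap\{\pm\beta\circ\mathrm{pr}\ge0\}$, part (2) is the formal commutator computation from \eqref{eq_cut_formal}, parts (3)--(4) use the images of prisms and the factorization through $\mathrm{pr}$, and the existence of $\beta$ comes from shifting a $\delta/2$-approximation by a Sard regular value on the strata of $F$ and of the hypersurfaces $\{\alpha_i=0\}$. Your stratum-by-stratum check, including the crease $\{\alpha=0,\ t=1\}$, that joint transversality makes $\alpha|_{F_{\beta,\pm}}$ and $\beta\circ\mathrm{pr}$ bisecting spells out what the paper states in a single clause.
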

\begin{proof}
(1) Both sides of the first identity are the common refinement $\{\pm\alpha\le0,\ \pm\beta\le0\}$; joint transversality makes it a valid bisection. For the second, $W(F,\alpha)\cap\{\beta\circ\mathrm{pr}\le0\}=W(\{\beta\le0\},\alpha)$. (2) follows from (1) and (\ref{eq_cut_formal}): $[\partial,P_\beta\mathrm{Cut}_\alpha]-[\partial,P_\alpha\mathrm{Cut}_\beta]=(1-\mathrm{Cut}_\beta)\mathrm{Cut}_\alpha-(1-\mathrm{Cut}_\alpha)\mathrm{Cut}_\beta$. (3) Each term contains a $\mathrm{Cut}$, and a prism maps into the image of its base. (4) is clear. For the last statement, $\{\alpha_i=0\}$ is a hypersurface in every stratum. Choose an adapted $\beta$ with $|\beta-(\phi\circ f-t)|<\delta/2$. Replace it by $\beta-s$, where $|s|<\delta/2$ is a regular value of $\beta$ on all strata of $F$ and of all hypersurfaces $\{\alpha_i=0\}$ (Sard). Then $\beta-s$ is adapted, bisecting and jointly transverse to all $\alpha_i$.
\end{proof}

\begin{proposition}[Mayer--Vietoris for continuous chains, first half]\label{prop_MVa_cont}
For $O$, $U$, $V$ as above, the inclusion $GC_*(U)+GC_*(V)\hookrightarrow GC_*(O)$ is a quasi-isomorphism.
\end{proposition}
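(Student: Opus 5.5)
The plan is to imitate the proof of Proposition \ref{prop_MVa}, but with a separately chosen adapted bisecting function for each geometric simplex. This is forced by Remark \ref{rem_holonomy}: a cutting function pulled back from the target need not be smooth. The price is that faces which cancel in $\partial z$ are cut differently. I will repair the mismatch with the correction operators $\Theta$ of Lemma \ref{lem_two_cuts}, whose terms are thin and each map into $U$ or into $V$. As before there are two statements: surjectivity and injectivity on homology.

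\emph{Surjectivity.} Let $z=\sum_i c_i[Q_i,f_i]$ with $\partial z\in D_*(O)$.
\begin{enumerate}
\item For each $Q_i$ choose an adapted $\rho_i$. Then (\ref{eq_cut_formal}) gives, formally, $z-\mathrm{Cut}(z)=\partial P(z)+P(\partial z)$.
\item Here $P(\partial z)$ means $\sum_i c_i\sum_F P_{\rho_i|_F}(F)$. A single face class $[F,f|_F]$ occurs several times in this sum, with different restrictions $\alpha=\rho_i|_F$, so the terms no longer cancel as they do in $\partial z$.
\item For every face class $F$ occurring, choose a reference adapted function $\beta_F$ that is jointly transverse to all restrictions $\alpha$ occurring on $F$; this uses the last statement of Lemma \ref{lem_two_cuts}.
\item By Lemma \ref{lem_two_cuts}(2), $\mathrm{Cut}_\alpha(F)=\mathrm{Cut}_{\beta_F}(F)+\partial\Theta(\alpha,\beta_F)(F)+\Theta(\alpha,\beta_F)(\partial F)$. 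Put
\[
z':=\mathrm{Cut}(z)+\sum(\text{signed})\,\Theta(\rho_i|_F,\beta_F)(F).
\]
By Lemma \ref{lem_two_cuts}(3), $z'$ is a sum of simplices mapping into $U$ or into $V$.
\item Its boundary is $\sum\mathrm{Cut}_{\beta_F}(F)$, in which the occurrences of each face class now cancel exactly as in $\partial z$. This leaves the cut of the thin part of $\partial z$, plus error terms $\Theta(\cdot,\cdot)(\partial F)$ on codimension-two faces.
\end{enumerate}

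I would handle the codimension-two errors by induction on codimension. Choose references $\beta_E$ on codimension-two face classes $E$, jointly transverse to all restrictions $\beta_F|_E$ and $\rho_i|_E$. Correct by $\Theta$-terms one level lower, using the cocycle-type relation
\[
\Theta(\alpha,\beta)+\Theta(\beta,\gamma)\ \text{versus}\ \Theta(\alpha,\gamma),
\]
which should hold up to $\partial$ of a two-parameter prism built from three pairwise jointly transverse cuts. Lemma \ref{lem_two_cuts}(1) makes those cuts commute. The process stops after $\dim Q$ steps.

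The term $P(\partial z)$ is treated the same way: its simplices are thin by Lemma \ref{lem_two_cuts}(4), and one checks that its boundary is thin modulo terms in $U$ or $V$. Hence $z$ is homologous in $GC_*(O)$ to a cycle in $GC_*(U)+GC_*(V)$.

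\emph{Injectivity.} Suppose $c=\partial w+e$ as in Proposition \ref{prop_MVa}. I will run the same construction on $w$, $e$ and $c$ simultaneously, with references chosen jointly on all face classes occurring in any of them. Prisms and $\Theta$-terms map into the images of their bases. So the correction terms attached to $c$ stay in $C^g_*(U)+C^g_*(V)$, while those attached to $e$ are thin, and by Lemma \ref{lem_D_local} this gives degenerate chains where needed. Rational coefficients are used when averaging references over the automorphisms of a face class: $\beta_F$ must be well defined on the isomorphism class, and Remark \ref{rem_holonomy} shows that no invariant smooth choice need exist. So I would take the symmetrized sum $\frac{1}{|\mathrm{Aut}|}\sum_a\Theta(\alpha\circ a,\beta)$ over a finite group of automorphisms. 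If the automorphism group is infinite, I would instead tie the choice to the chosen representative and absorb the discrepancy into the $\Theta$-corrections.

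\emph{Main obstacle.} The hardest step will be the bookkeeping at codimension $\ge2$. I need to verify that the boundaries of the $\Theta$-corrections cancel after the reference choices on lower faces, and that every leftover term is either in $U$ or $V$ or lies in $D_*(O)$. In particular, the thin pieces $P_\alpha(F)$ and $\Theta(F)$ must be shown to have thin boundary modulo such terms. I would first attempt this with a single global choice: fix one reference function per face class at every depth, chosen in order of increasing dimension so that each is jointly transverse to everything chosen before. Then I would verify the identity $\partial z'\in D+C^g_*(U)+C^g_*(V)$ by a direct sign computation using (\ref{eq_cut_formal}).
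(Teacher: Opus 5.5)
Your first-level construction is the right one, and it is essentially the paper's. You choose an adapted $\rho_j$ for each simplex, a reference $\beta_y$ on a fixed representative of each face class $y$, jointly transverse to the transported restrictions $\alpha_o$, and put $z'=\mathrm{Cut}(z)-\sum_y\sum_o\mu_o\Theta(\alpha_o,\beta_y)(y)$. The gap is that you do not see why this already finishes the proof. Instead you defer the conclusion to an induction on codimension that rests on an unproved cocycle relation for $\Theta$ up to two-parameter prisms. The missing observation, on which the paper's proof turns, is that a thin chain whose boundary is thin is degenerate, and every leftover term here is thin. By Lemma \ref{lem_two_cuts}(2),
\[
\partial z'=\sum_y\Big(\sum_o\mu_o\Big)\mathrm{Cut}_{\beta_y}(y)+\sum_y\sum_o\mu_o\,\Theta(\alpha_o,\beta_y)(\partial y).
\]
In the first sum only classes with $\sum_o\mu_o\neq0$ survive. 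These occur in $\partial z\in D(O)\subseteq S(O)$, so they are thin, and so are their cut pieces. The codimension-two errors are thin by Lemma \ref{lem_two_cuts}(4). Hence $\partial z'\in S(O)$, and since $\partial\partial z'=0$, we get $\partial z'\in D(O)$. So $z'$ is already a cycle of $GC(U)+GC(V)$, and nothing needs correcting in codimension $\ge2$. Likewise $\Lambda:=z-z'-\partial P(z)=\sum\mu_oP_{\alpha_o}(y)+\sum\mu_o\Theta(\alpha_o,\beta_y)(y)$ is thin by Lemma \ref{lem_two_cuts}(4), and $\partial\Lambda=\partial z-\partial z'\in S(O)$, so $\Lambda\in D(O)$. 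The criteria you set yourself are off. The condition $\partial z'\in D+C^g(U)+C^g(V)$ holds automatically because $z'\in C^g(U)+C^g(V)$. A boundary that is only thin modulo terms in $U$ or $V$ would not make $P(\partial z)$ degenerate; what is needed, and true, is thinness outright.

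Two further points need care. First, $\beta_y$ need not be invariant under automorphisms of the face class. It lives on the fixed representative $F_y$, and changing an identification by an orientation preserving automorphism $a$ replaces $\alpha_o$ by $\alpha_o\circ a^{-1}$ without changing the classes $\mathrm{Cut}_{\alpha_o}(y)$ and $P_{\alpha_o}(y)$. The real issue is an orientation reversing automorphism $a$ of $(F_y,f|_{F_y})$. Then the signs $\mu_o$ depend on the identifications, and $\sum_o\mu_o$ may be non-zero although $[F_y,f|_{F_y}]=0$. This leaves the non-thin term $(\sum_o\mu_o)\mathrm{Cut}_{\beta_y}(y)$ in $\partial z'$. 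Averaging over a finite group does not handle this, since automorphism groups can be infinite (for instance for constant maps). The paper instead writes each occurrence as $\frac12(\mu_o,\alpha_o)+\frac12(-\mu_o,\alpha_o\circ a^{-1})$. This changes no class, because $a\times\mathrm{id}$ is an orientation reversing isomorphism of prisms and pieces, and it forces $\sum_o\mu_o=0$.

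Second, for injectivity there is no global cut operator to apply to $c=\partial w+e$, so running the construction on $w,e,c$ simultaneously is not yet an argument. What works is to build $T_w$ from $w$ alone. Then $\partial T_w$ is thin plus $\sum\mu_o\mathrm{Cut}_{\alpha_o}(y)$, summed over the non-thin, non-cancelling face classes $y$ of $w$. These classes occur in $c$, because $\partial w-c\in D(O)\subseteq S(O)$ and $S(O)$ is spanned by basis elements; hence they map into $U$ or into $V$. So $w':=T_w+\sum\mu_oP_{\alpha_o}(y)$ lies in $C^g(U)+C^g(V)$. By (\ref{eq_cut_formal}), $\partial w'-c$ is thin with boundary $-\partial c\in S(O)$, hence degenerate.
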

\begin{proof}
The key observation is that a thin chain whose boundary is thin is degenerate. Therefore we do not need the cuttings of different geometric simplices to agree on common faces: the error terms are thin, and their boundaries turn out to be thin as well.

\textit{Surjectivity on homology.} Let $z=\sum_j\lambda_j[Q_j,f_j]$ with $\partial z\in D(O)$. Choose an adapted bisecting function $\rho_j$ for each $j$, with no compatibility required, and put $\mathrm{Cut}(z)=\sum_j\lambda_j\mathrm{Cut}_{\rho_j}[Q_j,f_j]$ and $P(z)=\sum_j\lambda_jP_{\rho_j}[Q_j,f_j]$. By (\ref{eq_cut_formal}), applied to each geometric simplex separately,
\begin{equation*}
z-\mathrm{Cut}(z)=\partial P(z)+\sum_y\sum_{o}\mu_o\,P_{\alpha_o}(y).
\end{equation*}
Here $y$ runs over the isomorphism classes, up to orientation, of codimension one faces of the $Q_j$. The index $o$ runs over the occurrences of $y$, with coefficient $\mu_o=\pm\lambda_j$, and $\alpha_o$ is the restriction of $\rho_j$ transported to a fixed representative $F_y$. If $(F_y,f|_{F_y})$ admits an orientation reversing automorphism $a$, the sign of $\mu_o$ depends on the chosen identification. In this case we write each occurrence $(\mu_o,\alpha_o)$ as $\frac12(\mu_o,\alpha_o)+\frac12(-\mu_o,\alpha_o\circ a^{-1})$. Since $a\times\mathrm{id}:W(F_y,\alpha_o)\to W(F_y,\alpha_o\circ a^{-1})$ is an orientation reversing isomorphism compatible with the maps, and similarly for the cut pieces, this changes none of the classes involved, and afterwards $\sum_o\mu_o=0$. Call $y$ \textit{cancelling} if $\sum_o\mu_o=0$. Otherwise $y$ occurs in $\partial z\in D(O)\subseteq S(O)$, hence is thin. For each cancelling $y$ choose an adapted $\beta_y$ jointly transverse to all $\alpha_o$ (and all $\alpha_o\circ a^{-1}$) (Lemma \ref{lem_two_cuts}), and put
\begin{equation*}
M:=\sum_{y\ \mathrm{cancelling}}\sum_o\mu_o\,\Theta(\alpha_o,\beta_y)(y),\qquad T:=\mathrm{Cut}(z)-M.
\end{equation*}
By Lemma \ref{lem_two_cuts}(3), $T\in C^g(U)+C^g(V)$. Using $\sum_o\mu_o=0$ for cancelling $y$ and Lemma \ref{lem_two_cuts}(2), we get
\begin{equation*}
\partial T=\sum_{y\ \mathrm{thin}}\sum_o\mu_o\,\mathrm{Cut}_{\alpha_o}(y)+\sum_{y\ \mathrm{cancelling}}\sum_o\mu_o\,\Theta(\alpha_o,\beta_y)(\partial y).
\end{equation*}
The first sum is thin, because cut pieces of thin simplices are thin. The second sum is thin by Lemma \ref{lem_two_cuts}(4). Since $\partial(\partial T)=0$, we get $\partial T\in D(O)$, so $T$ is a cycle. Moreover $z-T=\partial P(z)+\Lambda$ with
\begin{equation*}
\Lambda:=\sum_y\sum_o\mu_oP_{\alpha_o}(y)+M.
\end{equation*}
$\Lambda$ is thin by Lemma \ref{lem_two_cuts}(4), and $\partial\Lambda=\partial z-\partial T\in S(O)$. Hence $\Lambda\in D(O)$, and $z$ is homologous to $T$.

\textit{Injectivity on homology.} Let $c\in C^g(U)+C^g(V)$ with $\partial c\in D(O)$ and $c-\partial w\in D(O)$ for some $w\in C^g(O)$. Apply the construction above to the geometric simplices of $w$. This gives $T_w\in C^g(U)+C^g(V)$, with $\partial T_w$ equal to a thin chain plus $\sum_{y}\sum_o\mu_o\,\mathrm{Cut}_{\alpha_o}(y)$, the sum running over the non-cancelling face classes $y$ of $w$ which are not thin. Each such $y$ occurs in $c$, since $\partial w-c\in D(O)$, so $y$ maps into $U$ or into $V$. Put
\begin{equation*}
w':=T_w+\sum_{y}\sum_o\mu_o\,P_{\alpha_o}(y),
\end{equation*}
with $y$ running over the same classes; then $w'\in C^g(U)+C^g(V)$. By (\ref{eq_cut_formal}), $\partial w'=c+\tau$ with
\begin{multline*}
\tau=-c_{\mathrm{thin}}+\sum_{y\ \mathrm{thin}}\sum_o\mu_o\mathrm{Cut}_{\alpha_o}(y)\\+\sum_{y\ \mathrm{cancelling}}\sum_o\mu_o\Theta(\alpha_o,\beta_y)(\partial y)-\sum_{y}\sum_o\mu_oP_{\alpha_o}(\partial y),
\end{multline*}
where the last sum runs over the non-thin non-cancelling classes, and $c_{\mathrm{thin}}$ is the sum of the thin terms of $c$. Every term is thin. Since $\partial\tau=-\partial c\in S(O)$, we get $\tau\in D(O)$, so $c=\partial w'$ in $GC(U)+GC(V)$.
\end{proof}

The second half of the Mayer--Vietoris principle concerns the complex
\begin{equation*}
\mathcal{K}_*:=\frac{D_*(O)\cap\big(C^g_*(U)+C^g_*(V)\big)}{D_*(U)+D_*(V)},
\end{equation*}
which measures to what extent degenerate chains are \textit{local}.

\begin{proposition}\label{prop_reduction}
Let $O$, $U$, $V$ be as above, and suppose that $p_*$ is an isomorphism for $U$, $V$ and $U\cap V$. Then $p_*:H_*(O;\mathbb{Q})\to GH_*(O)$ is an isomorphism if and only if $\mathcal{K}_*$ is acyclic. Moreover $\mathcal{K}_0=\mathcal{K}_1=0$.
\end{proposition}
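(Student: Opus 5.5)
We follow the pattern of the smooth Mayer--Vietoris argument, Section \ref{sec_MV}. Put $K:=GC_*(U)\cap GC_*(V)$, the intersection taken inside $GC_*(O)$. As in Step 2 of the proof of Theorem \ref{main_thm}, the sequence
\begin{equation*}
0\to K\to GC_*(U)\oplus GC_*(V)\to GC_*(U)+GC_*(V)\to 0
\end{equation*}
is exact. By Proposition \ref{prop_MVa_cont} the right-hand term computes $GH_*(O)$. On the singular side we have the classical Mayer--Vietoris sequence for $\{U,V\}$-small chains, and $p$ maps it compatibly into the sequence above. If $GC_*(U\cap V)\to K$ is a quasi-isomorphism, the five lemma together with the hypotheses on $U$, $V$ and $U\cap V$ shows that $p_*$ is an isomorphism for $O$. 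Conversely, suppose $p_*$ is an isomorphism for $O$. Compare the long exact sequence of $K$ with the singular one via the composite $C_*(U\cap V)\to GC_*(U\cap V)\to K$. The five lemma then gives that $H_*(U\cap V)\to H_*(K)$ is an isomorphism, and since $p_*$ is an isomorphism for $U\cap V$, so is $GH_*(U\cap V)\to H_*(K)$. It therefore remains to identify the acyclicity of $K/GC_*(U\cap V)$ with that of $\mathcal{K}_*$.

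For this we compute the quotients explicitly at the level of $C^g$. Write $A:=C^g(U)$, $B:=C^g(V)$, $D:=D(O)$. Then $K=\big((A+D)\cap(B+D)\big)/D$. The first step is to show that $(A+D)\cap(B+D)=A\cap B+D\cap(A+B)+D$. Since $A$ and $B$ are spanned by subsets of the basis of Lemma \ref{lem_Cg}, an element $a+d=b+d'$ gives $a-b=d'-d\in D\cap(A+B)$. Writing $d'-d=a_1-b_1$ with $a_1\in A$, $b_1\in B$, we get $a-a_1=b-b_1\in A\cap B=C^g(U\cap V)$, which gives the claimed decomposition. Next, by Lemma \ref{lem_D_local}, $D\cap A=D(U)$ and $D\cap B=D(V)$. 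Together these give a natural isomorphism of complexes
\begin{equation*}
K/GC_*(U\cap V)\;\cong\;\frac{D\cap(A+B)}{D(U)+D(V)+D\cap C^g(U\cap V)}=\mathcal{K}_*,
\end{equation*}
where $D\cap C^g(U\cap V)=D(U\cap V)\subseteq D(U)$. Checking that this map is well defined and injective is a diagram chase in the basis, again using Lemma \ref{lem_D_local}. Since $GC_*(U\cap V)\to K$ is injective, it is a quasi-isomorphism if and only if the quotient is acyclic, which proves the equivalence.

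For $\mathcal{K}_0=\mathcal{K}_1=0$: in degree $0$ no class is thin, so $D_0=0$. In degree $1$, take $c=a+b\in D_1(O)$ with $a\in C^g_1(U)$ and $b\in C^g_1(V)$, all of whose simplices are thin. We must split $c$ so that each part has thin boundary. Here $\partial c$ is a thin $0$-chain, hence $\partial c=0$. A point in the boundary of $a$ that does not cancel within $a$ must cancel against $b$, so it maps to $U\cap V$. We would correct $a$ by adding, for each such point, a constant interval at that point. Such intervals are degenerate: they are thin, their endpoints cancel, and they lie in $D(U\cap V)$. The obstacle is that a point of $\partial a$ does not cancel against a single point of $\partial b$. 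Instead, $\partial a$ is a $0$-chain in $U\cap V$ with total signed count zero over each point of $O$, by Lemma \ref{lem_Cg}. Since $\partial a=-\partial b$ as classes, $\partial a$ is in fact zero, because $0$-dimensional classes with the same point image coincide. Hence $a\in D(U)$ and $b\in D(V)$ already, and $\mathcal{K}_1=0$. This low-degree verification, specifically whether $\partial a=0$ really holds or requires the interval correction, is the step I would double-check most carefully. The main conceptual difficulty of the statement, however, is hidden in the acyclicity of $\mathcal{K}_*$ in higher degrees, which the proposition does not claim.
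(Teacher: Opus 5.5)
Your overall structure is sound and differs from the paper's. You pass through $K=GC_*(U)\cap GC_*(V)$ and the quotient $K/GC_*(U\cap V)$, as in Proposition \ref{prop_MVb}, and both of your five-lemma arguments are correct. The paper instead uses $GC^{\mathcal U}(O):=(C^g(U)+C^g(V))/(D(U)+D(V))$. For this complex, $0\to GC(U\cap V)\to GC(U)\oplus GC(V)\to GC^{\mathcal U}(O)\to0$ is exact by a one-line argument, so a single five lemma gives $H_*(O)\cong H_*(GC^{\mathcal U}(O))$. Moreover, $\mathcal K_*$ is by definition the kernel of $GC^{\mathcal U}(O)\to GC(U)+GC(V)$, so the equivalence is immediate from Proposition \ref{prop_MVa_cont} and no identification of $K/GC(U\cap V)$ is needed.

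In your route that identification is the crux, and the argument you give for it fails. Since $D\cap(A+B)\subseteq D$, your equality $(A+D)\cap(B+D)=A\cap B+D\cap(A+B)+D$ reduces to $(A+D)\cap(B+D)=A\cap B+D$. That says $K=GC_*(U\cap V)$, hence $\mathcal K_*=0$ for every cover, which would settle the open locality problem of Remark \ref{rem_locality_problem}. Your derivation is vacuous: one may simply take $a_1=a$ and $b_1=b$. In general $a_1\notin D$, so $x=(a-a_1)+a_1+d$ is not a decomposition in $A\cap B+D$.

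What you actually need is an explicit map. For $x\in(A+D)\cap(B+D)$, choose $a\in A$ and $b\in B$ with $x-a,\,x-b\in D$, and send $x$ to the class of $a-b\in D\cap(A+B)$ in $\mathcal K_*$.
\begin{itemize}
\item By Lemma \ref{lem_D_local}, $a$ and $b$ are determined modulo $D\cap A=D(U)$ and $D\cap B=D(V)$, so the map is well defined.
\item It is a chain map and kills $A\cap B+D$.
\item It is injective on the quotient: if $a-b=d_U+d_V$, then $x\equiv a-d_U=b+d_V\in A\cap B$ modulo $D$.
\item It is surjective: given $e=a-b\in D\cap(A+B)$, the element $x=a\in A\cap(B+D)$ maps to $e$.
\end{itemize}
In other words, the class of $a-b$ in $\mathcal K_*$ is exactly the obstruction to writing $x$ in $A\cap B+D$.

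Your degree-one argument contains a non sequitur. The relation $\partial a=-\partial b$ does not imply $\partial a=0$, and Lemma \ref{lem_Cg} does not give vanishing signed counts over each point. The missing observation is that a thin connected $1$-dimensional simplex factors through a $0$-dimensional orbifold, so its map is constant. It is then either an interval whose endpoints contribute $[pt,x]-[pt,x]=0$, or a circle with empty boundary. Either way its boundary vanishes, and it lies in $D_1(U)$ or $D_1(V)$ by itself. Since $D_1(O)$ is spanned by such simplices, $\mathcal K_1=0$ follows at once, with no correction by constant intervals. This also answers the doubt you flagged: $\partial a=0$ does hold, simplex by simplex.
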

\begin{proof}
Put $GC^{\mathcal{U}}(O):=(C^g(U)+C^g(V))/(D(U)+D(V))$. The sequence
\begin{equation*}
0\to GC(U\cap V)\to GC(U)\oplus GC(V)\to GC^{\mathcal{U}}(O)\to0
\end{equation*}
is exact: if $a-b=d_U+d_V$ with $d_U\in D(U)$, $d_V\in D(V)$, then $a-d_U=b+d_V\in C^g(U)\cap C^g(V)=C^g(U\cap V)$, and injectivity on the left is Lemma \ref{lem_D_local}. Comparing with the Mayer--Vietoris sequence of $\mathcal{U}$-small singular chains, the five lemma shows that $p$ induces an isomorphism $H_*(O)\cong H_*(GC^{\mathcal{U}}(O))$. The map $GC^{\mathcal{U}}(O)\to GC(O)$ is the surjection onto $GC(U)+GC(V)$, whose kernel is $\mathcal{K}$, followed by the inclusion, which is a quasi-isomorphism by Proposition \ref{prop_MVa_cont}. This proves the equivalence. Finally, no $0$-dimensional geometric simplex is thin, so $D_0=0$. A thin $1$-dimensional geometric simplex has constant map, so it lies in $U$ or in $V$, and it is degenerate by itself. Hence $\mathcal{K}_1=0$.
\end{proof}
By the induction in the proof of Theorem \ref{main_thm}, the passage to colimits and the retraction argument, Conjecture \ref{conj_continuous} for Euclidean neighbourhood retracts therefore follows from the acyclicity of $\mathcal{K}_*$ for all open covers $\{U,V\}$ of open subsets of Euclidean spaces. The argument works degree by degree.

\begin{theorem}\label{thm_low_degrees}
Let $k\ge0$, and suppose that $H_j(\mathcal{K}_*)=0$ for all $j\le k$ and all open covers $\{U,V\}$ of open subsets of Euclidean spaces. Then for every Euclidean neighbourhood retract $X$, the map $p_*:H_j(X;\mathbb{Q})\to GH_j(X;\mathbb{Q})$ is an isomorphism for $j\le k$ and surjective for $j=k+1$. In particular, since $\mathcal{K}_0=\mathcal{K}_1=0$, $p_*$ is an isomorphism in degrees $\le1$ and surjective in degree $2$.
\end{theorem}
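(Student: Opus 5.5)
We run the proof of Theorem \ref{main_thm} degree by degree, replacing the full second half of Mayer--Vietoris by the degree-limited vanishing of $H_j(\mathcal{K}_*)$ and tracking in which degrees each step gives isomorphisms. For a pair $O=U\cup V$ write $GC^{\mathcal U}(O)$ as in the proof of Proposition \ref{prop_reduction}. The short exact sequence $0\to\mathcal{K}\to GC^{\mathcal U}(O)\to GC(U)+GC(V)\to0$, combined with Proposition \ref{prop_MVa_cont}, shows that $H_j(GC^{\mathcal U}(O))\to GH_j(O)$ is an isomorphism for $j\le k$ and surjective for $j=k+1$, since $H_j(\mathcal{K})=0$ for $j\le k$. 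The exact sequence $0\to GC(U\cap V)\to GC(U)\oplus GC(V)\to GC^{\mathcal U}(O)\to0$ yields a long exact sequence which $p$ receives from the singular Mayer--Vietoris sequence of $\mathcal U$-small chains.

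\textbf{Induction statement.} Let $(\ast)_O$ say that $p_*$ is an isomorphism on $H_j$ for $j\le k$ and surjective for $j=k+1$. We prove $(\ast)$ for finite unions $O=O_1\cup\dots\cup O_m$ of convex open sets in an open subset $Y\subseteq\mathbb{R}^N$. Finite intersections of convex sets are convex, so they are contractible and Corollary \ref{cor_contractible} gives the base case. In the inductive step, $U\cap V$ is again a union of $m-1$ convex sets. The four/five lemma then transfers $(\ast)$ from $U,V,U\cap V$ to $H_*(GC^{\mathcal U}(O))$. In degree $j\le k$ we need isomorphisms at $U\cap V$ in degrees $j,j-1$ and at $U,V$. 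In degree $k+1$, surjectivity requires only surjectivity at $U\oplus V$ in degree $k+1$ and injectivity at $U\cap V$ in degree $k$, which are available. Composing with the comparison of $GC^{\mathcal U}(O)$ and $GC(O)$ established above gives $(\ast)_O$.

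\textbf{Colimit and retraction.} For an open $Y\subseteq\mathbb{R}^N$, every chain has compact support and therefore lies in a finite union of convex balls. Lemma \ref{lem_D_local} gives $GC(Y)=\mathrm{colim}\,GC(O)$, and filtered colimits preserve both isomorphism and surjectivity in each degree. For an ENR $X$, choose $X\xrightarrow{i}Y\xrightarrow{r}X$ with $ri=\mathrm{id}$ and $Y$ open in $\mathbb{R}^N$. By naturality of $p$, $p_*^X$ is a retract of $p_*^Y$ in the category of arrows, and retracts of isomorphisms, respectively of surjections, are again isomorphisms, respectively surjections. The final sentence of the theorem follows with $k=1$ from $\mathcal{K}_0=\mathcal{K}_1=0$.

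The main obstacle is the degree bookkeeping in the five lemma for the top degree $k+1$. There one must check that surjectivity at $O$ needs only surjectivity of the middle term together with injectivity in degree $k$ of the next term, which is the standard half of the four lemma. One must also confirm that Proposition \ref{prop_MVa_cont} applies, which requires $O$ to be metrizable; open subsets of $\mathbb{R}^N$ are metrizable.
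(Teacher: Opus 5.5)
Your proposal is correct and follows the paper's proof step by step: the long exact sequence of $0\to\mathcal{K}\to GC^{\mathcal U}(O)\to GC(U)+GC(V)\to0$ together with Proposition \ref{prop_MVa_cont}, then induction on the number of convex sets using the five lemma and the four lemma, with Corollary \ref{cor_contractible} as base case, and finally passage to colimits and retracts. One small slip: in degree $k+1$, the surjectivity half of the four lemma needs surjectivity at $U\cap V$ in degree $k$ and injectivity at $U$, $V$ in degree $k$, not injectivity at $U\cap V$; this is harmless, since all degree-$k$ maps are isomorphisms by the induction hypothesis.
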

\begin{proof}
By the long exact sequence of $0\to\mathcal{K}\to GC^{\mathcal{U}}(O)\to GC(U)+GC(V)\to0$ and Proposition \ref{prop_MVa_cont}, the map $H_j(GC^{\mathcal{U}}(O))\to GH_j(O)$ is an isomorphism for $j\le k$ and surjective for $j=k+1$. We argue by induction on the number of convex sets in a finite union, as in the proof of Theorem \ref{main_thm}. The five lemma gives $H_j(O)\cong H_j(GC^{\mathcal{U}}(O))$ for $j\le k$, and the four lemma gives surjectivity for $j=k+1$, provided the same statements hold for $U$, $V$ and $U\cap V$. For convex sets they hold by Corollary \ref{cor_contractible}. Isomorphisms and surjections are preserved by colimits over increasing unions and by retracts.
\end{proof}

The next degree is governed by the following question.

\begin{question}\label{q_dimension}
Let $K$ be a compact subset of a finite disjoint union $\mathcal{N}$ of orbifolds, possibly with corners and not necessarily compact, of dimension at most $m$. For $e=1,\dots,r$, let $F_e$ be a compact orbifold with corners of dimension at most $m$ and $\gamma_e:F_e\to\mathcal{N}\times\mathcal{N}$ a smooth map with image in $K\times K$. Let $R$ be the smallest closed equivalence relation on $K$ containing $\gamma_1(F_1),\dots,\gamma_r(F_r)$. Is the covering dimension of $\hat Y:=K/R$ at most $m$?
\end{question}

\begin{proposition}\label{prop_K2}
Suppose that Question \ref{q_dimension} has a positive answer for $m=1$. Let $O$ be a metrizable absolute neighbourhood retract (for instance an open subset of a Euclidean space, or a manifold) and $O=U\cup V$ with $U,V$ open. Then $\mathcal{K}_2=0$.
\end{proposition}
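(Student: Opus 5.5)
The plan is to prove directly that every element of $D_2(O)\cap\big(C^g_2(U)+C^g_2(V)\big)$ lies in $D_2(U)+D_2(V)$. The relevant group is the group itself, not its homology, so $\mathcal{K}_2$ must vanish on the nose. Let $c$ be such an element. Since $S_*(O)$ is spanned by a subset of the basis of Lemma~\ref{lem_Cg}, every basis class of $c$ is thin. Each class maps into $U$ or into $V$, so I write $c=a+b$ with $a\in S_2(U)$ and $b\in S_2(V)$, assigning classes lying in $U\cap V$ to $a$, say. By Lemma~\ref{lem_D_local} and the proof of Proposition~\ref{prop_reduction}, the only obstruction to $a\in D_2(U)$ is the non-thin part $y$ of $\partial a$. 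Its classes cancel against non-thin faces of $b$, because $\partial c\in S_1(O)$, so $y\in C^g_1(U\cap V)$. It therefore suffices to find a thin chain $e\in S_2(U\cap V)$ with $\partial e-y\in S_1(U\cap V)$. Then $a-e\in D_2(U)$ and $b+e\in D_2(V)$.

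To construct $e$ I would first build a $1$-dimensional model of $a$. Fix factorizations $f_j=g_j\circ\pi_j$ of the classes $[Q_j,f_j]$ of $a$, through orbifolds $N_j$ with $\dim N_j\le1$. Let $\mathcal{N}=\bigsqcup_jN_j$ and let $K=\bigsqcup_j\pi_j(Q_j)$, which is compact. For each codimension one face $F$ of some $Q_j$ that is identified, inside $\partial a$ or against a cancelling face, with a face $F'$ of some $Q_{j'}$, take $\gamma_F=(\pi_j|_F,\pi_{j'}|_{F'}\circ h)\colon F\to\mathcal{N}\times\mathcal{N}$, where $h$ is the identifying isomorphism. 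These maps are smooth, $F$ has dimension $1$, and their images lie in $K\times K$. Question~\ref{q_dimension} with $m=1$ then gives $\dim\hat Y\le1$ for $\hat Y=K/R$. The maps $g_j$ descend to a continuous $\hat g\colon\hat Y\to O$, because cancelling faces have equal maps to $O$. The chain $a$ with its face identifications becomes a $2$-chain over $\hat Y$ whose boundary over $\hat Y$ is $y$ plus thin terms. The set $\hat Y_{UV}:=\hat g^{-1}(U\cap V)$ is an open subset of a $1$-dimensional compact metric space. The $2$-chain over $\hat Y$ shows that $y$ bounds in $\hat g^{-1}(U)$, while lying in $\hat Y_{UV}$.

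Next I would transfer this to a graph. Since $\dim\hat Y\le1$ and $O$ is an ANR, there are open covers of $\hat Y$ of order $\le2$, subordinate to $\hat g^{-1}(U)$ and $\hat g^{-1}(V)$, with nerve a graph $\Gamma$ and nerve map $\nu\colon\hat Y\to\Gamma$. The ANR property gives $\bar g\colon\Gamma\to O$ with $\bar g\circ\nu$ close to $\hat g$, controlled so that the relevant pieces stay in $U$, $V$ and $U\cap V$. The composites $\nu\circ\pi_j$ are continuous maps of $Q_j$ into a $1$-dimensional polyhedron. Smoothing them rel faces, compatibly on cancelling faces, makes the replaced simplices thin: they factor through a $1$-orbifold, namely the edges of $\Gamma$ made smooth. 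The resulting 2-chain over $\Gamma$ lying in $\bar g^{-1}(U\cap V)$ is then the candidate for $e$. Because $\Gamma$ is a graph, I expect to handle its boundary by the elementary fact that a $1$-cycle in a graph which bounds a thin $2$-chain bounds one inside any subgraph containing it.

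The main obstacle is comparing $y$, pushed through $\hat g$, with its graph approximation through $\bar g\circ\nu$. The obvious homotopy prism has a $2$-dimensional source which factors only through $N\times[0,1]$. That target is not of lower dimension, so the prism is not thin. I would first try to avoid prisms altogether. Since $\hat Y$ is a quotient of $K$, the map $y$ itself already factors through $\hat Y$. I would choose the nerve map so that $\nu$ restricted to the images of the faces is, up to reparametrization, the identity onto an embedded subgraph. Then $y$ is literally a thin-modulo-reparametrization chain over $\Gamma$, and no homotopy is needed. Making $\nu$ a homeomorphism on the $1$-dimensional images of the faces, compatibly with the covering-dimension approximation, is the step I expect to be hardest. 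I would attack it by first refining $\hat Y$ near the finitely many face images, which are compact $1$-dimensional images of smooth maps of intervals and circles.
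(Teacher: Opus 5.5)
\textbf{There is a genuine gap, in the step you yourself flag as hardest.} Your reduction matches the paper: you reduce to finding $e\in S_2(U\cap V)$ with $\partial e-y$ thin. So does the construction of $\hat Y=K/R$ from the face identifications, followed by Question~\ref{q_dimension} with $m=1$. The trouble comes after that.

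Once $\hat g$ is replaced by $\bar g\circ\nu$ for a nerve map $\nu\colon\hat Y\to\Gamma$, the faces contributing to $y$ no longer carry the maps $f_i|_F$. As you note, no thin prism absorbs the difference. Your remedy is to choose $\nu$ to be a homeomorphism on $A:=\bigcup q(\pi_i(F))$, the union over these faces. You give no argument for this, and it is not clear it is possible at all. $A$ is a quotient of finitely many arcs and circles by the restriction of $R$. It is a one-dimensional Peano continuum that need not be a graph, so nothing makes a map to a nerve injective on it.

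The smoothing step is also ill-defined and unnecessary. A graph is not a $1$-orbifold at vertices of valence $\ge3$. And in the continuous theory thinness only requires a smooth $\pi$ and a \emph{continuous} $g$, so you can keep the original $\pi_i$ as the smooth factor.

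\textbf{The missing idea is to extend on $\hat Y$ itself, with no graph.} Your ``elementary fact'' about subgraphs is the one-dimensional extension principle, and it holds for metrizable spaces of covering dimension $\le1$.
\begin{enumerate}
\item Let $W_1,\dots,W_l$ be the path components of $U\cap V$ that meet $\hat g(A)$.
\item Let $e_\nu\colon A\to W_\nu$ equal $\hat g$ on $A\cap\hat g^{-1}(W_\nu)$ and be constant on the rest of $A$.
\item $W_\nu$ is a path-connected, locally path-connected ANR and $\dim\hat Y\le1$. So the Kuratowski--Dugundji extension theorem extends $e_\nu$ to $E_\nu\colon\hat Y\to W_\nu$.
\item Put $t_\nu:=\sum_i[Q_i,E_\nu\circ q\circ\pi_i]$. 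It is thin: extend $E_\nu\circ q|_{\pi_i(Q_i)}$ to an open neighbourhood in $N_i$, using that $W_\nu$ is an ANR.
\item Paired faces still cancel. The faces of $y$ mapping into $W_\nu$ keep exactly the maps $f_i|_F$, and the other faces of $y$ become constant.
\end{enumerate}
Then $e:=\sum_\nu t_\nu$, divided by the common denominator, works with no comparison step.

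\textbf{Your relation $R$ must also include the unpaired thin faces.} For each such face $F$, add a point $p_F$ to $K$ and the generator $\{(\pi_i(x),p_F):x\in F\}$. Otherwise such a face, whose map was constant only because $g_i$ is constant on $\pi_i(F)$, gets a non-constant map once $\hat g$ is replaced. A non-constant $1$-dimensional simplex is not thin. Your claim that the boundary over $\hat Y$ is ``$y$ plus thin terms'' holds only after composing with $\hat g$. Likewise, faces admitting an orientation reversing automorphism $h$ need the generator $\{(\pi_i(x),\pi_i(h(x)))\}$, so that they still vanish after the map is changed.
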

\begin{proof}
\textit{Reduction.} Let $c\in D_2(O)\cap(C^g(U)+C^g(V))$ and put $W:=U\cap V$. Since $S_*(O)$ is spanned by a subset of the basis of $C^g_*(O)$, every basis element occurring in $c$ is thin. Let $a$ be the sum of the terms of $c$ that lie in $C^g(U)$, and put $b:=c-a\in C^g(V)$. By Lemma \ref{lem_D_local}, $a\in S_2(U)$ and $b\in S_2(V)$. Let $w$ be the sum of the non-thin terms of $\partial a$, and $s:=\partial a-w\in S_1(U)$. Since $\partial a+\partial b=\partial c\in S_1(O)$, every non-thin basis element occurring in $\partial a$ also occurs in $\partial b$, hence lies in $C^g(U)\cap C^g(V)=C^g(W)$. Thus $w\in C^g_1(W)$. It suffices to find $t\in S_2(W)$ with $\partial t-w\in S_1(W)$. Indeed, then $a-t\in D_2(U)$, $b+t\in D_2(V)$, and $c=(a-t)+(b+t)$.

\textit{The configuration.} Replacing $c$ by $Nc$ for a common denominator $N$ and reversing orientations where necessary, write $a=\sum_{i=1}^r[Q_i,f_i]$, with repetitions allowed. Choose thin factorizations $f_i=g_i\circ\pi_i$ with $\pi_i:Q_i\to N_i$ and $\dim N_i\le1$. Choose a maximal set of disjoint pairs and self-paired faces of the following two kinds:
\begin{itemize}
\item pairs $\{F,F'\}$ together with an orientation reversing diffeomorphism $h:F\to F'$ such that $f_j\circ h=f_i|_F$, where $F$ is a face of $Q_i$ and $F'$ a face of $Q_j$;
\item single faces $F$ together with an orientation reversing automorphism $h$ of $(F,f_i|_F)$.
\end{itemize}
By maximality, the classes of the remaining faces add up to $\partial a$. So a remaining face is either thin, and then its map is constant, or it is one of the terms of $w$.

\textit{The quotient.} Let $K$ be the disjoint union of the compact sets $\pi_i(Q_i)$ and of one point $p_F$ for each remaining thin face $F$. Let $R$ be the smallest closed equivalence relation on $K$ containing
\begin{itemize}
\item the sets $\{(\pi_i(x),\pi_j(h(x))):x\in F\}$, for the faces of the first two kinds;
\item the sets $\{(\pi_i(x),p_F):x\in F\}$, for the remaining thin faces.
\end{itemize}
These are images of smooth maps from compact $1$-dimensional orbifolds. Let $q:K\to\hat Y:=K/R$ be the quotient map. Since $K$ is compact metrizable and $R$ is closed, $\hat Y$ is compact metrizable. The $f_i$ agree along paired faces and are constant on thin faces. Hence the maps $g_i$, together with $p_F\mapsto f_i(F)$, descend to a continuous map $G:\hat Y\to U$ with $G\circ q\circ\pi_i=f_i$. By assumption, $\dim\hat Y\le1$.

\textit{Extension.} Let $A\subseteq\hat Y$ be the union of the sets $q(\pi_i(F))$ over the faces $F$ contributing to $w$. Then $A$ is compact and $G(A)\subseteq W$. Hence $G(A)$ meets only finitely many path components $W_1,\dots,W_l$ of $W$, and $A$ is the disjoint union of the sets $A_\nu:=A\cap G^{-1}(W_\nu)$, which are open and closed in $A$. Fix $\nu$ and a point $x_\nu\in W_\nu$, and define $e_\nu:A\to W_\nu$ by $e_\nu=G$ on $A_\nu$ and $e_\nu=x_\nu$ on $A\setminus A_\nu$. The set $W_\nu$ is an open subset of an absolute neighbourhood retract, hence a path-connected and locally path-connected absolute neighbourhood retract, i.e. it is $C^0$ and $LC^0$. Since $\dim\hat Y\le1$, the extension theorem of Kuratowski \cite{ku35} and Dugundji \cite[Theorem~9.1]{du58} shows that $e_\nu$ extends to a continuous map $E_\nu:\hat Y\to W_\nu$.

\textit{The filling.} Put $t_\nu:=\sum_i[Q_i,E_\nu\circ q\circ\pi_i]$. Each term is thin, because the restriction of $E_\nu\circ q$ to $\pi_i(Q_i)$ extends to an open neighbourhood in $N_i$, $W_\nu$ being an absolute neighbourhood retract. The faces of $t_\nu$ behave as follows.
\begin{itemize}
\item Faces of the first two kinds cancel or vanish, because $q\circ\pi_i=q\circ\pi_j\circ h$ on them.
\item Remaining thin faces still have constant maps.
\item A face contributing to $w$ keeps its map $G\circ q\circ\pi_i=f_i$ if its image lies in $A_\nu$, and gets a constant map otherwise.
\end{itemize}
Hence $\partial t_\nu-w_\nu$ is thin, where $w_\nu$ is the sum of the terms of $w$ that map into $W_\nu$. Then $t:=\frac1N\sum_\nu t_\nu$ has the required property for the original $c$.
\end{proof}

\begin{corollary}\label{cor_degree_two}
Suppose that Question \ref{q_dimension} has a positive answer for $m=1$. Then for every Euclidean neighbourhood retract $X$, the map $p_*$ is an isomorphism in degrees $\le2$ and surjective in degree $3$.
\end{corollary}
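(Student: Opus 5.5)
The plan is to combine Proposition \ref{prop_K2} with Theorem \ref{thm_low_degrees}, taking $k=2$. Theorem \ref{thm_low_degrees} asks for $H_j(\mathcal{K}_*)=0$ for all $j\le2$ and for every open cover $\{U,V\}$ of an open subset $O$ of a Euclidean space. Given that, it yields that $p_*$ is an isomorphism for $j\le2$ and surjective for $j=3$, for every Euclidean neighbourhood retract $X$, which is exactly the claim.

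So the only thing to verify is $H_j(\mathcal{K}_*)=0$ for $j\le2$. For $j=0,1$ this holds because $\mathcal{K}_0=\mathcal{K}_1=0$ by Proposition \ref{prop_reduction}. For $j=2$, note that an open subset $O$ of a Euclidean space is a metrizable absolute neighbourhood retract. Under the assumed positive answer to Question \ref{q_dimension} for $m=1$, Proposition \ref{prop_K2} then gives $\mathcal{K}_2=0$, and hence $H_2(\mathcal{K}_*)=0$. Note that only the vanishing of the chain groups in degrees $\le2$ enters; no information about $\mathcal{K}_3$ is used.

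The only point needing care is that the hypotheses of Theorem \ref{thm_low_degrees} are quantified over all open covers of all open subsets of Euclidean spaces. The induction on finite unions of convex sets in that proof applies Proposition \ref{prop_reduction} to sets $O=U\cup V$ that are open in $\mathbb{R}^n$. Proposition \ref{prop_K2} is stated for exactly such $O$, so the hypotheses match. The passage to colimits and to retracts is handled inside Theorem \ref{thm_low_degrees}. I do not expect any step to be an obstacle; the substantive work lies in Proposition \ref{prop_K2} and in the open dimension-theoretic question.
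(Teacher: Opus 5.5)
Your proposal is correct and is exactly the paper's argument. The paper proves this corollary by combining $\mathcal{K}_0=\mathcal{K}_1=0$ from Proposition \ref{prop_reduction} and $\mathcal{K}_2=0$ from Proposition \ref{prop_K2} (which applies because open subsets of Euclidean spaces are metrizable absolute neighbourhood retracts) with Theorem \ref{thm_low_degrees} for $k=2$.
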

\begin{proof}
Combine Propositions \ref{prop_reduction} and \ref{prop_K2} with Theorem \ref{thm_low_degrees} for $k=2$.
\end{proof}

We now settle Question \ref{q_dimension} for $m=1$ in a large class of cases. A continuous map $\gamma=(\alpha,\beta):[0,1]\to[0,1]^2$ is \textit{piecewise monotone} if $[0,1]$ is a finite union of closed intervals (\textit{laps}) on each of which both $\alpha$ and $\beta$ are strictly monotone or constant. For instance, real-analytic maps are piecewise monotone. For general $\gamma$, let $\Sigma^*(\gamma)$ be the closed set of points of $[0,1]$ that have no neighbourhood on which $\gamma$ is piecewise monotone, i.e. the points where laps accumulate. If $\gamma$ is $C^1$, then $\Sigma^*(\gamma)$ is contained in the set of critical points of $\alpha$ and $\beta$.

\begin{proposition}\label{prop_dim_laps}
Let $\gamma_1,\dots,\gamma_r:[0,1]\to[0,1]^2$ be continuous, let $R$ be the smallest closed equivalence relation on $[0,1]$ containing their images, $Y=[0,1]/R$ and $q:[0,1]\to Y$ the quotient map.
\begin{enumerate}[(a)]
\item If every $\gamma_e$ is piecewise monotone, then $Y$ is the union of a finite set and countably many open sets, each homeomorphic to a subset of $\mathbb{R}$. In particular $\dim Y\le1$.
\item In general, $\dim Y\le1$ if and only if $\dim q(A^*)\le1$, where $A^*:=\bigcup_e\alpha_e(\Sigma^*(\gamma_e))$. In particular $\dim Y\le1$ if every $\Sigma^*(\gamma_e)$ is countable.
\item If $r=1$ and $\gamma_1=(\mathrm{id},T)$ for an arbitrary continuous map $T:[0,1]\to[0,1]$, then $Y$ is a point or an arc.
\end{enumerate}
\end{proposition}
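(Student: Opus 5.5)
Throughout, write $\Gamma:=\bigcup_e\gamma_e([0,1])\subseteq[0,1]^2$ and let $\Gamma^{\mathrm{sym}}:=\Gamma\cup\Gamma^{t}\cup\Delta$, where $\Gamma^t$ is the transpose and $\Delta$ the diagonal. Then $R$ is the closure (iterated transfinitely if necessary) of the transitive hull of $\Gamma^{\mathrm{sym}}$. $Y=[0,1]/R$ is compact metrizable, so by the countable sum theorem for covering dimension it suffices in (a) and (b) to cover $Y$ by countably many closed sets of dimension $\le1$, or equivalently by countably many open sets each embedding in $\mathbb{R}$ together with a countable closed remainder.

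For (a), I would subdivide $[0,1]$ at the finitely many lap endpoints of all $\gamma_e$, and at the values $\alpha_e,\beta_e$ take there. This gives a finite set $P\subseteq[0,1]$ whose complement is a finite union of open intervals $J_k$. On each lap, $\gamma_e$ is either a point, a horizontal or vertical segment, or the graph of a strictly monotone homeomorphism between subintervals. A segment $\{x\}\times[c,d]$ with $c<d$ collapses a whole interval to the class of a point, and I would add such intervals to a ``collapsed'' part. Monotone graphs generate a pseudogroup $\mathcal{P}$ of partial homeomorphisms between the $J_k$. The relation generated by $\mathcal{P}$ before closure is the orbit relation of $\mathcal{P}$. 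The key step is to show that taking the closure only identifies orbits that accumulate on each other along $P$, or on the collapsed intervals. To do this I would decompose $[0,1]\setminus P$ into countably many $\mathcal{P}$-invariant open pieces on which the orbit relation is already closed and has a local section (an open interval $I$ meeting each orbit at most once near any point). On such a piece, $q$ restricted to $I$ is an open embedding of $I$ into $Y$. The leftover points form the closure of the orbits of $P$ and of the collapsed intervals. I expect this leftover to map onto a countable set in $Y$, since each collapsed interval and each point of $P$ gives a single class, and a countable compact set has dimension $0$.

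The main obstacle is making the claim in (a) precise: pseudogroups of interval maps can have minimal exceptional (Cantor-like) sets, on which the closure is not locally a section. Here I would use finiteness of laps. Denjoy-type exceptional minimal sets need compositions to be defined on nested intervals around the set, and with finitely many monotone generators on an interval (no circle rotation is available) every orbit closure is either finite, or contains an interval of fixed points of an element, or accumulates at $P$. If this fails I would fall back to a direct dimension estimate: $Y$ is the image of $[0,1]$ under a monotone-decomposition argument, and I would check that all non-degenerate classes of $R$ are countably many, using the fact that the decomposition is upper semicontinuous.

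For (b), the direction ``$\Rightarrow$'' holds because $q(A^*)$ is a closed subset of $Y$. For ``$\Leftarrow$'', on the open set $[0,1]\setminus\overline{A^*}$ every $\gamma_e$ is locally piecewise monotone near the relevant preimages, so the argument of (a) applies locally and gives countably many pieces of dimension $\le1$. The sum theorem, together with $\dim q(A^*)\le1$, then gives $\dim Y\le1$. If every $\Sigma^*(\gamma_e)$ is countable, then $q(A^*)$ is countable and compact, hence $0$-dimensional. For (c), the relation generated by $x\sim T(x)$ is closed under the monotone hull: if $x<y$ are related, the intermediate value theorem applied to $T$ shows that the classes are intervals. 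Then $Y$ is a quotient of $[0,1]$ by an upper semicontinuous decomposition into closed intervals, which is a point or an arc.
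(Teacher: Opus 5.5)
There are genuine gaps in all three parts. In (a), your key step is false: you claim that on an invariant open piece where the orbit relation is relatively closed and has a local section $I$, the map $q|_I$ is an open embedding. This fails because closure and transitivity can collapse whole pieces. Take the single generator $\gamma(t)=(t,t/2)$. Then $P=\{0,\tfrac12,1\}$, and the closure of the orbits of $P$ is $\{0\}\cup\{2^{-n}:n\ge0\}$. On the complementary piece $\bigcup_n(2^{-n-1},2^{-n})$ the orbit relation is relatively closed, with section $(\tfrac12,1)$. Yet $(t,2^{-n}t)\to(t,0)$ gives $(t,0)\in R$ for all $t$, so $Y$ is a point. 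Orbits accumulating at $P$ are therefore the dangerous case, not a harmless leftover.

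The missing idea is to use closedness of $R$ as a rigidity statement, in three steps. First, collapse the connected components of the classes. This gives an arc or point $Z$ on which the generators descend to homeomorphisms between subarcs, and no class contains an arc. Second, no composition of generators and their inverses can shrink a non-degenerate interval $V$ towards a point $c$, since otherwise $V\times\{c\}\subseteq R$. Third, let $F$ be the closure of the orbit of $P$. Each component of $Z\setminus F$ then has only finitely many images, and a composition mapping such a component into itself is the identity or an involution. Let $\hat F$ be the finite union of the classes of $P$. The finite orbit relation on $Z\setminus\hat F$, together with $R\cap(\hat F\times\hat F)$, is then a closed equivalence relation containing the generators, so it equals $R$. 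Fundamental domains give the open sets embedding in $\mathbb{R}$. Your trichotomy for orbit closures is unproved and would not yield this finiteness. Your fallback is false: for $\gamma=(\mathrm{id},1-\mathrm{id})$, every class $\{x,1-x\}$ with $x\neq\tfrac12$ is non-degenerate.

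Part (c) fails on that same example. For $T(x)=1-x$ the classes $\{x,1-x\}$ are not intervals and $Y\cong[0,\tfrac12]$, so no intermediate value argument can show that classes are intervals. What is needed is the following. After the same monotone reduction, the iterates $T^k$ are equicontinuous; a failure would, by the intermediate value theorem and closedness, put an interval into one class. An idempotent $u$ of the compact semigroup of limits of $T^{n_i}$ (Ellis--Numakura) is a retraction onto $\Lambda=\bigcap_nT^n(Z)$ commuting with $T$. Then $T|_\Lambda$ is the identity or an involution $\sigma$. Finally $R=(u\times u)^{-1}(\sim)$, with $\sim$ equality or the orbit relation of $\sigma$, so $Y\cong\Lambda/{\sim}$.

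In (b), the phrase ``the argument of (a) applies locally'' hides the real issue. Part (a) is not a local statement, and two points far from $A^*$ can be linked by chains through the bad region. Moreover, the non-monotone generators also act near $\beta_e(\Sigma^*(\gamma_e))$, which your set $[0,1]\setminus A^*$ ignores. Let $S_\eta$ be the relation generated by the restrictions of the $\gamma_e$ to points at distance $\ge\eta$ from $\Sigma^*(\gamma_e)$, and put $A^\dagger=\bigcup_e(\alpha_e\cup\beta_e)(\Sigma^*(\gamma_e))$. You must show that the classes of $R$ outside the $S_\eta$-saturation of a small neighbourhood of $A^\dagger$ are $S_\eta$-classes. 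One way: $S_\eta\cup R_Z$, with $R_Z$ the closed relation generated on that saturation, is a closed equivalence relation containing all generators. Then apply (a) to $S_\eta$, let $\eta\to0$, and use $q(A^*)=q(A^\dagger)$ together with the countable sum theorem.
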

The proof is given in Appendix \ref{app_intervals}.

Parts (a) and (b) of Proposition \ref{prop_dim_laps} remain valid in the setting of Question \ref{q_dimension} for $m=1$. Only finitely many components of $\mathcal{N}$ meet $K$, and the underlying space of each is an interval or a circle. Cut each such circle at a point $p$, outside $K$ unless the circle is contained in $K$, that is a regular value of all components of the generators, embed the resulting intervals disjointly in $[0,1]$, and extend $R$ by the identity on the complement of $K$, which changes the quotient only by adding open intervals. Subdivide the domains $F_e$ at the finitely many preimages of the cut points, cut the remaining circle components of the $F_e$ at an arbitrary point, and, for each cut circle contained in $K$, add the constant generator identifying its two ends. In the proof of Proposition \ref{prop_K2}, the generators are the pairs $(\pi_i|_F,\pi_j\circ h|_F)$ and $(\pi_i|_F,p_F)$ on $1$-dimensional faces. Hence the construction of Proposition \ref{prop_K2} works without further assumptions for those $c$ whose terms admit thin factorizations for which these generators are piecewise monotone (for example, real-analytic), or more generally have countable sets $\Sigma^*$. This does not yet give $\mathcal{K}_2=0$.

\begin{remark}[the kernel of $p_*$ and thin cycles]\label{rem_thin_cycles}
Call a singular simplex $\sigma:\Delta^n\to X$ \textit{thin} if $\sigma=g\circ\pi$, where $\pi:\Delta^n\to N$ is smooth in the orbifold sense, $N$ is an orbifold (possibly with corners, not necessarily compact) of dimension $<n$, and $g$ is continuous. If $z$ is a singular cycle made of thin simplices, then $p(z)$ is thin with zero boundary, so $p(z)\in D_n(X)$ and $p_*[z]=0$. Conversely, we sketch why every class in $\ker p_*$ is of this form. We use the fundamental chains of Definition \ref{def_fundamental_chains} and Lemma \ref{lem_fundamental_chains} in Appendix \ref{app_fundamental}; the proof of that lemma does not use triangulations of $Q$. For every compact oriented orbifold with corners $Q$ we fix $\mu_Q$ and $\nu_Q$ as in Lemma \ref{lem_fundamental_chains}(c). Thus $\mu_Q$ is a fundamental chain of $Q$, $\partial\mu_Q=(i_Q)_*\nu_Q$, and the restriction of $\nu_Q$ to every component $F$ of $\partial Q$ is a fundamental chain of $F$.

\textit{Sketch.}
\begin{itemize}
\item Suppose $p(s)=\partial w+d$ with $d\in D_n(X)$. Choose a representative $(Q,f)$ of each basis class of Lemma \ref{lem_Cg} occurring in $w$, in $d$, or as an iterated face of these, and put $\varphi(Q,f):=f_*\mu_Q$.
\item Put $\varphi:=0$ on classes admitting an orientation reversing automorphism $a$. This is consistent, since $-a_*\mu_Q$ is again a fundamental chain, so $f_*\mu_Q=\frac12f_*(\mu_Q+a_*\mu_Q)$ is a thin chain plus the boundary of a thin chain, by the next item.
\item By Lemma \ref{lem_fundamental_chains}(d), two fundamental chains of the same $F$ differ by $\partial\beta+(i_F)_*\gamma$, with $\beta\in C^{\mathrm{orb}}_{\dim F+1}(F)$ and $\gamma\in C^{\mathrm{orb}}_{\dim F}(\partial F)$. (By Lemma \ref{lem_fundamental_chains}(a) the relevant homology is that of the pair $(|F|,B_F)$, but the correction term is $(i_F)_*\gamma$ with $\gamma$ a chain on $\partial F$, not merely a chain on the subspace $B_F$. When faces are not embedded, chains on $B_F$ need not lift to $\partial F$; see Example \ref{ex_teardrop_chains}.) After push-forward, $\beta$ and $(i_F)_*\gamma$ consist of thin simplices, since they factor through $F$, resp. $\partial F$, whose dimensions are smaller than their degrees; so the difference is a thin chain plus the boundary of a thin chain.
\item Hence $\partial\varphi(w)-\varphi(\partial w)$ is a thin chain plus the boundary of a thin chain: each occurrence of a face is compared with the chosen representative by the previous item, whether or not it cancels in $\partial w$. No compatibility of fundamental chains or triangulations along cancelling faces is needed, and holonomy plays no role.
\item Likewise $s-\varphi(p(s))$ is a thin chain plus the boundary of a thin chain: if $s'$ is a simplex of $s$, we may take the chosen representative of its class to be $(\Delta^n,s_0)$, so that $s'=s_0\circ h$ for an orientation preserving diffeomorphism $h$ of $\Delta^n$; then the identity of $\Delta^n$ and $h$ are both fundamental chains of $\Delta^n$. Moreover $\varphi(d)$ is thin.
\item Altogether $s-\partial\big(\varphi(w)+\theta\big)=z$, for a chain $\theta$ of thin simplices and a cycle $z$ of thin simplices.
\end{itemize}
Thus $\ker p_*$ consists exactly of the classes of thin singular cycles. In the smooth theory on manifolds these classes vanish by Remark \ref{rem_stokes}, and on orbifolds by Theorem \ref{main_thm}. In the continuous theory, thin simplices can have space-filling images.

Let $X$ be a metrizable absolute neighbourhood retract, and suppose that Question \ref{q_dimension} has a positive answer for $m=n-1$. Then thin singular $n$-cycles are null-homologous.
\begin{itemize}
\item Replacing $z$ by an integer multiple, write $z=\sum_a\varepsilon_a\,g_a\circ\pi_a$ with $\varepsilon_a=\pm1$, repetitions allowed, $\pi_a:\Delta^n\to N_a$, $\dim N_a\le n-1$. Since $\partial z=0$, the face occurrences $(a,j)$ can be matched in pairs $\{(a,j),(b,l)\}$ with opposite signs and $g_a\circ\pi_a\circ d_j=g_b\circ\pi_b\circ d_l$.
\item Let $K:=\bigsqcup_a\pi_a(\Delta^n)\subseteq\bigsqcup_aN_a$. Let $R$ be the smallest closed equivalence relation on $K$ containing the images of the smooth maps $(\pi_a\circ d_j,\pi_b\circ d_l):\Delta^{n-1}\to K\times K$ for the matched pairs. This is the setting of Question \ref{q_dimension} with $m=n-1$, so $\hat Y:=K/R$ is a compact metrizable space with $\dim\hat Y\le n-1$.
\item Put $g:=\bigsqcup_ag_a|_{\pi_a(\Delta^n)}$. The relation $\{(y,y'):g(y)=g(y')\}$ is a closed equivalence relation on $K$ ($X$ being Hausdorff) containing the generators, hence containing $R$. So $g$ descends to a continuous map $G:\hat Y\to X$ with $G\circ q\circ\pi_a=g_a\circ\pi_a$, where $q:K\to\hat Y$ is the quotient map.
\item The chain $\tilde z:=\sum_a\varepsilon_a\,q\circ\pi_a$ is a singular cycle in $\hat Y$, because matched faces become equal after composing with $q$, and $G_*\tilde z=z$.
\item Embed $X$ as a closed subset of a convex subset $C$ of a normed space (Kuratowski--Wojdys\l{}awski), and let $r:O'\to X$ be a retraction of an open neighbourhood $O'\subseteq C$. Choose $\varepsilon>0$ with $B_C(G(y),\varepsilon)\subseteq O'$ for all $y\in\hat Y$, and a finite open cover $\mathcal U$ of $\hat Y$ of order $\le n-1$ with $\operatorname{diam}G(U)<\varepsilon$ for $U\in\mathcal U$ (by uniform continuity of $G$ and \cite[Theorem~1.6.12]{en78}).
\item Let $\kappa:\hat Y\to P:=|\mathcal N(\mathcal U)|$ be the canonical map, and $\psi(\sum_Ut_Uv_U):=\sum_Ut_U\,G(y_U)$ with $y_U\in U$. The homotopy $r\big((1-s)G+s\,\psi\kappa\big)$ shows $G\simeq r\psi\kappa$. Since $\dim P\le n-1$, $[z]=(r\psi)_*\kappa_*[\tilde z]=0$.
\end{itemize}
One cannot argue with $H_n(\hat Y)$ directly: the singular homology of a compactum can be non-zero above its dimension \cite{bm62}.

Together with the sketch above, this would show that $p_*:H_n(X;\mathbb{Q})\to GH_n(X)$ is injective for every metrizable absolute neighbourhood retract $X$ as soon as Question \ref{q_dimension} has a positive answer for $m=n-1$, without any information on $\mathcal K_*$. In that case, consider the exact sequence $H_{n+1}(GC^{\mathcal U}(O))\to GH_{n+1}(O)\to H_n(\mathcal K_*)\to H_n(GC^{\mathcal U}(O))\to GH_n(O)$ from the proof of Proposition \ref{prop_reduction}, where $H_*(GC^{\mathcal U}(O))\cong H_*(O)$ if $p_*$ is an isomorphism for $U$, $V$, $U\cap V$.
\begin{itemize}
\item The map $H_n(\mathcal K_*)\to H_n(O)$ vanishes, so $H_n(\mathcal K_*)$ is the cokernel of $p_*$ in degree $n+1$.
\item Hence the extension obstruction of Remark \ref{rem_locality_problem}(2), which obstructs $\mathcal K_n=0$, can only affect surjectivity of $p_*$ in degree $n+1$.
\item In item (1) of that remark, it is injectivity in degree $2$ that would fail. This is excluded by the present argument as well.
\end{itemize}
\end{remark}

\begin{remark}[the locality problem]\label{rem_locality_problem}
Question \ref{q_dimension} is open. For $m=1$ it is answered positively in the cases of Proposition \ref{prop_dim_laps}; the case $m\ge2$ is not treated here. Consequently $\mathcal{K}_2=0$ is known only conditionally (Proposition \ref{prop_K2}), and the acyclicity of $\mathcal{K}_*$ in degrees $\ge3$ is open.
\begin{enumerate}
\item \textit{Why degree $2$ matters.} Cover $S^2$ by open neighbourhoods $U,V$ of the two closed hemispheres. Suppose there were a degenerate $2$-chain $e_U+e_V$ with $e_U\in S_2(U)$, $e_V\in S_2(V)$ and $\partial e_U\equiv\omega\equiv-\partial e_V$ modulo thin chains, for a non-thin $1$-cycle $\omega$ in $U\cap V$ representing the generator of $H_1(U\cap V)$. By Theorem \ref{thm_low_degrees} in degree $1$, such an element is non-zero in $\mathcal{K}_2$, and the fundamental class of $S^2$ would vanish in $GH_2(S^2)$. By Proposition \ref{prop_K2}, this cannot happen if Question \ref{q_dimension} has a positive answer for $m=1$.
\item \textit{Higher degrees.} The proof of Proposition \ref{prop_K2} extends as follows. The only change is that a remaining thin face $F$ with factorization $f_i|_F=g_F\circ\pi_F$ contributes $\pi_F(F)$ to $K$ and the generator $(\pi_i|_F,\pi_F)$, instead of a point $p_F$. Suppose that Question \ref{q_dimension} has a positive answer for $m=n-1$, and that the path components of $U\cap V$ are $(n-2)$-connected. Then $\mathcal{K}_n=0$, because maps from closed subsets of metrizable spaces of dimension $\le n-1$ into $(n-2)$-connected absolute neighbourhood retracts extend \cite[Theorem~9.1]{du58}. (An ANR is $LC^{n-2}$, and $(n-2)$-connected means $C^{n-2}$.) In general the extension of $e_\nu$ is obstructed by classes in $\check H^{j+1}(\hat Y,A;\pi_j(W_\nu))$, $1\le j\le n-2$, with local coefficients if $W_\nu$ is not simple. We do not know whether these obstructions are realized by degenerate chains.
\item \textit{On Question \ref{q_dimension}.} By Proposition \ref{prop_dim_laps}, for $m=1$ the only open case is that of curves whose laps accumulate on an uncountable set. A single graph $(\mathrm{id},T)$ is settled by part (c).
\begin{itemize}
\item For curves meeting the diagonal, two self-maps suffice. Splice all such curves along the diagonal into one curve $(\alpha,\beta)$ with values in $[0,\frac12]$ and starting at $(\frac12,\frac12)$. Let $r_1,r_2$ be the identity on $[0,\frac12]$ and equal to $\alpha$, resp. $\beta$, on $[\frac12,1]$. Then $x\sim r_1(x)$, $x\sim r_2(x)$ give the same quotient. Curves not meeting the diagonal are not covered by this reduction.
\item The proof of Proposition \ref{prop_dim_laps}(c) extends to finitely many pairwise commuting self-maps. We only sketch this: the minimal ideal of the compact abelian enveloping semigroup is a group, and it acts on $\Lambda=u[0,1]$ through $\{\mathrm{id},\sigma\}$. For general pairs, for example the two retractions above, the enveloping semigroup is right-zero, and the argument gives no information.
\item In the setting of Proposition \ref{prop_dim_laps}, if every $\gamma_e$ meets the diagonal, every map $g:Y\to S^1$ is null-homotopic. Indeed, lift $g\circ q$ to $\Phi:[0,1]\to\mathbb{R}$; then $\Phi\circ\beta_e-\Phi\circ\alpha_e$ is an integer constant, which vanishes at a point on the diagonal. Hence $\{\Phi(x)=\Phi(y)\}$ is a closed equivalence relation containing $R$, and $\Phi$ descends to a lift of $g$. So $Y$, a Peano continuum, satisfies $\dim Y\le1$ if and only if it contains no simple closed curve. If $\dim Y\le1$, the identity of a simple closed curve would extend to a map $Y\to S^1$ \cite[Theorem~3.2.10]{en78}, which is null-homotopic. Conversely, a Peano continuum without simple closed curves is a dendrite \cite[Ch.~X]{nadler}. Dendrites are regular, i.e.\ every point has arbitrarily small open neighbourhoods with finite boundary \cite[Ch.~X]{nadler}, so $\operatorname{ind}Y\le1$, hence $\dim Y\le1$ \cite[Theorem~1.7.7]{en78}.
\end{itemize}
Two further remarks show that the question is delicate.
\begin{itemize}
\item Finiteness of the number of generators is essential. Every closed equivalence relation is generated by a countable dense subset, so countably many constant generators can produce $\hat Y\cong[0,1]^2$, for example from the fibres of a space-filling curve.
\item Metric arguments cannot work. In Remark \ref{rem_holonomy}, take the Cantor set $C$ of Hausdorff dimension $0$, which is possible since every closed subset of a manifold is the zero set of a smooth function. Take $K=S^1$, with $R$ generated by the graph of $a$. Then $K/R\cong S^1$, but for no compatible metric is the quotient map Hölder continuous.
\end{itemize}
\end{enumerate}
\end{remark}

\subsection{Operations on geometric chains}\label{sec_operations}
As stated in the introduction, geometric chains are well behaved under natural operations. We discuss two of them.
\begin{enumerate}
\item (Cartesian product).
Let $X,Y$ be topological spaces. There is a product $GC_*(X) \otimes GC_*(Y) \rightarrow GC_*(X\times Y)$, defined by
\begin{equation*}
[P,f_X]\otimes[Q,f_Y]\mapsto[P\times Q,\ f_X\times f_Y],
\end{equation*}
where the Cartesian product $P\times Q$ of orbifolds with corners carries the induced orientation and is again an orbifold with corners. It satisfies the Leibniz rule $\partial(x\times y)=\partial x\times y+(-1)^{|x|}x\times\partial y$, since $\partial(P\times Q)=\partial P\times Q\sqcup(-1)^{\dim P}P\times\partial Q$ as oriented orbifolds. It preserves degenerate chains: if $x$ is thin, then $x\times y$ is thin, and if $x\in D$, then both terms of $\partial(x\times y)$ are thin. The same formula defines $GC^\infty_*(X)\otimes GC^\infty_*(Y)\to GC^\infty_*(X\times Y)$ whenever $X\times Y$ carries a smooth structure for which products of smooth maps are smooth, for instance for orbifolds with corners.
\item (Pullback).
Let $f: Y \rightarrow X$ be an oriented smooth fibre bundle over a smooth manifold $X$ without boundary, whose fibre $F$ is a compact oriented smooth manifold, possibly with corners. For a smooth geometric simplex $x: P \rightarrow X$, the fibre product $R:=P\times_X Y$ is a compact orbifold with corners, locally $P\times F$, oriented base first, and we put $f^*[P,x]:=[R,\mathrm{pr}_Y]$. This defines $f^*: GC^\infty_*(X) \rightarrow GC^\infty_{*+\dim F}(Y)$.
\begin{itemize}
\item If $x=g\circ\pi$ is thin, then $R\to N\times_X Y$ exhibits $f^*[P,x]$ as thin.
\item We have $\partial f^*x=f^*\partial x+(-1)^{|x|}\,(i_Y)_*(f|_{\partial})^*x$, where $f|_\partial=f\circ i_Y:\partial Y\to X$ is the bundle with fibre $\partial F$ (Convention \ref{conv_faces}). In particular $f^*$ is a chain map if $\partial F=\emptyset$.
\item If $x\in D^\infty_*(X)$, then $x$ and $\partial x$ are thin, so $f^*x$, $f^*\partial x$ and $(f|_\partial)^*x$ are thin. By the boundary formula $f^*x\in D^\infty_*(Y)$, so $f^*$ preserves degenerate chains.
\end{itemize}
\textit{Pullback over orbifolds.} Let $X$ be an orbifold with corners and $f:Y\to X$ an oriented orbifold fibre bundle with fibre $F$ as above. That is, over every chart $(\tilde V,H)$ of $X$, $f$ is the projection $H\ltimes(\tilde V\times F)\to H\ltimes\tilde V$, with $H$ acting on $F$ by orientation preserving diffeomorphisms, compatibly with embeddings of charts. The pullback is defined on the complex $GC^{\mathrm{mor}}_*$ of Definition \ref{def_orb_chains}. For $[P,\hat x]\in C^{g,\mathrm{mor}}_*(X)$ let $R:=P\times_XY$ be the fibre product of groupoids \cite{mm}. Where $\hat x$ is given by $(\tilde x,\lambda)$ on a chart $(\tilde U,G)$ (Lemma \ref{lem_orb_maps}(a)), $R$ is represented by the action groupoid of $G$ on $\tilde U\times F$, with $G$ acting on $F$ through $\lambda$. Near a point $(\tilde u,y)$ with stabiliser $G'$, the product of a $G'$-invariant neighbourhood of $\tilde u$ and a linear chart of $F$ at $y$ for $G'$ (Lemma \ref{lem_doubling}(1)) is a chart in the sense of Definition \ref{orbi_corners}; its local group may permute the faces of $F$, as that definition allows. So $R$ is a compact orbifold with corners, effective because $G$ acts effectively on $\tilde U$, and oriented base first. Its boundary is $\partial P\times_XY\sqcup P\times_X\partial_vY$, where $\partial_vY\to X$ is the bundle with fibre $\partial F$. Put $f^*[P,\hat x]:=[R,\widehat{\mathrm{pr}}_Y]$.
\begin{itemize}
\item An isomorphism of sources and a $2$-isomorphism of morphisms induce an isomorphism of fibre products over $Y$, so $f^*$ is well defined on $C^{g,\mathrm{mor}}_*(X)$.
\item If $\hat x\cong\hat g\circ\hat\pi$ through $N$, then $R\cong P\times_N(N\times_XY)$, and $\widehat{\mathrm{pr}}_Y$ factors through $N\times_XY$, an orbifold with corners in the wide sense of dimension $\dim N+\dim F<\dim R$. So $f^*$ preserves $S^{\mathrm{mor}}_*$. The boundary formula above holds with the same proof, with $\partial_vY$ and its inclusion into $Y$ in place of $\partial Y$ and $i_Y$ (if $\partial X\neq\emptyset$, then $\partial Y\neq\partial_vY$), so $f^*$ preserves $D^{\mathrm{mor}}_*$.
\end{itemize}
Hence $f^*:GC^{\mathrm{mor}}_*(X)\to GC^{\mathrm{mor}}_{*+\dim F}(Y)$ is defined, and it is a chain map if $\partial F=\emptyset$. By Proposition \ref{prop_orb_comparison}, these complexes compute $H_*(X;\mathbb{Q})$ and $H_*(Y;\mathbb{Q})$. The definition uses two features of $GC^{\mathrm{mor}}_*$. First, $f^*[P,\hat x]$ depends on $\hat x$ and not only on $|\hat x|$ (Example \ref{ex_lift_not_unique}). Second, thin factorizations consist of morphisms; for a factorization through liftable maps only, $N\times_XY$ is not defined. The fibre product of underlying spaces is in general not an orbifold. For example, take $Y=(\mathbb{R}^2\times S^1)/\mathbb{Z}_2\to\mathbb{R}^2/\mathbb{Z}_2$, with $\mathbb{Z}_2$ acting by $-1$ on $\mathbb{R}^2$ and by rotation by $\pi$ on $S^1$, and $x(t)=[(t,0)]$, $t\in[-1,1]$. The fibre of $|Y|\to|X|$ over $x(t)$ is $S^1$ for $t\neq0$ and $S^1/\mathbb{Z}_2$ for $t=0$, so $|P|\times_{|X|}|Y|$ is not a manifold near $t=0$. For a manifold $X$ and merely continuous $x$, the fibre product is only a topological fibre bundle over $|P|$, and the definition requires additional care, which we do not address here.
\end{enumerate}

\begin{remark}[twisted coefficients]\label{rem_twisted}
Let a finite group $\varrho$ act on an orbifold with corners $M$ by diffeomorphisms, and let $L$ be a one-dimensional representation of $\varrho$ over $\mathbb{Q}$. Then $\varrho$ acts on $GC^\infty_*(M)$ by $g[Q,f]=[Q,g\circ f]$, $g\in\varrho$, preserving $D^\infty_*$, and diagonally on $GC^\infty_*(M)\otimes_{\mathbb{Q}}L$. We put
\begin{equation*}
GC^\infty_*(M/\varrho;\mathbb{Q}^\epsilon):=\big(GC^\infty_*(M)\otimes_{\mathbb{Q}}L\big)_\varrho .
\end{equation*}
Taking coinvariants of a finite group is exact over $\mathbb{Q}$, and $p$ is $\varrho$-equivariant. Hence Corollary \ref{cor_corners_general} gives $GH^\infty_*(M/\varrho;\mathbb{Q}^\epsilon)\cong(H_*(M;\mathbb{Q})\otimes L)_\varrho$, the rational homology of the orbifold $M/\varrho$ with coefficients in the orbifold local system $\mathbb{Q}^\epsilon$ defined by $L$ (if $\varrho$ acts freely, the homology of the quotient space with local coefficients). Since diffeomorphisms underlie unique orbifold morphisms (Lemma \ref{lem_orb_maps}(d)), the same holds for $GC^{\mathrm{orb}}_*$ and $GC^{\mathrm{mor}}_*$ by Proposition \ref{prop_orb_comparison}.
\end{remark}

As indicated in the introduction, we expect GHT to be useful in situations where singular chains lead to difficulties. In the next section we illustrate this with the author's work on Batalin--Vilkovisky (BV) algebra structures in open-closed topological conformal field theory.

\section{Applications}\label{sec_applications}
As was briefly mentioned in the introduction, geometric chains are well suited for the construction of a BV structure on chains on the compactified moduli spaces of Riemann surfaces with boundary and marked points, extending the work of Costello \cite{cos04,cos05} on algebraic definitions of Gromov--Witten potentials. We explain this in more detail. The BV operator $\Delta$ involves pullbacks of chains along circle bundles, which are not defined on singular chains. Costello works with homotopy coinvariants of the circle actions on singular chains (see the footnote on page 5 of \cite{cos05}, and Section 5 there). This is not merely a technical device: the homotopy coinvariant model is also what allows one to include $\psi$-class insertions in the definition of categorical Gromov--Witten invariants, and to act on Hochschild chains, which makes the theory explicit in terms of ribbon graphs.

Geometric chains give another chain-level model for this BV structure, since pullbacks along circle bundles are defined on them (Section \ref{sec_operations}); this model does not use equivariant homotopy theory. We do not claim that it replaces the homotopy coinvariant model of \cite{cos05}; in particular, we do not discuss $\psi$-class insertions or the action on Hochschild chains here. Theorem \ref{main_thm}, in the form of Corollary \ref{cor_corners_general}, Proposition \ref{prop_orb_comparison} and Remark \ref{rem_twisted}, ensures that geometric chains compute the rational homology of the moduli spaces, with the twisted coefficients, which is what is needed to carry the homological arguments of \cite{cos05} over to the open-closed case. 
The following statements concern the BV structure on the space of geometric chains mentioned above.

\begin{definition}
Let $V$ be a graded vector space over a field $k$.
A \textit{dg-BV algebraic structure} on $V$ is a
quadruple $(V, \bullet, d, \Delta)$ satisfying the following three conditions:
\begin{enumerate}
\item $(V,\bullet,d)$ is a differential graded, (graded) commutative, (graded) associative algebra
over $k$. The differential $d$ is of degree 1 and $d(1) = 0$.
\item $\Delta$ is a second-order differential operator with respect to $\bullet$, i.e. the degree of $\Delta$ is 1,
$\Delta^2 = 0$, $\Delta(1) = 0$, and for all $a,b,c \in V$,
\begin{align*}
\Delta (a \bullet b \bullet c) & = & \Delta (a \bullet b) \bullet c + (-1)^{|a|}a \bullet \Delta (b \bullet c) + (-1)^{(|a|+1)|b|}b \bullet \Delta (a \bullet c) \\
& &-(\Delta a) \bullet b \bullet c - (-1)^{|a|} a \bullet (\Delta b) \bullet c - (-1)^{|a|+|b|} a \bullet b \bullet (\Delta c)
 \end{align*}
where $|\cdot|$ denotes the degree of an element.
\item $d$ and $\Delta$ graded-commute, i.e. $[d, \Delta] = d \Delta + \Delta d = 0$.
\end{enumerate}
\end{definition}
If $V$ is concentrated in degree 0 (with $d$ trivial), then the dg-BV algebraic structure is called a \textit{BV algebraic structure}.
\begin{remark}
Condition 2 is equivalent to requiring that the deviation of the operator $\Delta$ from being a derivation, defined by
\begin{equation*}
\{, \} := \Delta (ab) - \Delta (a)b - (-1)^{|a|}a\Delta (b)
\end{equation*}
is a (graded) Lie bracket and that $\{, \}$ is a (graded) derivation in each variable, i.e.
\begin{eqnarray*}
\{a,bc\} = b\{a,c\} + (-1)^{|b|}
\{a,b\}c \\
\{ab,c\} = a\{b,c\} + (-1)^{|a|}
\{a,c\}b
\end{eqnarray*}
Condition 3 is equivalent to $d$ being a (graded) derivation of the Lie bracket $\{, \}$, i.e.
\begin{equation*}
d\{a,b\} = \{da,b\} + (-1)^{|a|}
\{a,db\}
\end{equation*}
\end{remark}
The open-closed analogues of the moduli spaces of Riemann surfaces with marked points are the moduli spaces of bordered Riemann surfaces with marked points both in the interior and on the boundary. More precisely, consider the moduli spaces of isomorphism classes of stable
bordered Riemann surfaces of type $(g,b)$ with $(n, \vec{m})$ marked points and certain extra data,
namely, decorations by a real tangent direction, i.e., a ray, in the complex tensor product of the tangent spaces on each side of each interior node. We denote this moduli space by $\underline{M}^{b,\vec{m}}_{g,n}$. We work with the moduli spaces
\begin{equation*}
\underline{M}^{b,m}_{g,n} / \varrho = (\prod_{\vec{m}:\sum {m_i} = m}
\underline{M}^{b,\vec{m}}_{g,n} / Z_{m_1} \times \cdots \times Z_{m_b} ) / \varrho_b \times \varrho_n
\end{equation*}
of stable bordered Riemann surfaces as above with unlabelled boundary components
and marked points, that is, the quotient of the disjoint union
$ \underline{M}^{b,m}_{g,n} / \varrho = \prod_{\vec{m}:\sum {m_i} = m}
\underline{M}^{b,\vec{m}}_{g,n} $
of moduli spaces with labelled boundaries and marked points by an appropriate action
of the permutation group $\varrho = (\prod_{\vec{m}:\sum_{m_i} = m} Z_{m_1} \times \cdots \times Z_{m_b} ) \times \varrho_b \times \varrho_n $. Both $\underline{M}^{b,\vec{m}}_{g,n}$ and $\underline{M}^{b,m}_{g,n} / \varrho$ are oriented orbifolds with corners of dimension $6g-6+2n+3b+m$. The latter is equipped with a local system $\mathbb{Q}^\epsilon$ coming from a certain sign representation of $\varrho$ on a one-dimensional vector space $L$ over $\mathbb{Q}$, which is the orientation sheaf of $\underline{M}^{b,m}_{g,n} / \varrho$. The local system $\mathbb{Q}^\epsilon$ is obtained by the orbifold construction from the determinant bundle of $\underline{M}^{b,m}_{g,n}$, which is trivial since this space is oriented, together with the action of $\varrho$. See \cite{vhz} for details.

The space on which we introduce a BV algebraic structure is the space of geometric chains.
\begin{equation*}
V := \bigoplus_{b,m,n}C^{geom}_{*}(\underline{M}^{b,m}_n / \varrho; \mathbb{Q}^\epsilon)
\end{equation*}
where $\underline{M}^{b,m}_n /\varrho$ is the (real compactified) moduli space of stable bordered Riemann surfaces with $b$ boundary
components, $n$ interior marked points, and $m$ boundary marked points, just like the Riemann
surfaces in $\underline{M}^{b,m}_{g,n} / \varrho$, but possibly having several connected components of various
genera. Throughout this section, $C^{geom}_*$ denotes the complex $GC^{\mathrm{mor}}_*$ of Definition \ref{def_orb_chains}, whose chains are given by orbifold morphisms, with twisted coefficients as in Remark \ref{rem_twisted}. By Proposition \ref{prop_orb_comparison} it is quasi-isomorphic to $GC^\infty_*$, compatibly with $p$. We use $GC^{\mathrm{mor}}_*$ because pullbacks along circle bundles are defined on it (Section \ref{sec_operations}). It carries the grading of \cite{cos05,vhz}: a $k$-dimensional chain on $\underline{M}^{b,m}_{g,n}$ has degree $\dim\underline{M}^{b,m}_{g,n}-k$, so fundamental chains have degree $0$ and $d=\partial$ has degree $1$.
\begin{itemize}
\item The fundamental chain $[\underline{M}^{b,m}_{g,n}/\varrho]$ is the image of $\sum_{\vec m:\sum m_i=m}[\underline{M}^{b,\vec m}_{g,n},\hat\iota_{\vec m}]\otimes1$ in $\big(GC^{\mathrm{mor}}_*(\underline{M}^{b,m}_{g,n})\otimes L\big)_\varrho$, where $\hat\iota_{\vec m}$ is the inclusion of a union of components, a strict orbifold morphism.
\item Its boundary is computed with the boundary of Convention \ref{conv_faces}, so the moduli spaces need not have embedded faces.
\item Some of these moduli spaces are not effective: the generic point of a component may have a non-trivial automorphism group $\Gamma$ (e.g. for the one-holed torus). For these we use the convention after Definition \ref{orbifold}, i.e. the effective orbifold $\underline{M}_{\mathrm{red}}$ with the same underlying space, weighted by $1/|\Gamma|$. As a target, such a component is likewise replaced by $\underline{M}_{\mathrm{red}}$, without weight, so that Definition \ref{def_orb_chains} and Proposition \ref{prop_orb_comparison} apply; this does not change the homology, since $|\underline{M}_{\mathrm{red}}|=|\underline{M}|$. A circle bundle over such a component descends to $\underline{M}_{\mathrm{red}}$ after dividing each fibre by the image of $\Gamma$, which acts on it by orientation preserving diffeomorphisms of finite order, $\Gamma$ acting trivially on the base and the bundle being oriented. A finite subgroup of $\mathrm{Diff}^+(S^1)$ is conjugate to a group of rotations and acts freely, so the quotient fibre is again a circle; if $\Gamma$ acts effectively on the fibres, the total space is effective and is not changed by this operation. The resulting factors belong to the normalizations below. Pushforward along a gluing map $\underline{M}\to\underline{M}'$ in $GC^{\mathrm{mor}}_*$ requires a morphism $\underline{M}_{\mathrm{red}}\to\underline{M}'_{\mathrm{red}}$. Such a morphism exists if the generic automorphism group of $\underline{M}$ is mapped into that of $\underline{M}'$; otherwise only the underlying map is liftable. We do not verify this condition for the maps of \cite{vhz}.
\end{itemize}
The normalizations (orbit sizes, $1/|\varrho|$, $1/|\Gamma|$) are to be matched with those of \cite{vhz}.

One can canonically define an operator $\Delta = \Delta_c + \Delta_o + \Delta_{co}$ on $V$. It is proved that this is a BV operator.
\begin{theorem}
The operator $\Delta = \Delta_c+\Delta_o+\Delta_{co}$ is a graded second-order
differential on the dg graded commutative algebra $V$ and thereby defines the structure of
a dg BV algebra on $V$.
\end{theorem}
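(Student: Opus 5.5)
The plan is to follow the structure of the corresponding argument of Harrelson--Voronov--Z\'u\~niga, now carried out in $GC^{\mathrm{mor}}_*$ with twisted coefficients. The product $\bullet$ is the disjoint union of surfaces. It is induced by the Cartesian product of Section \ref{sec_operations} followed by push-forward along the disjoint-union map of moduli spaces, which is a strict orbifold morphism. The Leibniz rule for the Cartesian product gives that $(V,\bullet,d)$ is a graded commutative dg algebra. Graded commutativity comes from the symmetric group factors in $\varrho$ together with the sign representation $L$. Degenerate chains form an ideal, since products of thin chains with anything are thin.

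Each of $\Delta_c$, $\Delta_o$, $\Delta_{co}$ is defined as a composite $\mathrm{glue}_*\circ\pi^*$. Here $\pi$ is an oriented circle bundle (twisting a closed node, or choosing the attaching position along a boundary component), or a bundle with interval fibre for the open gluing. The map glue is the corresponding gluing morphism, and the construction is summed over pairs of punctures or boundaries and then symmetrized.

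I would verify the conditions in the following order.
\begin{enumerate}
\item Well-definedness. Pullback along orbifold bundles preserves $S^{\mathrm{mor}}_*$ and $D^{\mathrm{mor}}_*$ (Section \ref{sec_operations}), and push-forward preserves these as well (Lemma \ref{lem_D_local}). Both operations are $\varrho$-equivariant (Lemma \ref{lem_orb_maps}(d)), so they descend to coinvariants.
\item The identity $[d,\Delta]=0$. For circle fibres, $\pi^*$ is a chain map. For interval-type fibres, the extra vertical boundary term $(i_Y)_*(f|_\partial)^*x$ of Section \ref{sec_operations} must cancel against the corresponding term of another summand, or be degenerate after gluing.
\item The second-order property. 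Since $\Delta$ applied to $a\bullet b$ sums over pairs of gluing sites, it splits into the terms with both sites in $a$, both in $b$, or one in each. The seven-term identity then follows combinatorially, because a gluing involves exactly two sites.
\item The identity $\Delta^2=0$. The two gluings in $\Delta\circ\Delta$ commute up to the symmetry exchanging the order of the two gluings. This symmetry acts on the fibre $S^1\times S^1$ (suitably oriented) by an orientation reversing swap compatible with the map. So the paired terms cancel by Lemma \ref{lem_Cg}, or, after the symmetrization in the coinvariants, enter with opposite signs.
\end{enumerate}

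I expect the main obstacle to be the interaction of signs and orientations in steps 2 and 4, particularly for $\Delta_{co}$ and $\Delta_o$. There the fibres of the relevant bundles have boundary, corresponding to marked points colliding on a boundary circle, so $\pi^*$ is not a chain map on the nose. For these fibres I would try to show that the boundary contributions either cancel pairwise under the $\mathbb{Z}_{m_i}$ cyclic symmetries or land in thin chains. The ingredients are the boundary formula for pullbacks in Section \ref{sec_operations} and the face involution $\sigma$ of Convention \ref{conv_faces}.

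A second, more technical, point is the existence of gluing morphisms $\underline{M}_{\mathrm{red}}\to\underline{M}'_{\mathrm{red}}$ flagged in the bullet list above. I would restrict to the components where the generic automorphism group maps into that of the target, and absorb the factors $1/|\Gamma|$ into the normalizations of \cite{vhz}.
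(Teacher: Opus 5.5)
Your overall strategy is the paper's. The paper does not re-derive the BV identities; it takes them from \cite{vhz}. Its only new input is essentially your step~1: products, pushforwards along the sewing morphisms, pullbacks along circle bundles and $\partial$ are defined on $GC^{\mathrm{mor}}_*$ and preserve degenerate chains, so the structure of \cite{vhz} descends. For the ideal property you need more than $x\times y$ thin: you also need $\partial(x\times y)=\partial x\times y\pm x\times\partial y$ to be thin when $x\in D^{\mathrm{mor}}_*$, as in Section~\ref{sec_operations}.

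Where you go beyond this, your step~2 rests on a misdescription of the operators. No bundle with interval fibre occurs. In the paper's account the only bundles are circle bundles, with closed fibres. Sewing two open boundaries carries no twist parameter, so $\Delta_o$ is a pure pushforward along the sewing morphism. Pushforward along a morphism is a chain map, and so is pullback along a bundle with $\partial F=\emptyset$, by the boundary formula of Section~\ref{sec_operations}: the term $P\times_X\partial_vY$ is then absent. Hence $[d,\Delta]=0$ holds already in $C^{g,\mathrm{mor}}_*$, up to the sign conventions of the grading, with no cancellation argument. The ``main obstacle'' you single out, and the unproved claim that vertical boundary terms cancel under cyclic symmetries or become degenerate after gluing, are artefacts of this misreading. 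As written, that part of step~2 is left open.

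Your step~4 mechanism is also incomplete. The orientation reversing swap of $S^1\times S^1$ handles only the closed--closed terms $\Delta_c^2$. In $\Delta_o^2$ and in the mixed terms at most one twist circle is present, and the two orders of sewing give the same map from the same source. There the relative sign comes from the orientations of the sewing maps onto codimension-one faces, which is the same orientation reversal as for the involution $\sigma$ at codimension-two corners (Convention~\ref{conv_faces}). The Koszul signs of the grading and the twist by $L$ also enter. This is part of what \cite{vhz} verify and what the paper imports rather than reproves.

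Finally, Lemma~\ref{lem_orb_maps}(d) only says that a finite group acting on a single orbifold acts on its chains. It does not give compatibility of the sewing maps and circle bundles with the different groups $\varrho$ on source and target, or with $L$. Restricting to the components on which generic automorphism groups map correctly would change $\Delta$ rather than prove the statement. The paper explicitly states that it does not verify either condition, so on these points you are no worse off than the paper, but your proposed remedies do not close them.
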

We do not present the construction of $\Delta$ in detail; instead, we refer the interested reader to \cite{vhz} for a very clear account. The operations of \cite{vhz} are built from Cartesian products, pushforwards along the gluing maps, pullbacks along circle bundles and $\partial$. On $GC^{\mathrm{mor}}_*$, pushforwards along orbifold morphisms and $\partial$ preserve degenerate chains by Lemma \ref{lem_D_local} (item (2) in the proof of Proposition \ref{prop_orb_comparison}), and so do products, as in Section \ref{sec_operations}, using products of morphisms. Pullbacks along oriented circle bundles over the moduli orbifolds are defined on $GC^{\mathrm{mor}}_*$ and preserve degenerate chains (Section \ref{sec_operations}). By Proposition \ref{prop_orb_comparison} and Remark \ref{rem_twisted}, $GC^{\mathrm{mor}}_*$ computes the rational homology of the moduli spaces, with the twisted coefficients. Here the gluing maps and the circle bundles are understood as orbifold morphisms and orbifold bundles, as given by the construction of the moduli spaces as quotients, and as compatible with the actions of $\varrho$ and with $L$, so that they induce maps of the coinvariants of Remark \ref{rem_twisted}; we do not verify this here. We emphasize that the BV operator is constructed more naturally and easily on geometric chains than on singular chains. This is one reason why we prefer geometric chains here.

This result is not entirely new: it is a mathematically rigorous version of ideas developed originally in physics by Sen and Zwiebach; see \cite{sz} for the closed case and \cite{zwi} for the open-closed case.

To develop a purely algebraic treatment of Gromov--Witten invariants, one first needs to replace the fundamental classes of the compactified moduli spaces of Riemann surfaces with boundary and marked points by a certain solution of the quantum master equation associated with the BV algebra constructed above. The fundamental result is the following theorem on the existence and uniqueness of solutions of this equation. 

Let $V^{b,m}_{g,n}$ be the image of the inclusion $C^{geom}_*(\underline{M}^{b,m}_{g,n}/\varrho; \mathbb{Q}^\epsilon) \rightarrow V$.
\begin{theorem}
For each $g,n,m,b$ with $2 - 2g - n - b - m/2 < -\frac{1}{2}$
, there exists
an element $S^{b,m}_{g,n} \in V^{b,m}_{g,n}$ of degree 0, with the following properties.
\begin{enumerate}
\item $S^{0,0}_{0,3}$
is a 0-chain in the moduli space of Riemann spheres with 3 unparameterised,
unordered closed boundaries and with no open or free boundaries.
\item Form the generating function
\begin{equation*}
S = \sum_{\substack{g,n,b,m \ge 0 \\ 2g+n+b+m/2-2>0}}\hbar^{p-\chi}\lambda^{-2\chi}S^{b,m}_{g,n} \in \lambda F(M)[[\sqrt{\hbar},\lambda]]
\end{equation*}
here $p = 1 - \frac{m+n}{2},\chi = 2 - 2g - n - b - \frac{m}{2}$.
\end{enumerate}
S satisfies the Batalin-Vilkovisky quantum master equation:
\begin{equation*}
\hat{d}e^{S/\hbar} = 0
\end{equation*}
(Remember that in dg BV algebra $B[[\hbar,\lambda]]$, we let $\hat{d} = d + \hbar \Delta$).
Equivalently,
\begin{equation*}
\hat{d}S + \frac{1}{2}\{S, S\} = 0
\end{equation*}
Further, such an $S$ is unique up to homotopy through such elements.
\end{theorem}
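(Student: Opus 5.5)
The plan is to follow Costello's strategy for the closed case \cite{cos05}, as adapted in \cite{vhz}, and construct the $S^{b,m}_{g,n}$ by induction on $-\chi$ (and, for fixed $\chi$, on the dimension), obstruction-theoretically. Order the index set by $N:=-2\chi$. Write the master equation in components: the part of $\hat dS+\frac12\{S,S\}=0$ lying in $V^{b,m}_{g,n}$ reads
\begin{equation*}
d S^{b,m}_{g,n}+\Delta S^{b',m'}_{g',n'}+\tfrac12\sum\{S^{b_1,m_1}_{g_1,n_1},S^{b_2,m_2}_{g_2,n_2}\}=0,
\end{equation*}
where the $\Delta$-term involves the index obtained by undoing one self-gluing (closed via $\Delta_c$, open via $\Delta_o$, open-closed via $\Delta_{co}$), and the bracket terms involve pairs obtained by undoing one gluing between components. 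In every such term the indices have strictly smaller $N$, or equal $N$ with strictly smaller moduli dimension. Hence, given all lower $S$, the equation determines $dS^{b,m}_{g,n}$ as a fixed chain $-R^{b,m}_{g,n}$ built from lower data. Base case: $S^{0,0}_{0,3}$ is the fundamental $0$-chain of the point moduli space, together with the other minimal cases, where $R=0$ and we take the fundamental chain.

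Inductive step. First I would verify that $R^{b,m}_{g,n}$ is a cycle: $dR=0$ follows formally from the BV identities (the preceding theorem: $d^2=0$, $\Delta^2=0$, $[d,\Delta]=0$, $d$ and $\Delta$ are derivations of the bracket, and graded Jacobi) together with the equations already solved at lower order. This is the standard computation showing $\hat d$ squares to zero on $e^{S/\hbar}$ truncated. Next I would show $R$ is a boundary in $C^{geom}_*(\underline M^{b,m}_{g,n}/\varrho;\mathbb Q^\epsilon)$. Here Theorem \ref{main_thm}, in the form of Corollary \ref{cor_corners_general}, Proposition \ref{prop_orb_comparison} and Remark \ref{rem_twisted}, identifies the homology with $(H_*(\underline M^{b,m}_{g,n};\mathbb Q)\otimes L)_\varrho$. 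Since $R$ has degree $1$, it is a chain of dimension $\dim\underline M-1$. As in \cite{cos05}, it is supported on the boundary/corner part, where the gluing maps land. Consequently its class lies in homology of dimension $\dim-1$ of the (real-compactified) moduli space. I would show this group vanishes in the relevant twisted sense. A cleaner route is to take $S=[\underline M]$ plus corrections: the boundary of the fundamental chain of $\underline M^{b,m}_{g,n}/\varrho$ is exactly the sum of images of glued lower fundamental chains, i.e.\ $d[\underline M]=-R$ computed with fundamental chains. Choosing $S$ to be the fundamental chains themselves and checking that Convention \ref{conv_faces} boundaries match the gluing pushforwards plus circle-bundle pullbacks (which produce the ray-decorated boundary strata) then solves the equation on the nose. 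Where that matching holds only up to degenerate chains, the discrepancy is zero in $GC$.

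Uniqueness up to homotopy. Given two solutions $S,S'$, construct a path $S(t)$ in $V\otimes\Omega^*([0,1])$ (or equivalently geometric chains on $\underline M\times[0,1]$ via products, Section \ref{sec_operations}) by the same induction. At each step the obstruction is a relative cycle whose class lives in the homology of $\underline M^{b,m}_{g,n}$ in degree $\dim$, i.e.\ a multiple of the fundamental class with boundary conditions fixed. This class is killed because both endpoints restrict to the same lower data and the difference $S^{b,m}_{g,n}-S'^{b,m}_{g,n}$ is a cycle of degree $0$ homologous to zero once the top-dimensional coefficient is pinned by the lower equations.

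The main obstacle I expect is the vanishing/identification of the obstruction classes, i.e.\ showing the boundary of the fundamental chain matches the sum of $\Delta$ and bracket terms exactly in $GC^{\mathrm{mor}}_*$. This involves orientations, the $1/|\Gamma|$ and $1/|\varrho|$ normalizations, non-embedded faces, and the requirement that gluing maps be orbifold morphisms between reduced orbifolds (flagged above as unverified). I would first check this on the low-dimensional cases (disc with boundary points, annulus) before trusting the general sign bookkeeping.
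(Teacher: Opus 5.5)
Your inductive set-up (ordering by $-2\chi$ and then by dimension, with $dR^{b,m}_{g,n}=0$ following from the dg-BV identities and the lower equations) is Costello's, and that is all the paper uses. Its proof observes that, by Proposition~\ref{prop_orb_comparison} and Remark~\ref{rem_twisted}, $GC^{\mathrm{mor}}_*$ with the twisted coefficients computes $(H_*(\underline{M}^{b,m}_{g,n};\mathbb{Q})\otimes L)_\varrho$. The obstruction groups in the argument of \cite{cos05} are therefore the same as for singular chains, and that argument applies verbatim.

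The gap in your proposal is exactly at this point. Existence requires the class of $R^{b,m}_{g,n}$ to vanish in degree $\dim\underline{M}^{b,m}_{g,n}-1$, and you only say that you ``would show this group vanishes''. That vanishing is the one non-formal input of the argument. It is a statement about the rational homology of the moduli spaces; in the closed case of \cite{cos05} it rests on their topology, not on any chain-level construction. The comparison theorem is needed precisely to transport this vanishing to geometric chains, so citing it without a vanishing statement to feed into it does not remove the obstruction.

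Your fallback of taking the fundamental chains is a genuinely different route to existence: it is the next theorem, attributed to \cite{vhz}. But it needs the exact identification of $\partial[\underline{M}^{b,m}_{g,n}/\varrho]$ with the $\Delta$- and bracket-terms of the lower fundamental chains. That means orientations, the normalizations $1/|\varrho|$ and $1/|\Gamma|$, non-embedded faces, and the gluing maps being morphisms of the reduced orbifolds, which the paper explicitly does not verify. You yourself list this as your main unresolved obstacle, whereas the paper's route needs only homology. It also says nothing about uniqueness.

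Your uniqueness argument does not work as written. Two solutions need not agree on lower data. The homotopy has to be built inductively, and at each stage $S'^{b,m}_{g,n}$ is compared with $S^{b,m}_{g,n}$ transported along the homotopy already constructed for lower indices. The resulting obstruction is an absolute cycle of degree $0$, i.e.\ of dimension $\dim\underline{M}^{b,m}_{g,n}$. It is a boundary because $(H_{\dim}(\underline{M}^{b,m}_{g,n};\mathbb{Q})\otimes L)_\varrho=0$ away from the pinned zero-dimensional case $S^{0,0}_{0,3}$. This holds, for instance, whenever every component of $\underline{M}^{b,m}_{g,n}$ has non-empty boundary, since the collar of Corollary~\ref{cor_corners_general} makes it homotopy equivalent to a non-compact orbifold.

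Placing the obstruction in relative homology, as a multiple of the fundamental class in $H_{\dim}(\underline{M},\partial\underline{M})$, puts it in a group where it need not vanish. Saying it is ``pinned by the lower equations'' only restates the injectivity of the connecting map $H_{\dim}(\underline{M},\partial\underline{M})\to H_{\dim-1}(\partial\underline{M})$, i.e.\ the vanishing of $H_{\dim}(\underline{M})$, which you neither state nor justify.
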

This result has the same form as the one for the closed case. By Proposition \ref{prop_orb_comparison} and Remark \ref{rem_twisted}, the complexes $GC^{\mathrm{mor}}_*$ compute the same homology as singular chains for the moduli spaces involved, with the twisted coefficients. The proof of \cite{cos05} then applies essentially verbatim; for more details see \cite{hy}.

As in the closed case, this solution also encodes the fundamental chains of moduli spaces of Riemann surfaces with boundary and marked points, as shown in \cite{vhz}:
\begin{theorem}
For the notation of above theorem, if we take $S^{b,m}_{g,n}:= [\underline{M}^{b,m}_{g,n}/\varrho] \in C^{geom}_0(\underline{M}^{b,m}_{g,n}/ \varrho; \mathbb{Q}^\epsilon)$ be the fundamental (geometric) chains. Then $S$ satisfies the quantum master equation:
\begin{equation*}
(d + \hbar \Delta)e^{S/\hbar} = 0
\end{equation*}
\end{theorem}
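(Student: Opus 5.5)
The plan is to verify the equation component by component, by comparing the boundary of each fundamental chain with the corresponding gluing terms. Expanding $(d+\hbar\Delta)e^{S/\hbar}=0$ and using that $\Delta$ is second order with bracket $\{\,,\}$ (the Remark after the definition of dg-BV algebras), the equation is equivalent to
\begin{equation*}
dS+\hbar\Delta S+\tfrac12\{S,S\}=0 .
\end{equation*}
Since the weights $\hbar^{p-\chi}\lambda^{-2\chi}$ are multiplicative under the gluings defining $\Delta$ and $\{\,,\}$ (Euler characteristics add, and self-gluing changes $\chi$ and $p$ by the amount that the extra $\hbar$ compensates), it suffices to prove, for each $(g,n,b,m)$, an identity in $C^{geom}_*(\underline{M}^{b,m}_{g,n}/\varrho;\mathbb{Q}^\epsilon)$. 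That identity should express $\partial[\underline{M}^{b,m}_{g,n}/\varrho]$ as minus the sum of two kinds of terms: first, the $\Delta$-terms applied to fundamental chains of moduli spaces of higher $\chi$ with one fewer gluing; second, the $\tfrac12\{\,,\}$-terms applied to pairs of fundamental chains whose $\chi$ adds up correctly.

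First I would describe the codimension one faces of $\underline{M}^{b,\vec m}_{g,n}$, i.e.\ the components of $\partial Q$ in the sense of Convention~\ref{conv_faces}, following the construction of the real compactification in \cite{vhz}. They are of two types.
\begin{enumerate}
\item Faces where a boundary node forms. These are images of gluing maps that identify two boundary marked points, either on one surface or on two surfaces.
\item Faces where an interior node forms together with a decoration ray. These are circle bundles, namely the ray bundle, over images of gluing maps of two interior closed boundaries.
\end{enumerate}
Working with $\partial Q$ rather than with the images in $Q$ is essential, because faces need not be embedded. The key claim is that each component of $\partial\underline{M}$ is diffeomorphic, as an oriented orbifold with corners, to the fibre product or product source of the corresponding gluing map. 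Non-separating gluings produce $\Delta_c$, $\Delta_o$, $\Delta_{co}$, using the pullback along circle bundles from Section~\ref{sec_operations}. Separating gluings produce the bracket, using the Cartesian product.

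Then I would check that the operations send fundamental chains to fundamental chains:
\begin{enumerate}
\item the product of fundamental chains is the fundamental chain of the product orbifold;
\item the pullback of $[P,\hat\iota]$ along an oriented circle bundle is the fundamental chain of the total space;
\item the pushforward along a gluing morphism of a fundamental chain equals the corresponding face class $[F,\hat f|_F]$.
\end{enumerate}
Summing over faces then gives the required identity in $C^{g,\mathrm{mor}}_*$, before passing to coinvariants and tensoring with $L$ as in Remark~\ref{rem_twisted}. No degenerate chains are needed; the identity holds on the nose.

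The main obstacle is bookkeeping. Orientations must be matched: the outward normal first convention for $\partial Q$ has to agree with the sign representation $\epsilon$ and with the grading shift $\dim-k$. Combinatorial factors must also be matched: the $\tfrac12$ in the bracket, the orbit sizes of $\varrho$, the $1/|\varrho|$ from coinvariants, and the weights $1/|\Gamma|$ of non-effective components (convention after Definition~\ref{orbifold}). For this I would work upstairs on labelled moduli spaces $\underline{M}^{b,\vec m}_{g,n}$ and count how many labelled faces map to a given unlabelled gluing. I would check the normalizations against \cite{vhz} on the smallest cases, such as $S^{0,0}_{0,3}$ and the annulus. A secondary point, flagged in the paper, is that each gluing map must be an orbifold morphism between the reduced orbifolds. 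I would assume this as in \cite{vhz} rather than reprove it.
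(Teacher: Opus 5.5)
The paper gives no argument of its own for this statement. It refers to \cite{vhz}, and it explicitly leaves the normalizations and the morphism property of the gluing maps unchecked. Your face-by-face plan is the strategy behind that citation.

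However, its central step fails in the model you work in, namely labelled moduli spaces with the coinvariants of Remark~\ref{rem_twisted}. The failure is not a matter of bookkeeping. You assume that each codimension one face is diffeomorphic to the source of its gluing map, so that the push-forward of a fundamental chain is the face class. This is false whenever the gluing has a symmetry that acts on the source and is not induced by $\varrho$ on the target.

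Concretely, in the closed sector, the face of $\underline{M}_{1,2}$ where a non-separating node forms is $F=Y/\mathbb{Z}_2$. Here $Y$ is the ray bundle of $T_3\otimes T_4$ over the labelled $\underline{M}_{0,4}$, and $\mathbb{Z}_2$ exchanges the points $3$ and $4$, acting generically freely.
\begin{itemize}
\item $\Delta$ of the fundamental chain of $\underline{M}_{0,4}$ is a rational multiple of $\varrho$-translates of $[Y,i_F\circ c]$, with $c\colon Y\to F$ a double covering.
\item $\partial[\underline{M}_{1,2}]$ instead contains $[F,i_F]$.
\item These are distinct basis elements of $C^{g,\mathrm{mor}}_*$. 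The deck transformation preserves orientation, so Lemma~\ref{lem_Cg} applies.
\item Neither class is thin, and the $\varrho$-action on the target does not relate them.
\end{itemize}
The same happens in other symmetric configurations, e.g.\ separating gluings of two pieces of the same type that no label distinguishes.

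Since the gluing relation has been dropped, nothing in the complex identifies a $k$-sheeted covering with $k$ times its base. Hence no count of labelled faces, and no degenerate chain, turns $\tfrac12[Y,i_F\circ c]$ into $[F,i_F]$. The identity you want to prove on the nose therefore fails in $C^{g,\mathrm{mor}}_*$ and in $GC^{\mathrm{mor}}_*$. Checking $S^{0,0}_{0,3}$ will not reveal this: there the exchange is an automorphism of the curve, and the weight $1/|\Gamma|$ of $\underline{M}_{1,1}$ absorbs the factor $2$.

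What is missing is a set-up in which every twist-gluing map is an isomorphism from its source onto the face, compatibly with the maps to the target. With the paper's definition of $[\underline{M}/\varrho]$ as the image of the labelled fundamental chain this fails, and the citation does not address it either.

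One option is to work with chains on the quotient orbifolds $\underline{M}/\varrho$ themselves:
\begin{itemize}
\item select unordered pairs of points, or of components, by pulling back along finite orbifold coverings such as $\underline{M}_{g,n}/(\varrho_2\times\varrho_{n-2})\to\underline{M}_{g,n}/\varrho_n$;
\item take $[\underline{M}/\varrho,\mathrm{id}]$, with the weights $1/|\Gamma|$, as the fundamental chain.
\end{itemize}
This is a different complex from that of Remark~\ref{rem_twisted}, so the comparison with twisted coefficients would have to be redone for it.

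Separately, separating interior nodes also carry the ray circle. So the closed part of the bracket involves the circle-bundle pull-back and not only the Cartesian product.
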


Together with the results of \cite{cos05}, these theorems essentially state that the fundamental classes of moduli spaces of Riemann surfaces with marked points, or the (geometric) fundamental chains of moduli spaces of bordered Riemann surfaces with marked points, are encoded in the unique (up to homotopy) solution of the quantum master equation associated with a canonically defined BV algebraic structure on the singular (or geometric) chain complexes. From here one can proceed to define the Gromov--Witten potential of any (open-closed) topological conformal field theory; this subject, called categorical Gromov--Witten theory, was initiated by Costello, motivated by Kontsevich's homological mirror symmetry. The closed case was completed in \cite{cos05}; to our knowledge the open-closed case remains open.


Finally, we remark that the theory in this paper uses \textit{smooth} orbifolds with corners as sources: bisecting functions, Sard's theorem and thin factorizations all require a smooth structure on $Q$. We do not know whether a version with topological sources exists; note that by \cite{man} there are topological manifolds of every dimension $\geq 5$ that are not triangulable. We leave this as a topic for future study.

\subsection{Further applications}\label{sec_further}
\textit{Morse theory.} Let $h$ be a Morse function on a closed manifold $M$, with a metric $g$ such that $-\nabla h$ is Morse--Smale \cite[Definition 2.7]{qin} and $g$ is locally trivial \cite[Definition 2.16]{qin}: near each critical point $a$ there are coordinates $(x,y)$ in which $g$ is Euclidean and $h=h(a)-\frac12|x|^2+\frac12|y|^2$. Let $U(a)$, $A(a)$ be the descending and ascending manifolds, $\mathcal M(a,b)=(U(a)\cap A(b))/\mathbb R$ for $b\ne a$, of dimension $\mathrm{ind}\,a-\mathrm{ind}\,b-1$, $\overline U(a)$, $\overline{\mathcal M}(a,b)$ the compactifications by broken flow lines, and $e_a:\overline U(a)\to M$ the evaluation map \cite[Definitions 2.4, 2.8, \S3.2]{qin}. Orient each $U(a)$; this orients $\overline U(a)$ and, by \cite[\S6.1]{qin}, $\mathcal M(a,b)$. The group $CM_k(h;\mathbb Q)$ has as basis the critical points of index $k$, and $da=\sum_{\mathrm{ind}\,b=\mathrm{ind}\,a-1}\#\mathcal M(a,b)\,b$, where $\gamma\in\mathcal M(a,b)$ counts $+1$ if and only if, at $z=\gamma\cap h^{-1}(t)$, $t\in(h(b),h(a))$ regular, the orientation of $T_z(U(a)\cap h^{-1}(t))$ for which $(-\nabla h,\dots)$ is positive in $T_zU(a)$ agrees, via $T_zh^{-1}(t)/T_z(A(b)\cap h^{-1}(t))\cong T_zM/T_zA(b)$, with the orientation of the normal bundle of $A(b)$ given at $b$ by $T_bU(b)$. This is the differential of \cite[Theorem 3.9]{qin}; $d^2=0$ by \cite[Theorem 3.6(1)]{qin}.

\begin{lemma}\label{lem_qin}
\cite[Theorems 3.3, 3.4, 3.6]{qin} (a) $\overline{\mathcal M}(a,b)$ is a compact oriented manifold with corners of dimension $\mathrm{ind}\,a-\mathrm{ind}\,b-1$ with interior $\mathcal M(a,b)$. (b) $\overline U(a)$ is a compact oriented manifold with faces \cite[Definition 2.14]{qin} of dimension $\mathrm{ind}\,a$ with interior $U(a)$; $e_a$ is smooth and is the inclusion on $U(a)$; concatenation gives smooth embeddings $i_{(a,b)}:\overline{\mathcal M}(a,b)\times\overline U(b)\to\overline U(a)$, and $e_a\circ i_{(a,b)}=e_b\circ\mathrm{pr}_2$. (c) The open $1$-stratum of $\overline U(a)$ is $\bigsqcup_bi_{(a,b)}(\mathcal M(a,b)\times U(b))$, and $i_{(a,b)}$ preserves orientations there, the $1$-stratum carrying the boundary orientation, outward normal first. These smooth structures depend only on $(h,g)$, since evaluations at regular level sets are embeddings \cite[Theorem 3.3(3), Lemma 5.4]{qin}.
\end{lemma}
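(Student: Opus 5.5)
The statement is a repackaging of results of Qin, so the plan is to verify that each item follows from the cited theorems in \cite{qin}, and that Qin's conventions match those used here (Joyce's boundary, Convention~\ref{conv_faces}, with outward normal first). For (a), I would invoke \cite[Theorem~3.4]{qin}. It gives $\overline{\mathcal M}(a,b)$ the structure of a compact smooth manifold with corners whose open stratum is $\mathcal M(a,b)$ and whose depth-$k$ strata are the spaces of $(k+1)$-fold broken trajectories. The dimension count $\mathrm{ind}\,a-\mathrm{ind}\,b-1$ is the Morse--Smale transversality of $U(a)$ and $A(b)$, divided by the free $\mathbb R$-action. The orientation is the one fixed in \cite[\S6.1]{qin} from the chosen orientations of the $U(a)$, transported to the closure because $\mathcal M(a,b)$ is dense.

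For (b), I would use \cite[Theorem~3.3]{qin}. It gives $\overline U(a)$ as a compact manifold with faces with interior $U(a)$, smooth evaluation map $e_a$, and concatenation embeddings $i_{(a,b)}$. The identity $e_a\circ i_{(a,b)}=e_b\circ\mathrm{pr}_2$ holds on the dense open set $\mathcal M(a,b)\times U(b)$: the endpoint of a broken trajectory in $\overline U(a)$ is the endpoint of its last piece. By continuity it then holds everywhere. Local triviality of $g$ is what makes the gluing charts near critical points explicit and hence smooth, which is why that hypothesis appears.

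For (c), I would identify the depth-one stratum with once-broken trajectories ending in $U(b)$, using \cite[Theorem~3.6]{qin}. The substantive step, and the one I expect to be the main obstacle, is the orientation claim that $i_{(a,b)}$ is orientation preserving onto the $1$-stratum with the outward-normal-first convention. Qin's sign conventions for $\partial\overline U(a)$ may differ by $(-1)^{\mathrm{ind}\,a-\mathrm{ind}\,b}$ or similar. My first attempt would be a local computation at a point of $\mathcal M(a,b)\times U(b)$ near $b$. In the linear coordinates where $h=h(b)-\frac12|x|^2+\frac12|y|^2$, the outward normal of $\overline U(a)$ at the break is $-\nabla h$ transported along the trajectory towards $a$. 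The orientation of $\mathcal M(a,b)$ was defined so that $(-\nabla h,\cdot)$ is positive in the level set. Comparing these two directions should give exactly the product orientation $\mathcal M(a,b)\times U(b)$, consistent with how the differential $d$ was defined above. If Qin's convention differs, I would absorb the sign into the orientation of $\mathcal M(a,b)$, since it is defined through \cite[\S6.1]{qin}.

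Finally, for independence of choices, I would note that \cite[Theorem~3.3(3), Lemma~5.4]{qin} shows that evaluation at regular level sets embeds the compactified spaces. The smooth structures are then pulled back from the level sets, so they are determined by $(h,g)$ alone.
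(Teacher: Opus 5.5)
Your plan is the paper's. The lemma is simply quoted from \cite[Theorems 3.3, 3.4, 3.6]{qin}: this covers the oriented boundary statement in (c), with the orientations of \cite[\S6.1]{qin}, and the independence of choices via \cite[Theorem 3.3(3), Lemma 5.4]{qin}. The paper gives no further argument.

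The one step to discard is your fallback of absorbing a sign into the orientation of $\mathcal M(a,b)$. For $\mathrm{ind}\,a-\mathrm{ind}\,b=1$ this orientation is already fixed by the explicit sign rule defining $d$. Proposition \ref{prop_morse} needs the signs in (c) to be exactly those counted by $d$, so re-orienting would prove a different statement.

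The fallback is also unnecessary, because your local computation goes through. Work near $b$, with $x$ oriented by $T_bU(b)$, and write $U(a)\cap h^{-1}(h(b)+\epsilon)$ near $\gamma$ as a graph $x\mapsto(x,Y(x))$. The flow of $-\nabla h$ for time $s$ then gives the points of $U(a)$ near $U(b)$ as $(e^sx,e^{-s}Y(x))=(w,\lambda Y(\lambda w))$, with $\lambda=e^{-s}$ and $w=e^sx$. One computes $d\lambda\wedge dw_1\wedge\dots\wedge dw_{\mathrm{ind}\,b}=-e^{(\mathrm{ind}\,b-1)s}\,ds\wedge dx_1\wedge\dots\wedge dx_{\mathrm{ind}\,b}$.

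So the outward normal $-\partial_\lambda$, followed by the orientation $\partial_w$ of $U(b)$, is oriented like $(-\nabla h,\partial_x)$. This is precisely the comparison that defines the sign $\varepsilon(\gamma)$ of $\gamma$ in $d$. Hence $\{\gamma\}\times U(b)$ carries the boundary orientation $\varepsilon(\gamma)[U(b)]$ near $b$, and on all of $\{\gamma\}\times U(b)$ by connectedness.
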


\begin{proposition}\label{prop_morse}
$\Phi(a):=[\overline U(a),e_a]$ defines a chain map $\Phi:CM_*(h;\mathbb Q)\to GC^\infty_*(M)$, depending only on $h$, $g$ and the orientations of the $U(a)$. If $M$ is oriented, $\Phi$ is a quasi-isomorphism.
\end{proposition}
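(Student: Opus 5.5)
The chain map property comes from the boundary stratification of Lemma \ref{lem_qin}; the quasi-isomorphism comes from comparing with a cellular model through fundamental chains.

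\emph{Well-definedness.} By Lemma \ref{lem_qin}, $\overline U(a)$ is a compact oriented manifold with faces and $e_a$ is smooth. So $[\overline U(a),e_a]$ is a smooth geometric simplex. It involves no choices beyond $(h,g)$ and the orientation of $U(a)$.

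\emph{Chain map.} Since $\overline U(a)$ has faces, the codimension one faces of Convention \ref{conv_faces} are the closures of the components of the open $1$-stratum. By Lemma \ref{lem_qin}(c) these are the images of the embeddings $i_{(a,b)}$, with orientations matching the boundary orientation. By Lemma \ref{lem_qin}(b), $e_a\circ i_{(a,b)}=e_b\circ\mathrm{pr}_2$. Hence
\[
\partial\Phi(a)=\sum_b\big[\overline{\mathcal M}(a,b)\times\overline U(b),\,e_b\circ\mathrm{pr}_2\big]
\]
in $C^{g,\infty}_*(M)$. There is one thing to check here: each $i_{(a,b)}$ is a diffeomorphism onto the corresponding face of Joyce's $\partial\overline U(a)$, not just on the open stratum. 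I expect this to follow from the faces being embedded. The sum then splits by index.
\begin{itemize}
\item For $\mathrm{ind}\,b=\mathrm{ind}\,a-1$, $\overline{\mathcal M}(a,b)=\mathcal M(a,b)$ is a finite set of signed points. The term is $\#\mathcal M(a,b)\,\Phi(b)$, provided the sign convention of the product orientation agrees with the one defining $d$. This is a sign bookkeeping against \cite[\S6.1]{qin}.
\item For $\mathrm{ind}\,b<\mathrm{ind}\,a-1$, the map factors through $\mathrm{pr}_2$ onto the manifold $\overline U(b)$ of dimension $\mathrm{ind}\,b<\mathrm{ind}\,a-1$, so the term is thin.
\end{itemize}
Thus $e:=\partial\Phi(a)-\Phi(da)\in S^\infty$. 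Its boundary is
\[
\partial e=-\partial\Phi(da)=-\Phi(d^2a)-(\text{thin terms})
\]
by the same computation applied to each $b$, and $d^2=0$. So $\partial e\in S^\infty$ and $e\in D^\infty$. Therefore $\partial\Phi=\Phi d$ in $GC^\infty_*(M)$.

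\emph{Quasi-isomorphism.} By Theorem \ref{main_thm} it suffices to produce a singular chain map $\Psi:CM_*(h;\mathbb Q)\to C^\infty_*(M)$ that induces the standard isomorphism $H(CM)\cong H_*(M;\mathbb Q)$ and satisfies $p\Psi\simeq\Phi$.
\begin{enumerate}
\item Put $\Psi(a):=(e_a)_*\mu_{\overline U(a)}$, using the fundamental chains of Appendix \ref{app_fundamental}.
\item The argument of Remark \ref{rem_thin_cycles} (Lemma \ref{lem_fundamental_chains}(c),(d)) shows that $\partial\Psi(a)-\Psi(da)$ is a thin singular chain plus the boundary of a thin one, the thin terms factoring through lower-dimensional $\overline U(b)$ or faces. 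It also shows $p\Psi(a)-\Phi(a)$ is of the same form.
\item For genuine chain-level statements I would instead choose the fundamental chains compatibly with the product decompositions of faces. Concretely, $\nu_{\overline U(a)}$ restricted to the face $\overline{\mathcal M}(a,b)\times\overline U(b)$ is taken as a cross product of fundamental chains, defined inductively in $\mathrm{ind}\,a$. The thin leftover terms then map into the union $M_{k-2}$ of the images of descending manifolds of index $\le k-2$.
\item Since the $e_a(\overline U(a))$ form a CW-type filtration $M_0\subseteq M_1\subseteq\dots$ of $M$ \cite{qin}, these leftover terms vanish in $H_k(M_k,M_{k-1})$. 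So $\Psi$ induces the cellular chain map, whose characteristic classes are the fundamental classes of the cells.
\item Qin's identification of the Morse complex with the cellular complex then gives that $\Psi_*$ is an isomorphism. Orientability of $M$ enters only to match Qin's sign conventions. The error terms in $p\Psi-\Phi$ are degenerate by the thin-with-thin-boundary argument above, so $\Phi_*=p_*\Psi_*$.
\end{enumerate}

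The main obstacle is this last comparison. Making the thin error terms in $p\Psi-\Phi$ and in $\partial\Psi-\Psi d$ cohere into an honest chain homotopy, and matching signs with the cellular boundary, requires an inductive choice of fundamental chains compatible with the corner structure of all $\overline U(a)$ simultaneously. If that fails, I would instead filter both $CM_*$ and $GC^\infty_*(M)$ by $M_k$ (using Lemma \ref{lem_D_local} and relative Mayer--Vietoris) and compare the associated graded pieces, where only top cells survive.
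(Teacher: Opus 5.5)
Your chain-map argument is correct. Your proof that $e=\partial\Phi(a)-\Phi(da)$ has thin boundary is also valid: from $\partial^2=0$ in $C^{g,\infty}_*(M)$ and $d^2=0$ you get $\partial e=-\sum_b\#\mathcal M(a,b)\,e_b$. This is shorter than the paper's step (iii), which analyses $\partial(\overline{\mathcal M}(a,b)\times\overline U(b))$ directly, using thin pieces and the cancelling endpoints of the $1$-dimensional $\overline{\mathcal M}(a,b)$ when $\mathrm{ind}\,b=k-2$. The face identification you left open is done in the paper as you expected. Since $\overline U(a)$ has faces, $i_{\overline U(a)}$ is injective on each component of $\partial\overline U(a)$, and the inverse function theorem then makes $i_{(a,b)}$ a diffeomorphism onto the corresponding component.

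The gap is in the quasi-isomorphism, at the step $\Phi_*=p_*\Psi_*$. In step~5 you call the error $p\Psi(a)-\Phi(a)=p((e_a)_*\mu_{\overline U(a)})-[\overline U(a),e_a]$ degenerate, but it is not even thin. Indeed $[\overline U(a),e_a]$ is a non-thin basis element, since $e_a$ embeds $U(a)$, and so are the simplices of $(e_a)_*\mu$ that carry the local degree.

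Your step~2 instead claims that this error is thin plus the boundary of something thin, citing Remark~\ref{rem_thin_cycles}. That remark compares $s$ with $\varphi(p(s))$ \emph{in singular chains}. There both are push-forwards of fundamental chains of $\Delta^n$, so Lemma~\ref{lem_fundamental_chains}(d) applies. It controls $\varphi\circ p$, not $p\circ\varphi$ on geometric chains. For the geometric chain $[\overline U(a),e_a]$ there is no such lemma. Relating it to $p((e_a)_*\mu)$ at chain level would require realizing the decomposition of $\overline U(a)$ into the simplices of $\mu$ by thin prisms, and the paper has this only for bisections (Proposition~\ref{prop_bisection}).

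What your argument lacks is any mechanism that identifies the class of $[\overline U(a),e_a]$, relative to lower cells, with the fundamental class of the cell. In other words, you have no way to detect fundamental classes in $GH^\infty_*$. Your fallback of filtering by the closed skeleta $M_k$ needs the same identification on the associated graded, and also a comparison theorem for these non-open pairs. Separately, the compatible choice in your step~3 needs coherent fundamental chains on all $\overline{\mathcal M}(a,c)$, so that $(i)_*\nu$ is a cycle along the codimension two corners.

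The paper supplies this detection by integration. By Remark~\ref{rem_stokes}, $I[Q,f](\omega)=\int_Qf^*\omega$ is a chain map $GC^\infty_*(M)\otimes\mathbb R\to\mathcal D'_*(M)$. It is a quasi-isomorphism, since $I\circ p$ is the de Rham map. The currents $I\Phi(a)=[U(a)]$ are linearly independent, and by Harvey--Lawson their span includes quasi-isomorphically into the currents. Faithful flatness of $\mathbb R$ over $\mathbb Q$ then gives the rational statement. Orientability of $M$ is used in this step, not for matching Qin's signs.

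Your route could be repaired in the same spirit: pair $\Phi(z)$ and $p\Psi(z)$ with closed forms. Two facts make this work. Integration of a top-degree form over a fundamental chain of $\overline U(a)$ equals integration over $\overline U(a)$, because the local degree is $1$ almost everywhere. And by Remark~\ref{rem_stokes} and Theorem~\ref{main_thm}, these pairings detect $GH^\infty_*(M)\otimes\mathbb R$. As written, however, the decisive comparison rests on a misreading of Remark~\ref{rem_thin_cycles}.
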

\begin{proof}
Let $k=\mathrm{ind}\,a$. (i) As $\overline U(a)$ has faces, the $l$ local boundary components at a point of depth $l$ lie in distinct components of $\partial\overline U(a)$, so $i=i_{\overline U(a)}$ is injective on each component $\partial_F$, with image $F$. As $\partial_F$ is compact, near $x\in F$ the set $F$ is the hyperplane of the unique $(x,\beta)\in\partial_F$. The interior of $\partial\overline U(a)$ is identified by $i$ with the $1$-stratum. Let $C$ be a component of $\overline{\mathcal M}(a,b)\times\overline U(b)$. By Lemma \ref{lem_qin}(c), $i_{(a,b)}$ maps the interior of $C$, connected and dense, onto a component of the $1$-stratum, hence $C$ onto some $F$; distinct $C$ give distinct $F$, and every $F$ arises. Since $i_{(a,b)}|_C$ takes values locally in one hyperplane, $(i|_{\partial_F})^{-1}\circ i_{(a,b)}|_C:C\to\partial_F$ is a smooth bijection with invertible differential, hence a diffeomorphism (extend locally, inverse function theorem), orientation preserving on the interior, hence everywhere. As $\mathcal M(a,b)=\emptyset$ unless $\mathrm{ind}\,b<k$ (Morse--Smale), in $C^{g,\infty}_{k-1}(M)$
$$\partial\Phi(a)=\sum_b[\overline{\mathcal M}(a,b)\times\overline U(b),e_b\circ\mathrm{pr}_2]=\sum_{\mathrm{ind}\,b=k-1}\#\mathcal M(a,b)\,[\overline U(b),e_b]+T_a,$$
$T_a$ being the sum over $\mathrm{ind}\,b\le k-2$; for $\mathrm{ind}\,b=k-1$, $\overline{\mathcal M}(a,b)$ is a finite set of signed points $\gamma$, and $\mathrm{pr}_2:\{\gamma\}\times\overline U(b)\to\pm\overline U(b)$ is an isomorphism. By \cite[Proposition 7.4]{jo} ($Z$ a point), as in Proposition \ref{prop_homotopy},
$$\partial\big(\overline{\mathcal M}(a,b)\times\overline U(b)\big)=\big(\partial\overline{\mathcal M}(a,b)\times\overline U(b)\big)\sqcup(-1)^{k-\mathrm{ind}\,b-1}\big(\overline{\mathcal M}(a,b)\times\partial\overline U(b)\big).$$
(ii) Let $\mathrm{ind}\,b\le k-2$. The components of $\overline{\mathcal M}(a,b)\times\overline U(b)$ have dimension $k-1$, and $e_b\circ\mathrm{pr}_2=e_b\circ\pi$, $\pi=\mathrm{pr}_2$ onto $N=\overline U(b)$, $\dim N=\mathrm{ind}\,b<k-1$, with $\pi$, $e_b$ smooth. So $T_a\in S^\infty_{k-1}(M)$.

(iii) The components of $\overline{\mathcal M}(a,b)\times\partial\overline U(b)$ are thin, via $\mathrm{pr}_2$ onto $\partial\overline U(b)$, of dimension $\mathrm{ind}\,b-1<k-2$, followed by $e_b\circ i_{\overline U(b)}$. If $\mathrm{ind}\,b\le k-3$, those of $\partial\overline{\mathcal M}(a,b)\times\overline U(b)$ are thin via $\mathrm{pr}_2$ onto $\overline U(b)$, $\mathrm{ind}\,b<k-2$. If $\mathrm{ind}\,b=k-2$, $\overline{\mathcal M}(a,b)$ is a compact oriented $1$-manifold, so its boundary consists of points $x$ with signs $\varepsilon(x)$, $\sum_x\varepsilon(x)=0$, the two ends of an arc having opposite signs. Since $\mathrm{pr}_2:\{x\}\times\overline U(b)\to\varepsilon(x)\overline U(b)$ is an isomorphism, in $C^{g,\infty}_{k-2}(M)$
$$\big[\partial\overline{\mathcal M}(a,b)\times\overline U(b),e_b\circ\mathrm{pr}_2\big]=\Big(\sum_x\varepsilon(x)\Big)[\overline U(b),e_b]=0.$$
Hence $\partial T_a\in S^\infty_{k-2}(M)$.

(iv) By (ii) and (iii), $T_a\in D^\infty_{k-1}(M)$, so $\partial\Phi=\Phi d$ in $GC^\infty_*(M)$ by (i). Reversing the orientation of $U(a)$ changes the signs of $a$ and $\Phi(a)$; nothing else was chosen.

(v) Let $\mathcal D'_j(M)$ be the currents of dimension $j$, $(\partial\tau)(\omega)=\tau(d\omega)$. By Remark \ref{rem_stokes}, $I[Q,f](\omega):=\int_Qf^*\omega$ defines a chain map $I:GC^\infty_*(M)\otimes\mathbb R\to\mathcal D'_*(M)$. $I\circ p$ is integration over smooth singular chains, a quasi-isomorphism by de Rham's theorem \cite{dr}, and $p$ is one by Theorem \ref{main_thm} and Lemma \ref{lem_smooth_singular}, $\mathbb R$ being flat over $\mathbb Q$; hence so is $I$. Next, $I\Phi(a)=[U(a)]$, since $\overline U(a)\setminus U(a)$ has measure zero. By (i) and induction, $e_b(\overline U(b))\subseteq U(b)\cup\bigcup_{\mathrm{ind}\,c<\mathrm{ind}\,b}U(c)$; this compact set misses $z\in U(a)$ if $b\ne a$, $\mathrm{ind}\,b=\mathrm{ind}\,a$, so a suitable bump form near $z$ pairs non-trivially only with $[U(a)]$. Thus the $[U(a)]$ are linearly independent, and $I\circ(\Phi\otimes\mathbb R)$ is an isomorphism of complexes onto $\mathrm{Stab}:=\mathrm{span}_{\mathbb R}\{[U(a)]\}$ followed by the inclusion $\mathrm{Stab}\subseteq\mathcal D'_*(M)$. In \cite{hl}, which uses the flow of $+\nabla f$, take $f=2h$; then $f=f(a)-|x|^2+|y|^2$, $g=|dx|^2+|dy|^2$ near $a$, so $g$ is $f$-tame \cite[\S3]{hl}, the stable manifolds $S_a$ are the $U(a)$, and the flow is Morse--Stokes by \cite[Remark 3.2]{hl}, proved in \cite{hl2}. By \cite[Theorems 4.1, 4.2]{hl} and the proof of the latter ($i_*\circ\widehat{\mathbb P}_*=\mathbb I_*$, both isomorphisms), the inclusion of $\mathrm{Stab}$ is a quasi-isomorphism (\cite{hl} grade currents by degree $\dim M-j$; this changes differentials only by signs). Hence $\Phi\otimes\mathbb R$ is a quasi-isomorphism, and so is $\Phi$, as $H_*(\Phi\otimes\mathbb R)=H_*(\Phi)\otimes\mathbb R$ and $\mathbb R$ is faithfully flat over $\mathbb Q$.
\end{proof}
On homology, Proposition \ref{prop_morse} is classical, and \cite{hl} already give a chain-level version without choices, with values in currents. Geometric chains add a target that computes $H_*(M;\mathbb{Q})$ and carries products and pullbacks along fibre bundles.

\textit{String vertices.} We expect, but have not checked, the following. The hyperbolic string vertices $\mathcal{V}_{g,n}(L)$, $L\le2\,\mathrm{arcsinh}\,1$, of \cite{cz}, and their open-closed analogues \cite{cho}, should be compact orbifolds with corners whose codimension-one faces are images of twist-sewings of one or two vertices of lower complexity. Their fundamental chains would then solve the geometric master equation as an identity of geometric chains, whereas the proof of \cite[Theorem 1]{cz} compares sets of surfaces. This would take place in chains of the infinite-dimensional spaces of surfaces with local coordinates of \cite{cz}, not in the model of \cite{vhz} used above. Orientations and symmetry factors would have to be matched, and the uniqueness of $S$ in the existence theorem earlier in this section, which concerns chains on the moduli spaces of \cite{vhz}, does not apply directly.

\textit{Scope.} Geometric chains admit pullbacks only along fibre bundles; they have no transverse fibre products along general maps and no virtual chains, and we work with rational coefficients. For intersection products, string topology and Floer theory, the M-homology of \cite{jo15} and the de Rham chains of \cite{ir} are better suited.

\appendix
\section{Quotients of intervals by finitely many curves}\label{app_intervals}
In this appendix we prove Proposition \ref{prop_dim_laps}. The notation is local to this appendix.

\begin{proof}[Proof of Proposition \ref{prop_dim_laps}]
(a) After subdividing at the endpoints of laps, $R$ is the smallest closed equivalence relation containing finitely many sets of three kinds: graphs of homeomorphisms $h:I\to J$ between closed intervals, sets $\{c\}\times J$ with $J$ a closed interval, and single points. Let $\Gamma$ be the set of finite compositions of the maps $h$ and their inverses, each with its maximal domain (a closed interval, possibly empty).

\textit{Step 1: monotone reduction.} Let $R_0$ be the set of pairs $(x,y)$ such that the closed interval between $x$ and $y$ lies in one class of $R$. Since $R$ is closed, $R_0$ is a closed equivalence relation; its classes are the components of the classes of $R$, which are closed intervals. Hence $Z:=[0,1]/R_0$ is an arc or a point, and the quotient map $m$ is monotone. Since $u\,R\,h(u)$, each $h$ maps components of classes into components of classes, so it descends to a homeomorphism between closed subarcs of $Z$ (possibly points). Each $\{c\}\times J$ descends to a single point, since $J\cup\{c\}$ lies in one class. The relation $(m\times m)(R)$ is the smallest closed equivalence relation on $Z$ containing the descended sets, since its preimage under $m\times m$ is a closed equivalence relation containing the original sets. Its quotient is $Y$. No class of $(m\times m)(R)$ contains a non-degenerate arc, since the preimage of such an arc would be an interval in one class of $R$. From now on we work on $Z$ and keep the notation.

\textit{Step 2: no shrinking.} Let $V$ be a non-degenerate closed interval, and suppose that $g_n\in\Gamma$ satisfy $V\subseteq\mathrm{dom}\,g_n$ and $|g_n(V)|\to0$. Passing to a subsequence, $g_n(V)$ converges to a point $c$. Since $R$ is closed, $V\times\{c\}\subseteq R$, a contradiction. Consequently, if $g\in\Gamma$ maps a non-degenerate closed interval $W$ into itself, then $g|_W$ is the identity or an involution. Indeed, if $g$ is increasing and $g(y)\ne y$, then the intervals between $g^k(y)$ and $g^{k+1}(y)$ are the images of a fixed interval under $g^k$, and their lengths tend to $0$.

\textit{Step 3: a finite bad set.} Let $P$ be the finite set of endpoints of the domains and ranges of the generators, together with the single points. The endpoints of domains of elements of $\Gamma$ lie in $\Gamma P$. Let $\hat F$ be the union of the classes of the points of $P$. It is closed and saturated, $q(\hat F)$ is finite, and $\hat F$ contains $F:=\overline{\Gamma P}$. By continuity, $F$ is $\Gamma$-invariant. Let $V$ be a component of $Z\setminus F$. For each $g\in\Gamma$, either $V\subseteq\mathrm{dom}\,g$ or $V\cap\mathrm{dom}\,g=\emptyset$, and in the first case $g(V)$ is again a component of $Z\setminus F$.
\begin{itemize}
\item By Step 2, the lengths of the components $g(V)$ are bounded below, so $V$ has only finitely many images $V_1,\dots,V_k$.
\item Two elements mapping $V$ onto the same $V_i$ differ by an element preserving $V$. By Step 2 there are at most two of them.
\end{itemize}
Hence the $\Gamma$-orbits in $U:=Z\setminus\hat F$ are finite.

\textit{Step 4: the relation on $U$.} Let $S$ be the union of the $\Gamma$-orbit relation on $U$ and of $R\cap(\hat F\times\hat F)$. Then $S\subseteq R$, and $S$ is an equivalence relation containing all generators. We show that $S$ is closed. Let $(x_n,y_n)\in S$ converge to $(x,y)$.
\begin{itemize}
\item If $x\in U$, then $x_n$ eventually lies in the component $V$ of $Z\setminus F$ containing $x$, and $y_n=\varphi_n(x_n)$ with $\varphi_n$ among finitely many maps. So $y=\varphi(x)$ for one of them.
\item If $x\in\hat F$, then $(x,y)\in R$, and $y\in\hat F$ because $\hat F$ is saturated.
\end{itemize}
By minimality, $R=S$.

\textit{Step 5: dimension.} For each orbit $\{V_1,\dots,V_k\}$ of components, $T:=U\cap(V_1\cup\dots\cup V_k)$ is open and saturated. Let $D$ be a fundamental domain in $V_1$: $D=U\cap V_1$ if no element of $\Gamma$ reverses $V_1$, and otherwise the part of $U\cap V_1$ on one side of the fixed point of the involution. In the second case the fixed point is included. Then $D$ is closed in $T$, because $V_1\cap T$ is open and closed in $T$. Since $T$ is saturated, for $C\subseteq D$ closed we have $q(C)=q(\overline C)\cap q(T)$, which is closed in $q(T)$. Hence $q|_D:D\to q(T)$ is a closed continuous bijection onto the open set $q(T)$, thus a homeomorphism. So $Y$ is the union of the finite set $q(\hat F)$ and countably many open subsets homeomorphic to subsets of $\mathbb{R}$. Each of them is a countable union of compact sets of dimension $\le1$, since open subsets of metric spaces are $F_\sigma$. By the countable sum theorem \cite[Theorem~3.1.8]{en78}, $\dim Y\le1$.

(b) Since $q\circ\alpha_e=q\circ\beta_e$, also $q(A^*)=q(A^\dagger)$ with $A^\dagger:=\bigcup_e(\alpha_e\cup\beta_e)(\Sigma^*(\gamma_e))$. Fix $\eta>0$, and let $J_\eta^e$ be the set of points at distance $\ge\eta$ from $\Sigma^*(\gamma_e)$. It is a finite union of closed intervals (possibly points), on which $\gamma_e$ is piecewise monotone by compactness. Let $S_\eta\subseteq R$ be the closed equivalence relation generated by the restrictions $\gamma_e|_{J_\eta^e}$. By (a), $\dim[0,1]/S_\eta\le1$. The remaining generators lie in $N\times N$, where $N$ is the closed $\varepsilon$-neighbourhood of $A^\dagger$ and $\varepsilon\to0$ as $\eta\to0$.

Let $Z$ be the $S_\eta$-saturation of $N$, which is closed. Let $R_Z$ be the smallest closed equivalence relation on $Z$ containing $S_\eta\cap Z^2$ and the remaining generators. Then $S_\eta\cup R_Z$ is a closed equivalence relation: for transitivity, if $(x,y)\in S_\eta$ and $(y,z)\in R_Z$, then $x\in Z$ because $Z$ is saturated. It contains all generators, so $R\subseteq S_\eta\cup R_Z$. Hence the classes of $R$ outside $Z$ are classes of $S_\eta$. Therefore every closed subset of $Y$ disjoint from $q(N)=q(Z)$ is homeomorphic to a closed subset of $[0,1]/S_\eta$, and has dimension $\le1$.

By compactness, $\bigcap_\eta q(N)=q(A^\dagger)$. So $Y\setminus q(A^*)$ is a countable union of closed sets of dimension $\le1$, and $q(A^*)$ is closed. The countable sum theorem \cite[Theorem~3.1.8]{en78} gives the first claim. If all $\Sigma^*(\gamma_e)$ are countable, then $q(A^*)$ is a countable compact metric space, hence $0$-dimensional.

(c) Here $R$ is the smallest closed equivalence relation containing the graph of $T$.

\textit{Reduction.} If $[x,y]$ lies in a class, then $T([x,y])$ is an interval in the same class, since $u\,R\,T(u)$. So $T$ maps components of classes into components of classes. Hence, as in Step 1 of (a), $T$ descends to the arc $Z$, and we may assume that all classes are totally disconnected.

\textit{Equicontinuity.} Suppose that $x_n\to x$, $T^{k_n}x_n\to y$ and $T^{k_n}x\to z$ with $y\ne z$. Let $t$ lie strictly between $y$ and $z$. For large $n$, the intermediate value theorem gives $y_n$ between $x_n$ and $x$ with $T^{k_n}y_n=t$. Since $(y_n,T^{k_n}y_n)\in R$ and $R$ is closed, $(x,t)\in R$. So the class of $x$ contains an interval, a contradiction. Hence $\{T^k\}$ is equicontinuous.

\textit{An idempotent.} By the Arzel\`a--Ascoli theorem, the set of uniform limits of sequences $T^{n_i}$ with $n_i\to\infty$ is a non-empty compact semigroup. By the Ellis--Numakura lemma \cite{numakura52}, \cite[Lemma~1]{ellis58} it contains an idempotent $u=\lim T^{n_i}$, and $u$ commutes with $T$. Let $\Lambda:=\bigcap_nT^n(Z)$; it is an arc or a point, and $T(\Lambda)=\Lambda$.
\begin{itemize}
\item $u(Z)\subseteq\Lambda$.
\item $u|_\Lambda=\mathrm{id}$. Given $\lambda\in\Lambda$, choose $x_i\in\Lambda$ with $T^{n_i}x_i=\lambda$. By equicontinuity, every limit point $x^*$ of the $x_i$ satisfies $u(x^*)=\lambda$, hence $u(\lambda)=u(u(x^*))=\lambda$.
\item $T|_\Lambda$ is a homeomorphism. Passing to a subsequence, $T^{n_i-1}\to v$, and $u=vT=Tv$. Since $u|_\Lambda=\mathrm{id}$, $T|_\Lambda$ is injective.
\end{itemize}

\textit{$T|_\Lambda$ is the identity or an involution.} If $T|_\Lambda$ is increasing and $T(\lambda)\ne\lambda$, then $T^n(\lambda)$ converges monotonically to a point different from $\lambda$. This contradicts $T^{n_i}(\lambda)\to u(\lambda)=\lambda$. If $T|_\Lambda$ is decreasing, only finitely many $n_i$ are odd, since a limit of decreasing maps is not the identity of a non-degenerate arc. Applying the same argument to $T^2$ shows $T^2|_\Lambda=\mathrm{id}$. So $T|_\Lambda$ is the identity or an involution $\sigma$.

\textit{Conclusion.} Let $\sim$ be equality on $\Lambda$, resp. the orbit relation of $\sigma$, and put $Q:=(u\times u)^{-1}(\sim)$. Then $Q$ is a closed equivalence relation containing the graph of $T$, because $u(Tx)=T(ux)\sim ux$. Hence $R\subseteq Q$. Conversely, $(x,ux)\in R$ as a limit of $(x,T^{n_i}x)$, and $(\lambda,T\lambda)\in R$, so $Q\subseteq R$. Thus $Y\cong\Lambda/{\sim}$, a point or an arc.
\end{proof}

\section{Fundamental chains of orbifolds with corners}\label{app_fundamental}
We construct the fundamental chains used in Remark \ref{rem_thin_cycles}. Throughout, $Q$ is a compact oriented orbifold with corners of dimension $k$, and coefficients are $\mathbb{Q}$.

\begin{definition}\label{def_fundamental_chains}
\begin{enumerate}[(1)]
\item Recall from Definition \ref{def_orb_chains}(c) the complex $C^{\mathrm{orb}}_*(Q)$ of liftable singular simplices. Thus $s:\Delta^n\to|Q|$ belongs to it if every point of $\Delta^n$ has a neighbourhood $W$ on which $s=\varphi\circ\tilde s$ for a chart $(\tilde U,G,\varphi)$ of $Q$ and a map $\tilde s:W\to\tilde U$ which is smooth up to the corners. Charts are related near corresponding points by embeddings, and the lifts of a point to a chart form one orbit of its group. Hence a simplex with image in $\varphi(\tilde U)$ is liftable if and only if it lifts locally to smooth maps into $\tilde U$. $C^{\mathrm{orb}}_*(Q)$ is preserved by precomposition with affine maps between simplices and by push-forward along diffeomorphisms of orbifolds with corners (which lift locally to charts) and along $i_Q$. By Proposition \ref{prop_orb_comparison} it computes $H_*(|Q|;\mathbb{Q})$; part (a) of the lemma below gives a direct proof of this, together with a relative version. For $U\subseteq|Q|$ open, $C^{\mathrm{orb}}_*(U)$ is spanned by the liftable simplices with image in $U$.
\item Put $B_Q:=i_Q(|\partial Q|)$. It is the closed set of points of positive depth, and $|Q|\setminus B_Q=|Q^\circ|$ is the interior. Let
\begin{equation*}
A_*(Q):=(i_Q)_*C^{\mathrm{orb}}_*(\partial Q),\qquad R_*(Q):=C^{\mathrm{orb}}_*(Q)/A_*(Q),
\end{equation*}
and, for $U\subseteq|Q|$ open, $A_*(U):=A_*(Q)\cap C^{\mathrm{orb}}_*(U)$ and $R_*(U):=C^{\mathrm{orb}}_*(U)/A_*(U)$. Thus $A_*(Q)$ is spanned by the simplices $i_Q\circ s$ with $s\in C^{\mathrm{orb}}_*(\partial Q)$, and $A_*(Q)\subseteq C_*(B_Q)$.
\item Let $Q^\circ_{\mathrm{reg}}\subseteq|Q^\circ|$ be the set of interior points with trivial isotropy group. Since $Q$ is effective, it is open and dense in $|Q|$, and it is an oriented $k$-manifold. For $z\in Q^\circ_{\mathrm{reg}}$ let $o_z\in H_k(|Q|,|Q|\setminus z)$ be the generator given by the orientation. The \textit{local degree} $\deg_z\alpha\in\mathbb{Q}$ of a class $\alpha\in H_k(|Q|,B_Q)$, or more generally of $\alpha\in H_k(|Q|,Y)$ with $z\notin Y$, is defined by $\alpha\mapsto(\deg_z\alpha)\,o_z$ under restriction to $H_k(|Q|,|Q|\setminus z)$. A chain $\mu\in C^{\mathrm{orb}}_k(Q)$ with $\partial\mu\in A_{k-1}(Q)$ has a class in $H_k(|Q|,B_Q)$, hence local degrees. It is a \textit{fundamental chain} of $Q$ if $\deg_z\mu=1$ for all $z\in Q^\circ_{\mathrm{reg}}$. If $Q=\emptyset$, then $\mu=0$.
\item By \cite[\S2]{jo}, applied in charts, $\partial^mQ$ is locally $(\partial^m\tilde U)/G$, where a point of $\partial^m\tilde U$ is a point of $\tilde U$ together with an ordered $m$-tuple of distinct local boundary components at it. Permuting the $m$-tuples defines an action of the symmetric group $S_m$ on $\partial^mQ$ by diffeomorphisms. The transposition of the entries $j+1,j+2$ acts by $\partial^{m-j-2}\sigma$, where $\sigma$ is the involution of $\partial^2(\partial^jQ)$ of Convention \ref{conv_faces}. It reverses the orientation, since the boundary of an orientation reversing diffeomorphism reverses the induced orientation. Hence $w\in S_m$ acts with orientation sign $\mathrm{sgn}(w)$. Unlike for manifolds, this action need not be free (Example \ref{ex_teardrop_chains}). The map $i_m:=i_{\partial^mQ}:\partial^{m+1}Q\to\partial^mQ$ forgets the last entry. So it is $S_m$-equivariant, where $S_m\subseteq S_{m+1}$ is the stabiliser of $m+1$, and $i_m\circ i_{m+1}\circ\sigma=i_m\circ i_{m+1}$ for the transposition $\sigma$ of $m+1,m+2$.
\end{enumerate}
\end{definition}

\begin{lemma}\label{lem_fundamental_chains}
Let $Q$ be a compact oriented orbifold with corners of dimension $k$.
\begin{enumerate}[(a)]
\item For every open $U\subseteq|Q|$ the inclusions
\begin{gather*}
C^{\mathrm{orb}}_*(U)\to C_*(U),\qquad A_*(U)\to C_*(U\cap B_Q),\\
R_*(U)\to C_*(U)/C_*(U\cap B_Q)
\end{gather*}
are quasi-isomorphisms. In particular $H_*(C^{\mathrm{orb}}_*(Q))\cong H_*(|Q|)$ and $H_*(R_*(Q))\cong H_*(|Q|,B_Q)$.
\item For every $\alpha\in H_k(|Q|,B_Q)$ and every component $Q_a$ of $Q$, $\deg_z\alpha$ is independent of $z\in Q^\circ_{\mathrm{reg}}\cap Q_a$; call it $\deg_a\alpha$. The map $\alpha\mapsto(\deg_a\alpha)_a$ is an isomorphism $H_k(|Q|,B_Q)\cong\mathbb{Q}^{\pi_0(Q)}$. Hence $H_k(R_*(Q))\cong\mathbb{Q}^{\pi_0(Q)}$ is detected by local degrees, and fundamental chains exist.
\item There are fundamental chains $\mu_m$ of $\partial^mQ$, $0\le m\le k$, such that
\begin{equation*}
w_*\mu_m=\mathrm{sgn}(w)\,\mu_m\quad(w\in S_m),\qquad\partial\mu_m=(i_m)_*\mu_{m+1},
\end{equation*}
where $\mu_{k+1}:=0$. In particular $\mu_Q:=\mu_0$ and $\nu_Q:=\mu_1$ satisfy $\partial\mu_Q=(i_Q)_*\nu_Q$ and $\partial\nu_Q\in(i_{\partial Q})_*C^{\mathrm{orb}}_{k-2}(\partial^2Q)$, and the restriction of $\nu_Q$ to every component $F$ of $\partial Q$ is a fundamental chain of $F$.
\item Two fundamental chains of $Q$ differ by $\partial\beta+(i_Q)_*\gamma$, with $\beta\in C^{\mathrm{orb}}_{k+1}(Q)$ and $\gamma\in C^{\mathrm{orb}}_k(\partial Q)$.
\end{enumerate}
\end{lemma}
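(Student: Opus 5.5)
I will prove the four parts in order, with (a) as the technical base.

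\textbf{Part (a).} The first quasi-isomorphism $C^{\mathrm{orb}}_*(U)\to C_*(U)$ comes from the Mayer--Vietoris/small chains scheme of Lemma \ref{lem_smooth_singular}. Both complexes are homotopy invariant under liftable homotopies. Both satisfy the small chains lemma, since barycentric subdivision is affine; this uses item (1) of Definition \ref{def_fundamental_chains}.

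Both are acyclic in positive degrees on small sets. For these I would take the sets $W/H$ of Proposition \ref{prop_good_cover_orbifold} in the interior. Near boundary points I would take images of convex $G$-invariant neighbourhoods in a linearized corner chart (Lemma \ref{lem_doubling}(1)). The linear contraction to the fixed point is equivariant and preserves the corner cone, so it is a liftable homotopy.

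Because $U$ is an arbitrary open set, I would run the argument over a basis of such sets closed under finite intersections up to disjoint unions. Steps 1--3 of the proof of Theorem \ref{main_thm} then apply.

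For $A_*(U)\to C_*(U\cap B_Q)$ I would argue locally. In a linearized chart, $B_Q$ is the image of the union of the coordinate hyperplanes $\{y_i=0\}$, and $\partial\tilde U$ is their disjoint union.
\begin{itemize}
\item A small singular simplex in $B_Q$ need not lie in a single hyperplane image.
\item Both sides are still contractible on the convex sets above: the contraction preserves each hyperplane and commutes with $i_Q$.
\item Both sides satisfy small chains and Mayer--Vietoris. For $A_*$ this holds because $A_*(U)$ is spanned by pushed-forward simplices, and subdivision commutes with $(i_Q)_*$.
\end{itemize}
The main subtlety is that $A_*(U)$ is the image of $C^{\mathrm{orb}}_*(i_Q^{-1}U)$, not that complex itself. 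A simplex of $\partial Q$ may be identified with another one after push-forward, because $i_Q$ is not injective. I would handle this by noting that $A_*$ is spanned by a set of simplices closed under faces and affine precomposition, so it is a free complex with its own Mayer--Vietoris sequence. The local contractibility of $A_*$ over a corner chart is checked directly: the cone from the fixed point of a simplex contained in one hyperplane image stays in that hyperplane image.

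The third quasi-isomorphism, for $R_*(U)$, then follows from the five lemma.

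\textbf{Part (b).} $|Q|$ is a compact space, and the pair $(|Q|,B_Q)$ is homeomorphic, via the collar given by the inward flow of Corollary \ref{cor_corners_general}, to a rational homology manifold with boundary. Indeed, $|Q^\circ|$ is locally $\mathbb{R}^k/G$, which is a rational homology manifold because rational homology of a finite quotient is the invariant part. The orientation of $Q$ makes the local groups act orientation preservingly, so $|Q^\circ|$ is a rational homology manifold that is orientable.

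The standard argument, Poincaré--Lefschetz duality over $\mathbb{Q}$ (or the usual proof that $H_k(M,\partial M)\to H_k(M,M\setminus z)$ is an isomorphism for a connected oriented manifold), then gives both claims: local degrees are constant on each component, since $Q^\circ_{\mathrm{reg}}\cap Q_a$ is connected (the singular set has codimension $\ge2$), and $H_k(|Q|,B_Q)\cong\mathbb{Q}^{\pi_0(Q)}$. Existence of fundamental chains follows from (a).

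\textbf{Part (c).} I would construct the $\mu_m$ by descending induction on $m$, starting with $\mu_{k+1}=0$. For $m=k$, $\partial^kQ$ is a finite set of points, and $\mu_k$ is their signed sum; the $S_k$-sign condition holds because $w$ acts with orientation sign $\mathrm{sgn}(w)$.

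For the inductive step, assume $\mu_{m+1}$ is given. I need $\mu_m$ with $\partial\mu_m=(i_m)_*\mu_{m+1}$.
\begin{enumerate}
\item First show that $(i_m)_*\mu_{m+1}$ is a cycle. We have $\partial(i_m)_*\mu_{m+1}=(i_m i_{m+1})_*\mu_{m+2}$, which vanishes: $\sigma$ reverses the sign of $\mu_{m+2}$, while $i_m i_{m+1}\sigma=i_m i_{m+1}$, so the chain equals its own negative.
\item By the long exact sequence of $A_*\subseteq C^{\mathrm{orb}}_*$ and part (a), the class of this cycle in $H_{k-m-1}(C^{\mathrm{orb}}(\partial^mQ))$ is the image under the connecting map of the relative class of any chain $\mu'$ with relative degree $1$. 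Here I must check that $(i_m)_*\mu_{m+1}$ represents the connecting image of the fundamental class. I would do this with local degrees on the codimension one strata, comparing with the boundary orientation.
\item Hence $(i_m)_*\mu_{m+1}-\partial\mu'$ is a boundary $\partial\beta'$ of a chain supported where the relative class vanishes. Set $\mu_m=\mu'+\beta'$.
\item Finally antisymmetrize: replace $\mu_m$ by $\frac1{m!}\sum_w\mathrm{sgn}(w)w_*\mu_m$. This preserves the boundary condition because $i_m$ is $S_m$-equivariant and $\mu_{m+1}$ is already antisymmetric under $S_m\subseteq S_{m+1}$. It preserves local degrees because $w$ has orientation sign $\mathrm{sgn}(w)$.
\end{enumerate}
I expect step 2, identifying the boundary class, to be the main obstacle, because faces need not be embedded. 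I would check it locally at a generic point of a codimension one face, where $i_m$ is a local embedding, using that local degrees detect $H$ by (b) applied to $\partial^{m+1}Q$.

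\textbf{Part (d).} This follows from (b) and (a). The difference of two fundamental chains is a cycle in $R_*(Q)$ with all local degrees zero, hence null in $H_k(R_*(Q))$. Therefore it equals $\partial\beta$ plus an element of $A_k(Q)=(i_Q)_*C^{\mathrm{orb}}_k(\partial Q)$.
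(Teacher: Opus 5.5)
Your treatment of (a), (b) and (d), the proof that $(i_m)_*\mu_{m+1}$ is a cycle, and the antisymmetrisation are broadly the paper's. The gap is in step~2 of (c).

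Write $P=\partial^mQ$, $n=\dim P$ and $\nu=\mu_{m+1}$. As written, you compare classes in $H_{n-1}(C^{\mathrm{orb}}_*(P))$, but every connecting image vanishes there. The comparison your step~3 needs is in $H_{n-1}(A_*(P))\cong H_{n-1}(B_P)$; only then does (a) give $\beta'\in A_n(P)$.

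Checking local degrees at good points of codimension one faces is the content of the paper's items (ii) and (iii). It proves $[(i_P)_*\nu]=\delta[\mu']$ only if local degrees at good points \emph{detect} $H_{n-1}(B_P)$, and ``(b) applied to $\partial^{m+1}Q$'' does not give this. That statement concerns $H_{n-1}(|\partial P|,B_{\partial P})$, which is not the group in question. The map $i_P$ is not injective, and $(i_P)_*$ is not a quasi-isomorphism onto $A_*(P)$: for the teardrop, $H_1(B_Q)\cong\mathbb{Q}$ while $H_1(|\partial Q|)=0$. Lifting a cycle of $A_*(P)$ such as $\partial\mu'$ simplexwise to $\partial P$ need not produce a relative cycle of $(|\partial P|,B_{\partial P})$. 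Distinct liftable simplices of $\partial P$ can have the same push-forward, and their faces then need not cancel upstairs.

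The two groups are linked only through the pair $(B_P,B^{(2)}_P)$, where $B^{(2)}_P=i_P(B_{\partial P})$ is the locus of depth $\ge2$. Using that link would need two further facts, neither of which you address:
\begin{itemize}
\item an excision argument showing that the relative homeomorphism $(|\partial P|,B_{\partial P})\to(B_P,B^{(2)}_P)$ induces isomorphisms;
\item the vanishing $H_{n-1}(B^{(2)}_P;\mathbb{Q})=0$, i.e.\ vanishing of homology above the dimension, which is not automatic for compacta.
\end{itemize}

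The missing idea is a direct detection lemma: a class $\xi\in H_{n-1}(B_P)$ whose local degrees vanish at all points of depth one with trivial isotropy is zero. The paper proves it with the collar $\kappa:B_P\times(0,1)\to|P|$ constructed in (b). Crossing with the generator $e'$ of $H_1((0,1),(0,1)\setminus\{\frac12\})$, pushing forward by $\kappa$ and excising embeds $H_{n-1}(B_P)$ into $H_n(|P|\,|\,T')$. Here $T'=\kappa(B_P\times\{\frac12\})$ is a compact subset of the interior $|P^\circ|$. At a point $\kappa(b,t)$ with $b$ good, the local degree of the image is governed by $\deg_b\xi$. The interior duality statement (D2) for compact subsets of the rational homology manifold $|P^\circ|$ then forces $\xi=0$. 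With this lemma your steps 2 and 3 go through exactly as in the paper, via the quasi-isomorphism $A_*(P)\to C_*(B_P)$ of (a).

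A smaller point concerns (a). Convex $G$-invariant neighbourhoods of chart centres do not form a basis of a chart image, and you do not show that intersections of your sets are disjoint unions of such sets. The paper instead uses boxes $V$ with $gV=V$ or $gV\cap V=\emptyset$. Each box is contracted to an $H_V$-fixed point whose coordinate $c_j$ vanishes exactly for the hyperplanes meeting $V$, so that the contraction also preserves $A_*$.
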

\begin{proof}
If $k=0$, then $Q$ is a finite set of signed points and everything is obvious, so in (a) and (b) we assume $k\ge1$.

\textit{(a), linear charts and boxes.} By Lemma \ref{lem_doubling}(1), every point $x$ of depth $l$ has a chart $(\tilde U,G,\varphi)$ with $\tilde U\subseteq R^k_l=[0,\infty)^l\times\mathbb{R}^{k-l}$ and $\varphi(0)=x$ (a translation of the $v$-coordinates identifies $\tilde U$ with an open subset of $R^k_+$, as Definition \ref{orbi_corners} requires), on which $G$ acts by $g(u,v)=(P_gu,A_gv)$, with $P_g$ a permutation matrix and $A_g$ orthogonal for a $G$-invariant inner product. We call it a \textit{linear chart}. A \textit{box} of it is a non-empty set
\begin{equation*}
V=\{(u,v)\in R^k_l:\ a_j<u_j<b_j\ (j\le l),\ v\in E\}\subseteq\tilde U,
\end{equation*}
with $E\subseteq\mathbb{R}^{k-l}$ open and convex, such that each $gV$, $g\in G$, equals $V$ or is disjoint from $V$. If $H_V$ is the stabiliser of $V$, then $(V,H_V,\varphi|_V)$ is a chart, and we call $\varphi(V)$ a \textit{basic set}.
\begin{itemize}
\item If $V,V'$ are boxes, the non-empty sets $gV\cap g'V'$, $g,g'\in G$, have the same shape, are pairwise equal or disjoint, and are permuted by $G$. Hence the intersection of two basic sets of a chart is a finite disjoint union of basic sets of that chart.
\item Products of small intervals and small balls centred at a point of $\tilde U$ show that the basic sets form a basis of the topology of $\varphi(\tilde U)$.
\item Let $J_V$ be the set of $j\le l$ with $a_j<0$, i.e. those for which $V_j:=V\cap\{u_j=0\}$ is non-empty. Since $H_V$ permutes the intervals $(a_j,b_j)\cap[0,\infty)$, the point $c$ with $c_j=0$ for $j\in J_V$, $c_j=\frac12(a_j+b_j)$ for $j\notin J_V$, and $v$-component $|H_V|^{-1}\sum_{h\in H_V}A_hv_0$ for some $v_0\in E$, lies in $V$ and is fixed by $H_V$. The homotopy $F_t(y)=(1-t)y+tc$ preserves $V$ and each $V_j$, $j\in J_V$, and commutes with $H_V$.
\end{itemize}

\textit{(a), basic sets.} Let $U=\varphi(V)$ be a basic set. A simplex $s\in C^{\mathrm{orb}}_n(U)$ lifts locally to smooth maps $\tilde s$ into $V$, and $F$ descends to a homotopy of $U$ whose composite with $s\times\mathrm{id}_{[0,1]}$ lifts locally to $(1-t)\tilde s+tc$. So the prism operator built from the affine triangulation of $\Delta^n\times[0,1]$ maps $C^{\mathrm{orb}}_*(U)$ into $C^{\mathrm{orb}}_{*+1}(U)$. It is a chain homotopy between the identity and the chain map sending every $n$-simplex to the constant $n$-simplex at $\varphi(c)$. Hence $H_*(C^{\mathrm{orb}}_*(U))\cong\mathbb{Q}$, concentrated in degree $0$. Next, $i_Q^{-1}(U)=\big(\bigsqcup_{j\in J_V}V_j\big)/H_V$, and $F$ induces a homotopy $F'$ of it with $i_Q\circ F'_t=F_t\circ i_Q$. Hence the prism operator preserves $A_*(U)$. If $J_V\neq\emptyset$, the constant simplices at $\varphi(c)$ lie in $A_*(U)$, so $H_*(A_*(U))\cong\mathbb{Q}$ in degree $0$; if $J_V=\emptyset$, then $A_*(U)=0$. The same homotopies show that $U$ is contractible, and that $U\cap B_Q=\varphi\big(\bigcup_{j\in J_V}V_j\big)$ is contractible if $J_V\neq\emptyset$ and empty otherwise. So the first two maps in (a) are quasi-isomorphisms for basic sets, hence also for finite disjoint unions of basic sets.

\textit{(a), globalisation.} The complexes $C^{\mathrm{orb}}_*(U)$ and $A_*(U)$ are spanned by sets of simplices which are preserved by precomposition with affine maps. Hence the subdivision operator and the chain homotopy of the small chains lemma preserve them. As for singular chains, they satisfy Mayer--Vietoris sequences for open sets $U,V$, compatible with the inclusions into $C_*$; note that $A_*(U)\cap A_*(V)=A_*(U\cap V)$, since $A_*(Q)$ is spanned by simplices. By the five lemma, if the first two maps in (a) are quasi-isomorphisms for $U$, $V$ and $U\cap V$, then also for $U\cup V$. Moreover, every element of the complexes involved lies in the subcomplex of a finite union of members of a given basis of $U$, and homology commutes with directed unions. Fix a linear chart. By induction on $m$, using the first item above, the maps are quasi-isomorphisms for unions of $m$ finite disjoint unions of basic sets of this chart, hence for all open subsets of $\varphi(\tilde U)$. By induction on $m$ again, they are quasi-isomorphisms for unions $W_1\cup\dots\cup W_m$ of open sets each contained in the image of a linear chart, since $(W_1\cup\dots\cup W_{m-1})\cap W_m=\bigcup_{i<m}(W_i\cap W_m)$. Hence they are quasi-isomorphisms for all open $U$. The statement for $R_*$ follows by the five lemma.

\textit{(b), local homology.} Let $x$ have depth $l$, and choose a linear chart at $x$ with $\tilde U=\{(u,v)\in R^k_l:|u_j|<r,\ |v|<r\}$ for small $r$. If $l\ge1$, the point $e=(\varepsilon,\dots,\varepsilon,0)$, $0<\varepsilon<r$, is fixed by $G$, and $\tilde U$ and $\tilde U\setminus\{0\}$ are star-shaped with respect to $e$. Hence $\varphi(\tilde U)$ and $\varphi(\tilde U)\setminus\{x\}$ are contractible, and by excision $H_*(|Q|,|Q|\setminus x)=0$. If $l=0$, excision and radial deformation give $H_j(|Q|,|Q|\setminus x)\cong\tilde H_{j-1}(S^{k-1}/G)\cong\tilde H_{j-1}(S^{k-1})_G$, by the transfer for finite groups with rational coefficients \cite[Ch.~III]{bredon72}, applied for instance to a $G$-equivariant triangulation of $S^{k-1}$. Since $G$ preserves the orientation, this is $\mathbb{Q}$ for $j=k$ and $0$ otherwise.

\textit{(b), local constancy.} Let $x\in|Q^\circ|$, take a linear chart at $x$ with $\tilde U$ the open $r$-ball, and let $D$ be the open $\rho$-ball, $\rho<r$. By excision and radial deformation, the restriction $H_k(|Q|,|Q|\setminus\varphi(\bar D))\to H_k(|Q|,|Q|\setminus x)\cong\mathbb{Q}$ is an isomorphism. Let $[D]\in H_k(\tilde U,\tilde U\setminus\bar D)$ be the orientation class. A point $z\in\varphi(D)\cap Q^\circ_{\mathrm{reg}}$ has $|G|$ preimages in $D$, near each of which $\varphi$ is an orientation preserving homeomorphism, so $\varphi_*[D]$ restricts to $|G|\,o_z$. Hence $\varphi_*[D]$ generates $H_k(|Q|,|Q|\setminus\varphi(\bar D))$, and for every class $\zeta$ in this group, $\deg_z\zeta$ is independent of $z\in\varphi(D)\cap Q^\circ_{\mathrm{reg}}$. Applied to the image of $\alpha\in H_k(|Q|,B_Q)$, this shows that every point of $|Q^\circ|$ has a neighbourhood on whose regular points $\deg_z\alpha$ is constant. As $Q^\circ_{\mathrm{reg}}$ is dense, $\deg\alpha$ extends to a locally constant function on $|Q^\circ|$. Finally, $|Q^\circ|\cap Q_a$ is connected: $|Q^\circ|$ is dense, and every point has arbitrarily small basic neighbourhoods $\varphi(V)$ with $\varphi(V)\cap|Q^\circ|=\varphi(V\cap\{u_1,\dots,u_l>0\})$ connected. This proves the first claim of (b).

\textit{(b), a collar.} Put $M:=|Q^\circ|$. If $B_Q=\emptyset$, put $U:=\emptyset$ and $K_0:=|Q|$. Otherwise let $v$ and $\Phi$ be as in the proof of Corollary \ref{cor_corners_general}, for $X=Q$. By compactness, $B_Q\subseteq\bigcup_i\varphi_i(N_i)$ for finitely many linear charts $(\tilde U_i,G_i,\varphi_i)$, $\tilde U_i\subseteq R^k_{l_i}$, and compact sets $N_i\subseteq\tilde U_i$. Let $\tilde v_i$ be a smooth extension of the lift of $v$ to an open set $\tilde U_i'\subseteq\mathbb{R}^k$ with $\tilde U_i'\cap R^k_{l_i}=\tilde U_i$ \cite{se64}, with flow $\tilde\Phi^i$, and choose $0<\varepsilon_1<\min_{B_Q}\varepsilon$ such that $\tilde\Phi^i$ is defined on $[-2\varepsilon_1,2\varepsilon_1]\times N_i$. By the proof of Corollary \ref{cor_corners_general}, $\Phi_t(x)\in M$ for $t>0$, $\Phi$ commutes with the local groups, and $\varphi_i$ maps forward trajectories of $\tilde\Phi^i$ in $\tilde U_i$ to trajectories of $\Phi$. Let $\kappa:B_Q\times[0,1]\to|Q|$, $\kappa(x,s):=\Phi_{s\varepsilon_1}(x)$.
\begin{itemize}
\item \textit{$\kappa$ is injective.} Let $\kappa(x,s)=\kappa(x',s')$ with $a:=s\varepsilon_1\ge b:=s'\varepsilon_1$. If $b=0$, then $\kappa(x,s)=x'\in B_Q$, so $a=0$ and $x=x'$. If $b>0$, the curves $t\mapsto\Phi_{a-b+t}(x)$ and $t\mapsto\Phi_t(x')$, $0<t\le b$, are integral curves of $v$ in $M$ with the same endpoint. By uniqueness of integral curves of invariant vector fields in linear charts centred at points of $M$, the set of $t$ where they agree is open and closed in $(0,b]$, so they agree, and $t\to0$ gives $\Phi_{a-b}(x)=x'\in B_Q$. Hence $a=b$ and $x=x'$.
\item \textit{$U:=\kappa(B_Q\times[0,1))$ is open.} Let $y=\kappa(x,s)$ with $s<1$, choose $i$ and $\tilde x\in N_i$ with $\varphi_i(\tilde x)=x$, and put $t:=s\varepsilon_1$, $\tilde y:=\tilde\Phi^i_t(\tilde x)$. Since $u_j(\tilde x)=0$ for some $j\le l_i$ and $\tilde v_i$ points strictly inwards, $\tilde\Phi^i_{-\delta}(\tilde x)\notin R^k_{l_i}$ for some $\delta>0$ with $t+\delta<\varepsilon_1$. As the domain of a flow is open, every $\tilde y'\in\tilde U_i$ close to $\tilde y$ has a backward trajectory defined up to time $t+\delta$, with $\tilde\Phi^i_{-(t+\delta)}(\tilde y')\notin R^k_{l_i}$. Let $\tau$ be maximal with $\tilde\Phi^i_{-[0,\tau]}(\tilde y')\subseteq R^k_{l_i}$. Then $\tau<t+\delta$, and $\tilde x':=\tilde\Phi^i_{-\tau}(\tilde y')$ has some $u_j(\tilde x')=0$, so $x':=\varphi_i(\tilde x')\in B_Q$ and $\varphi_i(\tilde y')=\Phi_\tau(x')=\kappa(x',\tau/\varepsilon_1)\in U$. As $\varphi_i$ is open, $U$ is a neighbourhood of $y$.
\end{itemize}
Hence $\kappa$ is an embedding, $U$ deformation retracts onto $B_Q$ by $\kappa(x,s)\mapsto\kappa(x,(1-\lambda)s)$, $\lambda\in[0,1]$, $K_0:=|Q|\setminus U$ is a compact subset of $M$, and $S:=U\setminus B_Q=\kappa(B_Q\times(0,1))$ is open in $|Q|$. Writing $H_*(|Q|\,|\,K):=H_*(|Q|,|Q|\setminus K)$, restriction gives isomorphisms $H_*(|Q|,B_Q)\cong H_*(|Q|,U)=H_*(|Q|\,|\,K_0)$. Since $\tilde\Phi^i_t$ is injective and commutes with $G_i$ on $\tilde U_i$, the isotropy group of $\kappa(x,s)$ embeds into that of $x$; so $\kappa(x,s)\in Q^\circ_{\mathrm{reg}}$ if $s>0$ and $x$ has trivial isotropy.

\textit{(b), duality in the interior.} For $x\in M$ let $o_x\in H_k(|Q|\,|\,x)\cong\mathbb{Q}$ be the image of $|G|^{-1}\varphi_*[D]$, for a linear chart and a ball $D$ as in the proof of local constancy. This class has local degree $1$ at the regular points of $\varphi(D)$, and $H_k(|Q|\,|\,\varphi(\bar D))\cong\mathbb{Q}$ is detected by the local degree at any one of them; so $o_x$ does not depend on the choices, and it is the orientation generator if $x\in Q^\circ_{\mathrm{reg}}$. We claim that for every compact $K\subseteq M$:
\begin{enumerate}[(D1)]
\item $H_i(|Q|\,|\,K)=0$ for $i>k$;
\item a class in $H_k(|Q|\,|\,K)$ whose image in $H_k(|Q|\,|\,x)$ vanishes for every $x\in K$ is zero;
\item some $\alpha_K\in H_k(|Q|\,|\,K)$ has image $o_x$ in $H_k(|Q|\,|\,x)$ for every $x\in K$.
\end{enumerate}
This is \cite[Lemma~3.27]{hat02} for the rational homology manifold $M$, with the same proof. First let $K\subseteq\varphi(\tilde U)$ for a linear chart at an interior point, $\tilde U$ the open $r$-ball, and put $\tilde K:=\varphi^{-1}(K)$, a compact $G$-invariant set. By excision and the transfer \cite[Ch.~III]{bredon72}, $\varphi_*$ induces isomorphisms $H_*(\tilde U,\tilde U\setminus\tilde K)_G\cong H_*(|Q|\,|\,K)$, natural in $\tilde K$. So (D1) follows from \cite[Lemma~3.27(b)]{hat02}. In (D2) write the class as $\varphi_*\beta$. Then $\beta':=\sum_{g\in G}g_*\beta$ has $\varphi_*\beta'=|G|\varphi_*\beta$, and for $y\in\tilde K$ its image in $H_k(\tilde U,\tilde U\setminus Gy)$ is $G$-invariant and vanishes in the coinvariants, hence is zero, as $V^G\to V_G$ is bijective over $\mathbb{Q}$. So $\beta'$ restricts to $0$ at every $y\in\tilde K$, hence $\beta'=0$ by the uniqueness in \cite[Lemma~3.27(a)]{hat02}, and $\varphi_*\beta=0$. For (D3) choose $\rho<r$ such that the open $\rho$-ball $D$ contains $\tilde K$, and restrict $|G|^{-1}\varphi_*[D]$ to $K$; by the description of $o_x$ above, applied to small linear charts at $x\in K$ with image in $\varphi(D)$, it has image $o_x$. Next, if (D1)--(D3) hold for $K_1$ and $K_2$, and (D1), (D2) for $K_1\cap K_2$, then they hold for $K_1\cup K_2$, by the Mayer--Vietoris sequence of the open sets $|Q|\setminus K_1$, $|Q|\setminus K_2$: (D1) and (D2) hold as $H_{k+1}(|Q|\,|\,K_1\cap K_2)=0$, and $\alpha_{K_1}$, $\alpha_{K_2}$ have the same image in $H_k(|Q|\,|\,K_1\cap K_2)$ by (D2), so they come from a class $\alpha_{K_1\cup K_2}$. By induction on $m$, using $(K_1\cup\dots\cup K_{m-1})\cap K_m=\bigcup_{i<m}(K_i\cap K_m)$, the claim holds for unions of $m$ compact sets each contained in the image of a linear chart at an interior point. Every compact $K\subseteq M$ is such a union.

\textit{(b), conclusion.} For $x\in M$ and $\varphi(\bar D)$ as in the proof of local constancy, an element of $H_k(|Q|\,|\,\varphi(\bar D))\cong H_k(|Q|\,|\,x)$ is determined by its local degree at the regular points of $\varphi(D)$. If $\alpha\in H_k(|Q|,B_Q)$ has $\deg_a\alpha=0$ for all $a$, then $\deg_z\alpha=0$ for all $z\in Q^\circ_{\mathrm{reg}}$, so the image of $\alpha$ in $H_k(|Q|\,|\,x)$ vanishes for every $x\in M$, and $\alpha=0$ by (D2) for $K_0$ and the collar. Conversely, (D3) for $K_0$ gives $\alpha\in H_k(|Q|,B_Q)$ with image $o_x$ in $H_k(|Q|\,|\,x)$ for $x\in K_0$, hence with local degree $1$ at the regular points near $x$. Every component $Q_a$ meets $K_0$: either $\partial Q_a=\emptyset$ and $|Q_a|\subseteq K_0$, or $\kappa(x,1)\in K_0\cap Q_a$ for $x\in B_Q\cap Q_a$, by injectivity of $\kappa$. So $\deg_a\alpha=1$ for all $a$. Since each $|Q_a|$ is open and closed, $H_k(|Q|,B_Q)=\bigoplus_aH_k(|Q_a|,B_Q\cap|Q_a|)$, and the component of $\alpha$ in the summand of $Q_a$ has local degree $1$ on $Q_a$ and $0$ elsewhere. This proves surjectivity. By (a), $H_k(R_*(Q))\cong H_k(|Q|,B_Q)$, and the class of local degree $1$ is represented by a fundamental chain.

\textit{(c), the boundary.} Call $x\in B_Q$ \textit{good} if it has depth one and trivial isotropy group. Then $x=i_Q(p)$ for a unique $p\in(\partial Q)^\circ_{\mathrm{reg}}$, and $i_Q$ maps a neighbourhood of $p$ homeomorphically onto a neighbourhood of $x$ in $B_Q$. We orient $B_Q$ near $x$ accordingly, and define local degrees of $(k-1)$-cycles of $B_Q$ at good points as in Definition \ref{def_fundamental_chains}. Good points are dense in $B_Q$: in a linear chart, the points of depth $\ge2$ and the points with non-trivial isotropy form a nowhere dense subset of each $\tilde U\cap\{u_j=0\}$, because a non-trivial element of $G$ cannot fix an open subset of a boundary hyperplane (Convention \ref{conv_faces}).
\begin{enumerate}[(i)]
\item \textit{A class $\xi\in H_{k-1}(B_Q)$ whose local degrees at all good points vanish is zero.} Let $e\in H_1\big((0,1),(0,1)\setminus[\frac13,\frac23]\big)\cong\mathbb{Q}$ be a generator, and $e'$ its image in $H_1\big((0,1),(0,1)\setminus\{\frac12\}\big)$. Since $\partial(\xi\times e)=\pm\,\xi\times\partial e$, the long exact sequences of the pairs show that $\xi\mapsto\xi\times e$ and $\xi\mapsto\xi\times e'$ are isomorphisms from $H_{k-1}(B_Q)$. Via the homeomorphism $\kappa:B_Q\times(0,1)\to S$ of (b) and excision, $\zeta:=\kappa_*(\xi\times e)\in H_k(|Q|\,|\,T)$ and its image $\zeta'=\kappa_*(\xi\times e')\in H_k(|Q|\,|\,T')$ are defined, where $T:=\kappa(B_Q\times[\frac13,\frac23])$ and $T':=\kappa(B_Q\times\{\frac12\})$, and $\zeta'=0$ implies $\xi=0$. Let $x\in T'$, and take a linear chart at $x$ with $\varphi(\bar D)\subseteq\kappa(B_Q\times(\frac13,\frac23))$, an open set. As good points are dense in $B_Q$, $\varphi(D)$ contains a point $z=\kappa(b,t)$ with $b$ good, and $z\in Q^\circ_{\mathrm{reg}}$ by (b). By naturality of the cross product of pairs (the subspaces involved are open), the image of $\zeta$ in $H_k(|Q|\,|\,z)$ is $\kappa_*(\xi_b\times e_t)$, where $\xi_b\in H_{k-1}(B_Q,B_Q\setminus b)$ and $e_t\in H_1\big((0,1),(0,1)\setminus\{t\}\big)$ are the images of $\xi$ and $e$; and $\xi_b=0$ as $\deg_b\xi=0$. So $\deg_z\zeta=0$, hence the images of $\zeta$ in $H_k(|Q|\,|\,\varphi(\bar D))$ and of $\zeta'$ in $H_k(|Q|\,|\,x)$ vanish (proof of (b)). By (D2) for $T'$, $\zeta'=0$, so $\xi=0$.
\item \textit{If $\alpha\in H_k(|Q|,B_Q)$ has local degree $1$, then $\delta\alpha\in H_{k-1}(B_Q)$ has local degree $1$ at every good point $x$.} A chart at $x$ with trivial group, followed by an affine map, gives an orientation preserving homeomorphism $h$ of a neighbourhood $W$ of $x$ onto a neighbourhood of an interior point of a face of $\Delta^k$, with $h(W\cap B_Q)=h(W)\cap\partial\Delta^k$. Let $K\subseteq W$ be a closed neighbourhood of $x$ such that $h(K)$ is a small closed half ball. By excision and naturality of connecting homomorphisms, $\deg_x\delta\alpha$ depends only on the image of $\alpha$ in $H_k(|Q|,B_Q\cup(|Q|\setminus\mathring K))\cong\mathbb{Q}$, and this image is detected by the local degree at interior points of $K$. Transporting by $h$, the claim follows from the case of $\Delta^k$ and the class of $\iota_k=\mathrm{id}_{\Delta^k}$ in $H_k(\Delta^k,\partial\Delta^k)$: its boundary $\sum_i(-1)^id_i$ has local degree $1$ on the interior of every face, for the outward-normal-first orientation (Section \ref{sec_comparison}).
\item \textit{If $\nu$ is a fundamental chain of $\partial Q$ and $(i_Q)_*\nu$ is a cycle, then $(i_Q)_*\nu$ has local degree $1$ at every good point $x=i_Q(p)$.} Indeed, $\partial\nu\in(i_{\partial Q})_*C^{\mathrm{orb}}_{k-2}(\partial^2Q)$ is supported away from $p$, and $i_Q$ induces an isomorphism $H_{k-1}(|\partial Q|,|\partial Q|\setminus p)\to H_{k-1}(B_Q,B_Q\setminus x)$ which maps the class of $\nu$ to that of $(i_Q)_*\nu$.
\end{enumerate}

\textit{(c), the inductive step.} Let $P$ be a compact oriented orbifold with corners of dimension $n$, and $\nu$ a fundamental chain of $\partial P$ such that $(i_P)_*\nu$ is a cycle. Then there is a fundamental chain $\mu$ of $P$ with $\partial\mu=(i_P)_*\nu$. Indeed, choose a fundamental chain $\mu'$ of $P$ by (b). The cycles $\partial\mu'$ and $(i_P)_*\nu$ of $A_*(P)$ represent $\delta[\mu']$ and $[(i_P)_*\nu]$ in $H_{n-1}(B_P)$. By (ii) and (iii) both have local degree $1$ at all good points, so they are equal by (i). By (a), $A_*(P)\to C_*(B_P)$ is a quasi-isomorphism, so $(i_P)_*\nu-\partial\mu'=\partial a$ with $a\in A_n(P)$. Then $\mu:=\mu'+a$ has the same class as $\mu'$ in $H_n(|P|,B_P)$, so it is a fundamental chain, and $\partial\mu=(i_P)_*\nu$.

\textit{(c), the tower.} We construct $\mu_m$ by downward induction on $m$, starting with $\mu_{k+1}=\mu_{k+2}=0$. Suppose that $\mu_{m+1}$ and $\mu_{m+2}$ have been constructed. Since $\mu_{m+2}$ is antisymmetric under the transposition $\sigma$ of $m+1,m+2$, and $i_m\circ i_{m+1}\circ\sigma=i_m\circ i_{m+1}$,
\begin{equation*}
\partial(i_m)_*\mu_{m+1}=(i_m\circ i_{m+1})_*\mu_{m+2}=(i_m\circ i_{m+1})_*\sigma_*\mu_{m+2}=-(i_m\circ i_{m+1})_*\mu_{m+2},
\end{equation*}
so $(i_m)_*\mu_{m+1}$ is a cycle, as $2$ is invertible in $\mathbb{Q}$. The inductive step, applied to $P=\partial^mQ$ and $\nu=\mu_{m+1}$, gives a fundamental chain $\mu$ of $\partial^mQ$ with $\partial\mu=(i_m)_*\mu_{m+1}$. Put
\begin{equation*}
\mu_m:=\frac1{m!}\sum_{w\in S_m}\mathrm{sgn}(w)\,w_*\mu .
\end{equation*}
Each $w\in S_m$ is a diffeomorphism of orientation sign $\mathrm{sgn}(w)$, and $w\circ i_m=i_m\circ w$, so $w_*$ preserves $A_*(\partial^mQ)$, and $\mathrm{sgn}(w)\,w_*\mu$ is a fundamental chain. Moreover $\partial(\mathrm{sgn}(w)\,w_*\mu)=\mathrm{sgn}(w)\,(i_m)_*w_*\mu_{m+1}=(i_m)_*\mu_{m+1}$, as $\mu_{m+1}$ is antisymmetric under $S_{m+1}\supseteq S_m$. Hence $\mu_m$ is an antisymmetric fundamental chain with $\partial\mu_m=(i_m)_*\mu_{m+1}$. The last assertion of (c) holds because $\partial^2Q=\bigsqcup_F\partial F$.

\textit{(d).} If $\mu,\mu'$ are fundamental chains, then $\mu-\mu'$ is a cycle of $R_*(Q)$ all of whose local degrees vanish. By (a) and (b) it is a boundary in $R_*(Q)$, i.e. $\mu-\mu'=\partial\beta+a$ with $a\in A_k(Q)=(i_Q)_*C^{\mathrm{orb}}_k(\partial Q)$.
\end{proof}

Neither the proof of the lemma nor that of Theorem \ref{main_thm} uses triangulations of orbifolds with corners: in (b) and (c) the inward flow of Corollary \ref{cor_corners_general} provides a collar of $B_Q$, and the duality (D1)--(D3) on $|Q^\circ|$ replaces a triangulation. Rational coefficients are used in the transfer, in the division by $|G|$ in the proof of local constancy and of (D2), (D3), in the cancellation showing that $(i_m)_*\mu_{m+1}$ is a cycle, and in the antisymmetrisation.

\begin{example}\label{ex_teardrop_chains}
Let $Q$ be the teardrop of Convention \ref{conv_faces}: a closed disc with one corner point $c$, and $\partial Q=F\cong[0,1]$ with $i_Q(0)=i_Q(1)=c$. Then $\partial^2Q=\{0,1\}$ with signs $-1,+1$, and $\sigma$ exchanges the two points, so $\mu_2=[1]-[0]$ is antisymmetric. For $\mu_1$ one can take a liftable $1$-simplex in $F$ from $0$ to $1$. It is not a cycle in $\partial Q$, but $(i_Q)_*\mu_1$ is a loop at $c$, which bounds a fundamental chain $\mu_0$. Accordingly $H_1(A_*(Q))\cong H_1(B_Q)\cong H_1(S^1)$, whereas $H_1(C^{\mathrm{orb}}_*(\partial Q))\cong H_1([0,1])=0$; so $(i_Q)_*$ is not a quasi-isomorphism onto $A_*(Q)$. Moreover $A_*(Q)$ is strictly smaller than $C^{\mathrm{orb}}_*(Q)\cap C_*(B_Q)$: a smooth path in $B_Q$ which enters $c$ along one local boundary component and leaves it along the other (necessarily flat at $c$) does not lift to $\partial Q$. The involution $\sigma$ need not be free. For $G=\mathbb{Z}_2$ acting on $R^3_2$ by $(y_1,y_2,v)\mapsto(y_2,y_1,-v)$, which preserves the orientation, $\partial^2(R^3_2/G)\cong\mathbb{R}$ (the $v$-axis with the ordering $(1,2)$), and $\sigma$ acts by $v\mapsto-v$. There antisymmetry is a genuine condition, achieved by averaging with rational coefficients.
\end{example}

\section*{Acknowledgments}
The author thanks A. A. Voronov, who suggested this topic in discussions while the author was at the School of Mathematics, University of Minnesota. This work was carried out independently and is not related to the author's current employment.

\section*{Tool Use}
The author use AI (claude code) to check the validity of the original proofs and under the author's direction the AI strachs some new proofs. The overall ideas and methods are from the author.

\noindent  Hao Yu, {\it Independent researcher, Beijing, China.}\\
\indent {\it Previously: School of Mathematics, University of Minnesota, Minneapolis, MN 55455, USA.}
\\
{\tt dustless2014@163.com}

\end{document}